\theoremstyle{plain}
\numberwithin{equation}{section}
\newtheorem{theorem}{Theorem}[section]
\newtheorem{corollary}[theorem]{Corollary}
\newtheorem{proposition}[theorem]{Proposition}
\newtheorem{lemma}[theorem]{Lemma}
\newtheorem{remark}[theorem]{Remark}
\newcommand{\cW}{{\mathcal{W}}}
\newcommand{\E}{{\mathbb E}}
\newcommand{\R}{{\mathbb R}}
\newcommand{\C}{{\mathbb C\hspace{0.05 ex}}}
\newcommand{\N}{{\mathbb N}}
\newcommand{\fs}{{\mathfrak{s}}}
\newcommand{\fd}{{\mathfrak{d}}}
\newcommand{\fv}{{\mathfrak{v}}}
\newcommand{\fu}{{\mathfrak{u}}}
\newcommand{\cf}[1]{{\mathbbm 1}_{\{#1\}}} 
\newcommand{\rme}{{\rm e}}
\newcommand{\rmd}{{\rm d}}
\newcommand{\vep}{\varepsilon}
\newcommand{\defset}[2]{ \left\{ #1\left|\,
#2\makebox[0cm]{$\displaystyle\phantom{#1}$}\right.\!\right\} }
\newcommand{\set}[1]{\{#1\}}
\newcommand{\mean}[1]{\left\langle #1\right\rangle}
\title[Control of the telegraph process]{Quantitative control of Wasserstein distance between Brownian motion and the Goldstein--Kac telegraph process}
\author{Gerardo Barrera}
\address{University of Helsinki, Department of Mathematics and Statistics. P.O. Box 68, Pietari Kalmin katu 5, FI-00014. Helsinki, Finland.}
\email{gerardo.barreravargas@helsinki.fi}
\author{Jani Lukkarinen}
\address{University of Helsinki, Department of Mathematics and Statistics. P.O. Box 68, Pietari Kalmin katu 5, FI-00014. Helsinki, Finland.}
\email{jani.lukkarinen@helsinki.fi}
\subjclass[2000]{Primary 60G50, 60J65, 60J99, 60K35; Secondary 35L99, 60K37, 60K40}
\keywords{Brownian motion; Coin-flip coupling; Decoupling; Free velocity flip model; Koml\'os--Major--Tusn\'ady coupling; Random evolutions; Synchronous coupling;
Telegraph process; Wasserstein distance}
\begin{document}
\begin{abstract}
In this manuscript, we provide a non-asymptotic process level control between the telegraph process and the Brownian motion with suitable diffusivity constant via a Wasserstein distance with quadratic average cost.
In addition, we derive non-asymptotic estimates for the corresponding time average $p$-th moments.
The proof relies on coupling techniques such as coin-flip coupling,  synchronous coupling and the Koml\'os--Major--Tusn\'ady coupling.
\end{abstract}

\maketitle

\tableofcontents

\section{\textbf{Introduction}}\label{sec:intro}

The so-called (Goldstein--Kac) telegraph process is perhaps the simplest example of a random evolution, see \cite{Goldstein1951} and \cite{Kac1974}. It describes the movement of a particle which starts at time zero
from the origin and moves with a finite constant speed $v_0$  on the line.
The initial direction of the motion, positive or negative, is chosen at random with the same probability.
The changes of direction are
driven by a homogeneous Poisson process of a positive constant intensity $\lambda$. In other words, 
when a jump occurs in the Poisson process, the particle instantaneously takes the opposite direction and keeps moving with the same speed (it just flips the sign of its velocity) until the next jump in the Poisson process happens, then it takes the opposite direction again, and so on.
This is in fact an example of a zigzag process considered in \cite{Bierkens2019}.
 Later on, we provide a precise mathematical definition in \eqref{def:path}.
In \cite{Goldstein1951} and \cite{Kac1974},
it is shown that  the family of probability densities of the particle position $(f(x,t):x\in \mathbb{R},t\geq 0)$ solves the 
hyperbolic partial differential equation
\begin{equation}\label{eq:hpde}
\frac{\partial^2 }{\partial t^2}f(x,t)+2\lambda \frac{\partial }{\partial t}f(x,t)=v_0^2 \frac{\partial^2 }{\partial x^2}f(x,t)\,.
\end{equation}
Since \eqref{eq:hpde} appears in electrical wave propagation models,
it is known as the telegraph equation or the hyperbolic heat equation.
An explicit solution of \eqref{eq:hpde} can be carried out in 
terms of special functions, see for instance Chapter~2 in \cite{Kolesnik2013}.

The telegraph process has been proposed as an alternative to diffusion models and, as such, extensively studied by the probability and physics communities.
Its generalisations are ubiquitous in applications, including transport phenomena in physical and biological systems, and it has produced a vast mathematics literature; standard references include \cite{Bogachev2011,DeGregorio2021,
DiCrescenzo2013,DiCrescenzo2018, Foong1994, Ghosh2014, Goldstein1951,
Iacus2009,Janssen1990,Kac1974,Kolesnik2018,Kolesnik1998,Kolesnik2015,
Kolesnik2013,
KolesnikTurbin1998,Martinucci2020,
Orsingher1990,Stadje2004} and further references may be found from therein.
It is also used in the context of risk theory
and to model financial markets,
see  \cite{DiMasi1994}, \cite{Kolesnik2013} and \cite{Mazza2004}.
Statistics for the telegraph process are done in  \cite{Iacus2009}. More recently, in \cite{Cinque2021} the authors compute the distribution of the maximum of the asymmetric telegraph process.

Despite its simplicity and the already existing results 
about the agreement with Brownian motion using the above marginals, it is not easy to connect these two processes on the level of realisations, i.e., on the process level.
The present work is motivated by the need to understand a similar connection to diffusion in an extension of the telegraph process where
the stochastic process is augmented into an interacting particle system, ordered along a circle,
and each particle follows an independent telegraph process but there additionally is an interaction potential acting between the nearest neighbour particles.  The resulting system is called a {\em velocity flip model\/}: the connection between them and suitably chosen diffusion processes have been explored, for instance,  in \cite{Bernardin2011,Lukkarinen2014,Lukkarinen2016,Simon2013}
but process level understanding of the connection is still lacking, even in the simplest case of a harmonic interaction potential.

Our aim is to provide a process level control between the telegraph process and the Brownian motion with suitable diffusivity constant via a Wasserstein distance with quadratic average cost.
Often, one can  rely on generators to see if a Markov process converges to another process, see for instance \cite{Ethier1986} and \cite{Pinsky1991}. There are several ways to quantify the preceding convergence, for instance, by the Wasserstein distance (Kantorovich distance).
For the setting of two pure jump Markov $\mathbb{R}^d$-valued processes with bounded intensity of jumps, the time evolution of the Wasserstein distance between them can be 
written as an integro-differential equation in terms of their generators and Kantorovich potentials,
see for instance Theorem~3.1 in \cite{Alfonsi2018}.
In dimension one, the latter remains true for piecewise deterministic Markov processes, see
Theorem~6.4 in \cite{Alfonsi2018}.
However, we do not follow this approach since the relevant Kantorovich potentials are hard to compute and also the process level control, in which we are interested, is with respect to an average quadratic cost function given in \eqref{def:metric} below.
This is in fact a natural distance between process; we refer to \cite{BionNadal2019} for further details.
For diffusions, under regular assumptions on their coefficients,  by analytical approach the Wasserstein distance and total variation distance can be estimated, see Theorem~1.1 in \cite{Bogachev2016}.
By coupling approach, in \cite{Eberle2019} the authors obtain bounds of the total variation distance between two multidimensional It\^o processes with different drifts.
We point out that in general, it is difficult to obtain rates of convergence by the generator approach or for functional limit theorems. One possible approach is offered by the so-called Stein's method, see \cite{Briand2021}, \cite{Coutin2020}, although also this tool is difficult to implement in the present case.

In this manuscript we present non-asymptotic estimates between the telegraph process and the Brownian motion with suitable diffusivity constant. 
The main mathematical tools for the rigorous control of the estimates are coupling techniques.
We do not require explicit knowledge of the marginals of the corresponding processes.
Roughly speaking, the idea is to compare the telegraph process with a decoupled process (perturbed random walk) via the coin-flip coupling (see \cite{Koskinen2020} or Appendix~A in \cite{Saksman2020}). Then we compare the decoupled process and the Brownian motion with suitable diffusivity constant.
Since the decoupled process has zero mean at even jumps, the latter is done in two steps
 via the synchronous coupling and the celebrated Koml\'os--Major--Tusn\'ady coupling (see \cite{Komlos1975} and \cite{Komlos1976}).
The argument is detailed in Section~\ref{Sec:out}.
As a consequence of our main result, Theorem~\ref{th:main}, we obtain non-asymptotic estimates for the time average of $p$-th moments.

The paper is organised as follows:
Section~\ref{sec:setting} lays out the setting, the main result  Theorem~\ref{th:main} and its consequences formulated as Corollary~\ref{cor:tele} and
Corollary~\ref{cor:absmoments}.
Section~\ref{Sec:out} sketches the steps leading to the proof of the main result Theorem~\ref{th:main}, which boils down to the proofs of Proposition~\ref{prop:coinflipcoupling}, Proposition~\ref{prop:naturalcoupling} and Lemma~\ref{lem:couplingKMT}.
Proposition~\ref{prop:coinflipcoupling} is proved in Section~\ref{sec:proof}, Proposition~\ref{prop:naturalcoupling} is shown in Section~\ref{sec:naturalcoupling} and Lemma~\ref{lem:couplingKMT} is proved Section~\ref{sec:KMTcoupling}.
Finally, 
there is an Appendix which collects main technical results and estimates used in the main text.
It is divided in four subsections: Appendix~\ref{ap:tools} yields quantitative estimates of moments, Appendix~\ref{ap:coupling} establishes coupling estimates between the free velocity flip model and a suitable Brownian motion, Appendix~\ref{ap:delta} gives integration formulas for the Dirac $\delta$-constrain probability measure, and Appendix~\ref{ap:basic} provides basic auxiliary results.

\section{\textbf{The setting and the main results}}\label{sec:setting}
This section is divided in four subsections.
We start by introducing the model and the notation in Subsection~\ref{subsec:constr}.
Then, we discuss related works in Subsection~\ref{subsect:related}. Next,  we define the metric in which we compare our processes in Subsection~\ref{subsect:metric}.
Finally,  the main result and its consequences are given in Subsection~\ref{subsect:main}.

\subsection{\textbf{Construction of the free velocity flip model}}\label{subsec:constr}

Let $\N=\{1,2,\ldots,\}$,  $\N_0=\N\cup\{0\}$ and $\R_*:=(0,\infty)$.
For any $\fd\in \R_*^\N:=\{z=(z_1,z_2,\ldots, ):z_j\in \R_*~\textrm{ for all }~j\in \N\}$, $\fd=(\delta_1,\delta_2,\ldots,)$, we set
$t_0(\fd):=0$ and 
\begin{equation}\label{eq:suma}
 t_n(\fd) := \sum_{k=1}^n \delta_k\, \quad\mathrm{for\; any }\quad n\in \mathbb{N}\,.
\end{equation}
We define the set of admissible sequences 
$\Delta_\infty$ by
\begin{equation}\label{def:domain}
\Delta_\infty:=\left\{\fd\in \R_*^\N:\sum_{k=1}^{\infty}\delta_k=\infty\right\}\,.
\end{equation}
In other words, $\fd\in \Delta_\infty$ if and only if 
$t_n(\fd)\to \infty$ as $n\to \infty$.
For each $\fd\in \Delta_\infty$
we set 
\begin{equation}\label{eq:poproc}
\begin{split}
N:\Delta_\infty& \longrightarrow \mathcal{D}([0,\infty),\mathbb{R})\\
&\hspace{-0.5cm}
\fd \hspace{0.3cm} \longmapsto \hspace{0.3cm} N(\cdot;\fd):t\hspace{0.1cm}\mapsto \hspace{0.1cm}   N(t;\fd) = \sup\defset{n\in \N_0}{t_n(\fd)\leq t}\,,\\
 \end{split}
\end{equation}
where $\mathcal{D}([0,\infty),\mathbb{R})$ denote the set of C\`adl\`ag paths defined on $[0,\infty)$ and taking values on $\mathbb{R}$. We observe that
the function $t\mapsto N(t;\fd)$ is 
non-decreasing.

The sequence $\fd$ is taken to encode the waiting times between jumps
for a certain piecewise linear path $(X(t;\fd):0\leq t<\infty)$.  
We point out that then $N(t;\fd)$ corresponds to the number of jumps which have occurred by the time $t$.
The process $X$, which can be identified with a ``free velocity flip process path in one dimension'', is defined by the following properties:
\begin{enumerate}
\item Instantaneous position $X(t;\fd)\in \R$, 
\item starting velocity is $v_0\in \R$, and 
\item at each ``jump'' the velocity flips its sign, $V(t;\fd)\mapsto -V(t;\fd)$. 
More precisely,
\[
V(t;\fd):=v_0(-1)^{N(t;\fd)} \quad \textrm{ for all } \quad t\geq 0\,.
\] 
\end{enumerate}
For simplicity, we assume that the initial position is zero, that is, $X(0;\fd)=0$.
Then
we explicitly define the free velocity flip path $X$ as follows: 
\begin{equation}\label{def:path01}
\begin{split}
X:\Delta_\infty& \longrightarrow \mathcal{C}([0,\infty),\mathbb{R})\\
&\hspace{-0.5cm}
\fd \hspace{0.3cm} \longmapsto \hspace{0.3cm} X(\cdot;\fd):t\hspace{0.1cm}\mapsto \hspace{0.1cm}  X(t;\fd):=\int_{0}^{t}
\rmd s  \,V(s;\fd)\,,\\
 \end{split}
\end{equation}
where $\mathcal{C}([0,\infty),\mathbb{R})$ denotes the set of continuous paths defined on $[0,\infty)$ and taking values on $\mathbb{R}$.
It is not hard to see that for any $\fd\in \Delta_\infty$ and $t\geq 0$,
\begin{equation}\label{def:path}
 X(t;\fd)
 = v_0\sum_{k=1}^{M} (-1)^{k-1} \delta_k + v_0(-1)^{M} (t-t_{M}(\fd))\,, \quad M=N(t;\fd)\,,
\end{equation}
with the convention that $\sum_{k=a}^{b} (\cdots)=0$ whenever $a>b$, $a,b\in \mathbb{Z}$.
In particular, for $t\in[0,\delta_1)$, we have $X(t;\fd)=v_0 t$.  Also, by definition of $N$, we have 
\[
\left|v_0(-1)^M (t-t_M(\fd))\right|\leq |v_0|(t_{M+1}(\fd)-t_M(\fd))=|v_0|\delta_{M+1}\,.
\]

\noindent
{\textbf{Notation}.}
When the sequence $\fd=(\delta_k:k\in \mathbb{N})$ 
is distributed according to the infinite product measure of exponential distributions with parameter $\lambda>0$,
from here on  we always write 
$\fs=(s_1,s_2\ldots,)$ instead of $\fd$. 

We note that $\fs\in \Delta_\infty$ almost surely and also
 $N(t;\fs)<\infty$  for any $t\ge 0$ almost surely. 
In fact,
$N(t;\fs)$ is the unique (random) value $n\in \N_0$ for which $t_n(\fs)\leq t<t_{n+1}(\fs)$ and
the process 
\begin{equation}\label{eq:poissonprocess}
(N(t;\fs):t\geq 0)\quad
\textrm{ is called a Poisson process of intensity } \lambda\,.
\end{equation}  For further details about Poisson processes we recommend the monographies  \cite{Brmaud2020} and \cite{Mikosch2009}. 
We denote by $(\Omega,\mathcal{F},\mathbb{P})$ the complete probability space where $X^{\fs}:=(X(t;\fs):t\geq 0)$ is defined and by $\mathbb{E}$ the expectation with respect to $\mathbb{P}$.
The vector process
$((X(t;\fs),V(t;\fs)):t\geq 0)$ is Markovian on the state space
$\mathbb{R}\times \{-v_0,v_0\}$, see for instance Section~12.1, p. 469 in \cite{Ethier1986}.
However, the marginal process $X^{\fs}$ is not itself Markovian for any non-zero initial velocity $v_0$.

Note that for $v_0=0$ we have $X(t;\fs)=0$ for all $t\geq 0$. In the rest of the manuscript we always assume that $v_0\in \R\setminus\{0\}$.
We point out that the free velocity flip model corresponds to the so-called Goldstein--Kac telegraph process when the initial velocity $v_0$ is chosen at random from the uniform distribution on $\{-1,1\}$ (Rademacher distribution) and independent  from the Poisson process $(N(t;\fs):t\geq 0)$. 
In this scenario, a straightforward computation yields
\[
\mathbb{E}\big[e^{-zV(t)}\big]=\cosh(z)\quad \textrm{ for any } z \in \mathbb{R}\quad \textrm{ and }\quad t\geq 0\,,
\]
where $\cosh$ denotes the hyperbolic cosine function, and hence the velocity process $(V(t):t\geq 0)$ is stationary.

In what follows we do not assume that $v_0$ is random. Nevertheless, our main result and its consequences hold true as soon as we assume that $v_0$ is a random variable with finite inverse second moment and independent of the Poisson process $(N(t;\fs):t\geq 0)$.
Hence, it covers the Goldstein--Kac telegraph process. 
The variance of $X(t;\fs)$, $\mathrm{Var}(X(t;\fs))$, is given by
\begin{equation}\label{eq:forvar}
\mathrm{Var}\left[X(t;\fs)\right]
=\frac{v^2_0}{\lambda^2}\left(\lambda t+e^{-2\lambda t}-\frac{e^{-4\lambda t}}{4}-\frac{3}{4}\right)\,
\quad \textrm{ for all }\quad t\geq 0\,.
\end{equation}
For details, see Item~(2) of Lemma~\ref{lem:momentskac} in Appendix~\ref{ap:coupling}. 

We also choose a target spatial scale, $L>0$, and time scale, $T>0$, in which we wish to control the position of the process $X$. The evolution of the process $X$ will then
be most conveniently described in terms of the following scaling parameters:
\begin{equation}\label{e:scalings}
T_*:=\lambda T\quad \textrm{ and } \quad {L}_*=|v_0|^{-1}\lambda L\,.
\end{equation}
By formula \eqref{eq:forvar} we have
\[
\mathrm{Var}\left[{L}^{-1}{X(T;\fs)}\right]=
\frac{T_*}{L^2_*}
+\frac{1}{L^2_*}\left(e^{-2T_*}-\frac{e^{-4T_*}}{4}-\frac{3}{4}\right)\,.
\]
The latter suggests that for $T_*\to \infty$, $L_*\to \infty$, $T_*/L^2_*\to \zeta$ for some $\zeta>0$,
the scaling process $(L^{-1}X(t;\fs):0\leq t\leq T)$ should behave as the Brownian motion with suitable diffusivity coefficient.

\subsection{\textbf{Related works}}\label{subsect:related}

It is well-known that, under a suitable scaling, the telegraph process satisfies a functional central limit theorem. To be more precise, if $\lambda \to \infty$ and $v_0\to \infty$ such that $v^2_0/\lambda \to 1$, then
the telegraph process $(X(t;\fs):0\leq t<\infty)$ converges weakly in $\mathcal{C}([0,\infty),\mathbb{R})$ to a standard Brownian motion $W=(W(t):t\geq 0)$, see for instance Theorem~1.1 in \cite{Bogachev2011} and the references therein.
Using the fact that the  evolution equation associated to $(X(t;\fs):0\leq t<\infty)$ is given by the telegraph partial differential equation \eqref{eq:hpde}, asymptotics for the transition probability can be carried out in terms of Bessel functions, see \cite{Orsingher1990}.
However, up to our knowledge, 
the unique reference discussing about 
 rate of convergence of the fixed rescaled time marginal to the corresponding Gaussian distribution is \cite{Janssen1990}, in which an expansion of length two is carried out for the
marginal $X(t;\fs)$ based on the marginal $W(t)/\sqrt{\lambda}$.  The error term is of order $\mathcal{O}(t)$ as $t\to \infty$.

We emphasize that the main 
Theorem~\ref{th:main} of this paper states a non-asymptotic estimate in the average quadratic Wasserstein metric between the processes 
$(L^{-1}X(t;\fs):t\geq 0)$ and  a Brownian motion (not necessarily standard). This, in particular, implies Theorem~1.1 in \cite{Bogachev2011} when
$\lambda \to \infty$ and $v_0\to \infty$ in such a way that $v^2_0/\lambda \to 1$.
The latter is referred as a singular perturbation scale in  \cite{Janssen1990}.

\subsection{\textbf{Comparing two stochastic processes}}\label{subsect:metric}

\subsubsection{\textbf{Coupling and Wasserstein metric}}

The Wasserstein metric is used to measure the distance between two probability
measures on a Radon space. Polish spaces are standard examples of Radon spaces.
Let $d\in \N$ and consider two
stochastic processes 
$(X(t):t\geq 0)$ and $(Y(t):t\geq 0)$
defined on the probability space $(\Omega,\mathcal{F}, \mathbb{P})$ and 
 taking values in $\R^d$.
 The Skorokhod space (Radon space)
consisting of the set of right-continuous with left limits functions taking values in $\mathbb{R}^d$ is denoted by $\mathcal{D}([0,\infty),\mathbb{R}^d)$ and 
assume that the trajectories of the processes $(X(t):t\geq 0)$ and $(Y(t):t\geq 0)$ belong to
$\mathcal{D}([0,\infty),\mathbb{R}^d)$.
Let $T>0$ be a \textit{target time scale}, and denote by $\mu_1(\rmd X)$ and $\mu_2(\rmd Y)$ the probability measures 
of the processes
$(X(t):t\geq 0)$ and $(Y(t):t\geq 0)$, respectively, restricted to times $t$ with $0\leq t\leq T$.
Then we define the \textit{average quadratic cost function}
\begin{equation}\label{eq:costfunction}
 c_2(X,Y) :=  \frac{1}{T} \int_{0}^{T}\rmd s \,\|X(s)-Y(s)\|^2\,,
\end{equation}
where $\|\cdot\|$ denotes the Euclidean norm on $\R^d$.
The corresponding Wasserstein distance of order $2$ with cost function $c_2$ between $\mu_1$ and $\mu_2$, $\cW_2(\mu_1,\mu_2)$, is defined by
\begin{equation}\label{def:metric}
\begin{split}
\cW_2(\mu_1,\mu_2):&= \left(\inf_{\gamma} \int \gamma(\rmd X,\rmd Y)
 c_2(X,Y)\right)^{{1}/{2}}\\
 & = \left(\inf_{\gamma} \int \gamma(\rmd X,\rmd Y) \frac{1}{T} \int_{0}^{T}\rmd s \|X(s)-Y(s)\|^2\right)^{{1}/{2}}\,,
\end{split}
\end{equation}
where the infimum is taken over all  couplings $\gamma$ between the probability measures
$\mu_1$ and $\mu_2$. 
In other words, for all continuous and bounded observables $h:\mathcal{D}([0,T],\R^d)\to \C$ it follows
\[
\int \gamma(\rmd X,\rmd Y) h(X)=\int \mu_1(\rmd X) h(X)
\quad \mathrm{and}\quad
\int \gamma(\rmd X,\rmd Y) h(Y)=\int \mu_2(\rmd Y) h(Y)\,.
\]
For shorthand, we write $\cW_2(X,Y)$ in place of $\cW_2(\mu_1,\mu_2)$.
We point out that 
\eqref{def:metric} defines a natural distance between stochastic processes, see for instance \cite{BionNadal2019}.
Indeed, 
$\cW_2$ defines a metric that metrizes the weak topology on the set of probability measures $\eta$ on $L^2([0,T ],\R^d)$ such that 
\[
\int_{L^2([0,T],\R^d)}\eta(\rmd X)\int_{0}^{T}\rmd s \|X(s)\|^2<\infty\,.
\] 
Unfortunately, the numerical computation of $\cW_2$ or any
other Wasserstein metric on an infinite dimensional space is very difficult.
For basic definitions, properties and notions related to couplings and Wasserstein metrics, we refer to \cite{Panaretos2020} and \cite{Villani09}.

\subsubsection{\textbf{Global estimate for moments}}

Let $w:\R^d\to [0,\infty)$
be a weight function 
 which yields an $L^2$-integrable observable with respect to $\mu_1$ and $\mu_2$. To be more precise, we require that 
\begin{equation}\label{eq:Cdef}
\begin{split}
&C_1^2:=\int \mu_1(\rmd X) \frac{1}{T}\int_{0}^{T}\rmd s\, (w(X(s)))^2<\infty\quad \mathrm{and}\\
& 
C_2^2:=\int \mu_2(\rmd Y) \frac{1}{T} \int_{0}^{T}\rmd s\, (w(Y(s)))^2 <\infty\,.
\end{split}
\end{equation}
Consider then any measurable function $f:\R^d\to \C$ which is ``$w$-Lipschitz'', in the following precise sense: there is a Lipschitz constant $K\ge 0$ such that 
\begin{equation}\label{def:lip}
 |f(x)-f(y)|\leq K \frac{w(x)+w(y)}{2}\|x-y\|\quad \mathrm{ for\; all}\quad x,y\in \R^d\,,
\end{equation}
where $|\cdot|$ denotes the modulus of a complex number.
The preceding condition \eqref{def:lip} is chosen to obtain upper bounds for the error as we see in \eqref{eq:momentCs}.
The preceding definition \eqref{def:lip} is motivated by the following observation: for any $p>0$ it follows that
\begin{equation}\label{eq:momentosw2}
||x|^p-|y|^p |\leq K\frac{w(x)+w(y)}{2}|x-y|\quad \textrm{ for all }\quad x,y\in \mathbb{R}\,,
\end{equation}
where $K=2p$ and $w:\mathbb{R}\to [0,\infty)$ is given by $w(z)=|z|^{p-1}$ for all $z\in \mathbb{R}$. For details see 
Lemma~\ref{lem:momentabs} in Appendix~\ref{ap:coupling}.
Inequality \eqref{eq:momentosw2} implies that for the $p$-th moment observables, the estimation of the corresponding $C_1$ and $C_2$ needs implicitly good estimates of the $2(p-1)$-th moments.

\noindent
For short, we use the following standard notation $\mean{h}_{\mu}$ to denote $\int \mu(\rmd X) h(X)$ for any $h:\mathcal{D}([0,T],\mathbb{C})\to \mathbb{C}$.
Then, in particular, for any coupling $\gamma$ between $\mu_1$ and $\mu_2$ we have 
\begin{equation}
\begin{split}
\left|\mean{\frac{1}{T} \int_{0}^{T}\rmd s f(X(s))}_{{\mu_1}}-\mean{\frac{1}{T} \int_{0}^{T}\rmd s f(Y(s))}_{{\mu_2}}\right| & \\
&\hspace{-6cm} = 
\left| \int \gamma(\rmd X,\rmd Y)
\frac{1}{T} \int_{0}^{T}\rmd s \left(f(X(s))-f(Y(s))\right)\right|
\\ 
&\hspace{-6cm} \le
\int \gamma(\rmd X,\rmd Y)
\frac{1}{T} \int_{0}^{T}\rmd s \frac{K}{2} (w(X(s))+w(Y(s)))\|X(s)-Y(s)\|
\\ &\hspace{-6cm}  \le
\frac{K}{2}
\left[\int  \gamma(\rmd X,\rmd Y)
\frac{1}{T} \int_{0}^{T}\rmd s (w(X(s))+w(Y(s)))^2\right]^{{1}/{2}}\\
&\hspace{-5cm}
\times
\left[ \int \gamma(\rmd X,\rmd Y) \frac{1}{T} \int_{0}^{T}\rmd s 
\|X(s)-Y(s)\|^2\right]^{{1}/{2}}
\\ &\hspace{-6cm}  \le
\frac{K}{\sqrt{2}} \left[C_1^2+C_2^2\right]^{{1}/{2}}
\left[ \int \gamma(\rmd X,\rmd Y)\, c_2(X,Y)\right]^{{1}/{2}}\,,
\end{split}
\end{equation}
where we have used H\"older's inequality twice.
Since this is true for any coupling $\gamma$,
we obtain an \textit{explicit estimate for time-averages}
using the Wasserstein distance  
$\cW_2$ defined in \eqref{def:metric}. In other words, for any $f$ satisfying $\eqref{def:lip}$ we have the global estimate
\begin{equation}\label{eq:momentCs}
\left|\mean{\frac{1}{T} \int_{0}^{T}\rmd s f(X(s))}_{{\mu_1}}-\mean{\frac{1}{T} \int_{0}^{T}\rmd s f(Y(s))}_{{\mu_2}}\right| 
\leq 
K{\max\{C_1,C_2\}}  \cW_2(X,Y)\,.
\end{equation}
Moments of random variables are one of the most interesting and useful observables both from theoretical and practical points of view.
Hence,
the right-hand side of \eqref{eq:momentCs} can be read as follows: good approximations in terms of the above $\cW_2$-distance imply similar approximations for a large class of physically relevant time-averages.

\subsection{\textbf{Main result and consequences}}\label{subsect:main}

Given any stochastic process $S=(S(t):t\geq 0)$ and a target time $T>0$, for convenience we define the projection process up to time $T$ as follows:
\[
\mathbb{S}_{[0,T]}:=(S(t):0\leq t\leq T)\,.
\]
Recall that $X^\fs=(X(t;\fs):t\geq 0)$ denotes the velocity flip model defined in \eqref{def:path}, where $\fs$ is sampled from an  infinite product of exponential distribution of parameter $\lambda$. For any $L>0$ and $T>0$ we consider the corresponding projection process 
$L^{-1}\mathbb{X}^{\fs}_{[0,T]}$ and we compare it with the projection process $\mathbb{B}_{[0,T]}$ of a Brownian motion $B=(B(t):t\geq 0)$ with diffusivity constant given by $\sigma^2=L^{-2}v^2_0\lambda^{-1}$. 
The independent coupling between $L^{-1}\mathbb{X}^{\fs}_{[0,T]}$ and $\mathbb{B}_{[0,T]}$ yields the following crude estimate:
for any $L>0$, $T>0$, $\lambda>0$ and $v_0\in \mathbb{R}\setminus\{0\}$ it follows that
\begin{equation}\label{eq:crudeboundKB}
\cW_2\left(L^{-1}\mathbb{X}^{\fs}_{[0,T]},\mathbb{B}_{[0,T]}\right)\leq
\left(\frac{1}{4T_* L^2_*}
\left(1-e^{-2 T_*}\right)
-\frac{1}{2L_*^2}+
\frac{T_*}{L^2_*}\right)^{{1}/{2}}\,,
\end{equation}
where the scaling parameters are given by $T_*=\lambda T$ and ${L}_*=|v_0|^{-1}\lambda L$.
For details for we refer to 
Lemma~\ref{lem:indcoup} in 
Appendix~\ref{ap:coupling}.
If we do not use the independent coupling and we just apply H\"older's inequality for the cross term 
$\mathbb{E}[L^{-1}X(s;\fs)B(s)]$,
we pay the price of a factor $\sqrt{2}$ in the right-hand side of the preceding estimate.
Nevertheless, for $T_*\to \infty$ and $L_*\to \infty$ in such way that $T_*/L^2_*\to \zeta>0$ for some $\zeta>0$, the right-hand side of \eqref{eq:crudeboundKB} tends to $\zeta^{1/2}$
and hence it is not informative.

The main result of this manuscript is the following.
\begin{theorem}[Brownian motion approximation of the free velocity flip model]\label{th:main}
\begin{equation}\label{eq:mainresult}
\cW_2\left(L^{-1}\mathbb{X}^{\fs}_{[0,T]},\mathbb{B}_{[0,T]}\right)\leq C\sqrt{{T_*}{L^{-2}_*}}T^{-1/4}_*\big(\sqrt{\ln(T_*+3)}+T^{-3/4}_*\big)+CL^{-1}_*\,,
\end{equation}
where the constants 
$T_*$ and $L_*$ are given by
\begin{equation}\label{e:scalings2020}
T_*:=\lambda T,\quad  \quad {L}_*=|v_0|^{-1}\lambda L\,,
\end{equation}
and the diffusivity constant of the Brownian motion $B:=(B(t):t\geq 0)$ is defined by 
\begin{equation}\label{eq:diffusivity}
\sigma^2:=L^{-2}\frac{v_0^2}{\lambda}\,.
\end{equation}
\end{theorem}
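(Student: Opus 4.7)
The plan is to establish \eqref{eq:mainresult} by the triangle inequality for the metric $\cW_2$ of \eqref{def:metric}, splitting the coupling of $L^{-1}\mathbb{X}^{\fs}_{[0,T]}$ with $\mathbb{B}_{[0,T]}$ through an intermediate \emph{decoupled process} $\mathbb{Z}_{[0,T]}$, as announced in Section~\ref{Sec:out}. The process $Z$ is the perturbed random walk obtained from \eqref{def:path} by replacing the deterministic alternating signs $(-1)^{k-1}$ by an i.i.d.\ Rademacher sequence $\varepsilon_1,\varepsilon_2,\ldots$ independent of the Poisson waiting times $\fs$; its partial sums are then i.i.d.\ and centered, which is crucial for the forthcoming invariance-principle step. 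The bound \eqref{eq:mainresult} then reduces to separate controls of
\[
\cW_2\bigl(L^{-1}\mathbb{X}^{\fs}_{[0,T]},\,L^{-1}\mathbb{Z}_{[0,T]}\bigr)\quad\text{and}\quad\cW_2\bigl(L^{-1}\mathbb{Z}_{[0,T]},\,\mathbb{B}_{[0,T]}\bigr),
\]
handled by Proposition~\ref{prop:coinflipcoupling} for the former, and by Proposition~\ref{prop:naturalcoupling} together with Lemma~\ref{lem:couplingKMT} for the latter.

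For the first distance, the structural fact driving the coin-flip coupling of \cite{Koskinen2020,Saksman2020} is that, for i.i.d.\ $\delta_{2j-1},\delta_{2j}\sim\mathrm{Exp}(\lambda)$, the difference $\delta_{2j-1}-\delta_{2j}$ is symmetric Laplace and equals in distribution $\eta_j\tau_j$ with $\eta_j$ a Rademacher independent of $\tau_j\sim\mathrm{Exp}(\lambda)$. Pairing consecutive waiting times in \eqref{def:path} therefore allows a simultaneous realisation of the alternating-sign telegraph path and of a centered random walk with i.i.d.\ Rademacher-weighted exponential increments on the same probability space, with the path-wise discrepancy contributed only by at most one unpaired terminal increment together with the boundary piece $v_0(-1)^{M}(t-t_{M}(\fs))$. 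After rescaling by $L^{-1}$ and time-averaging over $[0,T]$, these two residues account for the additive $CL_*^{-1}$ summand of \eqref{eq:mainresult}.

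For the second distance, I would feed the centered i.i.d.\ increments of $\mathbb{Z}$ into the Koml\'os--Major--Tusn\'ady strong approximation (Lemma~\ref{lem:couplingKMT}): because they have finite exponential moments, one obtains on an enlarged probability space a Brownian motion $\widetilde{W}$ such that the partial sums $S_n$ of $\mathbb{Z}$ satisfy $\max_{n\le N}|S_n-\widetilde{W}(n\sigma_\lambda^2)|=\mathcal{O}(\ln N)$ with Gaussian tails, where $\sigma_\lambda^2$ is the variance of a single increment. Evaluated at the random index $N=N(T;\fs)$, which concentrates around $T_*$, and combined with the synchronous coupling of Proposition~\ref{prop:naturalcoupling} that interpolates between consecutive jump epochs, this converts the discrete KMT bound into a time-averaged $L^2$ estimate on $[0,T]$. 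Tracking the prefactors produces the leading summand $C\sqrt{T_*/L_*^2}\,T_*^{-1/4}\sqrt{\ln(T_*+3)}$ in \eqref{eq:mainresult}, while the subleading $T_*^{-3/4}$ factor inside the parenthesis absorbs the synchronous-coupling interpolation error on the last incomplete excursion.

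The main technical obstacle will be precisely this second leg: extracting a \emph{square-root} logarithm $\sqrt{\ln(T_*+3)}$ rather than the naive $\ln(T_*+3)$ directly furnished by KMT. This requires exploiting the Gaussian tails in the KMT construction jointly with concentration of $N(T;\fs)$ around its mean $T_*$ in order to truncate the rare event $\{N(T;\fs)\gg T_*\}$ at negligible $L^2$ cost, aided by the moment bounds collected in Appendix~\ref{ap:tools}. Once the three couplings are realised on a single probability space (built inductively from the independent auxiliary randomness of each construction), the triangle inequality for $\cW_2$ assembles them into \eqref{eq:mainresult}.
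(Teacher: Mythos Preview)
Your high-level plan (triangle inequality through an intermediate process, then KMT) matches the paper, but the intermediate process you describe and the role you assign to each coupling are both different from what the paper actually does, and in ways that would create real difficulties.

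First, the paper's decoupled process $Y$ (Section~\ref{sec:defY}) is \emph{not} obtained by replacing $(-1)^{k-1}$ with Rademacher signs. It keeps the alternating zigzag structure intact but decouples the random number of jumps from the waiting times: one draws $K\sim\mathsf{Po}(T_*)$ independently and then rescales the first $K$ exponentials by $T/K$. The ``coin-flip coupling'' of Proposition~\ref{prop:coinflipcoupling} is then a coupling between two scalar distributions of a variable $r$ (one Beta-like, one Gamma), not a Rademacher construction; see \eqref{eq:defnu1nu2}--\eqref{eq:normalisation}. Your Laplace identity $\delta_{2j-1}-\delta_{2j}\stackrel{d}{=}\eta_j\tau_j$ is correct but concerns pairwise increments, not individual steps; it does not give a pathwise coupling between the alternating-sign process and a process with i.i.d.\ Rademacher signs on every waiting time. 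The paper instead passes from $Y$ to the even-jump skeleton $Z$ of \eqref{def:Ztrunk} by the synchronous coupling of Proposition~\ref{prop:naturalcoupling}, which is where the centered i.i.d.\ increments $-L^{-1}v_0(s_{2k}-s_{2k-1})$ appear.

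Second, you locate the $\sqrt{\ln(T_*+3)}$ in the KMT leg and flag its extraction as the main obstacle. In the paper it is the other way around: the KMT step (Lemma~\ref{lem:couplingKMT}) produces \emph{no} logarithm in the final bound (the $\ln^2 n$ from KMT is crudely dominated by $n^{1/2}$ in Step~6 of Section~\ref{sec:KMTcoupling}), while the $\sqrt{\ln(T_*+3)}$ arises in Proposition~\ref{prop:coinflipcoupling} via the estimate $\langle\max_k u_k\rangle\le C\ln(n+1)/n$ on the constrained simplex (Lemma~\ref{lem:maxmoments}), which enters the $D_2$ bound of Lemma~\ref{lem:D2term}. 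So the logarithm is a byproduct of the decoupling of $N(T;\fs)$ from the path, not of the Gaussian approximation. If you skip the $Y$ process and try to run KMT directly on the even-jump skeleton of $X$ itself, you still have to handle the dependence between $N(T;\fs)$ and the increments, and the paper's introduction of $Y$ is precisely the device that makes this manageable.
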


We continue to rely on the notations and assumptions in Theorem~\ref{th:main}.
We point out that the average cost and the time-one cost are linked by a deterministic time-change of the processes.
\begin{remark}[The average cost vs. the time-one cost]
\label{rem:averagecost}
The cost function $c_2$ defined in \eqref{eq:costfunction} depends on the target time $T$ and hence the corresponding Wasserstein distance $\mathcal{W}_2$ defined in \eqref{def:metric} also depends on $T$.
For any $t\geq 0$ we set
\[
\widetilde{X}(t;\fs):=L^{-1}X(Tt;\fs)\quad \textrm{ and }\quad
\widetilde{B}(t):=B(Tt)\,. 
\]
Then we have 
\[
\cW_2\left(L^{-1}\mathbb{X}^{\fs}_{[0,T]},\mathbb{B}_{[0,T]}\right)=
\widetilde{\cW_2}\left(\widetilde{\mathbb{X}}^{\fs}_{[0,1]},\widetilde{\mathbb{B}}_{[0,1]}\right)\,,
\]
where the cost function $\widetilde{c_2}$ of $\widetilde{\mathcal{W}_2}$ is given by
\[
\widetilde{c_2}(\widetilde{X},\widetilde{B})=\int_{0}^{1}
|\widetilde{X}(t;\fs)-\widetilde{B}(t)|^2 \rmd t\,.
\]
We point out that the cost function $\widetilde{c_2}$ does not depend on the target time $T$ and hence the distance $\widetilde{\mathcal{W}_2}$ also does not depend on $T$.
\end{remark}
Using the scale invariance property of the Brownian motion, we stress the dependence of \eqref{eq:diffusivity}
in
\eqref{eq:mainresult}.

\begin{remark}[The diffusivity constant $\sigma^2$]\label{rem:difconst}
We note that \eqref{eq:mainresult} can be rewritten as
\begin{equation*}
\cW_2\left(L^{-1}\mathbb{X}^{\fs}_{[0,T]},\sigma\mathbb{W}_{[0,T]}\right)\leq C\sqrt{{T_*}{L^{-2}_*}}T^{-1/4}_*\big(\sqrt{\ln(T_*+3)}+T^{-3/4}_*\big)+CL^{-1}_*\,,
\end{equation*}
where $(W(t):t\geq 0)$ is a standard Brownian motion.
\end{remark}

We point out that the diffusive scaling limit is deduced from \eqref{eq:mainresult}.
\begin{remark}[Diffusive scaling limit]\label{rem:dsl}
For $T_*\to \infty$ and $L_*\to \infty$ in such way that $T_*/L^2_*\to \zeta>0$ for some $\zeta>0$, the right-hand side of \eqref{eq:mainresult} is $\mathcal{O}(T^{-1/4}_*\sqrt{\ln(T_*+3)})$ and therefore tends to zero. For instance,  choosing the diffusive scaling $L=\sqrt{\frac{Tv^2_0}{\lambda \zeta}}$ and taking $T\to \infty$ produces such a limit.
\end{remark}
Now, we describe the meaning of the parameters \eqref{e:scalings2020}.
\begin{remark}[The meaning of the parameters]
Note that $T_*$ is the mean of the number of jumps at time $T$ and that $L_*=\frac{1}{|v_0|\lambda^{-1}}L$ where $|v_0|\lambda^{-1}$ is the mean free path of the unscaled process $X^s$. In other words, $\mean{|X(t_1(\fs);\fs)|}_{\fs}=|v_0|\lambda^{-1}$, where $t_{1}(\fs)=s_1$.
\end{remark}
\begin{remark}[A word about the constant $C$]
The pure constant $C$ of Theorem~\ref{th:main} can be estimated explicitly from the proof. However, the estimates given in the proof are not optimal and the  most likely lead to overestimation.
\end{remark}
Next corollary yields a Brownian motion approximation of the telegraph process.
\begin{corollary}[Brownian motion approximation of the telegraph process]\label{cor:tele}
Suppose that all assumptions made in 
Theorem~\ref{th:main} hold.
Let $L>0$, $T>0$ and $\lambda>0$.
In addition, assume that the initial velocity $v_0$ is chosen with the uniform distribution on the set $\{-c,c\}$ for some $c>0$ and independent of the Poisson process $(N(t;\fs):t\geq 0)$ given in \eqref{eq:poissonprocess}. 
Then there exists a positive constant $C$ such that for any $L>0$, $T>0$ and $\lambda>0$
it follows that
\begin{equation}\label{eq:telemain}
\cW_2\left(L^{-1}\mathbb{X}^{\fs}_{[0,T]},\mathbb{B}_{[0,T]}\right)\leq  C\sqrt{{T_*}{L^{-2}_*}}T^{-1/4}_*\big(\sqrt{\ln(T_*+3)}+T^{-3/4}_*\big)+CL^{-1}_*\,.
\end{equation}
\end{corollary}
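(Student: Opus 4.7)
The plan is to condition on the random sign $\varepsilon := v_0/c \in \{-1,+1\}$ and reduce to Theorem~\ref{th:main}, which is already established for every deterministic starting velocity. The crucial observations are that both the upper bound on the right-hand side of \eqref{eq:mainresult} and the diffusivity $\sigma^2 = L^{-2}c^2/\lambda$ in \eqref{eq:diffusivity} depend on $v_0$ only through $|v_0|=c$. Consequently, for each $\varepsilon \in \{-1,+1\}$ Theorem~\ref{th:main} yields the same numerical bound $R$ (the right-hand side of \eqref{eq:telemain}) and targets the same Brownian motion law.

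Writing $\mu_1^\varepsilon$ for the conditional law of $L^{-1}\mathbb{X}^{\fs}_{[0,T]}$ given $v_0 = \varepsilon c$, and $\mu_2$ for the law of $\mathbb{B}_{[0,T]}$ with diffusivity $\sigma^2$, Theorem~\ref{th:main} supplies couplings $\gamma^{+}$ and $\gamma^{-}$ between $\mu_1^{\pm}$ and $\mu_2$ with
\[
\int \gamma^{\varepsilon}(\rmd X,\rmd Y)\, c_2(X,Y) \le R^2 \quad \text{for } \varepsilon\in\{-1,+1\}.
\]
The next step is to form the mixture $\gamma := \tfrac{1}{2}\gamma^{+} + \tfrac{1}{2}\gamma^{-}$. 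By independence of $v_0$ and $(N(t;\fs):t\ge 0)$ together with the uniform law on $\{-c,c\}$, the first marginal of $\gamma$ equals the unconditional law $\mu_1 = \tfrac{1}{2}\mu_1^{+}+\tfrac{1}{2}\mu_1^{-}$ of $L^{-1}\mathbb{X}^{\fs}_{[0,T]}$, while the second marginal remains $\mu_2$ because both $\gamma^{+}$ and $\gamma^{-}$ already have that marginal. Linearity of the cost integral then gives
\[
\int \gamma(\rmd X,\rmd Y)\, c_2(X,Y) \le \tfrac{1}{2}R^2+\tfrac{1}{2}R^2 = R^2,
\]
and taking square roots yields \eqref{eq:telemain}.

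There is no genuine obstacle: the argument is a one-line mixture/conditioning reduction, whose only non-trivial input is the $v_0\mapsto -v_0$ invariance of both the bound in \eqref{eq:mainresult} and the diffusivity in \eqref{eq:diffusivity}. An equivalent route would exploit the pathwise symmetry $X(\cdot;\fs)\bigl|_{v_0=-c} \stackrel{d}{=} -X(\cdot;\fs)\bigl|_{v_0=c}$ together with $B\stackrel{d}{=} -B$ to transport any coupling for $v_0=+c$ into one for $v_0=-c$, but at the level of laws this collapses to the same mixture.
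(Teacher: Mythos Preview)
Your proposal is correct and is exactly the natural conditioning/mixture argument that the paper leaves implicit (the paper states the corollary without a separate proof, relying on the remark preceding Theorem~\ref{th:main} that the result extends to random $v_0$). The only point worth noting is that the existence of couplings $\gamma^{\pm}$ realising the bound follows because the proof of Theorem~\ref{th:main} actually constructs explicit couplings rather than merely bounding an infimum.
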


We emphasize that the large flip rate limit is implied from \eqref{eq:mainresult}.
\begin{remark}[Large flip rate limit]
If we choose
$L=1$ and $T$ arbitrary but fixed, $\lambda\to \infty$ and $|v_0|=c\to \infty$ in such a way that $c^2/\lambda\to 1$ we reproduce the limit used in Theorem~1.1 of \cite{Bogachev2011}. On the other hand, then we have
\[
C\sqrt{{T_*}{L^{-2}_*}}T^{-1/4}_*\big(\sqrt{\ln(T_*+3)}+T^{-3/4}_*\big)+CL^{-1}_*=
\mathcal{O}\big(\lambda^{-1/4}\sqrt{\ln(\lambda)}\big)\,.
\]
Therefore, Corollary~\ref{cor:tele} with the help of Remark~\ref{rem:difconst} implies that the process 
$\mathbb{X}^{\fs}_{[0,T]}$ converges
weakly in $\mathcal{C}[0,T]$ to the distribution of a standard Brownian motion $\mathbb{W}_{[0,T]}$.
\end{remark}
In addition, Theorem~\ref{th:main} allows us to derive estimates for the time average of the $p$-th moments as follows.
\begin{corollary}[Time average of $p$-th  moments]\label{cor:absmoments}
Suppose that all assumptions made in 
Theorem~\ref{th:main} hold.
Let $p>0$ and consider the observable $f(x)=|x|^p$ for any $x\in \mathbb{R}$. Then for any $L>0$, $T>0$, $\lambda>0$ and $v_0\in \mathbb{R}\setminus\{0\}$ it follows that 
\begin{equation}
\begin{split}
& \left|\mean{\frac{1}{T} \int_{0}^{T}\rmd s \,f(L^{-1}X(s;\fs))}_{{\mu_1}}-\mean{\frac{1}{T} \int_{0}^{T}\rmd s\, f(B(s))}_{{\mu_2}}\right|\\
&\hspace{9cm}\leq
2p\max\{C_1,C_2\}\cW_2\left(L^{-1}\mathbb{X}^{\fs}_{[0,T]},\mathbb{B}_{[0,T]}\right)\,,
\end{split}
\end{equation}
where $C^2_1=C^2_2=1$ if $p=1$, whereas if $p\neq 1$ we have
\[
C^2_1\leq 
\frac{\widetilde{C}T^{p-1}_*}{p L^{2(p-1)}_*}
+
\frac{{C^\prime}T^{p-2}_*}{(p-1) L^{2(p-1)}_*},\quad  \quad
C^2_2=\frac{2^{p-1}\Gamma(\frac{2p-1
}{2})}{p\sqrt{\pi}}\frac{T^{p-1}_*}{L^{2(p-1)}_*}\,,
\]
the constants 
$T_*$ and $L_*$ are the scaling parameters that appear in  \eqref{e:scalings2020},
$\widetilde{C}$ and $C^\prime$ are the constants that appear for $r=2(p-1)$ in Item~(2), \eqref{eq:momentskacr}, 
of Lemma~\ref{lem:momentskac} in 
Appendix~\ref{ap:coupling} and $\Gamma$ denotes the Gamma function.

\noindent
In addition, if  $p\in \mathbb{N}$ and $g(x)=x^p$ for $x\in \mathbb{R}$, then 
\begin{equation}
\begin{split}
& \left|\mean{\frac{1}{T} \int_{0}^{T}\rmd s\, g(L^{-1}X(s;\fs))}_{{\mu_1}}-\mean{\frac{1}{T} \int_{0}^{T}\rmd s\, g(B(s))}_{{\mu_2}}\right| \\
&\hspace{9cm}
\leq
2p\max\{C_1,C_2\}\cW_2\left(L^{-1}\mathbb{X}^{\fs}_{[0,T]},\mathbb{B}_{[0,T]}\right)\,.
\end{split}
\end{equation} 
\end{corollary}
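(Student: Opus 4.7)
The plan is to deduce both estimates directly from the general time-average moment inequality \eqref{eq:momentCs} derived in Subsection~\ref{subsect:metric}, combined with the main bound of Theorem~\ref{th:main}. The heart of the argument is that the observable $f(x)=|x|^p$ is ``$w$-Lipschitz'' in the sense of \eqref{def:lip} with constant $K=2p$ and weight $w(z)=|z|^{p-1}$; this is exactly the content of Lemma~\ref{lem:momentabs} recalled in \eqref{eq:momentosw2}. For $p=1$ one may instead use $w\equiv 1$ and $K=1$ (the factor $2$ in the stated bound comes from using $K=2p=2$), which trivially yields $C_1=C_2=1$. For the polynomial case $g(x)=x^p$, $p\in\N$, the algebraic identity $x^p-y^p=(x-y)\sum_{k=0}^{p-1}x^k y^{p-1-k}$ combined with Young's inequality produces the same $w$-Lipschitz inequality with the same constants, so the argument that follows applies verbatim.

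The first task is to estimate $C_2$. Since $B(s)$ is centred Gaussian of variance $\sigma^2 s=L^{-2}v_0^2\lambda^{-1}s$, the standard Gaussian absolute-moment formula gives
\[
\mean{|B(s)|^{2(p-1)}}_{\mu_2}=(\sigma^2 s)^{p-1}\frac{2^{p-1}\Gamma\big(\tfrac{2p-1}{2}\big)}{\sqrt{\pi}}.
\]
Integrating in $s\in[0,T]$, dividing by $T$ and using the definitions \eqref{e:scalings2020} of $T_*,L_*$, which give $\sigma^2 T=T_*/L_*^2$, yields the claimed closed form for $C_2^2$, namely $\frac{2^{p-1}\Gamma((2p-1)/2)}{p\sqrt{\pi}}\,T_*^{p-1}L_*^{-2(p-1)}$.

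The second task is to bound $C_1$. Here I invoke the moment bounds for the free velocity flip model collected in Lemma~\ref{lem:momentskac} of Appendix~\ref{ap:coupling}, applied with $r=2(p-1)$. That lemma provides an inequality of the shape $\mean{|X(s;\fs)|^{r}}\le \widetilde{C}\,(v_0^2 s/\lambda)^{r/2}+C^\prime\,(v_0^2/\lambda)^{r/2}s^{r/2-1}/\lambda$, where the second, correction term reflects the finite-speed nature of the telegraph dynamics. After dividing by $L^{2(p-1)}$, averaging in $s\in[0,T]$ using $\frac{1}{T}\int_0^T s^{p-1}\rmd s=T^{p-1}/p$ and $\frac{1}{T}\int_0^T s^{p-2}\rmd s=T^{p-2}/(p-1)$, and re-expressing everything through the scaling parameters via $v_0^2T/(\lambda L^2)=T_*/L_*^2$, one obtains exactly the stated upper bound
\[
C_1^2\le \frac{\widetilde{C}\,T_*^{p-1}}{p\,L_*^{2(p-1)}}+\frac{C^\prime\,T_*^{p-2}}{(p-1)\,L_*^{2(p-1)}}.
\]

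With $K=2p$, $C_1$ and $C_2$ in hand, the conclusion is immediate: applying \eqref{eq:momentCs} to the pair $(L^{-1}X^{\fs},B)$ on $[0,T]$ gives
\[
\left|\mean{\frac{1}{T}\int_0^T \rmd s\,f(L^{-1}X(s;\fs))}_{\mu_1}-\mean{\frac{1}{T}\int_0^T \rmd s\,f(B(s))}_{\mu_2}\right|\le 2p\max\{C_1,C_2\}\,\cW_2\big(L^{-1}\mathbb{X}^{\fs}_{[0,T]},\mathbb{B}_{[0,T]}\big),
\]
and the polynomial statement follows identically with $f$ replaced by $g$. No non-trivial step beyond the bookkeeping remains, but the main obstacle is precisely that bookkeeping: one must match the raw moment bound of Lemma~\ref{lem:momentskac} to the $(T_*,L_*)$ parametrisation and separate the contributions $s^{p-1}$ and $s^{p-2}$ of the two integrated terms to produce the two factors $1/p$ and $1/(p-1)$ in the stated expression for $C_1^2$.
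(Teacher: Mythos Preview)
Your proposal is correct and follows essentially the same route as the paper: the paper's proof simply cites inequality \eqref{eq:momentCs}, Lemma~\ref{lem:momentabs} (the $w$-Lipschitz property with $K=2p$, $w(z)=|z|^{p-1}$), and Lemma~\ref{lem:averagew} (the computation of $C_1$ and $C_2$), and you have reproduced exactly these three ingredients, spelling out the content of Lemma~\ref{lem:averagew} explicitly. The only cosmetic difference is that for $g(x)=x^p$ you invoke the factorisation $x^p-y^p=(x-y)\sum_{k=0}^{p-1}x^k y^{p-1-k}$ while the paper's Lemma~\ref{lem:momentabs} uses the Fundamental Theorem of Calculus, but both yield the same bound.
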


\begin{proof}
The proof follows directly from inequality \eqref{eq:momentCs}, 
Lemma~\ref{lem:momentabs} in Appendix~\ref{ap:coupling} and 
Lemma~\ref{lem:averagew}
 in Appendix~\ref{ap:coupling}.
\end{proof}

Additionally, Theorem~\ref{th:main} holds true in $\mathcal{W}_p$ 
for $p\in {[1,2]}$.
\begin{remark}[Convergence in $\mathcal{W}_p$, 
$p\in {[1,2]}$]\label{eq:p01}
Analogously to \eqref{def:metric}, for any $p\geq 1$ we define the $p$-th average distance $\mathcal{W}_p$ distance by
\begin{equation}
\begin{split}
\cW_p(\mu_1,\mu_2):&= \left(\inf_{\gamma} \int \gamma(\rmd X,\rmd Y)
 (c_2(X,Y))^{p/2}\right)^{{1}/{p}}.
\end{split}
\end{equation}
By  H\"older's inequality we obtain that Theorem~\ref{th:main}, Corollary~\ref{cor:tele} and Corollary~\ref{cor:absmoments} are valid for the $\mathcal{W}_p$ for $p\in [1,2]$.
\end{remark}

\section{\textbf{Outline of the proof of Theorem~\ref{th:main}}}\label{Sec:out}

The idea of the proof is to compare a suitable rescaled free velocity flip process $L^{-1}X=(L^{-1}X(t;\fs):t\geq 0)$ with a simplified process (decoupled process) $Y:=(Y(t):t\geq 0)$ in which computations can be carried out.
However, the process $Y$ only possesses zero mean at even jump times. Hence, it is reasonable to compare the simplified process $Y$ at even jump times with a Brownian motion $B=(B(t):t\geq 0)$ with a suitable diffusivity constant.

By the triangle inequality, the estimation of the left-hand side of \eqref{eq:mainresult} is divided in three main components as follows:
\begin{equation}\label{eq:I1I2I3}
\begin{split}
\mathcal{W}_2\left(L^{-1}\mathbb{X}^{\fs}_{[0,T]},\mathbb{B}_{[0,T]}\right)&\leq
\mathcal{W}_2\left(L^{-1}\mathbb{X}^{\fs}_{[0,T]}, \mathbb{Y}_{[0,T]}\right)+ 
\mathcal{W}_2\left(\mathbb{Y}_{[0,T]},\mathbb{Z}_{[0,T]}\right)+
\mathcal{W}_2\left(\mathbb{Z}_{[0,T]},\mathbb{B}_{[0,T]}\right)\,.
\end{split}
\end{equation}
where the process $Z=(Z(t):t \geq 0)$ is the C\`{a}dl\`{a}g process obtained from the process $Y$ at even jumps by a constant piecewise interpolation between even jump times. The explicit construction will be given below.

\subsection{\textbf{Definition of the process $Y$}}\label{sec:defY}
Decoupling techniques are very useful in the theory and applications of Probability and Statistics to replace (or reduce) the dependence of the system. For further discussion we recommend the monograph \cite{delaPea1999}.
We note that \eqref{def:path} can be seen as a perturbed random walk. However, we point out that $n=N(t;\fs)$ and $X(t;\fs)$ are highly  dependent.
To overcome this difficulty,
we define a suitable decoupled process $(Y(t):t\geq 0)$ which is indeed, a perturbed random walk. Then using the cost function \eqref{eq:costfunction} and the metric \eqref{def:metric}
 we quantify the error term between
$L^{-1}\mathbb{X}^{\fs}_{[0,T]}$ and  $\mathbb{Y}_{[0,T]}$.

Let $\fu=(u_j:j\in \mathbb{N})$ be sequence of independent and identically distributed (i.i.d.\ for short) random variables with exponential distribution of parameter one defined in the probability space $(\Omega_1,\mathcal{F}_1,\mathbb{P}_1)$ .
For each (deterministic) $n\in \mathbb{N}$, we define $\fu^{(n)}:=(u^{(n)}_k:k\in \mathbb{N})$ as follows
\begin{equation}\label{eq:utruncado}
u^{(n)}_k:=u_k\quad \textrm{ for  } k\in \{1,\ldots,n\}\quad \textrm{ and } 
\quad u^{(n)}_k:=2\beta_n\quad \textrm{ for } k\in \{n+1,n+2,\ldots,\}\,.
\end{equation}
where $\beta_n=n$ (the value of $\beta_n$ will later be fixed by an optimisation argument, hence the somewhat peculiar notation here).
Then we also define
\begin{equation}\label{eq:skn}
\delta^{(n)}_k:=\frac{T}{\beta_n}u^{(n)}_k\quad \textrm{ for  } k\in \N\,.
\end{equation}
Note that then $\delta^{(n)}_k=2T$ if $k\ge n+1$.
We consider the corresponding ``jump-times''
\begin{equation}\label{eq:tmn}
\quad t^{(n)}_m:=\sum\limits_{k=1}^{m}{\delta^{(n)}_k}\quad \textrm{ for all }\quad m\in \mathbb{N} \quad \textrm{ and } \quad t^{(n)}_0=0\,,
\end{equation}
and collect its increments as $\fd^{(n)}=(\delta^{(n)}_1,\delta^{(n)}_2,\ldots,)$.
Since $\fd^{(n)}\in \Delta_\infty$, where $\Delta_\infty$ is defined in \eqref{def:domain}, using the map \eqref{def:path} we can define the stochastic process $(Y(t;n,\fd^{(n)}):t\geq 0)$  by the following rule: for any $t\ge 0$
\begin{equation}\label{def:Ytrunk}
\begin{split}
Y(t;n,\fd^{(n)})&:=L^{-1}X(t;\fd^{(n)})\\
&=L^{-1}v_0\sum_{k=1}^{M} (-1)^{k-1} \delta^{(n)}_k + L^{-1}v_0(-1)^{M} (t-t_{M}(\fd^{(n)}))\,, \quad M=N(t;\fd^{(n)})\,,
\end{split}
\end{equation}
where $(N(t;\fd^{(n)}):t\geq 0)$ is given in \eqref{eq:poproc}.
In addition, we take a
random variable $K$ with Poisson distribution with parameter $T_*=\lambda T$ defined in the probability space $(\Omega_2,\mathcal{F}_2,\mathbb{P}_2)$ and independent of $\fu$. 
For short, for any $\lambda>0$,
$(p_n(\lambda):n\in \N_0)$ denotes the Poisson distribution $\mathsf{Po}(\lambda)$, that is,
\begin{equation}\label{eq:poissonprob} 
p_n(\lambda):=\frac{1}{n!}\lambda^{n}   \rme^{-\lambda}\quad \textrm{ for any } \quad n\in \N_0\,.
\end{equation}

From now on, we will use a convenient generic indicator function notation 
$\cf{P}$ where $P$ is some condition; more precisely, given a condition $P(x)$ which depends on the variable $x$, the corresponding indicator function is defined as the map
$x\mapsto\cf{P}(x)\in \set{0,1}$ where $\cf{P}(x)=1$, if $P(x)$ is true, and
$\cf{P}(x)=0$ otherwise.  We will also often use the alternative notation $\cf{P(x)}$ to denote $\cf{P}(x)$.

Let 
$(\Omega,\mathcal{F},\mathbb{Q}):=(\Omega_1\times \Omega_2,\mathcal{F}_1\otimes\mathcal{F}_2,\mathbb{P}_1\otimes\mathbb{P}_2)$ be the product probability space of 
$(\Omega_1,\mathcal{F}_1,\mathbb{P}_1)$ and $(\Omega_2,\mathcal{F}_2,\mathbb{P}_2)$ and denote by $\mathbb{E}_{\mathbb{Q}}$ the expectation with respect to $\mathbb{Q}$.
On $(\Omega,\mathcal{F},\mathbb{Q})$ 
we define the stochastic process $(Y(t;K,\fu):t\geq 0)$ as follows: for each $\omega=(\omega_1,\omega_2)\in \Omega$ we set
\begin{equation}\label{def:KU}
Y(t;K,\fu)(\omega):=\sum_{n=0}^\infty Y(t;n,\fd^{(n)})(\omega_1) \cf{K=n}(\omega_2)\quad \textrm{ for any }\quad t\geq 0\,,
\end{equation}
where for each (not random) $n\in \mathbb{N}$, $(Y(t;n,\fd^{(n)}):t\geq 0)$
is defined in \eqref{def:Ytrunk} and for $n=0$ we define $Y(t;0,\fd^{(0)}):=L^{-1}v_0t$ for all $t\geq 0$.
Let $F:\mathcal{C}([0,T],\mathbb{R})\to \mathbb{R}$ be a bounded measurable function. In particular, it only depends on the values of the process up to time $T$  such as $c_1(X)$ and $c_2(X)$ given in \eqref{eq:Cdef}.
By Fubini's theorem we have 
\[
 \E_\mathbb{Q}[F(Y(\cdot;K,\fu))]=\sum_{n=0}^\infty \E_\mathbb{Q}[F(Y(\cdot;K,\fu)) \cf{K=n}]\,,
\]
where 
\begin{equation}\label{def:Yinte}
\begin{split}
\E_\mathbb{Q}[F(Y(\cdot;K,\fu))\cf{K=n}]&=\E_\mathbb{Q}[F(Y(\cdot;n,\fu))\cf{K=n}]=p_n(T_*)   \E_\mathbb{Q}[F(Y(\cdot;n,\fu))]\\
&= p_n(T_*)
   \int_{\R_*^{n}}\rmd^{n} \fu \, \rme^{- \sum_{k=1}^n u_k}
  F\left(L^{-1} X\left(\cdot;T\beta^{-1}_n \fu^{(n)}\right)\right) \,,
\end{split}
\end{equation}
$\fu^{(n)}$ is given in \eqref{eq:utruncado}, and  the map $X$ is defined in \eqref{def:path01}.
As evident in  the last equality, the function $\fu\mapsto F(Y(\cdot;n,\fu))$ only depends on the components $u_1,\ldots, u_n$.

Bearing all this in mind, we compare $L^{-1}\mathbb{X}^{\fs}_{[0,T]}$ with the decoupled process $\mathbb{Y}_{[0,T]}$.

\begin{proposition}[Coin-flip coupling]\label{prop:coinflipcoupling}
There exists a (pure) constant $\kappa_1>0$ such that for any $L>0$, $T>0$, $\lambda>0$ and $v_0\in \mathbb{R}\setminus\{0\}$ it follows that
\begin{equation}\label{eq:cflipine}
\mathcal{W}_2\left(L^{-1}\mathbb{X}^{\fs}_{[0,T]}, \mathbb{Y}_{[0,T]}\right)\leq \kappa_1 \sqrt{{T_*}{L^{-2}_*}}T^{-1/4}_*\sqrt{\ln(T_*+3)}+\kappa_1L^{-1}_*,
\end{equation}
where the constants 
$T_*$ and $L_*$ are given in \eqref{e:scalings2020}.
\end{proposition}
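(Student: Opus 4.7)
The plan is to realise $L^{-1}X^{\fs}$ and $Y$ on a common probability space by letting both be driven by a shared i.i.d.\ $\mathrm{Exp}(1)$ sequence $\fu=(u_k)_{k\in\N}$, independent of a single $K\sim\mathrm{Po}(T_*)$. Set $s_k:=u_k/\lambda$, so that $\fs$ has the correct $\mathrm{Exp}(\lambda)^{\otimes\N}$-distribution while being independent of $K$, and the decoupled spacings $\delta^{(n)}_k=(T/n)u_k$ on $\{K=n\}$ use the same $u_k$'s by construction. This is the coin-flip coupling idea: both processes are driven by the same exponential clocks but differ only through the rate factor $K/T$ versus $\lambda$, so that on $\{K=n\}$,
\[
s_k-\delta^{(n)}_k\;=\;\frac{u_k}{\lambda}\cdot\frac{n-T_*}{n}\,.
\]

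\textbf{Key identity for the path difference.} Using the representation $X(t;\fs)/v_0=\int_0^t(-1)^{N(s;\fs)}\rmd s$ and the analogous one for $Y$, the pathwise difference $D(t):=X(t;\fs)-L\,Y(t;K,\fu)$ is piecewise linear with slope in $\{-2v_0,0,+2v_0\}$, non-zero only on the mismatch intervals between paired jumps $t_k(\fs)$ and $t_k^{(K)}$. A careful bookkeeping of the contribution of each matched pair yields, up to a boundary piece $R(t)$ from the partial mismatch interval containing $t$,
\[
D(t)\;=\;-2v_0\cdot\frac{T_*-K}{\lambda K}\cdot\sum_{k=1}^{m(t)}(-1)^{k-1}u_k\;+\;R(t)\,,
\]
where $m(t)$ counts the completed matched pairs by time $t$. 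The crucial feature is the factorisation of the leading term into two pieces depending on the independent variables $K$ and $\fu$, respectively.

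\textbf{Probabilistic estimates.} By the independence of $K$ and $\fu$, the two factors in the product can be estimated separately. Poisson concentration yields $\E\bigl[\bigl((T_*-K)/K\bigr)^2\cf{|K-T_*|\le\sqrt{T_*\ln T_*}}\bigr]\lesssim 1/T_*$, while the centred alternating partial sum $A_m:=\sum_{k=1}^{m}(-1)^{k-1}u_k$ is an $L^2$-martingale satisfying the sharpened maximal bound $\E[\max_{m\le K}A_m^2]\lesssim K\ln(K+3)$ by a Doob-type or Bernstein chaining argument using the sub-exponential tails of the $u_k$. Squaring $D(t)$, taking expectations, splitting by independence, dividing by $L^2$, and averaging over $t\in[0,T]$ then produces the main term $\kappa_1\sqrt{T_*/L^2_*}\,T_*^{-1/4}\sqrt{\ln(T_*+3)}$ in \eqref{eq:cflipine}. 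The residual term $\kappa_1/L_*$ absorbs both the contribution from the boundary correction $R(t)$ and that from the complementary large-deviation event $\{|K-T_*|>\sqrt{T_*\ln T_*}\}$, which is handled by exponential Poisson tails combined with the crude a priori bound \eqref{eq:crudeboundKB}.

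\textbf{Main obstacle.} The hardest step is the handling of the boundary term $R(t)$: when $t$ lies inside a partial mismatch interval near the endpoint, the closed-form expression for $D(t)$ contains an unpaired ``half-jump'' that is not captured by the telescoping of the alternating sum, and a cavalier bound here would inflate the final estimate by a factor of $\sqrt{T_*}$. The second delicate point is producing only a logarithmic loss in the maximal inequality for $A_m$, which is necessary to recover the rate $T_*^{-1/4}\sqrt{\ln(T_*+3)}$ rather than the naive $T_*^{-1/2}$ that a direct $L^2$-Doob application would yield. Both difficulties should be resolved by exploiting the factorised form made possible by the coin-flip coupling, which decouples the random-rate fluctuation of $K$ from the fluctuations of the partial sums $A_m$.
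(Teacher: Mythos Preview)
Your coupling is not the paper's coin-flip coupling, and the ``key identity'' does not hold. In the paper, one first conditions on $N(T;\fs)=K=n$ (so the two processes share the \emph{same} jump count) and then observes that both conditional laws factor as $(r,\fu)$ with $\fu$ uniform on the simplex $\{t_n(\fu)=1\}$ and $r$ distributed as $\nu_1(\rmd r)=nr^{n-1}\cf{0<r\le1}\rmd r$ for $X$ versus $\nu_2(\rmd r)=\mathrm{Gamma}(n,n)$ for $Y$; the ``coin-flip'' is a maximal-type coupling $\gamma_0$ between $\nu_1$ and $\nu_2$ that makes $r_1=r_2$ (hence $D\equiv0$) with high probability, and only on the complementary event does one analyse $X(\cdot;Tr_1\fu)-X(\cdot;Tr_2\fu)$. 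Your construction instead takes $K$ \emph{independent} of $\fs$, so $N(T;\fs)$ and $K$ are independent and the two processes never have synchronised jump counts. Concretely, $N(t;\fs)\approx\lambda t$ while $N(t;\fd^{(K)})\approx (K/T)t$, so at time $T$ the counts differ by $|K-T_*|\sim\sqrt{T_*}$; the ``mismatch intervals'' for consecutive $k$ overlap from $k\gtrsim T_*/|K-T_*|\sim\sqrt{T_*}$ onward, and the decomposition ``one contribution per matched pair'' breaks down well before $t=T$.

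The correct splitting (eq.~\eqref{eq:nin} in the paper) has, besides your factorised piece, a term $D_2=Tr_2\sum_{\min(n_1,n_2)<k\le\max(n_1,n_2)}(-1)^{k-1}u_k$ coming precisely from the mismatched jump counts $n_1\ne n_2$. This is \emph{not} a boundary correction: it is the dominant contribution and is the source of the rate $L_*^{-1}T_*^{1/4}\sqrt{\ln(T_*+3)}$ (Lemma~\ref{lem:D2term}), whose proof needs a delicate change of variables on the simplex and the estimate $\langle\max_k u_k\rangle\lesssim n^{-1}\ln n$. Conversely, the factorised term you call ``main'' is the analogue of $D_1$ and contributes only $O(L_*^{-1})$ (Lemma~\ref{lem:D1term}); a quick check with $\E[(T_*-K)^2/K^2]\sim T_*^{-1}$ and Doob's inequality $\E[\max_{m\le n}A_m^2]\lesssim n$ already gives this, with no logarithmic refinement needed---your remark that Doob yields ``the naive $T_*^{-1/2}$'' has the arithmetic backwards. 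In short, the roles of the two pieces are inverted in your outline, and the genuine difficulty (controlling $D_2$) is hidden inside $R(t)$ without an argument.
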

The complete proof is given in Section~\ref{sec:proof} and it is based on the coin-flip coupling 
(see \cite{Koskinen2020} or Appendix~A in \cite{Saksman2020}).

\subsection{\textbf{Definition of the process $Z$}}

In the sequel, we define the process $Z$.
We note that \eqref{eq:suma} and \eqref{eq:poproc} implies
$N(t_{n};\fs)=n$ for all $n\in \mathbb{N}_0$.
Then \eqref{def:path} yields for all $n\in \mathbb{N}$
\begin{equation}
\begin{split}
L^{-1}X(t_{2n}(\fs);\fs)-L^{-1}X(t_{2(n-1)}(\fs);\fs)=-L^{-1}v_0\left(s_{2n}-s_{2n-1}\right)\,.
\end{split}
\end{equation}
That is, at even jumps, the increments of the process $L^{-1}X$ are i.i.d.\ random variables which can be described explicitly.
Recall the notations \eqref{eq:utruncado}, \eqref{eq:skn} and \eqref{eq:tmn} used to define the process $Y$.
In what follows, we define the stochastic process $Z$ as the constant C\`{a}dl\`{a}g interpolation between the values of $Y$ at even jumps.

We recall that $\lfloor \cdot \rfloor$ denotes the so-called floor function, that is, for any $x\in \mathbb{R}$, $\lfloor x \rfloor$ gives the largest integer less than or equal than $x$. 
For $n\in \mathbb{N}\setminus\{0,1\}$ we define the C\`adl\`ag version of the random walk $(\eta^*_k(\fs):k\in \mathbb{N}_0)$ given by
\begin{equation}\label{def:Ztrunk}
Z(t;n,\fs)=\sum\limits_{k=1}^{\widetilde{n}}\cf{t\geq  t_{2k}(\fs)}\eta^*_k(\fs) \quad \textrm{ for any }\quad t\geq 0\,,
\end{equation}
where $\widetilde{n}=\lfloor n/2 \rfloor$,
\begin{equation}\label{def:randomwalk}
\eta^*_k(\fs)=-L^{-1}v_0(s_{2k}-s_{2k-1}),\quad k\in \mathbb{N},\quad \eta^*_0(\fs)=0\,,
\end{equation}
and $\fs=(s_1,s_2,\ldots,)=\lambda^{-1}\mathfrak{u}=\lambda^{-1}(u_1,u_2,\ldots)$
is distributed according to the infinitely product measure of exponential distributions with parameter $\lambda>0$.
For $n\in \{0,1\}$ we set $Z(t;n,\fs)=0$ for all $t\geq 0$. Analogously to \eqref{def:KU} we define
on $(\Omega,\mathcal{F},\mathbb{Q})$ 
the stochastic process $(Z(t;K,\fu):t\geq 0)$ as follows: for each $\omega=(\omega_1,\omega_2)\in \Omega$ it is given by
\begin{equation}\label{def:KUZ}
Z(t;K,\fs)(\omega):=\sum_{n=0}^\infty Z(t;n,\fs)(\omega_1) \cf{K=n}(\omega_2)\quad \textrm{ for any }\quad t\geq 0\,.
\end{equation}
We remark that the process $Y$ and the process $Z$
are constructed in the same probability space $(\Omega,\mathcal{F},\mathbb{Q})$.

\begin{proposition}[Synchronous coupling]\label{prop:naturalcoupling}
There exists a (pure) constant $\kappa_2>0$ such that for any $L>0$, $T>0$, $\lambda>0$ and $v_0\in \mathbb{R}\setminus\{0\}$ it follows that
\begin{equation}\label{eq:pepino}
\mathcal{W}_2\left(\mathbb{Y}_{[0,T]}, \mathbb{Z}_{[0,T]}\right)\leq \kappa_2 \sqrt{{T_*}{L^{-2}_*}}
T^{-1/4}_*(1+T^{-3/4}_*)\,,
\end{equation}
where the constants $T_*$ and $L_*$ are given in \eqref{e:scalings2020}.
\end{proposition}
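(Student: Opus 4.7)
The approach is the synchronous coupling: since both $Y(\cdot;K,\fu)$ and $Z(\cdot;K,\fs)$ are already defined on the same probability space $(\Omega,\mathcal{F},\mathbb{Q})$ via the shared variables $K$ and $\fu$ (recall $\fs=\lambda^{-1}\fu$), this coupling is automatic and yields
\[
\mathcal{W}_2^2(\mathbb{Y}_{[0,T]},\mathbb{Z}_{[0,T]}) \leq \mathbb{E}_\mathbb{Q}\!\left[\frac{1}{T}\int_0^T (Y(t;K,\fu)-Z(t;K,\fs))^2\,dt\right] = \sum_{n=0}^\infty p_n(T_*)\,I(n),
\]
where $I(n)$ denotes the conditional time-averaged squared error given $K=n$. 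To bound $I(n)$, I would introduce the piecewise-constant interpolant of $Y$ at its even jump times,
\[
\widehat{Y}(t;n,\fu):=-L^{-1}v_0\sum_{k=1}^{\tn}\cf{t\geq t^{(n)}_{2k}}(\delta^{(n)}_{2k}-\delta^{(n)}_{2k-1}),
\]
which by construction agrees with $Y$ at every $t^{(n)}_{2k}$. By the triangle inequality, it then suffices to control $\mathcal{W}_2(\mathbb{Y}_{[0,T]},\widehat{\mathbb{Y}}_{[0,T]})$ and $\mathcal{W}_2(\widehat{\mathbb{Y}}_{[0,T]},\mathbb{Z}_{[0,T]})$ separately.

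For the first term, on each cycle $[t^{(n)}_{2(k-1)},t^{(n)}_{2k})$ the difference $Y-\widehat{Y}$ is an explicit tent of slope $\pm|v_0|/L$ and height at most $L^{-1}|v_0|\max(\delta^{(n)}_{2k-1},\delta^{(n)}_{2k})$, so a direct integration yields $\int_{t^{(n)}_{2(k-1)}}^{t^{(n)}_{2k}}(Y-\widehat{Y})^2\,dt \leq CL^{-2}v_0^2[(\delta^{(n)}_{2k-1})^3+(\delta^{(n)}_{2k})^3]$. Summing over $k\leq\tn$, using $\delta^{(n)}_j=(T/n)u_j$ with $\mathbb{E}_{\mathbb{P}_1}[u_j^3]=6$, plus an analogous boundary estimate past $t^{(n)}_{2\tn}$, gives $\mathbb{E}_{\mathbb{P}_1}[T^{-1}\!\int_0^T(Y-\widehat{Y})^2\,dt] \leq CT_*^2/(n^2 L_*^2)$ for $n\geq 1$; hence $\mathcal{W}_2^2(\mathbb{Y}_{[0,T]},\widehat{\mathbb{Y}}_{[0,T]})\leq C\,\mathbb{E}[T_*^2/K^2\cf{K\geq 1}]/L_*^2 = \mathcal{O}(L_*^{-2})$ by standard Poisson moment estimates. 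For the second term, the key observation is the time-rescaling identity
\[
\widehat{Y}(t;n,\fu) = \rho_n\,Z(t/\rho_n;n,\fs), \qquad \rho_n:=T_*/n,
\]
verifiable by direct substitution, since both sides equal $-L^{-1}v_0(T/n)\sum_{k=1}^{\tn}(u_{2k}-u_{2k-1})\cf{t\geq(T/n)\sum_{j=1}^{2k}u_j}$. Introducing the auxiliary process $Z_1(t):=\rho_n Z(t;n,\fs)$ on the same space and applying the triangle inequality once more, the total error decomposes into an amplitude part $Z_1-Z=(\rho_n-1)Z(t)$ and a time-shift part $\widehat{Y}-Z_1=\rho_n[Z(t/\rho_n)-Z(t)]$. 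The amplitude part, using $\mathbb{E}[Z(t)^2]\leq C\sigma^2 t$ with $\sigma^2=L^{-2}v_0^2/\lambda$, contributes $C(\rho_n-1)^2 T_*/L_*^2$ after averaging. The time-shift part, using the uniform increment bound $\mathbb{E}[(Z(s)-Z(s'))^2]\leq C\sigma^2|s-s'|$ together with $|t/\rho_n-t|\leq T|1-\rho_n|/\rho_n$, contributes $C(T_*/L_*^2)\rho_n|\rho_n-1|$.

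Taking expectations over $K\sim\mathrm{Poisson}(T_*)$, the key Poisson moment estimates $\mathbb{E}[(\rho_K-1)^2\cf{K\geq 1}]=\mathcal{O}(T_*^{-1})$ and $\mathbb{E}[\rho_K|\rho_K-1|\cf{K\geq 1}]\leq CT_*^{-1/2}$ bound the amplitude and time-shift contributions by $\mathcal{O}(L_*^{-2})$ and $\mathcal{O}(T_*^{1/2}/L_*^2)$ respectively, the latter being dominant. The degenerate cases $K\in\{0,1\}$, where $Z\equiv 0$ but $|Y|\leq L^{-1}|v_0|T$, contribute at most $(p_0(T_*)+p_1(T_*))T_*^2/L_*^2 = \mathcal{O}((T_*^2+T_*^3)\rme^{-T_*}/L_*^2)$, a quantity which is uniformly bounded on $(0,\infty)$ and is absorbed. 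Combining the square-root estimates via triangle inequality gives $\mathcal{W}_2(\mathbb{Y}_{[0,T]},\mathbb{Z}_{[0,T]}) \leq C(T_*^{1/4}/L_*+L_*^{-1}) \leq \kappa_2\sqrt{T_*/L_*^2}\,T_*^{-1/4}(1+T_*^{-3/4})$, using that $T_*^{1/4}+T_*^{-1/2}$ has a positive infimum on $(0,\infty)$. The main technical obstacle I expect is establishing the uniform $L^2$-H\"older-type estimate $\mathbb{E}[(Z(s)-Z(s'))^2]\leq C\sigma^2|s-s'|$: because $\xi_k=u_{2k}-u_{2k-1}$ and the jump time $\lambda^{-1}\sum_{j=1}^{2k}u_j$ share the variables $u_{2k-1},u_{2k}$, direct independence is unavailable, and one must expand the double sum and exploit the conditional mean-zero property of $\xi_k$ given the earlier increments to control the remaining cross-covariances, relying on the moment estimates of Appendix~\ref{ap:coupling}.
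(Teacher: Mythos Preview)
Your approach is essentially the paper's: your auxiliary process $\widehat{Y}$ coincides with the paper's intermediate process $\widetilde{Z}$ (both have jump times $t^{(n)}_{2k}=t_{2k}(\mathfrak{w}^{(n)})$ and increments $(T_*/n)\eta^*_k(\fs)$), so your two-step triangle inequality is exactly the split \eqref{eq:cYZtildeZ}. The time-rescaling identity $\widehat{Y}(t)=\rho_n Z(t/\rho_n)$ is a clean way to phrase it, and your amplitude/time-shift split is equivalent to the paper's decomposition \eqref{eq:inty}. The ``main obstacle'' you single out is precisely what the paper resolves via the change of variables $\hat{s}_\ell=s_{2\ell}+s_{2\ell-1}$, $\omega_\ell=(s_{2\ell}-s_{2\ell-1})/\hat{s}_\ell$ (Lemma~\ref{lem:independencia}): integrating out the independent, mean-zero $\omega_\ell$'s kills the cross terms in the double sum and yields directly $\mean{|Z(s)-Z(s')|^2}_{\omega}=\tfrac{1}{3}L^{-2}v_0^2\sum_\ell\hat{s}_\ell^2\cf{\min(s,s')<t_{2\ell}(\fs)\le\max(s,s')}$, from which the H\"older-type bound follows.

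One concrete inaccuracy: the boundary term past $t^{(n)}_{2\tn}$ is \emph{not} analogous to the bulk tents. For $n$ even, $T-t^{(n)}_{2\tn}=(T/n)\sum_{k=1}^{n}(1-u_k)$ has standard deviation of order $T/\sqrt{n}$, not $T/n$, so $\mathbb{E}_{\mathbb{P}_1}[(T-t^{(n)}_{2\tn})_+^3]\sim T^3 n^{-3/2}$ and the boundary contribution to $\mathbb{E}_{\mathbb{P}_1}[T^{-1}\!\int_0^T(Y-\widehat{Y})^2\,dt]$ is of order $T_*^2/(n^{3/2}L_*^2)$, not $T_*^2/(n^{2}L_*^2)$ (this is exactly \eqref{eq:evencase0}--\eqref{eq:evencase} in the paper). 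After Poisson averaging this gives $T_*^{1/2}L_*^{-2}$ rather than $L_*^{-2}$, so $\mathcal{W}_2^2(\mathbb{Y}_{[0,T]},\widehat{\mathbb{Y}}_{[0,T]})=\mathcal{O}(T_*^{1/2}L_*^{-2})$. This does not affect your final bound, since the time-shift part already contributes at this order, but the claim $\mathcal{W}_2^2(\mathbb{Y}_{[0,T]},\widehat{\mathbb{Y}}_{[0,T]})=\mathcal{O}(L_*^{-2})$ as stated is too optimistic.
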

The complete proof can be found in Section~\ref{sec:naturalcoupling} and 
it is based on the synchronous coupling of the processes $Y$ and $Z$ which are constructed with the same sequence of random variables as an input.
\begin{lemma}[Koml\'os--Major--Tusn\'ady coupling]\label{lem:couplingKMT}
There exists a (pure) constant $\kappa_3>0$ such that for any $L>0$, $T>0$, $\lambda>0$ and $v_0\in \mathbb{R}\setminus\{0\}$ it follows that
\begin{equation}
\mathcal{W}_2\left(\mathbb{Z}_{[0,T]}, \mathbb{B}_{[0,T]}\right)\leq \kappa_3 \sqrt{T_*L^{-2}_*}T^{-1/4}_*
\big(
1+T^{-3/4}_*\big)\,,
\end{equation}
where the constants $T_*$ and $L_*$ are given in \eqref{e:scalings2020}
and the diffusivity constant of the Brownian motion $B:=(B(t):t\geq 0)$ is defined in \eqref{eq:diffusivity}.
\end{lemma}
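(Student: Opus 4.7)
The plan is to realise the process $Z$ as an embedded random walk evaluated at Poisson-even times and then apply a Koml\'os--Major--Tusn\'ady type strong approximation. On the event $\{K=n\}$ one has $Z(t;n,\fs)=S_{N^*(t)}$, where $S_k:=\sum_{j=1}^k \eta_j^*(\fs)$, the i.i.d.\ increments $\eta_k^*(\fs)=-L^{-1}v_0(s_{2k}-s_{2k-1})$ have a symmetric Laplace distribution with mean $0$ and variance $\sigma_0^2:=2v_0^2/(L^2\lambda^2)=2/L_*^2$, and $N^*(t):=\max\{k\leq\lfloor K/2\rfloor:t_{2k}(\fs)\leq t\}$ counts the recorded pair-jumps. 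Since the Laplace law has a finite moment-generating function, the classical KMT theorem constructs on an enlarged probability space a standard Brownian motion $W$ together with universal constants $a,b,c>0$ such that
\begin{equation*}
\mathbb{P}\!\left(\max_{0\leq k\leq m}|S_k-\sigma_0 W(k)|\geq a\log(m+1)+x\right)\leq b\,\rme^{-cx},\qquad m\in\N,\ x\geq 0.
\end{equation*}
Setting $B(t):=\sigma_0 W(\lambda t/2)$ provides a Brownian motion with the required diffusivity $\sigma_0^2\lambda/2=\sigma^2$ as the coupling target.

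The pointwise estimate $(Z(t)-B(t))^2\leq 2(S_{N^*(t)}-\sigma_0 W(N^*(t)))^2+2\sigma_0^2(W(N^*(t))-W(\lambda t/2))^2$ reduces the proof to two bounds. The first is absorbed by the KMT maximum: the displayed tail yields $\E[\max_{k\leq\lfloor K/2\rfloor}(S_k-\sigma_0 W(k))^2]=O(\sigma_0^2\log^2(T_*+\rme))$ after averaging over $K$ via standard Poisson moment bounds, whose square root $O(L_*^{-1}\log(T_*+\rme))$ is controlled by the target $\kappa_3 L_*^{-1}(T_*^{1/4}+T_*^{-1/2})$ since $\log(T_*+\rme)\lesssim T_*^{1/4}+T_*^{-1/2}$ uniformly in $T_*>0$. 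The second, time-mismatch term $\sigma_0^2\E[(W(N^*(t))-W(\lambda t/2))^2]$, is the dominant contribution: combining the Poisson fluctuation bound $\E|N^*(t)-\lambda t/2|=O(\sqrt{\lambda t})$ with the ideal Brownian increment identity $\E[(W(a)-W(b))^2]=\E|a-b|$ would give $O(L_*^{-2}\sqrt{T_*})$ after averaging in $t\in[0,T]$, whose square root $L_*^{-1}T_*^{1/4}$ is exactly the target main term. The subleading $L_*^{-1}T_*^{-3/4}$ accounts for the small-$T_*$ regime, dominating the constant-in-$T_*$ pieces of the KMT bound when $T_*\lesssim 1$.

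The main technical obstacle is that the KMT Brownian motion $W$ is built from the $\eta_k^*$'s and is therefore not independent of $N^*(t)$, both being measurable with respect to the underlying exponentials. To circumvent this, I would perform the strong approximation conditionally on the gamma sums $A_k:=u_{2k-1}+u_{2k}\sim\mathsf{Gamma}(2,1)$, exploiting the Gamma--Uniform factorisation: $A_k$ and $\chi_k:=(u_{2k-1}-u_{2k})/A_k\sim\mathsf{Uniform}(-1,1)$ are independent, so $\eta_k^*=L^{-1}v_0\lambda^{-1}A_k\chi_k$ is conditionally (given $A$) an independent sequence of scaled uniform variables and $N^*(t)$ is $\sigma((A_k),K)$-measurable. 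A Sakhanenko-type strong approximation applied conditionally (after a mild truncation of $A_k$ controlling its moment-generating function) produces a Brownian motion $\widetilde W$ that is standard under the conditional law given $(A_k)$, hence unconditionally standard and independent of $\sigma((A_k),K)$; the Brownian identity then applies to give $\E[(\widetilde W(V_{N^*(t)}(A))-\widetilde W(\lambda t/2))^2]=\E|V_{N^*(t)}(A)-\lambda t/2|$ with the natural time-change $V_k(A):=\sum_{j\leq k}A_j^2/6$. The bounds $\E|V_k(A)-k|=O(\sqrt{k})$ and $\E|N^*(t)-\lambda t/2|=O(\sqrt{\lambda t})$ then close the estimate without any spurious logarithmic factor and yield the claim.
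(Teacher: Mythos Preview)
Your strategy is sound but takes a genuinely different route from the paper's.  The paper never evaluates the Brownian motion at the random clock $N^*(t)$.  Instead, after conditioning on $\{K=n\}$, it lays down the \emph{deterministic} grid $\{jT/\tn\}_{j=0}^{\tn}$ and splits
\[
 Z(s)-B(s)\;\longrightarrow\;\bigl(Z(s)-S_{j-1}/L\bigr)\;+\;\bigl(S_{j-1}/L-B((j-1)T/\tn)\bigr)\;+\;\bigl(B((j-1)T/\tn)-B(s)\bigr),
\]
for $s\in[(j-1)T/\tn,jT/\tn)$.  The middle piece is handled by the \emph{classical i.i.d.} KMT theorem applied to the Laplace walk $(S_j)$, comparing it to Gaussians at the deterministic index $j-1$; no independence between $W$ and a random time is ever needed.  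The first piece, comparing $Z(s)$ with $S_{j-1}/L$, is where the paper deploys the Gamma--Uniform factorisation $(\hat s_\ell,\omega_\ell)$: it uses the independence of the signs $\omega_\ell$ from the even jump times $t_{2\ell}(\fs)$ to kill the off-diagonal terms and reduce to $\sum_\ell \hat s_\ell^2\,|t_{2\ell}(\fs)-\ell T/\tn|$, which is then estimated by moment bounds.  In other words, the paper uses the same $(\hat s,\omega)$ decoupling you invoke, but for the time-mismatch between $Z$ and the deterministic-index walk, not inside the KMT step.

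Your route---random clock plus a conditional strong approximation given $A=(A_k)$---is conceptually more direct but buys its simplicity at the cost of a heavier tool.  Conditionally on $A$, the increments $\eta_k^*\propto A_k\chi_k$ are independent but not identically distributed, with (random) bounds $|\eta_k^*|\le |v_0|A_k/(L\lambda)$.  To invoke Sakhanenko you need uniform control of, say, $\sum_k \E[|\eta_k^*|^3\mid A]/B_n^{2}$ or an exponential-moment condition that is uniform in the realisation of $A$; this is where your ``mild truncation of $A_k$'' hides genuine work (the maximum of $\tn$ i.i.d.\ $\mathsf{Gamma}(2,1)$ variables is of order $\log\tn$, which still gets absorbed, but the argument must be made).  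You also need to justify that the Sakhanenko Brownian motion, being standard \emph{conditionally} on $A$ with a law not depending on $A$, is indeed independent of $\sigma(A,K)$; this is correct provided the auxiliary randomisation used in the construction is taken independent of $A$.  Finally, the truncation of $N^*(t)$ at $\tn=\lfloor K/2\rfloor$ introduces a boundary error near $t=T$ that you should account for separately (the paper handles the analogous issue in its step~8).  None of these points is fatal, but each requires a paragraph you have not written; the paper's deterministic-grid approach avoids all of them by never leaving the i.i.d.\ KMT framework.
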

The proof is given in Section~\ref{sec:KMTcoupling} and it relies on the Koml\'os--Major--Tusn\'ady coupling (see \cite{Komlos1975} and \cite{Komlos1976}).

We stress the fact that Theorem~\ref{th:main} is just a consequence of what we have already stated up to here.
\begin{proof}[Proof of Theorem~\ref{th:main}]
It follows directly from inequality \eqref{eq:I1I2I3} with the help of Proposition~\ref{prop:coinflipcoupling}, Proposition~\ref{prop:naturalcoupling} and Lemma~\ref{lem:couplingKMT}.
\end{proof}

\section{\textbf{Proof of Proposition~\ref{prop:coinflipcoupling}: Coin-flip coupling}}\label{sec:proof}

In this section we prove Proposition~\ref{prop:coinflipcoupling}. We start with some preliminaries.
\subsection{\textbf{Expectation for continuous observables of the free velocity flip model}}
Since then $N(t;\fs)<\infty$ almost surely, for any (regular enough, e.g., positive and measurable) observable $F(\mathbb{X}^{\fs}_{[0,T]})$ 
which depends only on the path up to times $T>0$, such as $c_1$ and $c_2$ defined in \eqref{eq:Cdef},
we can decompose the events depending on the realisation of $N(T;\fs)$. 
We recall the generic indicator function notation $\cf{P}$ from Section~\ref{sec:defY}, and obtain
\[
 \E[F(\mathbb{X}^{\fs}_{[0,T]})] = \sum_{n=0}^\infty \E[F(\mathbb{X}^{\fs}_{[0,T]}) \cf{N(T;\fs)=n}]\,.
\]
This partition helps, since for any $n$ the path up to time $T$ and thus also the value of $F(\mathbb{X}^{\fs}_{[0,T]})$, only depends on $s_k$, for $0\leq k\leq n+1$ which is crucial in our argument. 
First, for $n=0$, 
we can directly evaluate 
\[
\E[F(\mathbb{X}^{\fs}_{[0,T]}) \cf{N(T;\fs)=0}]
=F(I) \rme^{-\lambda T}\,,
\quad \textrm{ where }\quad I(t)=v_0 t\quad \textrm{ for all }\quad t\geq 0\,.
\] 
Therefore, 
in the following computation we only need to consider values $n\in \N$.

For such $n$,  we have the following explicit integral representation
\begin{equation*}
  \E[F(\mathbb{X}^{\fs}_{[0,T]}) \cf{N(T;\fs)=n}]
  = \lambda^{n+1}
  \int_{\R_*^{n+1}} \rmd^{n+1}\fs \, \prod_{k=1}^{n+1}\rme^{-\lambda s_k}
  F(\mathbb{X}^{\fs}_{[0,T]}) \cf{t_n(\fs)\leq T} \cf{s_{n+1}>T-t_{n}(\fs)}\,,
\end{equation*}
where $(t_n(\fs):n\in \mathbb{N}_0)$ are the jump times defined in \eqref{eq:suma}.
The notation is somewhat formal, since the full path 
$\mathbb{X}^{\fs}$ depends on the full sequence $\fs$.  However, whenever both indicator functions are non-zero in the above integrand, we note that $\mathbb{X}^{\fs}_{[0,T]}$ only depends on the values $s_k$, $k=1,2,\ldots,n$.
Explicitly, defining $v_k:= s_k/T$, $k=1,2,\ldots,n$, and setting somewhat arbitrarily $v_k=2$ for $k>n$, we find that 
then $\mathbb{X}^{\fs}_{[0,T]}=X(\cdot;T\fv)_{[0,T]}$
since the conditions in the integrand guarantee that the jump $n+1$ occurs after $T$ which is also ensured by the above choice $T v_{n+1}=2T$.  In this case, for simplicity, we also drop the restriction to the time-interval $[0,T]$ from the notation, and simply write $F(\mathbb{X}^{\fs}_{[0,T]}) = F(X(\cdot;T \fv))$ in the above integral.

Performing the implied change of variables, setting $\tilde{r}=s_{n+1}/T$, and using Fubini's theorem to reorder the integrals, we obtain
\begin{equation}\label{eq:integralrepre}
\begin{split}
  \E[&F(\mathbb{X}^{\fs}_{[0,T]}) \cf{N(T;\fs)=n}]
 \\ 
 & 
  = T_*^{n+1}
  \int_{\R_*^{n}} \rmd^{n}\fv \,
    \left(\int_{\R_*} \rmd\tilde{r}\,
\rme^{-T_* (\tilde{r} + t_n(v))}
  F(X(\cdot;T \fv)) \cf{t_n(\fv)\leq 1}
   \cf{\tilde{r}>1-t_{n}(\fv)}\right)\,,
\end{split}
\end{equation}
where $T_*=\lambda T$, and we have used the obvious scaling  property 
$t_n(T\fv)=T t_n(\fv)$.
The $\tilde{r}$-integral can then be evaluated explicitly using $\int_{1-t_n(\fv)}^\infty \rmd\tilde{r}\,
\rme^{-T_* (\tilde{r} + t_n(v))}=T_*^{-1} \rme^{-T_*}$.
The resulting integral representation can be further simplified by relying on Dirac $\delta$-functions whose precise construction as non-negative Radon measures is explained in Appendix~\ref{ap:delta}.
In particular, applying item iv) of 
Lemma~\ref{lem:deltaprop} justifies
the result from the following formal computation:
\begin{equation}\label{def:Xint}
\begin{split}
  \E[& F(\mathbb{X}^{\fs}_{[0,T]})  \cf{N(T;\fs)=n}] 
 \\ 
 & 
    = T_*^{n}\rme^{-T_*}
  \int_{\R_*^{n}} \rmd^{n}\fv \,
  F(X(\cdot;T \fv)) \cf{t_n(\fv)\leq 1}
 \\  
 & 
    = T_*^{n}\rme^{-T_*}
  \int_{\R_*^{n}} \rmd^{n}\fv \,
  F(X(\cdot;T \fv)) \cf{t_n(\fv)\leq 1}
  \int_{\R_*} \rmd r \,\delta(t_n(\fv)-r)
 \\  
 & 
    = T_*^{n}\rme^{-T_*}
  \int_{\R_*} \rmd r \,  \cf{r\leq 1} \left(\int_{\R_*^{n}} \rmd^{n}\fv \,
  F(X(\cdot;T \fv))
\delta(t_n(\fv)-r)\right)
 \\  
 & 
  = \frac{1}{n!}T_*^{n}   \rme^{-T_*} \int_0^1 \rmd r\, 
n r^{n-1}
  (n-1)!\int_{\R_*^{n}} \rmd^{n} \fu \,\delta(t_n(\fu)-1)
  F(X(\cdot;T r \fu)) \,,
\end{split}
\end{equation}
with a slight abuse of notation since the values of the sequence ``$Tr \fu$'' for $k>n$ are not rescaled by $r$, i.e., they are still equal to $2T$. 

Since $\int_{\R_*^{n}} \rmd^{n} \fu \,\delta(u_n(\fu)-1)=\frac{1}{(n-1)!}$, the remaining
integrals form a product of two probability measures for $r$ and $\fu$, i.e., 
\begin{equation}\label{eq:probr}
\rmd r \cdot nr^{n-1}\cf{r\in (0,1]}
\end{equation}
and
\begin{equation}\label{eq:probu}
\rmd^{n}\fu \cdot (n-1)! \delta(t_n(\fu)-1)\,.
\end{equation}
By straightforward computations we have
\[
 \mean{r}=1-\frac{1}{n+1}\,,\quad \mean{r^2}-\mean{r}^2 = 
 \frac{n}{n+2}-\frac{n^2}{(n+1)^2}= \frac{n}{(n+1)^2(n+2)}\leq \frac{1}{(n+1)^2}\,,
\]
where the expectations are computed with respect to the probability measure \eqref{eq:probr}.
Hence, $r$ is distributed close to $1$ with standard deviation $\mathcal{O}(1/n)$ as $n\to \infty$.
Using the permutation invariance of \eqref{eq:probu}, we also find the following statistics for components of $\fu$, 
\[
 \mean{u_i}  = \frac{1}{n}\,, \quad \mean{u_i^2}  = \frac{2}{n(n+1)}
 \,, \quad \mean{u_i^3}  = \frac{6}{n(n+1)(n+2)}
 \,, \quad \mean{u_i u_j}= \frac{1}{n(n+1)}\,,\quad i\ne j\,,
\]
where the expectations are computed with respect to the probability measure \eqref{eq:probu}.
For details see Lemma~\ref{lem:moments} 
in Appendix~\ref{ap:tools}.
In particular, the standard deviation of each component $u_i$ is also $\mathcal{O}(1/n)$.

We now rescale the jumps for the process $(Y(t;n,\fs):0\leq t\leq T)$  so that a good coupling can be formed between it and $(L^{-1}X(t;n,\fs):0\leq t\leq T)$.  
Assume that $n\in \N$ is deterministic, and for shorthand we set    
$f(T\fv):=F(L^{-1} X(\cdot;T\fv))$. Then \eqref{def:Yinte} implies  
\begin{equation}\label{eq:nu1nu2}    
\begin{split}
\E_{\mathbb{Q}}[F(Y(\cdot;n,\fu))]
  &= 
  \beta^n_n \int_{\R_*^{n}}\rmd^{n} \fv \, \rme^{-\beta_n \sum_{k=1}^n v_k} f(T \fv)  
\\ 
& \hspace{-2cm}= 
  \beta^n_n \int_{\R_*}\rmd r \, \rme^{-\beta_n r} 
   \int_{\R_*^{n}}\rmd^{n} \fv \, \delta\!\left(\sum_{k=1}^n v_k-r\right) f(T \fv)    
\\ & \hspace{-2cm}
  = 
  \beta^n_n \int_{\R_*}\rmd r \, r^{n-1} \rme^{-\beta_n r} 
   \int_{\R_*^{n}}\rmd^{n} \fu \, \delta\!\left(\sum_{k=1}^n u_k-1\right) f(T r \fu)    
\\ 
 & \hspace{-2cm}
  = 
  \frac{1}{(n-1)!}\beta^n_n \int_{\R_*}\rmd r \, r^{n-1} \rme^{-\beta_n r} 
   (n-1)! \int_{\R_*^{n}}\rmd^{n} \fu \, \delta\!\left(\sum_{k=1}^n u_k-1\right) f(T r \fu)
   \,.
\end{split}
\end{equation}
\subsection{\textbf{Coin flip type coupling}}
Let
\begin{equation}\label{eq:defnu1nu2}
\nu_1(\rmd r) =n r^{n-1} \cf{0< r\leq 1} \rmd r
 \quad \textrm{ and } \quad 
\nu_2(\rmd r) = \frac{1}{(n-1)!}\beta^n r^{n-1} \rme^{-\beta r}\cf{r>0} \rmd r\,
\end{equation}
denote probability measures on $\R_ *$.
We note that 
$\nu_2$ is the Gamma distribution of parameters $n$ and $\beta$.
Moreover, it has
mean $n/\beta$
and variance $n/\beta^2$.
There is a choice of $\beta$ which makes the densities agree ``with a high probability'',
and the following computations yield that the original choice $\beta=\beta_n=n$ suffices.
By \eqref{def:Xint} and \eqref{eq:nu1nu2} we only need to generate a good coupling between 
$\nu_1$ and $\nu_2$.
For this case, one good choice is using a ``coin-flip type'' coupling. 
Let
\begin{equation}\label{def:g}
 g(r) := n! \cf{0<r\leq 1} \beta^{-n} \rme^{\beta r}\,
\end{equation}
and define a coupling $\gamma_0$
by setting the action  for any observable $h:\R^2_*\to \R$ as follows
\begin{equation}
\begin{split}
 \int_{\R_*^2} \gamma_0(\rmd r_1,\rmd r_2) h(r_1,r_2)
 &=  \int_{\R_*} \nu_2(\rmd r) \min\{1,g(r)\} h(r,r)
 \\ & \qquad
 +  \int_{\R_*} \nu_2(\rmd r_1) \int_{\R_*} \nu_2(\rmd r_2)\, 
 \frac{1}{Z_{\beta,n}} (1-g(r_2))_+ (g(r_1)-1)_+ h(r_1,r_2)\,,
\end{split}
\end{equation}
where $(x)_+ := x\cf{x> 0}$ and 
\begin{equation}\label{eq:normalisation}
 Z_{\beta,n}=\int_{\R_*} \nu_2(\rmd r) (1-g(r))_+ = \int_{\R_*} \nu_2(\rmd r) (g(r)-1)_+ >0\,.
\end{equation}
This is indeed a coupling between $\nu_1$ and $\nu_2$, see for instance relation (5.11) of \cite{Lukkarinen2019}, where it has been applied the techniques from Appendix~A in \cite{Saksman2020}.
We stress that $\nu_1$, $\nu_2$, $g$ and $\gamma_0$ depend on $n$, however, for ease of notation we drop its dependence.

Finally, using this $\gamma_0$ we define a coupling  $\gamma$ between $L^{-1}\mathbb{X}^{\fs}_{[0,T]}$ and  $\mathbb{Y}_{[0,T]}$  as follows
\begin{equation}\label{eq:gammacoupling}
 \begin{split}
 & \E_\gamma[F(L^{-1}\mathbb{X}^{\fs}_{[0,T]},\mathbb{Y}_{[0,T]})]
 =  \rme^{-T_*}  
 F(L^{-1} I,L^{-1} I)
 + \sum_{n=1}^\infty 
p_n(T_*) 
 \\ & \qquad 
\times \int_{\R_*^2} \gamma_0(\rmd r_1,\rmd r_2) 
   (n-1)! \int_{\R_*^{n}}\rmd^{n} \fu \, \delta\!\left(\sum_{k=1}^n u_k-1\right) 
   F(L^{-1}X(\cdot;T r_1 \fu),L^{-1} X(\cdot;T r_2 \fu))\,,
 \end{split}
 \end{equation}
where $I=I(t)=v_0t$ for all $t\geq 0$.
One can now check that this has the right marginals.
Thus for the choice $F=c_2$ given in \eqref{eq:costfunction}, we need to control
 \begin{equation}
 c_2(L^{-1}X(\cdot;T r_1 \fu),L^{-1} X(\cdot;T r_2 \fu))
 = \frac{1}{L^2 T} \int_0^{T}\rmd t\,|X(t;T r_1 \fu)-X(t;T r_2 \fu)|^2\,.
 \end{equation}
For $r_2=r_1$, the value of the preceding cost is clearly zero whatever $n$ and $\fu$ are.  Therefore, 
\begin{equation}\label{eq:W2est}
\begin{split}
 & \E_\gamma[c_2(L^{-1}\mathbb{X}^{\fs}_{[0,T]},\mathbb{Y}_{[0,T]})]
 = \sum_{n=1}^\infty p_n(T_*)
   (n-1)! \int_{\R_*^{n}}\rmd^{n} \fu \, \delta\!\left(\sum_{k=1}^n u_k-1\right)\\ & \quad
\times  \int_{\R_*} \nu_2(\rmd r_1) \int_{\R_*} \nu_2(\rmd r_2)\, 
 \frac{1}{Z_{\beta,n}} (1-g(r_2))_+ (g(r_1)-1)_+ c_2(L^{-1}X(\cdot;T r_1 \fu),L^{-1} X(\cdot;T r_2 \fu))\,. 
\end{split}
\end{equation}
Then  for $t\in [0,T]$ we rewrite $X(t;T r_1 \fu)-X(t;T r_2 \fu)$  as follows
\begin{equation}\label{eq:nin}
\begin{split}
 X(t;T r_1 \fu)-X(t;T r_2 \fu)&= v_0 D_1(t,\fu,r_1,r_2,T) + v_0 D_2(t,\fu,r_1,r_2,T)\\
 &\qquad+ v_0 D^{\prime}_1(t,\fu,r_1,r_2,T) + v_0 D^{\prime}_2(t,\fu,r_1,r_2,T)\,,
\end{split}
\end{equation}
where, writing $n_i=n_i(t)=N(t;T r_i \fu)\leq n$, 
we have
\begin{equation}\label{eq:D1estimate}
D_1(t,\fu,r_1,r_2,T):= T(r_1-r_2) \sum_{k=1}^{n_1} (-1)^{k-1} u_k\,,
\end{equation}
\begin{equation}\label{eq:D2estimate}
|D_2(t,\fu,r_1,r_2,T)|:= T r_2 \big|\sum_{\min\{n_1,n_2\}<k\leq \max\{n_1,n_2\}} (-1)^{k-1} u_k \big|\,,
\end{equation}
\begin{equation}\label{eq:incompletejumps}
\begin{split}
D^{\prime}_1(t,\fu,r_1,r_2,T)&:=
(-1)^{n_1} (t-t_{n_1}(T r_1\fu))\,,\\
D^{\prime}_2(t,\fu,r_1,r_2,T)&:=
(-1)^{n_2} (t-t_{n_2}(T r_2\fu))\,.
\end{split}
\end{equation}
For short, for each $j\in \{1,2\}$ we write $D_j$, $D^{\prime}_j$ instead of $D_j(t,\fu,r_1,r_2,T)$,
$D^{\prime}_j(t,\fu,r_1,r_2,T)$, respectively.
By  H\"older's inequality we obtain the following upper bound 
\[
 |X(t;T r_1 \fu)-X(t;T r_2 \fu)|^2 \leq 4v^2_0 \left(|D_1|^2 + |D_2|^2 + |D^{\prime}_1|^2 + |D^{\prime}_2|^2\right)\, \textrm{ for all }\quad t\in [0,T]\,.
\]
In what follows, we  find a positive constant $\kappa_1$  that does not depend on $v_0$, $\lambda$, $L$ and $T$ and such that 
\begin{equation}\label{equ:W}
\begin{split}
\mathcal{W}^2_2\left(L^{-1}\mathbb{X}^{\fs}_{[0,T]}, \mathbb{Y}_{[0,T]}\right)& \leq |v_0|^2\E_\gamma[c_2(L^{-1} X,Y)]\\
&\leq \kappa_1 |v_0|^2 \left(\lambda^{-3/2}{L}^{-2}{T}^{1/2}
\ln(\lambda{T}+3)+\lambda^{-2}{L}^{-2}\right)\,,
\end{split}
\end{equation}
where $\gamma$ is the coupling measure defined in \eqref{eq:gammacoupling}.
By \eqref{e:scalings} 
we have that $T_*=\lambda T$ and ${L}_*=|v_0|^{-1}\lambda L$ and hence 
inequality \eqref{equ:W} reads as follows
 \begin{equation}
\mathcal{W}^2_2\left(L^{-1}\mathbb{X}^{\fs}_{[0,T]}, \mathbb{Y}_{[0,T]}\right)
 \leq \kappa_1 T_*{L}^{-2}_*{T}^{-1/2}_*
\ln({T}_*+3)+\kappa_1 L^{-2}_*\,.
\end{equation}

\subsection{\textbf{Comparison between $L^{-1}\mathbb{X}^{\fs}_{[0,T]}$ and $\mathbb{Y}_{[0,T]}$}}
By \eqref{eq:W2est} and \eqref{eq:nin} we have 
\begin{equation}\label{eq:couplingflip}
\begin{split}
  \E_\gamma[c_2(L^{-1}\mathbb{X}^{\fs}_{[0,T]},\mathbb{Y}_{[0,T]})]&\leq 4v^2_0\sum_{n=1}^\infty p_n(T_*)
   (n-1)! \int_{\R_*^{n}}\rmd^{n} \fu \, \delta\!\left(\sum_{k=1}^n u_k-1\right)\\ 
   & \qquad
\times  \int_{\R_*} \nu_2(\rmd r_1) \int_{\R_*} \nu_2(\rmd r_2)\, 
 \frac{1}{Z_{\beta,n}} (1-g(r_2))_+ (g(r_1)-1)_+\\
 &\qquad \quad 
(L^2T)^{-1}\int_0^T \rmd t 
(D^2_1+D^2_2+(D^\prime_1)^2+(D^\prime_2)^2)\,. 
\end{split}
\end{equation}
We start with the apparently worst term $D_2$.
\\

\noindent
\textbf{Estimates on $D_2$}.
We point out that the term $D_2$ given in \eqref{eq:D2estimate} is complicated since $n, n_1, n_2$ and $\fu$ are not independent, and the components of $\fu$ satisfy the constraint $u_1+\cdots+u_n=1$.
 
\begin{lemma}[Remainder $D_2$]\label{lem:D2term}
There is a positive constant $C_2$ such that for all $T>0$, $\lambda>0$ and $L>0$ it follows that
\begin{equation}\label{eq:D2term}
\begin{split}
\mathcal{R}_2:=v^2_0
\sum_{n=1}^\infty p_n(T_*)\,
(L^2 T)^{-1}  
\int_{\R_*^{n}}&\,(n-1)!\,\rmd^{n}\fu\, 
\delta\left(\sum_{k=1}^n u_k-1\right)\,
\int_{\R_*} \nu_2(\rmd r_1)\,  \int_{\R_*} \nu_2(\rmd r_2)\,  
\int_0^{T}
\rmd t\,  
\times\\
&\hspace{-1cm}
|D_2|^2
\frac{1}{Z_{\beta,n}} (1-g(r_2))_+ (g(r_1)-1)_+\leq C_2T_*{L}^{-2}_*{T}^{-1/2}_*
\ln({T}_*+3)\,,
\end{split}
\end{equation}
where 
the probability measure $\nu_2$ is given in \eqref{eq:nu1nu2},
the function $g$ is given in \eqref{def:g} and the normalisation constant $Z_{\beta,n}$ is given in \eqref{eq:normalisation}.
\end{lemma}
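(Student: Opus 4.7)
My plan is to exploit the alternating-sign cancellation in $D_2$. First, by the $r_1\leftrightarrow r_2$ symmetry of $\gamma_0$, I would reduce to the event $\{r_1\ge r_2\}$, on which $n_1(t)\le n_2(t)$ for every $t\in[0,T]$. Writing $\sigma_k:=Tr_1 t_k(\fu)$, $\tau_k:=Tr_2 t_k(\fu)$ and $\chi_k(t):=\cf{\tau_k\le t<\sigma_k}$, the index $k$ lies in the gap $(n_1(t),n_2(t)]$ iff $\chi_k(t)=1$, and so
\[
D_2(t,\fu,r_1,r_2,T)=Tr_2\sum_{k=1}^{n}(-1)^{k-1}u_k\chi_k(t).
\]
The naive triangle-inequality bound $|D_2(t)|\le Tr_2\sum_{k\in\mathrm{gap}}u_k$, combined with the telescoping identity $\sum_{k\in\mathrm{gap}}u_k=t_{n_2(t)}(\fu)-t_{n_1(t)}(\fu)\le(r_1-r_2)/(r_1 r_2)+u_{n_1(t)+1}$, would only yield $\mathcal{R}_2\le CT_*L_*^{-2}$, off by a factor $\sqrt{T_*}/\ln(T_*+3)$, so the alternating sign cancellation is indispensable.

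Next, I would expand $|D_2(t)|^2=T^2r_2^2\sum_{k,l=1}^{n}(-1)^{k+l}u_ku_l\chi_k(t)\chi_l(t)$ and integrate in $t\in[0,T]$; for $k\le l$ the overlap integral is
\[
\int_0^T\chi_k(t)\chi_l(t)\,dt=\bigl[T(r_1-r_2)t_k(\fu)-Tr_2(t_l(\fu)-t_k(\fu))\bigr]_+\wedge T,
\]
which vanishes unless $l-k$ is of order at most $\sqrt{n}$. Averaging over $\fu$ against the constrained measure \eqref{eq:probu} with the moment identities $\mean{u_k^2}=2/[n(n+1)]$ and $\mean{u_ku_l}=1/[n(n+1)]$ from Lemma~\ref{lem:moments} of Appendix~\ref{ap:tools}, the diagonal $(k=l)$ contribution is of order $T^3r_2^2(r_1-r_2)/n^2$, while the alternating off-diagonal sums cancel and contribute at the same order, so that the effective variance of $D_2(t)$ is of order $(Tr_2)^2(n_2(t)-n_1(t))/n^2$ rather than the quadratic $(Tr_2)^2(n_2(t)-n_1(t))^2/n^2$ that the triangle inequality produces. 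I would then average against $\gamma_0$: the concentration of $\nu_2=\mathrm{Gamma}(n,n)$ about $1$ with variance $1/n$, together with Stirling's formula applied to the support of $(g-1)_+$, gives $\mean{(r_1-r_2)^2}_{\gamma_0}\le C/n$, hence $\E_\gamma\int_0^T|D_2|^2\,dt\le CT^3/n^{3/2}$.

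Finally, I would divide by $L^2T$, multiply by $v_0^2$, and sum over $n$ weighted by $p_n(T_*)$ to obtain $\mathcal{R}_2\le Cv_0^2T^2L^{-2}\sum_{n\ge 1}p_n(T_*)/n^{3/2}$. Combining Poisson concentration with the elementary estimate $\mean{\max_k u_k}=O(\ln n/n)$ under \eqref{eq:probu} (which absorbs the boundary term $u_{n_1(t)+1}$ from the naive decomposition and controls the small-$n$ regime where the cancellation breaks down) produces the factor $\ln(T_*+3)/T_*^{3/2}$, giving $\mathcal{R}_2\le C_2 T_* L_*^{-2}T_*^{-1/2}\ln(T_*+3)$. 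The main obstacle is the second step: extracting the alternating-sign cancellation against the constrained measure \eqref{eq:probu}, because the gap endpoints $n_1(t),n_2(t)$ depend on $\fu$ through the partial sums $t_k(\fu)$ and cannot be decoupled from the signed sum without losing the crucial factor $\sqrt{n}$.
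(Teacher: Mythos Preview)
Your reduction to $r_1>r_2$ is correct but not by symmetry: $\gamma_0$ is \emph{not} symmetric (its marginals are $\nu_1$ and $\nu_2$).  Rather, on the off-diagonal piece the factor $(g(r_1)-1)_+(1-g(r_2))_+$ already forces $r_*<r_1\le 1$ and $0<r_2<r_*$, so $r_2<r_1$ and hence $n_1(t)\le n_2(t)$ automatically.

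The real issue is the one you flag in your last sentence, and it is a genuine gap: you cannot average $u_ku_l\chi_k(t)\chi_l(t)$ against $(n-1)!\,\delta(t_n(\fu)-1)$ using only the moment identities of Lemma~\ref{lem:moments}, because $\chi_k(t)$ depends on the entire partial sum $t_k(\fu)$, not just on $u_k$.  The ``alternating off-diagonal sums cancel'' step is therefore unjustified, and the proposal does not close.

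The paper resolves exactly this obstacle by a pairing change of variables that decouples the sign from the gap endpoints.  For each pair of consecutive gap indices $(n_1{+}2k,\,n_1{+}2k{+}1)$ one sets
\[
\hat u_k:=u_{n_1+2k}+u_{n_1+2k+1},\qquad \omega_k:=\frac{u_{n_1+2k}-u_{n_1+2k+1}}{\hat u_k}\,,
\]
so that the alternating gap sum becomes $\sum_k \hat u_k\omega_k$ (up to one boundary term).  The crucial observations are: (i) the constraint $t_n(\fu)=1$ and the events $\{n_1=j_1\},\{n_2=j_2\}$ depend only on the $\hat u_k$'s and the untouched $u_j$'s, never on $\omega_k$; (ii) under this change of variables the $\omega_k$ are i.i.d.\ uniform on $(-1,1)$ and independent of the $\hat u_k$.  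Conditioning on $(n_1,n_2)$ and averaging over $\omega$ first gives exactly
\[
\Bigl\langle\Bigl(\sum_k\hat u_k\omega_k\Bigr)^2\,\Big|\,n_1,n_2\Bigr\rangle_{\omega}=\tfrac{1}{3}\sum_k\hat u_k^2\le \tfrac{2}{3}\Bigl(\max_{1\le j\le n}u_j\Bigr)\sum_k\hat u_k\,,
\]
after which $\sum_k\hat u_k\le t_{n_2}(\fu)-t_{n_1+1}(\fu)\le (t/T)\,|r_1^{-1}-r_2^{-1}|$ and the bound $\mean{\max_j u_j}\le C\ln(n+1)/n$ finish the job.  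This is where the $\ln$ factor and the extra $n^{-1/2}$ enter; your direct moment expansion does not see either without this decoupling.
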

\begin{proof}
By Fubini's theorem the left hand-side  of \eqref{eq:D2term} can be written as 
\begin{equation}\label{eq:aimD2}
\begin{split}
\mathcal{R}_2=v^2_0
\sum_{n=1}^\infty p_n(T_*)\,
 \frac{L^{-2} T}{Z_{\beta,n}}   \int_0^{T} &
\rmd t\, 
\int_{\R_*} \nu_2(\rmd r_1)\,  (g(r_1)-1)_+  \int_{\R_*} \nu_2(\rmd r_2)\, r^2_2(1-g(r_2))_+   
\times\\
 &\hspace{-1cm}
\int_{\R_*^{n}}\, 
(n-1)!\,\rmd^{n}\fu\, 
\delta\left(\sum_{k=1}^n u_k-1\right)\, \big|\sum_{\min\{n_1,n_2\}<k\leq \max\{n_1,n_2\}} (-1)^{k-1} u_k \big|^2\,.
\end{split}
\end{equation}
For each $i\in \{1,2\}$
 recall that $n_i=n_i(t)=N(t;T r_i \fu)\leq n$ and
note that if $n_1=n_2$, then $D_2=0$.  
By definition of $n_1$ and $n_2$ we have
\[
t_{n_i}(Tr_i \fu)\leq t<t_{n_i+1}(Tr_i \fu)\quad \textrm{ for each } i\in \{1,2\}\,,
\]
which reads as 
\[
Tr_i\sum\limits_{k=1}^{n_i}u_k\leq t<Tr_i\sum\limits_{k=1}^{n_i+1}u_k
\quad \textrm{ for each } i\in \{1,2\}\,.
\]
Since the integration with respect to the measures
$\nu_2(\rmd r_1)$ and $\nu_2(\rmd r_2)$ are on the set $\mathbb{R}_*$,
we assume that $r_1>0$ and $r_2>0$ and then we have  
\begin{equation}\label{eq:Trr}
\sum\limits_{k=1}^{n_i}u_k\leq \frac{t}{Tr_i}<\sum\limits_{k=1}^{n_i+1}u_k \quad \textrm{ for each } i\in \{1,2\}\,.
\end{equation}

Without loss of generality we may assume that $n_1<n_2$. 
We rewrite the preceding inequalities as follows
\begin{equation}\label{eq:r2n}
\sum\limits_{k=1}^{n_1+1}u_k
+\sum\limits_{k=n_1+2}^{n_2}u_k
\leq \frac{t}{Tr_2}<\sum\limits_{k=1}^{n_1+1}u_k
+\sum\limits_{k=n_1+2}^{n_2}u_k+u_{n_2+1}\,.
\end{equation}
The idea is to do a  change of variables for suitable indexes as follows.
For the indexes $\{1,2,\ldots,n_1,n_1+1\}\cup\{n_2+1,n_2+2,\ldots\}$ we do not do a change of variables.
Also, in the case that $n_2-n_1$ is an even number, we do not do a change of variable for the index $k=n_2$.
To be more precise,
let $\mathcal{I}:=\{n_1+2,n_1+3,\ldots,n_2\}$ be a set of indexes and define $K(n_1,n_2)=\lfloor \frac{n_2-n_1-1}{2} \rfloor$.
For $K\geq 1$ we set  
\begin{equation}\label{eq:cofvar}
\hat u_k :=u_{2k+n_1}+u_{2k+n_1+1}\quad \textrm{ and }\quad
\omega_k :=\frac{u_{2k+n_1}-u_{2k+n_1+1}}{\hat u_k}\,
\end{equation}
for each  $k\in \{1,\ldots,K\}$.
Observe that
\begin{equation}\label{eq:defK}
\sum\limits_{k\in \mathcal{I}}u_k=\sum\limits_{k=1}^{K} \hat u_k+u_{n_2}\cf{n_2-n_1-1\textrm{ is odd}}\,.
\end{equation}
In fact, the preceding equality holds even if $K=0$ recalling the convention about empty sums.
We point out that right-hand side of \eqref{eq:defK} does not depend on $(\omega_k:1\leq k\leq K)$.

In the sequel, for $j_1<j_2$ we show that
\begin{equation}\label{eq:deltaintegral4}
\begin{split}
&\mean{\left(\sum\limits_{k=1}^{K} \hat u_k 
\omega_k
\right)^2 \cf{n_1=j_1,n_2=j_2}}_{\nu(\rmd^{n}\fu)}\\
&\hspace{4cm}=
\frac{1}{3}
\int_{\R_*^{n}}\,(n-1)!
\rmd^{n} \fu \, \delta\!  
\left(\sum_{k=1}^n u_k-1\right)\sum\limits_{k=1}^{K} \hat u^2_k \cf{n_1=j_1,n_2=j_2}\,,
\end{split}
\end{equation}
where
 $\nu(\rmd^{n}\fu)=\rmd^{n}\fu \cdot (n-1)! \delta(t_n(\fu)-1)$.
We stress that $n_i=N(t;T r_i \fu)$.
It is clear that the restriction $n_1=j_1$ does not depend on $\omega_1,\ldots, \omega_{K}$ since we do not do any change of variables for $k\leq j_1+1$. However, a priori 
the condition $n_2=j_2$ could depend on $\omega_1,\ldots, \omega_{K}$.
By \eqref{eq:r2n} and
\eqref{eq:defK} we find that it is not the case.
Using the symmetry,
in the case $n_2<n_1$ we 
set $\mathcal{I}:=\{n_2+2,n_2+3,\ldots,n_1\}$ be a set of indexes and define $K(n_1,n_2)=\lfloor \frac{n_1-n_2-1}{2} \rfloor$.
 
On  both sides of \eqref{eq:deltaintegral4} we can replace the random variable $K(n_1,n_2)$ by the fixed value $K(j_1,j_2)$. For convenience, we denote $K(j_1,j_2)$ by $K$  in the following.
We note that \eqref{eq:deltaintegral4} is trivially true for $K=0$. Hence, we only need to prove for  $K \geq 1$ .
We start by observing
\begin{equation}
\begin{split}
&
\mean{\left(\sum\limits_{k=1}^{K}  \hat u_k 
\omega_k
\right)^2 \cf{n_1=j_1,n_2=j_2}}_{\nu(\rmd^{n}\fu)}\\
&\qquad \qquad=
\int_{\R_*^{n}}\,(n-1)!
\rmd^{n} \fu \, \delta\!  
\left(\sum\limits_{k=1}^{K} \hat u_k+\sum_{k\in \mathcal{J}} u_k-1\right)\left(\sum\limits_{k=1}^{K} \hat u_k 
\omega_k
\right)^2\cf{n_1=j_1,n_2=j_2}\,,
\end{split}
\end{equation}
where 
\[
\mathcal{J}:=
\begin{cases}
(\{1,2,\ldots,n\}\setminus \mathcal{I})\cup {\{n_2\}} & \textrm{ if $n_2-n_1$ is an even number}\,,\\
\{1,2,\ldots,n\}\setminus \mathcal{I} & \textrm{ if $n_2-n_1$ is an odd number}\,.
\end{cases}
\]

The following computations are based on properties of the $\delta$-constraint measures whose construction and main properties are discussed in detail in  Appendix~\ref{ap:delta}.
We apply Lemma~\ref{lem:Borelprop}, in particular, 
we wish to approximate the measure by ordinary Lebesgue integrals using the mollifier functions
$\Phi_\varepsilon(\cdot;1)$ defined there.  To do this rigorously, it is necessary that the rest of the integrand is continuous: for this reason, we first need to approximate the indicator function $\cf{n_1(\fu)=j_1,n_2(\fu)=j_2}$ by a sequence of continuous functions (for fixed $t$, $r_1$, and $r_2$).  This can be done even monotonously using non-negative functions and without introducing a dependence on the variables $\omega_1,\ldots, \omega_{K}$, and thus the limits of the sequence can be taken in the integrands, in the beginning and at the end of the computation, using monotone convergence theorem.  To simplify the notation, we make the computations using the notation 
$\cf{n_1(\fu)=j_1,n_2(\fu)=j_2}$ for one of its continuous approximants.

Replacing the $\delta$-function by the mollifier $\Phi_\varepsilon$ and after using the above continuous approximant for the integrand, we may perform the change of variables \eqref{eq:cofvar} using standard rules of Lebesgue measures.  This yields
\begin{equation}
\begin{split}
\int_{\R_*^{n}}\, &
\rmd^{n} \fu \,   \Phi_\varepsilon \left(\sum\limits_{k=1}^{K} \hat u_k+\sum_{k\in \mathcal{J}} u_k-1\right)\left(\sum\limits_{k=1}^{K} \hat u_k 
\omega_k
\right)^2 \cf{n_1=j_1,n_2=j_2}\\
&=
\int_{\R^n}\,
\rmd \hat u_1 \rmd \omega_1\cdots
\rmd \hat u_K \rmd \omega_K \prod_{k\in \mathcal{J}} \rmd u_k \,
 \Phi_\varepsilon \left(\sum\limits_{k=1}^{K} \hat u_k+\sum_{k\in \mathcal{J}} u_k-1\right)\\
&\qquad\times
\prod_{k=1}^{K}\left(
\cf{\hat u_k>0}\cf{|\omega_k|<1}\frac{\hat u_k}{2}\right)
\prod_{k\in \mathcal{J}}
\cf{u_k>0}
\left(\sum\limits_{k=1}^{K} \hat u_k 
\omega_k
\right)^2\cf{n_1=j_1,n_2=j_2}\\
&=\frac{1}{3}
\int_{\R_*^{K}\times \R_*^{|\mathcal{J}|}}\,
\rmd \hat u_1 \cdots
\rmd \hat u_K \prod_{k\in \mathcal{J}} \rmd u_k \,
 \Phi_\varepsilon \left(\sum\limits_{k=1}^{K} \hat u_k+\sum_{k\in \mathcal{J}} u_k-1\right)\\
&\qquad\times
\prod_{k=1}^{K}\left(
\cf{\hat u_k>0}{\hat u_k}\right)
\prod_{k\in \mathcal{J}}
\cf{u_k>0}
\sum\limits_{k=1}^{K} \hat u^2_k\cf{n_1=j_1,n_2=j_2}\,,
\end{split}
\end{equation}
where the last equality holds by Fubini's theorem, the fact that the variables $\hat u_1,\ldots,\hat u_K$ and $\omega_1,\ldots,\omega_K$ are independent, and that the argument in the function $\Phi_\varepsilon$ does not depend on $\omega_1,\ldots,\omega_K$ (see \eqref{eq:defK}).
Also, $|\mathcal{J}|$ denotes the cardinality of $\mathcal{J}$.
Now, we reintroduce the variables $\omega_1,\ldots,\omega_K$ yielding that final integral is equal to
\begin{equation}
\begin{split}
\frac{1}{3}
\int_{\R^{n}}\,& 
\rmd \hat u_1 \rmd \omega_1\cdots
\rmd \hat u_K \rmd \omega_K\prod_{k\in \mathcal{J}} \rmd u_k \,
 \Phi_\varepsilon \left(\sum\limits_{k=1}^{K} \hat u_k+\sum_{k\in \mathcal{J}} u_k-1\right)\\
&\qquad\times
\prod_{k=1}^{K}\left(
\cf{\hat u_k>0}{\hat u_k}\right)
\prod_{k=1}^{K}\left(
\cf{|\omega_k|<1}\frac{1}{2}\right)
\prod_{k\in \mathcal{J}}
\cf{u_k>0}
\sum\limits_{k=1}^{K} \hat u^2_k \cf{n_1=j_1,n_2=j_2}\,.
\end{split}
\end{equation}
Inverting the change of variables and taking $\epsilon \to 0$ followed by the limit for the continuous approximants,  we deduce \eqref{eq:deltaintegral4}.

For notational convenience we set $K=0$ when $n_1=n_2$.
By the Law of total probability 
we have
\begin{equation}\label{eq:media}
\begin{split}
\mean{\left(\sum\limits_{k=1}^{K} \hat u_k 
\omega_k
\right)^2}_{\nu(\rmd^{n}\fu)}&=
\sum\limits_{j_1=1,j_2=1}^n 
\mean{\left(\sum\limits_{k=1}^{K} \hat u_k 
\omega_k
\right)^2 \cf{n_1=j_1,n_2=j_2}}_{\nu(\rmd^{n}\fu)}\\
&=\frac{1}{3}
\sum\limits_{j_1=1,j_2=1}^n
\int_{\R_*^{n}}\,\nu(\rmd^{n}\fu)
\left(\sum\limits_{k=1}^{K} \hat u^2_k\right) \,
\cf{n_1=j_1,n_2=j_2}\\
&\leq \frac{1}{3}
\sum\limits_{j_1=1,j_2=1}^n
\int_{\R_*^{n}}\,\nu(\rmd^{n}\fu)
\max\limits_{1\leq k\leq K} \hat u_k
\left(\sum\limits_{k=1}^{K} \hat u_k\right) \,
\cf{n_1=j_1,n_2=j_2}\\
&\leq \frac{2}{3}
\sum\limits_{j_1=1,j_2=1}^n
\int_{\R_*^{n}}\,\nu(\rmd^{n}\fu)
\max\limits_{1\leq k\leq n} u_k
\left(\sum\limits_{k=1}^{K} \hat u_k\right) \,
\cf{n_1=j_1,n_2=j_2}\,.
\end{split}
\end{equation}
Recall that $r_1>0$ and $r_2>0$.
For $n_1<n_2$, \eqref{eq:Trr} reads
\[
 t_{n_1}(\fu)\leq \frac{t}{T r_1}< t_{n_1+1}(\fu)\quad \textrm{ and } \quad
 t_{n_2}(\fu)\leq \frac{t}{T r_2}<t_{n_2+1}(\fu)\,.
\]
which with the help of \eqref{eq:defK} implies
\begin{equation}
\sum\limits_{k=1}^{K} \hat u_k+u_{n_2}\cf{n_2-n_1-1\textrm{ is odd}}
=
\sum\limits_{k\in \mathcal{I}} u_k = 
t_{n_2}(\fu)-t_{n_1+1}(\fu)\leq 
\frac{t}{T r_2}-\frac{t}{T r_1}.
\end{equation}
Analogously, for $n_2<n_1$ we obtain
\begin{equation}
\sum\limits_{k\in \mathcal{I}} u_k = 
t_{n_1}(\fu)-t_{n_2+1}(\fu)\leq 
\frac{t}{T r_1}-\frac{t}{T r_2}.
\end{equation}
Combining 
the preceding inequality with \eqref{eq:media} and Fubini's theorem yields
\begin{equation}\label{eq:boundc}
\begin{split}
\mean{\left(\sum\limits_{k=1}^{K} \hat u_k 
\omega_k
\right)^2}_{\nu(\rmd^{n}\fu)}&
\leq \frac{2}{3}
\sum\limits_{j_1=1,j_2=1}^n
\int_{\R_*^{n}}\,\nu(\rmd^{n}\fu)
\max\limits_{1\leq k\leq n} u_k
\left|
\frac{t}{T r_2}-\frac{t}{T r_1}\right| \,
\cf{n_1=j_1,n_2=j_2}\\
&=\frac{2t}{3T}\left|
\frac{1}{r_2}-\frac{1}{r_1}\right|
\sum\limits_{j_1=1,j_2=1}^n
\int_{\R_*^{n}}\,\nu(\rmd^{n}\fu)
\left(\max\limits_{1\leq k\leq n} u_k\right)
\,\cf{n_1=j_1,n_2=j_2}\\
&=\frac{2t}{3T}\left|
\frac{1}{r_2}-\frac{1}{r_1}\right|
\mean{\max\limits_{1\leq k\leq n} u_k}_{\nu(\rmd^{n}\fu)}\,.
\end{split}
\end{equation}
By Lemma~\ref{lem:maxmoments} 
there exists a positive constant $C$ such that 
\[
\mean{\max\limits_{1\leq k\leq n} u_k}_{\nu(\rmd^{n}\fu)}\leq \frac{C\ln(n+1)}{n}\quad \textrm{ for all } n\in \mathbb{N}\,.
\] 
Combining the preceding inequalities in \eqref{eq:boundc} we obtain
\begin{equation}\label{eq:nuestimate}
\mean{\left(\sum\limits_{k=1}^{K} \hat u_k 
\omega_k
\right)^2}_{\nu(\rmd^{n}\fu)}\leq 
\frac{2C t}{3T}\left|
\frac{1}{r_2}-\frac{1}{r_1}\right|
\frac{\ln(n+1)}{n}\,.
\end{equation}
Inserting \eqref{eq:nuestimate} in
\eqref{eq:aimD2} we have
\begin{equation}\label{eq:all}
\begin{split}
\mathcal{R}_2&\leq 
\frac{C v^2_0L^{-2} T^2}{3}\sum_{n=1}^\infty p_n(T_*)\,
 \frac{\ln(n+1)}{n}\frac{1}{Z_{\beta,n}} 
\int_{\R_*} \nu_2(\rmd r_1)\,  (g(r_1)-1)_+ \times\\
&\qquad\qquad \int_{\R_*} \nu_2(\rmd r_2)\, r^2_2(1-g(r_2))_+    \left|
\frac{1}{r_2}-\frac{1}{r_1}\right|\,.
\end{split}
\end{equation}
Note that
\begin{equation}\label{eq:nu2cota}
\int_{\R_*} \nu_2(\rmd r_2)\, r^2_2(1-g(r_2))_+ \leq \mean{r^2}_{\nu_2}=\frac{1}{n}+1\leq 2\,.
\end{equation}
We need to estimate 
\begin{equation}
\frac{1}{Z_{\beta,n}}\int_{\R_*} \nu_2(\rmd r_1) \int_{\R_*} \nu_2(\rmd r_2)\, 
  (1-g(r_2))_+ (g(r_1)-1)_+ r_2\left|1-\frac{r_2}{r_1}\right| \,.
\end{equation}
Here, it is important that $\beta=n$.
Since $r\mapsto g(r)$ is strictly monotone increasing from $g(0)<1$ to $g(1)>1$,
there is a unique $r_*\in (0,1)$ such that $g(r_*)=1$, and then $g(r)<1$ if and only if $r<r_*$.  Since $g(r)=0$ for $r>1$, the above integrand can be non-zero
only if $0<r_2<r_*$ and $r_*<r_1\leq 1$.  In particular, then $r_2<r_1$, and we have
\[
 0<1-\frac{r_2}{r_1}\leq 1-r_2\,.
\]
Since $Z_{\beta,n} =\int_{\R_*} \nu_2(\rmd r) (g(r)-1)_+>0$ and $\beta=n$, we have $\mean{r}_{\nu_2}=1$ and 
$\mean{(1-r)^2}_{\nu_2}=\frac{1}{n}$.
Then
\begin{equation}\label{eq:nu2cota1}
\begin{split}
\frac{1}{Z_{\beta,n}}&\int_{\R_*} \nu_2(\rmd r_1) \int_{\R_*} \nu_2(\rmd r_2)\, 
  (1-g(r_2))_+ (g(r_1)-1)_+ r_2\left|1-\frac{r_2}{r_1}\right |\\
 &\leq 
\int_{\R_*} \nu_2(\rmd r_2)  (1-g(r_2))_+ r_2|1-r_2| \leq 
\mean{r^2}^{1/2}_{\nu_2}
\mean{(1-r)^2}^{1/2}_{\nu_2}
\leq  \frac{\sqrt{2}}{\sqrt{n}}\,. 
\end{split}
\end{equation}
Combining \eqref{eq:nu2cota} and \eqref{eq:nu2cota1} in 
 \eqref{eq:all} we deduce
\begin{equation}\label{e:boundd2}
\mathcal{R}_2\leq 
\frac{\sqrt{2}Cv^2_0L^{-2} T^2}{3}\sum_{n=1}^\infty p_n(T_*)\,
 \frac{\ln(n+1)}{n^{3/2}}\,.
\end{equation}
By Lemma~\ref{lem:momentpoisson} in Appendix~\ref{ap:tools} and the fact that $T_*=\lambda T$ and $L_*=|v_0|^{-1}\lambda L$  yields the existence of a constant $\widetilde{C}$ such that 
\begin{equation}\label{eq:cota}
\mathcal{R}_2\leq 
\frac{\sqrt{2}Cv^2_0L^{-2} T^2}{3}
\frac{\widetilde{C}\ln(\lambda T+3)}{(\lambda T)^{3/2}}= 3^{-1}C_2{L}^{-2}_*{T}^{1/2}_*
\ln({T}_*+3)\,,
\end{equation}
where $C_2=\sqrt{2}C\widetilde{C}$. This finishes the proof of Lemma~\ref{lem:D2term}.
\end{proof}
\noindent
\textbf{Estimates on $D^{\prime}_1$ and $D^{\prime}_2$}.
In this subsection we estimate the contribution of the incomplete jumps given in \eqref{eq:incompletejumps}.
\begin{lemma}[Incomplete jumps]\label{lem:D12term}
There is a positive constant $C_{1,2}$ such that for all $T>0$, $\lambda>0$ and $L>0$ it follows that
\begin{equation}\label{eq:D12term}
\begin{split}
\mathcal{R}_{1,2}:=v^2_0
\sum_{n=1}^\infty p_n(T_*)\,
(L^2 T)^{-1}  
\int_{\R_*^{n}}&\,(n-1)!\,\rmd^{n}\fu\, 
\delta\left(\sum_{k=1}^n u_k-1\right)\,
\int_{\R_*} \nu_2(\rmd r_1)\,  \int_{\R_*} \nu_2(\rmd r_2)\,  
\int_0^{T}
\rmd t\,  
\times\\
 &\hspace{-1cm}
(|D^\prime_1|^2+|D^\prime_2|^2)
\frac{1}{Z_{\beta,n}} (1-g(r_2))_+ (g(r_1)-1)_+\leq C_{1,2} T_* L^{-2}_* T^{-1/2}_*\,,
\end{split}
\end{equation}
where 
the probability measure $\nu_2$ is given in \eqref{eq:nu1nu2},
the function $g$ is given in \eqref{def:g} and the normalisation constant $Z_{\beta,n}$ is given in \eqref{eq:normalisation}.
\end{lemma}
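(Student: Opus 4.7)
The plan is to follow the template of Lemma~\ref{lem:D2term}, but with the change-of-variables argument replaced by a direct piecewise evaluation of $\int_0^T |D'_i(t,\fu,r_1,r_2,T)|^2 \rmd t$. The key observation is that on each gap $[t_k(Tr_i\fu),t_{k+1}(Tr_i\fu))$ of the process $X(\cdot;Tr_i\fu)$, the integrand equals $(t-t_k(Tr_i\fu))^2$, whose integral over the gap is exactly $(t_{k+1}-t_k)^3/3$.

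First I will fix $n\in\N$, $\fu\in\R_*^n$ with $\sum_{k=1}^n u_k=1$, and $r_i>0$, and exploit the padding convention $\delta^{(n)}_k=2T$ for $k>n$ to see that $X(\cdot;Tr_i\fu)$ has $j$-th jump time $Tr_i\sum_{k=1}^j u_k$ for $j\le n$, with the next jump only at $Tr_i+2T>T$. A direct computation will then give, for $r_i\le1$, the identity
\begin{equation*}
\int_0^T |D'_i(t,\fu,r_1,r_2,T)|^2 \rmd t = \frac{(Tr_i)^3}{3}\sum_{k=1}^n u_k^3 + \frac{T^3(1-r_i)^3}{3},
\end{equation*}
while for $r_i>1$ the residual boundary term is absent and the integral is dominated by $\frac{(Tr_i)^3}{3}\sum_{k=1}^n u_k^3$.

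I will then insert this decomposition into $\mathcal{R}_{1,2}$ and apply Fubini to treat the bulk and boundary contributions separately. For the bulk part, the $\fu$-integral against $(n-1)!\delta(t_n(\fu)-1)$ produces $\mean{\sum_{k=1}^n u_k^3}=6/((n+1)(n+2))$ by Lemma~\ref{lem:moments}; the $r_j$-integrations factorise through the decoupled off-diagonal weight $\frac{1}{Z_{\beta,n}}(1-g(r_2))_+(g(r_1)-1)_+$, and the identity $g(r)\nu_2(\rmd r)=nr^{n-1}\cf{0<r\le 1}\rmd r=\nu_1(\rmd r)$ together with the uniform bound $\mean{r^3}_{\nu_2}\le 6$ keeps the $r_j^3$-factor bounded. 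For the boundary term I will use that $r_1\le1$ on the support of $(g(r_1)-1)_+$ and that $(1-r_1)^3\le(1-r_1)^2$ there, yielding a $1/n$ bound via $\mean{(1-r)^2}_{\nu_2}=1/n$; the case $r_2>1$ in the $(1-g(r_2))_+$-support is already absorbed in the bulk-type estimate since the decomposition for $r_2>1$ has no boundary term. Each $n$-th summand in $\mathcal{R}_{1,2}$ is thus $O(v_0^2 T^2 L^{-2}/((n+1)(n+2)))$.

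The final step sums over $n$ using the Poisson-moment bound $\sum_{n=1}^\infty p_n(T_*)/((n+1)(n+2))\le \min\{\tfrac{1}{2},T_*^{-2}\}$, which gives $\mathcal{R}_{1,2}\le C v_0^2 T^2 L^{-2}\min\{1,T_*^{-2}\}=CL_*^{-2}\min\{T_*^2,1\}$, and the elementary inequality $\min\{T_*^2,1\}\le T_*^{1/2}$ valid for every $T_*>0$ produces the advertised bound $C_{1,2}T_* L_*^{-2} T_*^{-1/2}$. The main obstacle is the careful bookkeeping in the first step: the hard constraint $\sum_k u_k=1$ and the padding $u_k=2\beta_n$ for $k>n$ force the incomplete-jump geometry to depend on whether $r_i\le1$, and the residual boundary term must therefore be analysed in two cases. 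Every subsequent estimate is a direct reuse of moment results already developed in the paper.
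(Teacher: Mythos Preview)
Your piecewise evaluation of $\int_0^T |D'_i|^2\,\rmd t$ is exactly what the paper does, and your identity for $r_i\le 1$ and bound for $r_i>1$ are correct. However, two steps in your proposal break down, and one of them is the step that actually determines the rate $T_*^{-1/2}$.

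First, for the $D'_1$ boundary term you write ``$(1-r_1)^3\le(1-r_1)^2$ there, yielding a $1/n$ bound via $\mean{(1-r)^2}_{\nu_2}=1/n$''. This tacitly assumes $(g(r_1)-1)_+\le 1$, but $g(1)=n!\,n^{-n}e^{n}\sim\sqrt{2\pi n}$, so $(g-1)_+$ is not uniformly bounded. The fix is immediate with the identity you already quote: $(g(r_1)-1)_+\le g(r_1)$ and $g\,\nu_2=\nu_1$ give $\int\nu_2(\rmd r_1)(g(r_1)-1)_+(1-r_1)^3\le \mean{(1-r)^3}_{\nu_1}=\tfrac{6}{(n+1)(n+2)(n+3)}$, so this term is even $O(n^{-3})$.

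Second, and more seriously, you only discuss the support of $(1-g(r_2))_+$ on $\{r_2>1\}$, where indeed there is no boundary term. But $(1-g(r_2))_+$ is also positive on $\{0<r_2<r_*\}$ (with $r_*<1$ the unique zero of $g-1$), and there your decomposition for $r_2\le 1$ \emph{does} produce the boundary piece $T^3(1-r_2)^3/3$. After cancelling the $r_1$-integral against $Z_{\beta,n}$, you must control $\int\nu_2(\rmd r_2)(1-g(r_2))_+(1-r_2)^3\cf{r_2<r_*}$. Here $(1-g(r_2))_+\le 1$, but one cannot use the $g\nu_2=\nu_1$ trick, and the best general bound is $\mean{(1-r_2)_+^3}_{\nu_2}\le\mean{(1-r_2)^6}_{\nu_2}^{1/2}=O(n^{-3/2})$, exactly as the paper does. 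This term is therefore $O(n^{-3/2})$, \emph{not} $O(n^{-2})$, so your claim ``each $n$-th summand is $O(v_0^2T^2L^{-2}/((n+1)(n+2)))$'' is false, and consequently the bound $\mathcal{R}_{1,2}\le C L_*^{-2}\min\{T_*^2,1\}$ is not justified. With the correct $O(n^{-3/2})$ per summand, Lemma~\ref{lem:momentpoisson} gives $\sum_{n\ge 1}p_n(T_*)n^{-3/2}\le C T_*^{-3/2}$, hence $\mathcal{R}_{1,2}\le C\,v_0^2T^2L^{-2}T_*^{-3/2}=C\,T_*L_*^{-2}T_*^{-1/2}$ directly, without the detour through $\min\{T_*^2,1\}\le T_*^{1/2}$. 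In short, the missed $r_2<r_*$ boundary contribution is precisely the dominant term that sets the $T_*^{-1/2}$ rate.
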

\begin{proof}
We start by noticing that \eqref{eq:gammacoupling} and  \eqref{eq:defnu1nu2}
yield $0<r_1\leq 1$ and $r_2>0$.
If $n_1,n_2<n$, the remaining ``incomplete jumps'' can be estimated by
\[
 |D^{\prime}_1|\leq T r_1 u_{n_1+1}\,, \quad |D_2'|\leq T r_2 u_{n_2+1}\,. 
\]
By \eqref{eq:poissonprocess} and the fact that 
we are integrating over the measure $\delta(\sum_{k=1}^{n}{u_k}-1)$, we observe that
$n_i(t)=n$ occurs if and only if $t\ge t_n(T r_i u)=T r_i \sum_{k=1}^n u_k = T r_i$.
Then $0\leq t-t_n(T r_i u)= t-T r_i\leq T (1-r_i)$.
Therefore, in this case $|D^{\prime}_i|\leq T (1-r_i)$ which yields $0<r_2\leq 1$.
Hence, for $n_1(t)=n$ we have 
\[
 \frac{1}{L^2 T}  \int_{T r_1}^{T} \rmd t\, |D^{\prime}_1|^2\leq 
 \frac{T^2}{L^2}(1-r_1)^3\,.
\]
Similarly, for $n_2(t)=n$ we obtain 
\[
 \cf{r_2\leq 1}\frac{1}{L^2 T}  \int_{T r_2}^{T} \rmd t\, |D^{\prime}_2|^2\leq 
 \cf{r_2\leq 1}\frac{T^2}{L^2}(1-r_2)^3\,.
\]
For all other times, we estimate by splitting the integral at the points $0=t_0(T r_1 \fu)<t_1(T r_1 \fu)<\ldots<t_n(T r_1 \fu)=Tr_1$,
\[
 \frac{1}{L^2 T}  \int_0^{T r_1} \rmd t\, |D^{\prime}_1|^2=
\frac{1}{L^2 T}\sum_{k=0}^{n-1}  \int_{t_k(T r_1 \fu)}^{t_{k+1}(T r_1 \fu)} \rmd t\, |D^{\prime}_1|^2\leq \frac{1}{L^2 T}
 \sum_{k=0}^{n-1} T^3 r_1^3 u_{k+1}^3 \leq \frac{ T^2}{L^2} \sum_{k=1}^{n} u_{k}^3
  \,. 
\]
Analogously, for $r_2\leq 1$ we have
\[
 \frac{1}{L^2 T}  \int_0^{T r_2} \rmd t\, |D^{\prime}_2|^2\leq \frac{1}{L^2 T} 
 \sum_{k=0}^{n-1} T^3 r_2^3 u_{k+1}^3 \leq \frac{ T^2}{L^2} r_2^3\sum_{k=1}^{n} u_{k}^3
  \,.
\]
Finally, if $r_2>1$ and denoting $n':= N(T;T r_2 \fu)<n$, we estimate
\begin{equation}
\begin{split}
 \frac{1}{L^2 T}  \int_0^{T} \rmd t\, |D^{\prime}_2|^2&\leq \frac{1}{L^2 T} 
 \left(\sum_{k=0}^{n'-1}  \int_{t_k(Tr_2\fu)}^{t_{k+1}(Tr_2\fu)} \rmd t\, T^2 r_2^3 u_{k+1}^2
 +  \int_{t_{n'}(Tr_2\fu)}^{t_{n'+1}(Tr_2\fu)}\rmd t\,  T^2 r_2^2 u_{n'+1}^2\right)\\
& \leq \frac{T^2}{L^2} r_2^3\sum_{k=1}^{n} u_{k}^3
  \,.
\end{split}
\end{equation}
Collecting all pieces together we obtain
\[
\frac{1}{L^2 T}  \int_0^{T} \rmd t\, \left(|D^{\prime}_1|^2+|D^{\prime}_2|^2\right)
 \leq \frac{ T^2}{L^2}  \sum_{k=1}^{n} u_{k}^3\left(1+ r_2^3\right)
 + \frac{ T^2}{L^2} \left((1-r_1)^3+ (1-r_2)_+^3\right)\,. 
\]
This needs to be integrated over the measure for $\rmd^n \fu,\nu_2(\rmd r_1), \nu_2(\rmd r_2)$ in \eqref{eq:D12term}.
Since each term is non-negative and depends only on either $r_1$ or $r_2$, we use their 
known simple marginal distributions. By
Lemma~\ref{lem:moments} in Appendix~\ref{ap:tools} we have 
\[
  \mean{u_{k}^3} =  \frac{6}{n (n+1)(n+2)}\quad \textrm{ for all }\quad k\in \{1,\ldots,n\}\,.
\]
Moreover, since $\nu_2$ has Gamma distribution with parameters $n$ and $\beta$, straightforward computations yields
\[
  \mean{(1-r_1)^3}_{\nu_1} = \frac{6}{(n+1)(n+2)(n+3)}\quad \textrm{ and } \quad \mean{r_2^3}_{\nu_2} = \frac{n (n+1)(n+2)}{\beta^3}\,.
\]
By the Cauchy-Schwarz inequality we have 
$\mean{(1-r_2)^3_+}_{\nu_2}^2\leq \mean{(1-r_2)^6_+}_{\nu_2}\leq \mean{(1-r_2)^6}_{\nu_2}$.
Since $\nu_2$ is the Gamma distribution with parameters $n$ and $\beta$,
we point out that $\mean{(1-r_2)^6}_{\nu_2}$  can be evaluated explicitly, and is $O(n^{-3})$ for the choice $\beta=n$ which localises the mean $\mean{r}_{\nu_2}=1$, see Item~(5) of Lemma~\ref{lem:propgamma} in Appendix~\ref{ap:basic}.
That is, there exists a constant $C>0$ such that
\[
 \mean{(1-r_2)^3_+}_{\nu_2}
 = \int_0^1\rmd r\,\frac{1}{(n-1)!}(1-r)^3 n^n r^{n-1} \rme^{-n r} \leq \frac{C}{n^{3/2}}\,.
\]
Hence,
\begin{equation}
\begin{split}
(L^2 T)^{-1}  &
\int_{\R_*^{n}}\,(n-1)!\,\rmd^{n}\fu\, 
\delta\left(\sum_{k=1}^n u_k-1\right)\,
\int_{\R_*} \nu_2(\rmd r_1)\,  \int_{\R_*} \nu_2(\rmd r_2)\,  
\int_0^{T}
\rmd t\,  
\times\\ 
&\qquad
(|D^\prime_1|^2+|D^\prime_2|^2)
\frac{1}{Z_{\beta,n}} (1-g(r_2))_+ (g(r_1)-1)_+\\
&\hspace{-1cm}\leq 
  \frac{ T^2}{L^2} \frac{6}{(n+1)(n+2)} \left(1+ n^{-3}(n+2)(n+1)n\right)
 + \frac{ T^2}{L^2} \left(\frac{6}{(n+1)(n+2)(n+3)}+ \frac{C}{n^{3/2}}\right) .
\end{split}
\end{equation}
We stress that we have already set $\beta=n$.
Finally, we take the final missing expectation over the Poisson distribution of $n$ with parameter $T_*$  and with the help of Lemma~\ref{lem:momentpoisson} in Appendix~\ref{ap:tools} we obtain the existence of a positive constant $C_{1,2}$ such that 
\[
\mathcal{R}_{1,2}\leq 
C_{1,2} T_* L^{-2}_* T^{-1/2}_*\,.
\]
This finishes the proof of Lemma~\ref{lem:D12term}.
\end{proof}

\noindent
\textbf{Estimates on $D_1$}.
In this subsection we estimate the contribution of the term $D_1$ given in \eqref{eq:D1estimate}.
\begin{lemma}[Complete jumps]\label{lem:D1term}
There is a positive constant $C_1$ such that for all $T>0$, $\lambda>0$ and $L>0$ it follows that
\begin{equation}\label{eq:D1term}
\begin{split}
\mathcal{R}_1:=v^2_0
\sum_{n=1}^\infty p_n(T_*)\,
(L^2 T)^{-1}  
\int_{\R_*^{n}}&\,(n-1)!\,\rmd^{n}\fu\, 
\delta\left(\sum_{k=1}^n u_k-1\right)\,
\int_{\R_*} \nu_2(\rmd r_1)\,  \int_{\R_*} \nu_2(\rmd r_2)\,  
\int_0^{T}
\rmd t\,  
\times\\
\qquad\qquad &
|D_1|^2
\frac{1}{Z_{\beta,n}} (1-g(r_2))_+ (g(r_1)-1)_+\leq C_1 {L}^{-2}_*\,,
\end{split}
\end{equation}
where 
the probability measure $\nu_2$ is given in \eqref{eq:nu1nu2},
the function $g$ is given in \eqref{def:g} and the normalisation constant $Z_{\beta,n}$ is given in \eqref{eq:normalisation}.
\end{lemma}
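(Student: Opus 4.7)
The plan is to expose the two sources of smallness in $D_1 = T(r_1-r_2)\,S_{n_1(t)}$, where $S_m := \sum_{k=1}^m (-1)^{k-1} u_k$: a factor of order $n^{-1/2}$ coming from the bulk distance $(r_1-r_2)^2$ under the coupling $\gamma_0$ (which concentrates both $r_1$ and $r_2$ near $1$), together with a second factor of order $n^{-1/2}$ coming from cancellation in the alternating partial sum $S_{n_1(t)}$ under the simplex measure. These two estimates are controlled respectively by the variance of the Gamma$(n,n)$ distribution and by the exchangeable Dirichlet moments of $\fu$.

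First I would apply Fubini's theorem and, for fixed $\fu$ on the simplex and $r_1 \in (0,1]$ (forced by $(g(r_1)-1)_+ > 0$), compute the inner time integral explicitly. Since $n_1(t)$ is piecewise constant between the consecutive jump times $t_m(Tr_1\fu) = Tr_1\, t_m(\fu)$ and $t_n(\fu)=1$, the change of variables $\tau = t/(Tr_1)$ produces
\begin{equation*}
\int_0^T S_{n_1(t)}^2\,\rmd t = Tr_1 \sum_{m=0}^{n-1} u_{m+1}\,S_m^2 + T(1-r_1)\,S_n^2.
\end{equation*}

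Next, I would take the expectation under the simplex measure $\nu(\rmd^n\fu) = (n-1)!\,\delta(t_n(\fu)-1)\,\rmd^n\fu$. Using the Dirichlet moments $\mean{u_i^2 u_j} = 2/(n(n+1)(n+2))$ and $\mean{u_i u_j u_k} = 1/(n(n+1)(n+2))$ for distinct indices, together with the observation that $(\sum_{i=1}^{m-1}(-1)^{i-1})^2 \in \{0,1\}$, a direct calculation yields
\begin{equation*}
\mean{u_m S_{m-1}^2}_{\nu} = \frac{m-1+A_{m-1}}{n(n+1)(n+2)} \quad\text{and}\quad \mean{S_m^2}_{\nu} = \frac{m+A_m}{n(n+1)} \leq \frac{1}{n},
\end{equation*}
with $A_j \in \{0,1\}$. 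Summing the first identity over $m \in \{1,\ldots,n\}$ gives $\mean{\sum_{m=0}^{n-1} u_{m+1} S_m^2}_{\nu} \leq 1/(2(n+2))$, so altogether $\mean{\int_0^T S_{n_1(t)}^2\,\rmd t}_{\nu} \leq CT/n$ uniformly in $r_1 \in (0,1]$. For the $r_1, r_2$ integrals I would use $(r_1-r_2)^2 \leq 2(1-r_1)^2 + 2(1-r_2)^2$; integrating out the variable that is not squared produces exactly the normalising constant $Z_{\beta,n}$, which cancels, and leaves $\int \nu_2(\rmd r)(g(r)-1)_+(1-r)^2 \leq \int \nu_2(\rmd r)(1-r)^2 = 1/n$ (and analogously for $(1-g)_+$), since $\nu_2$ is Gamma$(n,n)$. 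Assembling all pieces with $v_0^2 T^2/L^2 = T_*^2/L_*^2$ yields
\begin{equation*}
\mathcal{R}_1 \leq \frac{C\,T_*^2}{L_*^2}\sum_{n=1}^\infty \frac{p_n(T_*)}{n^2},
\end{equation*}
and a uniform bound $\sum_n p_n(T_*)/n^2 \leq C(T_*+1)^{-2}$ from Lemma~\ref{lem:momentpoisson} then gives the desired $\mathcal{R}_1 \leq C_1 L_*^{-2}$.

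I expect the main obstacle to be the extraction of the $1/n$ decay from the alternating partial sum $S_m$: the naive bound $|S_m| \leq \sum_k u_k = 1$ is $O(1)$ and destroys the cancellation. Getting the correct order requires carefully exploiting the exchangeable Dirichlet moments, together with the identity that collapses the alternating double sum $\sum_{i\neq j,\,i,j<m}(-1)^{(i-1)+(j-1)}$ to an $O(1)$ term minus $(m-1)$, so that $\mean{S_m^2}_\nu$ becomes $O(m/n^2)$ rather than $O(m/n)$. A subsidiary technical step is verifying that the Poisson-moment inequality from Lemma~\ref{lem:momentpoisson} gives $\sum_n p_n(T_*)/n^2 \leq C(T_*+1)^{-2}$ uniformly in $T_* > 0$, rather than only in the large-$T_*$ regime.
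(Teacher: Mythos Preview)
Your proposal is correct and follows essentially the same route as the paper's proof: both compute the time integral of $S_{n_1(t)}^2$ by splitting at the jump times $t_m(Tr_1\fu)$, take the simplex expectation using the Dirichlet mixed moments to extract an $O(1/n)$ factor from the alternating sum, bound $(r_1-r_2)^2$ via $2(1-r_1)^2+2(1-r_2)^2$ and the Gamma$(n,n)$ variance to get a second $O(1/n)$, and finish with the Poisson inverse-moment bound $\sum_n p_n(T_*)/n^2\le C_2/T_*^2$.

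Your final worry is unnecessary: Lemma~\ref{lem:momentpoisson} already gives $\sum_{n\ge 1} p_n(T_*)/n^2 \le C_2/T_*^2$ for \emph{all} $T_*>0$, so after multiplying by $T_*^2/L_*^2$ the $T_*$ dependence cancels exactly and you get $\mathcal{R}_1\le C_2/L_*^2$ uniformly, with no separate treatment of small $T_*$ needed.
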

\begin{proof}
By \eqref{eq:D1estimate} we have 
\[
D_1^2 = (r_1-r_2)^2 T^2 \sum_{k,k'=1}^{n} \cf{k,k'\leq N(t;T r_1 \fu)} (-1)^{k+k'} u_k u_{k'}\,.
\]
Note that
\begin{equation}
\begin{split}
& \frac{1}{T} \int_0^{T}\rmd t \cf{k,k'\leq N(t;T r_1 \fu)}=
 \sum_{m=1}^n \cf{k,k'\leq m}\frac{1}{T} \int_0^{T}\rmd t \cf{N(t;T r_1 \fu)=m}
\\ & \qquad\qquad
 = \sum_{m=1}^{n-1} \cf{k,k'\leq m} r_1 u_{m+1}+ 1-r_1
 = 1-r_1 +\sum_{m=2}^{n} \cf{k,k'\leq m-1} r_1 u_{m}\,.
\end{split}
\end{equation}
Therefore,
\begin{equation}
\begin{split}
& \frac{1}{T} \int_0^{T}\rmd t 
 D_1^2 
\\ & \quad 
 = (r_1-r_2)^2 T^2 r_1 \sum_{m=2}^{n} u_m \sum_{k,k'=1}^{m-1}(-1)^{k+k'} u_k u_{k'}
 +(r_1-r_2)^2 T^2 (1-r_1) \sum_{k,k'=1}^{n} (-1)^{k+k'} u_k u_{k'}.
\end{split}
\end{equation}
Taking an expectation over $(n-1)!\,\rmd^{n}\fu\, 
\delta\left(\sum_{k=1}^n u_k-1\right)$
and using Lemma~\ref{lem:moments} in Appendix~\ref{ap:tools} we obtain
\begin{equation}
\begin{split} 
&\mathcal{J}_1:=\int_{\mathbb{R}^n_*}(n-1)!\,\rmd^{n}\fu\, 
\delta\left(\sum_{k=1}^n u_k-1\right)
\frac{1}{L^2 T} \int_0^{T}\rmd t 
 D_1^2 \\
&\hspace{4cm} =
 (r_1-r_2)^2 T^2 L^{-2} r_1 \sum_{m=2}^{n}\sum_{k,k'=1}^{m-1}(-1)^{k+k'}
 \frac{ \cf{k'= k}+  1}{n(n+1)(n+2)}
\\ 
& \hspace{4cm}  \qquad 
 +(r_1-r_2)^2 T^2 L^{-2} (1-r_1) \sum_{k,k'=1}^{n} (-1)^{k+k'}
 \frac{\cf{k'=k} + 1}{n(n+1)}\,.
\end{split}
\end{equation}
Note that for any $m$ we have $\sum_{k=1}^{m-1}(-1)^{k}=\pm 1$. Thus, 
\begin{equation} 
\mathcal{J}_1\leq (r_1-r_2)^2 T^2L^{-2} r_1 
 \frac{2}{n+2}
 +(r_1-r_2)^2 T^2 L^{-2} (1-r_1)
 \frac{2}{n+1}\leq \frac{2}{n+1}
 (r_1-r_2)^2 T^2L^{-2}\,.
\end{equation}
Since
\[
 (r_1-r_2)^2 \leq 2((1-r_2)^2 +(1-r_1)^2 )\,,
\]
\[
\mean{(1-r_1)^2}_{\nu_1}=\frac{2}{(n+1)(n+2)}\quad \textrm{ and } \quad
 \mean{(1-r_2)^2}_{\nu_2}=\frac{1}{n}\,,
\]
we obtain the existence of a positive constant $C$ such that
\[
\int_{\R_*} \nu_2(\rmd r_1)\,  \int_{\R_*} \nu_2(\rmd r_2)
\frac{1}{Z_{\beta,n}} (1-g(r_2))_+ (g(r_1)-1)_+
\mathcal{J}_1\leq C\frac{T^2L^{-2}}{n^2}.
\]
Finally, we take the final missing expectation over the Poisson distribution of $n$ with parameter $T_*$  and with the help of Lemma~\ref{lem:momentpoisson} in Appendix~\ref{ap:tools} we obtain the existence of a positive constant $C_1$ such that 
\[
\mathcal{R}_{1}\leq 
C_1 L^{-2}_* \,.
\]
This finishes the proof of Lemma~\ref{lem:D1term}.
\end{proof}

In the sequel, we stress the fact that Proposition~\ref{prop:coinflipcoupling} is just a consequence of what
we have already proved up to here.
\begin{proof}[Proof of Proposition~\ref{prop:coinflipcoupling}]
Combining Lemma~\ref{lem:D2term}, Lemma~\ref{lem:D12term} and Lemma~\ref{lem:D1term} in \eqref{eq:couplingflip} implies \eqref{eq:cflipine}. This finishes the proof of
Proposition~\ref{prop:coinflipcoupling}.
\end{proof}

\section{\textbf{Proof of Proposition~\ref{prop:naturalcoupling}: Synchronous coupling}}
\label{sec:naturalcoupling}

In this section we prove of Proposition~\ref{prop:naturalcoupling}. 
Recall that $\mathfrak{u}=(u_j:j\in \mathbb{N})$ is a sequence of i.i.d.\ random variables with exponential distribution of parameter one
defined in the probability space $(\Omega_1,\mathcal{F}_1,\mathbb{P}_1)$.
For each (deterministic) $n\in \mathbb{N}$,
\eqref{def:Ytrunk} defines 
\[
Y(t;n,\fd^{(n)})=L^{-1}v_0\sum\limits_{k=1}^{M}(-1)^{k-1}\delta^{(n)}_k
+L^{-1}v_0(-1)^M(t-t_M(\fd^{(n)}))\,,
\quad \textrm{ with }\quad M:=N(t;\fd^{(n)})
\]
for any $t\geq 0$,
where $\fu^{(n)}:=(u^{(n)}_k:k\in \mathbb{N})$, $\mathfrak{\delta}^{(n)}:=(\delta^{(n)}_k:k\in \mathbb{N})$
and $t_M(\fd^{(n)})$ are defined in \eqref{eq:utruncado}, \eqref{eq:skn} and \eqref{eq:tmn}, respectively.
Moreover,
for $n\in \mathbb{N}\setminus\{1\}$ we define the C\`adl\`ag version of the random walk  $(\eta^*_k(\fs):k\in \mathbb{N}_0)$ given in \eqref{def:randomwalk}. That is,
\[
Z(t;n,\fs)=\sum\limits_{k=1}^{\widetilde{n}}\cf{t\geq  t_{2k}(\fs)}\eta^*_k(\fs) \quad \textrm{ for any }\quad t\geq 0\,,
\]
where $\widetilde{n}=\lfloor n/2 \rfloor$
and $\fs=(s_1,s_2,\ldots,)=(1/\lambda)(u_1,u_2,\ldots,)$
is distributed according to the infinitely product measure of exponential distributions with parameter $\lambda>0$.

We define the auxiliary process
\[
\widetilde{Z}(t;n,
\fs)=\frac{T_* }{n}\sum\limits_{k=1}^{\widetilde{n}}\cf{t\geq t_{2k}(\mathfrak{w}^{(n)}(\fs))}\eta^*_k(\fs) \quad \textrm{ for any }\quad t\geq 0\,,
\]
where the sequence $(w^{(n)}_j(\fs):j\in \mathbb{N})$ is defined by setting
\[
w^{(n)}_j(\fs)=\frac{T}{n} u^{(n)}_j=
\frac{T_*}{n} s^{(n)}_j
\quad \textrm{ for any }\quad j\in \mathbb{N}\,.
\]
For notational convenience we write $(w^{(n)}_j:j\in \mathbb{N})$ instead of 
$(w^{(n)}_j(\fs):j\in \mathbb{N})$.
For $n\in \{0,1\}$ we set $Z(t;n,\fs)=\widetilde{Z}(t;n,\fs)=0$ for all $t\geq 0$.
Analogously to \eqref{def:KU} and \eqref{def:KUZ} we define 
$\widetilde{Z}:=(\widetilde{Z}(t;K,\fs):t\geq 0)$, where $K$ is a random variable with Poisson distribution with parameter $T_*=\lambda T$   
defined in the probability space 
$(\Omega_2,\mathcal{F}_2,\mathbb{P}_2)$ and independent of $\fu$. 
Then we let
$(\Omega,\mathcal{F},\mathbb{Q}):=(\Omega_1\times \Omega_2,\mathcal{F}_1\otimes\mathcal{F}_2,\mathbb{P}_1\otimes\mathbb{P}_2)$ be the product probability space of 
$(\Omega_1,\mathcal{F}_1,\mathbb{P}_1)$ and $(\Omega_2,\mathcal{F}_2,\mathbb{P}_2)$.

We compare the processes $Y$, $Z$ and $\widetilde{Z}$ as follows: 
\begin{equation}\label{eq:cYZtildeZ}
\mathcal{W}_2(\mathbb{Y}_{[0,T]},\mathbb{Z}_{[0,T]})\leq 
\mathcal{W}_2(\mathbb{Y}_{[0,T]},\mathbb{\widetilde{Z}}_{[0,T]})+\mathcal{W}_2(\mathbb{\widetilde{Z}}_{[0,T]},\mathbb{Z}_{[0,T]})\,.
\end{equation}
The following lemma provides an estimate for the second term of the right-hand side of \eqref{eq:cYZtildeZ}.
\begin{lemma}\label{lem:compZZ*}
For any $L>0$, $T>0$, $\lambda>0$ and $v_0\in \mathbb{R}\setminus\{0\}$ it follows that
\begin{equation}
\mathcal{W}^2_2(\mathbb{\widetilde{Z}}_{[0,T]},\mathbb{Z}_{[0,T]})\leq 
28{T_*}{L^{-2}_*}
T^{-1/2}_*(1+T^{-1/2}_*)\,.
\end{equation}
In particular, for $T_*\geq 1$ it follows that
\begin{equation}
\mathcal{W}^2_2(\mathbb{\widetilde{Z}}_{[0,T]},\mathbb{Z}_{[0,T]})\leq 
56{T_*}{L^{-2}_*}
T^{-1/2}_*\,.
\end{equation}
\end{lemma}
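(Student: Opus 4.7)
The plan is to use the fact that $\widetilde Z$ and $Z$ are jointly defined on $(\Omega,\mathcal F,\mathbb Q)$ to supply a synchronous coupling, bounding the time-averaged $L^2$-distance between the two paths conditionally on $K=n$. A direct inspection of \eqref{def:Ztrunk} and of the definitions of $\widetilde Z$ and $\mathfrak w^{(n)}$ shows that $t_{2k}(\mathfrak w^{(n)}(\fs))=\alpha_n\,t_{2k}(\fs)$ for every $k\leq\widetilde n$, with $\alpha_n:=T_*/n$, so that the pathwise scaling identity
\begin{equation*}
\widetilde Z(t;n,\fs)=\alpha_n\,Z(t/\alpha_n;n,\fs)\qquad(n\geq 2)
\end{equation*}
holds (both sides being zero for $n\in\{0,1\}$). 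Writing
\begin{equation*}
\widetilde Z(t;n,\fs)-Z(t;n,\fs)=(\alpha_n-1)Z(t/\alpha_n;n,\fs)+\bigl(Z(t/\alpha_n;n,\fs)-Z(t;n,\fs)\bigr),
\end{equation*}
and invoking $(a+b)^2\leq 2a^2+2b^2$, I would split $\mathcal W_2^2$ into twice an \emph{amplitude piece} plus twice a \emph{time-shift piece} and treat them separately.

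For the amplitude piece, $Z(\cdot;n,\fs)$ is the C\`adl\`ag interpolation of the mean-zero random walk $S_k:=\sum_{j\leq k}\eta^*_j$, $k\leq\widetilde n$, whose i.i.d.\ increments have variance $2L_*^{-2}$. Doob's $L^2$-maximal inequality then gives $\mathbb{E}[\sup_t Z(t;n,\fs)^2]\leq 4\mathbb{E}[S_{\widetilde n}^2]\leq 4nL_*^{-2}$, which also bounds the time-average. Averaging against $p_n(T_*)$ reduces this contribution to a constant multiple of $L_*^{-2}\mathbb{E}_K[\cf{K\geq 2}(K-T_*)^2/K]$, which I would control via $1/K\leq 2/(K+1)$ together with the Poisson identity $\mathbb{E}[f(K)/(K+1)]=T_*^{-1}\mathbb{E}[f(K-1)\cf{K\geq 1}]$ applied to $f(k)=(k-T_*)^2$ and $\mathbb{E}[(K-T_*)^2]=T_*$; the outcome is $O(1+T_*^{-1})$, so the amplitude piece contributes $O(L_*^{-2})$, which is subdominant with respect to the target.

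The time-shift piece is the main obstacle. The key step is to decompose the exponentials into independent pairs $A_j:=s_{2j-1}+s_{2j}$, $B_j:=s_{2j-1}-s_{2j}$, so that the jump times $t_{2k}=\sum_{j\leq k}A_j$ depend only on the $A$'s, whereas $\eta^*_k=-L^{-1}v_0 B_k$ has zero mean conditionally on $(A_\ell)_\ell$ (indeed $B_k\mid A_k$ is symmetric on $[-A_k,A_k]$). Conditioning on the $A$-field annihilates all cross-terms and yields, for any $0\leq a\leq b$ and $n\geq 2$,
\begin{equation*}
\mathbb{E}\bigl[(Z(b;n,\fs)-Z(a;n,\fs))^2\bigr]=\tfrac{L^{-2}v_0^2}{3}\sum_{k\leq\widetilde n}\mathbb{E}\bigl[\cf{a<t_{2k}\leq b}\,A_k^2\bigr].
\end{equation*}
Conditioning further on $t_{2k-2}$ (independent of $A_k\sim\mathrm{Gamma}(2,\lambda)$) and summing in $k$ against the renewal density $m(u)=\sum_{k\geq 1}f_{2k}(u)=\tfrac{\lambda}{2}(1-\rme^{-2\lambda u})\leq\lambda/2$, combined with the explicit moments $\int_0^\infty s^3\lambda^2\rme^{-\lambda s}\rmd s=6/\lambda^2$ and $\int_0^\infty s^4\lambda^2\rme^{-\lambda s}\rmd s=24/\lambda^3$, produces the clean linear-in-$(b-a)$ estimate $\mathbb{E}[(Z(b)-Z(a))^2]\leq CL_*^{-2}(1+\lambda|b-a|)$ uniformly in $n\geq 2$.

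To conclude, I would specialise $(a,b)=(\min(t,t/\alpha_n),\max(t,t/\alpha_n))$ so that $|b-a|=t|\alpha_n^{-1}-1|$, average in $t\in[0,T]$, and take the Poisson expectation in $K$; this produces a leading term proportional to $T_*L_*^{-2}\mathbb{E}[\cf{K\geq 2}|T_*K^{-1}-1|]$ plus an $O(L_*^{-2})$ remainder. Cauchy--Schwarz together with $\mathbb{E}[(T_*-K)^2]=T_*$ and the Poisson concentration estimate $\mathbb{E}[\cf{K\geq 2}K^{-2}]=O(\min(T_*^{-2},T_*^2))$ yield $\mathbb{E}[\cf{K\geq 2}|T_*K^{-1}-1|]=O(T_*^{-1/2})$ for $T_*\geq 1$ and $O(T_*^{3/2})$ for $T_*\leq 1$, so that the time-shift piece is at most $CT_*L_*^{-2}T_*^{-1/2}(1+T_*^{-1/2})$. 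Combining the two pieces and tracking constants delivers the announced inequality. The hard part is the linear-in-$(b-a)$ renewal estimate: a naive worst-case treatment of the renewal interval straddling $a$ (the ``inspection-paradox'' interval) would produce a spurious $(b-a)^2$ term and destroy the $T_*^{-1/2}$ scaling.
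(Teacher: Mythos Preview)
Your plan is correct and shares the paper's skeleton: the same synchronous coupling, the same additive decomposition (the paper writes $Z-\widetilde Z=\sum_k\eta^*_k(\cf{t\ge t_{2k}(\fs)}-\cf{t\ge t_{2k}(\mathfrak w^{(n)})})+(1-\alpha_n)\sum_k\eta^*_k\cf{t\ge t_{2k}(\mathfrak w^{(n)})}$, which is exactly your amplitude/time-shift split), and crucially the same diagonalisation trick via the pair variables $\hat s_\ell=A_\ell$, $\omega_\ell\propto B_\ell$, using that the jump times depend only on the $A$'s while the $\eta^*$'s are conditionally centred. Where you diverge is in the time-shift piece: the paper stays at the level of individual indices, integrates $t$ to get $\tfrac{1}{3}(v_0/L)^2\sum_\ell\hat s_\ell^2\,T^{-1}|t_{2\ell}(\fs)-t_{2\ell}(\mathfrak w^{(n)})|$, uses $|t_{2\ell}(\fs)-t_{2\ell}(\mathfrak w^{(n)})|=|1-\alpha_n|\,t_{2\ell}(\fs)$, and closes with Cauchy--Schwarz in $\ell$ on $\hat s_\ell^2$ against $t_{2\ell}$ (Gamma moments). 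You instead package everything into the increment bound $\mathbb E[(Z(b)-Z(a))^2]\le CL_*^{-2}(1+\lambda|b-a|)$ via the renewal density $m(u)=\tfrac{\lambda}{2}(1-\rme^{-2\lambda u})$; this is a genuinely different (and clean) way to arrive at the same $T_*^{-1/2}$ scaling, and your ``inspection-paradox'' remark correctly identifies the boundary term as the source of the additive~$1$. For the amplitude piece you use Doob where the paper just conditions; both give $\mathbb E[\sup_k|S_k|^2]\lesssim nL_*^{-2}$.

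One slip to fix: after averaging in $t$, your leading term is $\tfrac{C}{2}L_*^{-2}\mathbb E[\cf{K\ge2}|K-T_*|]$ (since $\tfrac{1}{T}\int_0^T\lambda t|\alpha_n^{-1}-1|\,\rmd t=\tfrac{T_*}{2}|n/T_*-1|=\tfrac{1}{2}|n-T_*|$), not $T_*L_*^{-2}\mathbb E[|T_*K^{-1}-1|]$. This is harmless for the argument—Jensen gives $\mathbb E|K-T_*|\le\sqrt{T_*}$ directly, which is simpler than the Cauchy--Schwarz route you sketch—and the final bound $O(T_*L_*^{-2}T_*^{-1/2}(1+T_*^{-1/2}))$ follows as claimed. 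Constants will need care if you want exactly~$28$, but the approach is sound.
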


\begin{proof}
By definition \eqref{def:metric}, the tower property (Item~iv) of Theorem~8.14 in \cite{Klenke2014}) and Fubini's theorem 
 we have
\begin{equation}\label{eq:desifb}
\begin{split}
\mathcal{W}^2_2(\mathbb{Z}_{[0,T]},\mathbb{\widetilde{Z}}_{[0,T]})&\leq \frac{1}{T}
\sum\limits_{n=2}^{\infty} p_n(T_*)
\int_{0}^{T}\mathbb{E}_{\mathbb{Q}}\left[|Z(t;n,\fs)-\widetilde{Z}(t;n,\mathfrak{w}^{(n)})|^2 | K=n
\right]\rmd t\\
&=\frac{1}{T}
\sum\limits_{n=2}^{\infty} p_n(T_*)
\int_{0}^{T}\mathbb{E}_{\mathbb{P}_1}\left[|Z(t;n,\fs)-\widetilde{Z}(t;n,\mathfrak{w}^{(n)})|^2
\right]\rmd t\\
&= \frac{1}{T}
\sum\limits_{n=2}^{\infty} p_n(T_*)
\mathbb{E}_{\mathbb{P}_1}\left[\int_{0}^{T}|Z(t;n,\fs)-\widetilde{Z}(t;n,\mathfrak{w}^{(n)})|^2
\rmd t\right]\,.
\end{split}
\end{equation}
In the sequel, we estimate 
\begin{equation}
\begin{split}
\frac{1}{T}
\int_{0}^{T} \rmd t |Z(t;n,\fs)-\widetilde{Z}(t;n,\mathfrak{w}^{(n)})|^2.
\end{split}
\end{equation}
Since $|x+y|^2\leq 2x^2+2y^2$ for any $x,y\in \mathbb{R}$, we have
\begin{equation}\label{eq:inty}
\begin{split}
|Z(t;n,\fs)-\widetilde{Z}(t;n,\mathfrak{w}^{(n)})|^2&\leq 
2\big|\sum\limits_{k=1}^{\widetilde{n}}\eta^*_k(\fs)\left(\cf{t\geq t_{2k}(\fs)}-\cf{t\geq t_{2k}(\mathfrak{w}^{(n)})}\right)\big|^2\\
&\qquad+2\Big(1-\frac{T_*}{n}\Big)^2\big|\sum\limits_{k=1}^{\widetilde{n}}\eta^*_k(\fs)\cf{t\geq t_{2k}(\mathfrak{w}^{(n)})}\big|^2.
\end{split}
\end{equation}

We start with the estimate of second term of the right-hand side of the preceding inequality.
For any $\ell\in \{1,\ldots,\widetilde{n}\}$ set
\begin{equation}\label{eq:ch1}
\hat{s}_\ell :=s_{2\ell}+s_{2\ell-1}\quad \textrm{ and }\quad
\omega_\ell :=\frac{s_{2\ell}-s_{2\ell-1}}{\hat{s}_\ell}\,
\end{equation}
and observe that
\begin{equation}\label{eq:ch2}
\eta^*_{\ell}(\fs)=-\frac{v_0}{L} \hat{s}_\ell\omega_\ell\quad
\textrm{ for any } \quad\ell\in \{1,\ldots,\widetilde{n}\}\,.
\end{equation}
We stress that
for any $k\in \mathbb{N}$, $2k$ is an even number and, therefore, it is possible to apply pairwise
Lemma~\ref{lem:independencia} in Appendix~\ref{ap:basic}. We find that $t_{2k}(\mathfrak{w}^{(n)})$ is independent of all $\omega_{\ell}$, $\ell\in \{1,\ldots, \widetilde{n}\}$. 
Since
\begin{equation}
\begin{split}
\big|\sum\limits_{k=1}^{\widetilde{n}}\eta^*_k(\fs)\cf{t\geq t_{2k}(\mathfrak{w}^{(n)})}\big|^2=
\sum\limits_{k=1}^{\widetilde{n}}
\sum\limits_{k'=1}^{\widetilde{n}}\eta^*_k(\fs)\eta^*_{k'}(\fs)\cf{t\geq t_{2k}(\mathfrak{w}^{(n)})}\cf{t\geq t_{2k'}(\mathfrak{w}^{(n)})}\,,
\end{split}
\end{equation}
Item~i) and Item~iii) of Lemma~\ref{lem:independencia} in Appendix~\ref{ap:basic} imply
 \begin{equation}
\begin{split}
&\left(1-\frac{T_*}{n}\right)^2
\mean{\big|\sum\limits_{k=1}^{\widetilde{n}}\eta^*_k(\fs)\cf{t\geq t_{2k}(\mathfrak{w}^{(n)})}\big|^2}_{\omega}\\
&\hspace{4cm}=
\left(1-\frac{T_*}{n}\right)^2\sum\limits_{k=1}^{\widetilde{n}}
\sum\limits_{k'=1}^{\widetilde{n}}
\mean{\eta^*_k(\fs)\eta^*_{k'}(\fs)}_{\omega}\cf{t\geq t_{2k}(\mathfrak{w}^{(n)})}\cf{t\geq t_{2k'}(\mathfrak{w}^{(n)})}\\
&\hspace{4cm}=\left(1-\frac{T_*}{n}\right)^2\sum\limits_{k=1}^{\widetilde{n}}
\mean{|\eta^*_k(\fs)|^2}_{\omega}\cf{t\geq t_{2k}(\mathfrak{w}^{(n)})}\\
&\hspace{4cm}\leq \left(1-\frac{T_*}{n}\right)^2
\mean{|\eta^*_1(\fs)|^2}_{\omega}\widetilde{n}\,,
\end{split}
\end{equation}
where $\omega$ denotes the product measure given by the law of the random vector $(\omega_1,\ldots,\omega_{\widetilde{n}})$.
Combining the above inequality with Fubini's theorem, 
\eqref{eq:ch2}, Lemma~\ref{lem:independencia} in Appendix~\ref{ap:basic}, and Item~(4) of Lemma~\ref{lem:propgamma} in Appendix~\ref{ap:basic}
we obtain
 \begin{equation}
\begin{split}
\left(1-\frac{T_*}{n}\right)^2\frac{1}{T}\int_{0}^{T} \rmd t &
\mean{\big|\sum\limits_{k=1}^{\widetilde{n}}\eta^*_k(\fs)\cf{t\geq t_{2k}(\mathfrak{w}^{(n)})}\big|^2}_{\hat{s},\omega}\leq 
\left(1-\frac{T_*}{n}\right)^2
\mean{|\eta^*_1(\fs)|^2}_{\hat{s},\omega}\widetilde{n}\\
&
\leq 
\left(1-\frac{T_*}{n}\right)^2
\frac{v^2_0}{L^2\lambda^2}n=
\frac{T_*}{L^2_*}\left(1-\frac{T_*}{n}\right)^2
\frac{n}{T_*}\,,
\end{split}
\end{equation}
where 
$\hat{s}$ denote the product measure given by the law of the random vector $(\hat{s}_1,\ldots,\hat{s}_{\widetilde{n}})$.
We observe that 
\[
\left(1-\frac{T_*}{n}\right)^2
\frac{n}{T_*}=\left(\frac{n}{T_*}-1\right)+\left(\frac{T_*}{n}-1\right)\leq 
\frac{1}{T_*}\left|n-T_*\right|+\frac{1}{n}\left|T_*-n\right|\,.
\]
Then by the Cauchy Schwarz inequality, Lemma~\ref{lem:momentpoisson} in Appendix~\ref{ap:tools} and 
Lemma~\ref{lem:mopoisfull} in Appendix~\ref{ap:basic}  we deduce
\begin{equation}
\begin{split}
\mean{\Big(1-\frac{T_*}{n}\Big)^2
\frac{n}{T_*}\cf{n\geq 2}}_{\textsf{Po}(T_*)}&\leq 
\frac{1}{T_*}\mean{|n-T_*|\cf{n\geq 2}}_{\textsf{Po}(T_*)}+
\mean{|n-T_*|\frac{1}{n}\cf{n\geq 2}}_{\textsf{Po}(T_*)}
\\
&\leq \frac{1}{T_*}\mean{\left(n-T_*\right)^2}_{\textsf{Po}(T_*)}^{1/2}+
\mean{\left(n-T_*\right)^2}_{\textsf{Po}(T_*)}^{1/2}
\mean{n^{-2}\cf{n\geq 1}}_{\textsf{Po}(T_*)}^{1/2}\\
&\leq \frac{T^{1/2}_*}{T_*}+
{T^{1/2}_*}\sqrt{\frac{16}{T^2_*}} 
=5T^{-1/2}_*\,.
\end{split}
\end{equation}
Collecting all pieces together we obtain
\begin{equation}\label{eq:uno}
\begin{split}
&\mean{\Big(1-\frac{T_*}{n}\Big)^2\frac{1}{T}\int_{0}^{T} \rmd t
\mean{\big|\sum\limits_{k=1}^{\widetilde{n}}\eta^*_k(\fs)\cf{t\geq t_{2k}(\mathfrak{w}^{(n)})}\big|^2}_{\hat{s},\omega}\cf{n\geq 2}}_{\textsf{Po}(T_*)}\leq 5\frac{T_*}{L^2_*}T^{-1/2}_*\,.
\end{split}
\end{equation}

We continue with the estimate of
\[
\frac{1}{T}\int_{0}^{T}\rmd t \big|\sum\limits_{k=1}^{\widetilde{n}}\eta^*_k(\fs)\left(\cf{t\geq t_{2k}(\fs)}-\cf{t\geq t_{2k}(\mathfrak{w}^{n})}\right)\big|^2\,.
\]
By \eqref{eq:ch1}, \eqref{eq:ch2}, Item~i) and Item~iii) of Lemma~\ref{lem:independencia} in Appendix~\ref{ap:basic} we have 
\begin{equation}\label{eq:cita1}
\begin{split}
&\mean{\frac{1}{T}\int_{0}^{T}\rmd t \big|\sum\limits_{k=1}^{\widetilde{n}}\eta^*_k(\fs)\left(\cf{t\geq t_{2k}(\fs)}-\cf{t\geq t_{2k}(\mathfrak{w}^{(n)})}\right)\big|^2}_{\omega}\\
&\hspace{1cm}
=\left(\frac{v_0}{L}\right)^2\sum\limits_{\ell=1}^{\widetilde{n}}\sum\limits_{\ell^\prime=1}^{\widetilde{n}} \hat{s}_\ell \hat{s}_{\ell^\prime}\mean{\omega_\ell\omega_{\ell^\prime}}_{\omega}\times\\
&\hspace{4cm}
\frac{1}{T}\int_{0}^{T}\rmd t
\left(\cf{t>t_{2\ell}(\fs)}-\cf{t>t_{2\ell}({\mathfrak{w}^{(n)}})}\right)\left(\cf{t>t_{2\ell^\prime}(\fs)}-\cf{t>t_{2\ell^\prime}({\mathfrak{w}^{(n)}})}\right)\\
&\hspace{1cm}
=\frac{1}{3}\left(\frac{v_0}{L}\right)^2\sum\limits_{\ell=1}^{\widetilde{n}} \hat{s}^2_\ell
\frac{1}{T}\int_{0}^{T}\rmd t
\left(\cf{t>t_{2\ell}(\fs)}-\cf{t>t_{2\ell}({\mathfrak{w}^{(n)}})}\right)^2\\
&\hspace{1cm}
=\frac{1}{3}\left(\frac{v_0}{L}\right)^2\sum\limits_{\ell=1}^{\widetilde{n}} \hat{s}^2_\ell
\frac{1}{T}\int_{0}^{T}\rmd t
\cf{\min\{t_{2\ell}(\fs),t_{2\ell}({\mathfrak{w}^{(n)}})\}<t<\max\{t_{2\ell}(\fs),t_{2\ell}({\mathfrak{w}^{(n)}})\}}\\
&\hspace{1cm}
\leq \frac{1}{3}\left(\frac{v_0}{L}\right)^2\sum\limits_{\ell=1}^{\widetilde{n}} \hat{s}^2_\ell
\frac{1}{T}\left(
\max\left\{t_{2\ell}(\fs),t_{2\ell}({\mathfrak{w}^{(n)}})\right\}-\min\left\{t_{2\ell}(\fs),t_{2\ell}({\mathfrak{w}^{(n)}})\right\}\right)\\
&\hspace{1cm}
\leq \left(\frac{v_0}{L}\right)^2\sum\limits_{\ell=1}^{\widetilde{n}} \hat{s}^2_\ell \frac{1}{T}
\left|t_{2\ell}(\fs)-t_{2\ell}({\mathfrak{w}^{(n)}})\right|\,.
\end{split}
\end{equation} 
We note that $\hat{s}_1$ has Gamma distribution with parameters $2$ and $\lambda$ and then Lemma~\ref{lem:propgamma} in Appendix~\ref{ap:basic} yields $\mean{\hat{s}^4_1}_{\hat{s}}=5!/\lambda^4$. 
Then we integrate with respect to $\hat{s}$ and using the Cauchy Schwarz inequality we obtain
\begin{equation}
\begin{split}
\mean{
\Big(\frac{v_0}{L}\Big)^2\sum\limits_{\ell=1}^{\widetilde{n}} \hat{s}^2_\ell \frac{1}{T}
\left|t_{2\ell}(\fs)-t_{2\ell}({\mathfrak{w}^{(n)}})\right|}_{\hat{s}}&\leq \frac{v^2_0}{L^2T}\sum\limits_{\ell=1}^{\widetilde{n}} \mean{\hat{s}^4_\ell}^{1/2}_{\hat{s}}
\mean{\left|t_{2\ell}(\fs)-t_{2\ell}({\mathfrak{w}^{(n)}})\right|^2}^{1/2}_{\hat{s}}\\
&=\frac{v^2_0}{L^2T}\mean{\hat{s}^4_1}^{1/2}_{\hat{s}}\sum\limits_{\ell=1}^{\widetilde{n}} 
\mean{\left|t_{2\ell}(\fs)-t_{2\ell}({\mathfrak{w}^{(n)}})\right|^2}^{1/2}_{\hat{s}}\\
&=\frac{v^2_0}{L^2T}\frac{\sqrt{120}}{\lambda^2}\sum\limits_{\ell=1}^{\widetilde{n}} 
\mean{\left|t_{2\ell}(\fs)-t_{2\ell}({\mathfrak{w}^{(n)}})\right|^2}^{1/2}_{\hat{s}}\\
&=\frac{\sqrt{120}}{L^2_*}\frac{1}{T}\sum\limits_{\ell=1}^{\widetilde{n}} 
\mean{\left|t_{2\ell}(\fs)-t_{2\ell}({\mathfrak{w}^{(n)}})\right|^2}^{1/2}_{\hat{s}}\,.
\end{split}
\end{equation}
By Item~ii) of Lemma~\ref{lem:independencia} in Appendix~\ref{ap:basic} we 
have that each $\hat{s}_j$ has Gamma distribution with parameters $2$ and $\lambda$. 
Since $\hat{s}_1,\ldots,\hat{s}_{\widetilde{n}}$ are i.i.d.\ random variables,
we obtain that
 $\sum\limits_{j=1}^{\ell} \hat{s}_j=\sum\limits_{j=1}^{2\ell} s_j$ has Gamma distribution with parameters  $2\ell$ and $\lambda$. Hence, Item~(5) of Lemma~\ref{lem:propgamma} in Appendix~\ref{ap:basic} yields
\[
\mean{\big|\sum\limits_{j=1}^{2\ell} s_j\big|^2}_{\hat s}=(2\ell+1)(2\ell)/\lambda^2\,.
\]
We then obtain
\begin{equation}
\begin{split}
\mean{\left|t_{2\ell}(\fs)-t_{2\ell}({\mathfrak{w}^{(n)}})\right|^2}_{\hat{s}}&=\Big(1-\frac{T_*}{n}\Big)^2
\mean{\big|\sum\limits_{j=1}^{2\ell} s_j\big|^2
}_{\hat{s}}=\Big(1-\frac{T_*}{n} \Big)^2 \frac{(2\ell+1)(2\ell)}{\lambda^2}\,.
\end{split}
\end{equation}
Therefore,
\begin{equation}
\begin{split}
\mean{
\Big(\frac{v_0}{L}\Big)^2\sum\limits_{\ell=1}^{\widetilde{n}} \hat{s}^2_\ell \frac{1}{T}
\big|t_{2\ell}(\fs)-t_{2\ell}({\mathfrak{w}^{(n)}})\big|}_{\hat{s}}&\leq \frac{\sqrt{120}}{L^2_*}\frac{1}{T}
\big|1-\frac{T_*}{n} \big|\sum\limits_{\ell=1}^{\widetilde{n}}\frac{\sqrt{6}\ell}{\lambda}\leq \frac{T_*}{L^2_*}
\big|1-\frac{T_*}{n} \big|\frac{n^{2}}{T^2_*}\,.
\end{split}
\end{equation}
Now, we integrate the right-hand side of the preceding inequality over $n$ for $n\geq 2$. With the help of the Cauchy-Schwarz inequality and Lemma~\ref{lem:mopoisfull} in Appendix~\ref{ap:basic},
we obtain
\begin{equation}
\begin{split}
\mean{9
\frac{T_*}{L^2_*}
\big|1-\frac{T_*}{n} \big|\frac{n^{2}}{T^2_*}\cf{n\geq 2}}_{\textsf{Po}(T_*)}&\leq 9\frac{T_*}{L^2_*}\frac{1}{T^2_*}
\mean{
\big|n-T_* \big|^2}^{1/2}_{\textsf{Po}(T_*)}
\mean{n^2\cf{n\geq 2}}^{1/2}_{\textsf{Po}(T_*)}\\
&\leq 9\frac{T_*}{L^2_*}\frac{1}{T^2_*}T^{1/2}_*(T^2_*+T_*)^{1/2}\leq 9\frac{T_*}{L^2_*}T^{-1/2}_*\left(1+T^{-1/2}_*\right)\,,
\end{split}
\end{equation}
where in the last step we used subadditivity inequality for $x\mapsto x^{1/2}$.
Combining all pieces together we deduce
\begin{equation}\label{eq:dos}
\begin{split}
&\sum_{n\geq 2} p_n(T_*)
\mean{\frac{1}{T}\int_{0}^{T}\rmd t \big|\sum\limits_{k=1}^{\widetilde{n}}\eta^*_k(\fs)\big(\cf{t\geq t_{2k}(\fs)}-\cf{t\geq t_{2k}(\mathfrak{w}^{(n)})}\big)\big|^2}_{\hat{s},\omega}\\
&\hspace{9cm}\leq 
9\frac{T_*}{L^2_*}T^{-1/2}_*\left(1+T^{-1/2}_*\right)\,.
\end{split}
\end{equation}
Now, by \eqref{eq:inty}, \eqref{eq:uno} and \eqref{eq:dos} we obtain
\begin{equation}
\sum_{n\geq 2} p_n(T_*)\frac{1}{T}
\int_{0}^{T}\mathbb{E}_{\mathbb{P}_1}[|Z(t;n,\fs)-\widetilde{Z}(t;n,\mathfrak{w}^{(n)})|^2] \rmd t\leq 28\frac{T_*}{L^2_*}
T^{-1/2}_*\left(1+T^{-1/2}_*\right)\,.
\end{equation}
This concludes the proof of Lemma~\ref{lem:compZZ*}.
\end{proof}
The following lemma provides an estimate for the first term of the right-hand side of \eqref{eq:cYZtildeZ}.
\begin{lemma}\label{lem:YZ*}
For any $L>0$, $T>0$, $\lambda>0$ and $v_0\in \mathbb{R}\setminus\{0\}$ it follows that

\[
\mathcal{W}^2_2(\mathbb{Y}_{[0,T]},\mathbb{\widetilde{Z}}_{[0,T]})
\leq 39 {T_*}{L^{-2}_*}T^{-1/2}_*+
38T_*{L^{-2}_*}e^{-T_*}(T_*+T^2_*).
\]
In addition, for $T_*>0$ we obtain
\[
\mathcal{W}^2_2(\mathbb{Y}_{[0,T]},\mathbb{\widetilde{Z}}_{[0,T]})
\leq 115{T_*}{L^{-2}_*}T^{-1/2}_*\,.
\]
\end{lemma}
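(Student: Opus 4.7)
\ The processes $Y$ and $\widetilde{Z}$ are constructed on the common probability space $(\Omega,\mathcal{F},\mathbb{Q})$ from the same sequence $\fu$ and the same Poisson variable $K$, so a synchronous coupling is already at hand. By the definition of $\mathcal{W}_2$ together with the tower property and Fubini (as in \eqref{eq:desifb}),
\begin{equation*}
\mathcal{W}_2^2(\mathbb{Y}_{[0,T]},\mathbb{\widetilde{Z}}_{[0,T]})
\le \frac{1}{T}\sum_{n=0}^{\infty} p_n(T_*)\,\mathbb{E}_{\mathbb{P}_1}\!\!\left[\int_0^T \bigl|Y(t;n,\fd^{(n)}) - \widetilde{Z}(t;n,\fs)\bigr|^2\,\rmd t\right].
\end{equation*}
The geometric key is that $Y$ and $\widetilde{Z}$ coincide at every true even jump time: $Y(t_{2k}(\fd^{(n)});n,\fd^{(n)}) = \widetilde{Z}(t_{2k}(\fd^{(n)});n,\fs)$ for $k\in\{0,\ldots,\widetilde{n}\}$, which follows from the identity $L^{-1}v_0(\delta^{(n)}_{2k-1}-\delta^{(n)}_{2k}) = (T_*/n)\eta^*_k(\fs)$ and the definitions of $Y$ and $\widetilde{Z}$. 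Since $\widetilde{Z}$ is constant between consecutive even jump times and $Y$ moves at constant speed $L^{-1}|v_0|$, I obtain the pointwise estimate
\begin{equation*}
|Y(t;n,\fd^{(n)})-\widetilde{Z}(t;n,\fs)|\le L^{-1}|v_0|\,(t-\tau(t)), \quad \tau(t):=\max\bigl(\{0\}\cup\{t_{2k}(\fd^{(n)}):1\le k\le\widetilde{n},\,t_{2k}(\fd^{(n)})\le t\}\bigr).
\end{equation*}

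Fix $n\ge 2$. Splitting $[0,T]$ into the complete even-to-even intervals $[t_{2k},t_{2k+2}]$ for $k=0,\ldots,\widetilde{n}-1$ plus a residual $[t_{2\widetilde{n}},T]$ (empty when $t_{2\widetilde{n}}>T$), and extending the complete-interval integration past $T$ if necessary (safe since the integrand is non-negative), I get
\begin{equation*}
\int_0^T (t-\tau(t))^2\,\rmd t \le \frac{1}{3}\sum_{k=0}^{\widetilde{n}-1}\bigl(\delta^{(n)}_{2k+1}+\delta^{(n)}_{2k+2}\bigr)^3 + \frac{1}{3}\bigl((T-t_{2\widetilde{n}})_+\bigr)^3.
\end{equation*}
Since $\delta^{(n)}_{2k+1}+\delta^{(n)}_{2k+2}=(T/n)(u_{2k+1}+u_{2k+2})$ and $u_{2k+1}+u_{2k+2}\sim\mathrm{Gamma}(2,1)$ has third moment $24$ (Lemma~\ref{lem:propgamma}), the first sum has $\mathbb{P}_1$-expectation at most $\widetilde{n}(T/n)^3\cdot 24/3 \le 4T^3/n^2$. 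For the residual, $t_{2\widetilde{n}}$ equals $(T/n)\sum_{k=1}^{2\widetilde{n}}u_k$, and the explicit second and fourth central-moment computations for sums of i.i.d.\ exponentials (analogous to Lemma~\ref{lem:propgamma}) give $\mean{(T-t_{2\widetilde{n}})^2}\le T^2/n$ and $\mean{(T-t_{2\widetilde{n}})^4}\le 9T^4/n^2$; Cauchy--Schwarz then yields $\mean{(T-t_{2\widetilde{n}})_+^3}\le 3T^3/n^{3/2}$.

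Dividing by $T$ and using $v_0^2 T^2/L^2 = T_*^2/L_*^2$, each $n\ge 2$ summand is bounded by a constant multiple of $(T_*^2/L_*^2)(n^{-2} + n^{-3/2})$. Averaging against the Poisson weights through Lemma~\ref{lem:mopoisfull}, which provides $\mean{n^{-2}\cf{n\ge 2}}_{\mathsf{Po}(T_*)}\le 16/T_*^2$ (whence $\mean{n^{-3/2}\cf{n\ge 2}}_{\mathsf{Po}(T_*)}\le 8/T_*^{3/2}$ by H\"older), the dominant contribution is of order $T_* L_*^{-2} T_*^{-1/2}$, coming from the residual. The two excluded cases $n=0$ (where $\widetilde{Z}\equiv 0$ and $Y(t)=L^{-1}v_0 t$, so the normalised integral equals $T_*^2/(3L_*^2)$) and $n=1$ (treated by direct integration using $\delta^{(1)}_1 = Tu_1$ and $\delta^{(1)}_k = 2T$ for $k\ge 2$), weighted by $p_0(T_*) = e^{-T_*}$ and $p_1(T_*) = T_* e^{-T_*}$, jointly contribute at most a constant multiple of $T_* L_*^{-2} e^{-T_*}(T_* + T_*^2)$. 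Assembling these three pieces produces the first displayed bound; the simplification to $115\,T_* L_*^{-2} T_*^{-1/2}$ follows because $T_*^{1/2}e^{-T_*}(T_*+T_*^2)$ is bounded on $(0,\infty)$ by an elementary calculus argument, so both exponential terms can be absorbed into the leading polynomial term. The main technical hurdle is the sharp residual estimate $\mean{(T-t_{2\widetilde{n}})_+^3} = O(T^3/n^{3/2})$: any coarser central-moment bound would degrade the power of $n$ and destroy the target $T_*^{-1/2}$ scaling after Poisson averaging.
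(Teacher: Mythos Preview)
Your proof is correct and follows essentially the same route as the paper: synchronous coupling, the key identity $Y(t_{2k}(\fd^{(n)}))=\widetilde{Z}(t_{2k}(\fd^{(n)}))$, decomposition into complete even-to-even intervals plus a residual controlled via Cauchy--Schwarz on $\mean{(T-t_{2\tn})_+^3}$, separate treatment of $n\in\{0,1\}$, and Poisson averaging. The only cosmetic differences are that you use the pointwise Lipschitz bound $|Y-\widetilde{Z}|\le L^{-1}|v_0|(t-\tau(t))$ where the paper computes each interval exactly (and so avoids the paper's even/odd split for the residual), and that the Poisson inverse-moment estimate you invoke is Lemma~\ref{lem:momentpoisson}, not Lemma~\ref{lem:mopoisfull}.
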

\begin{proof}
Analogously to \eqref{eq:desifb} we obtain
\begin{equation}\label{eq:logo}
\begin{split}
\mathcal{W}^2_2(\mathbb{Y}_{[0,T]},\mathbb{\widetilde{Z}}_{[0,T]})&\leq \frac{1}{T}
\sum\limits_{n=2}^{\infty} p_n(T_*)
\mathbb{E}_{\mathbb{P}_1}\left[\int_{0}^{T}|Y(t;n,\fs)-\widetilde{Z}(t;n,\mathfrak{w}^{(n)})|^2
\rmd t\right]\\
&\qquad+\frac{1}{T}p_0(T_*)\int_{0}^T L^{-2}v^2_0 t^2 \rmd t+\frac{1}{T}p_1(T_*)\int_{0}^T \mathbb{E}_{\mathbb{P}_1}[|Y(s;1,\fs)|^2] \rmd s\,.
\end{split}
\end{equation}

We start with the following claim:
\begin{equation}\label{eq:claimYZ}
Y(t_{2k}(\mathfrak{w}^{(n)});n,\mathfrak{w}^{(n)})=\widetilde{Z}(t_{2k}(\mathfrak{w}^{(n)});n,\mathfrak{w}^{(n)})\,
\end{equation}
for all $k\in \mathbb{N}$ and $n\in \mathbb{N}\setminus\{0,1\}$. Indeed, note that
\begin{equation}
\begin{split}
Y(t_{2k}(\mathfrak{w}^{(n)});n,\mathfrak{w}^{(n)})&=L^{-1}v_0\sum\limits_{j=1}^{2k}(-1)^{j-1}w^{(n)}_j=
L^{-1}v_0\sum\limits_{j=1}^{2k}(-1)^{j-1}\frac{T}{n} u^{(n)}_j\\
&=L^{-1}v_0\sum\limits_{j=1}^{2k}(-1)^{j-1}\frac{T\lambda}{n} s_j\\
&=\frac{T_*}{n}\sum_{j=1}^{k}\eta^*_j(\fs)=\widetilde{Z}(t_{2k}(\mathfrak{w}^{(n)});n,\mathfrak{w}^{(n)})\,,
\end{split}
\end{equation}
which yields the claim. Bearing the preceding claim in mind, 
we consider the following split that overestimate our desired estimate: for all $n\in \mathbb{N}\setminus\{0,1\}$
\begin{equation}\label{eq:splitover}
\begin{split}
\frac{1}{T}
&
\int_{0}^{T}\mathbb{E}_{\mathbb{P}_1}[|Y(t;n,\mathfrak{w}^{(n)})-\widetilde{Z}(t;n,\mathfrak{w}^{(n)})|^2] \rmd t\\
&\leq \mathbb{E}_{\mathbb{P}_1}\left[\frac{1}{T}\sum_{\ell=1}^{\widetilde{n}}
\int_{t_{2\ell-2}(\mathfrak{w}^{(n)})}^{t_{2\ell-1}(\mathfrak{w}^{(n)})}|Y(t;n,\mathfrak{w}^{(n)})-\widetilde{Z}(t;n,\mathfrak{w}^{(n)})|^2 \rmd t\right]\\
&\qquad+
\mathbb{E}_{\mathbb{P}_1}\left[\frac{1}{T}\sum_{\ell=1}^{\widetilde{n}}
\int_{t_{2\ell-1}(\mathfrak{w}^{(n)})}^{t_{2\ell}(\mathfrak{w}^{(n)})}|Y(t;n,\mathfrak{w}^{(n)})-\widetilde{Z}(t;n,\mathfrak{w}^{(n)})|^2 \rmd t\right]\\
&\qquad+\mathbb{E}_{\mathbb{P}_1}\left[
\frac{1}{T}\cf{t_{2\widetilde{n}}(\mathfrak{w}^{(n)})\leq T}\int_{t_{2\widetilde{n}}(\mathfrak{w})}^{T} |Y(t;n,\mathfrak{w}^{(n)})-\widetilde{Z}(t;n,\mathfrak{w}^{(n)})|^2  \rmd t
\right].
\end{split}
\end{equation}
 where $\widetilde{n}=\lfloor n/2 \rfloor$. 
For each $\ell\in \{1,\ldots,\widetilde{n}\}$ 
 observe that
\begin{equation}
\begin{split}
\mathbb{E}_{\mathbb{P}_1}&\left[\frac{1}{T}
\int_{t_{2\ell-2}(\mathfrak{w}^{(n)})}^{t_{2\ell-1}(\mathfrak{w}^{(n)})}|Y(t;n,\mathfrak{w}^{(n)})-\widetilde{Z}(t;n,\mathfrak{w}^{(n)})|^2 \rmd t\right]\\
&=\mathbb{E}_{\mathbb{P}_1}\left[\frac{1}{T}
\int_{0}^{t_{1}(\mathfrak{w}^{(n)})}|Y(t;n,\mathfrak{w}^{(n)})|^2 \rmd t\right]\\
&=\mathbb{E}_{\mathbb{P}_1}\left[\frac{1}{T}
\int_{0}^{w^{(n)}_1}L^{-2}v^2_0t^2 \rmd t\right]=T^{-1}\frac{1}{3}L^{-2}v^2_0 \mean{(w^{(n)}_1)^3}\\
&=T^{-1}\frac{1}{3}L^{-2}v^2_0 \frac{T^3}{n^3}\mean{u^3_1}=
\frac{1}{2}\frac{T_*}{L^2_*} \frac{T_*}{n^3}\,,
\end{split}
\end{equation}
due to $w^{(n)}_1:=\frac{T}{n} u^{}_1$ and $u_1$ having an exponential distribution with parameter one.
Analogously, 
\begin{equation}
\begin{split}
\mathbb{E}_{\mathbb{P}_1}&\left[\frac{1}{T}
\int_{t_{2\ell-1}(\mathfrak{w}^{(n)})}^{t_{2\ell}(\mathfrak{w}^{(n)})}|Y(t;n,\mathfrak{w}^{(n)})-\widetilde{Z}(t;n,\mathfrak{w})|^2 \rmd t\right]\\
&=\mathbb{E}_{\mathbb{P}_1}\left[\frac{1}{T}
\int_{t_{1}(\mathfrak{w}^{(n)})}^{t_{2}(\mathfrak{w}^{(n)})}|L^{-1}v_0w^{(n)}_1+L^{-1}v_0(-1)(t-w^{(n)}_1)|^2 \rmd t\right]\\
&=\mathbb{E}_{\mathbb{P}_1}\left[
T^{-1}\frac{1}{3}L^{-2}v^2_0\left((w^{(n)}_2-w^{(n)}_1)^3+(w^{(n)}_1)^3
\right)
\right]\\
&=T^{-1}\frac{1}{3}L^{-2}v^2_0 \mean{(w^{(n)}_1)^3}=
\frac{1}{2}\frac{T_*}{L^2_*} \frac{T_*}{n^3}\,.
\end{split}
\end{equation}
Hence, for $\cf{t_{2\widetilde{n}}(\mathfrak{w}^{(n)})> T}$ 
\begin{equation}\label{eq:nojump}
\begin{split}
\frac{1}{T}
&
\int_{0}^{T}\mathbb{E}_{\mathbb{P}_1}[|Y(t;n,\mathfrak{w}^{(n)})-\widetilde{Z}(t;n,\mathfrak{w}^{(n)})|^2] \rmd t
\leq 2\widetilde{n}\frac{1}{2}\frac{T_*}{L^2_*} \frac{T_*}{n^3}\leq \frac{1}{2}\frac{T_*}{L^2_*} \frac{T_*}{n^2}\,.
\end{split}
\end{equation}
In what follows, we estimate the possible remainder error given by the last jump, that is,
\begin{equation}
\begin{split}
\mathbb{E}_{\mathbb{P}_1}&\left[
\frac{1}{T}\cf{t_{2\widetilde{n}}(\mathfrak{w}^{(n)})\leq T}\int_{t_{2\widetilde{n}}(\mathfrak{w}^{(n)})}^{T} |Y(t;n,\mathfrak{w}^{(n)})-\widetilde{Z}(t;n,\mathfrak{w}^{(n)})|^2  \rmd t
\right]\\
&
=
\mathbb{E}_{\mathbb{P}_1}\left[
\frac{1}{T}\cf{t_{2\widetilde{n}}(\mathfrak{w}^{(n)})\leq T}\int_{t_{2\widetilde{n}}(\mathfrak{w}^{(n)})}^{T} |Y(t;n,\mathfrak{w}^{(n)})-\widetilde{Z}(t_{2\widetilde{n}}(\mathfrak{w}^{(n)});n,\mathfrak{w}^{(n)})|^2  \rmd t
\right]\\
&
=
\mathbb{E}_{\mathbb{P}_1}\left[
\frac{1}{T}\cf{t_{2\widetilde{n}}(\mathfrak{w}^{(n)})\leq T}\int_{t_{2\widetilde{n}}(\mathfrak{w}^{(n)})}^{T} |Y(t;n,\mathfrak{w}^{(n)})-Y(t_{2\widetilde{n}}(\mathfrak{w}^{(n)});n,\mathfrak{w}^{(n)})|^2  \rmd t
\right]\,,
\end{split}
\end{equation}
where the last equality follows by \eqref{eq:claimYZ}.

First, we assume that $n$ is an even number. We note that $2\widetilde{n}=n$ and then
\begin{equation}\label{eq:evencase0}
\begin{split}
\mathbb{E}_{\mathbb{P}_1}&\left[
\frac{1}{T}\cf{t_{2\widetilde{n}}(\mathfrak{w}^{(n)})\leq T}\int_{t_{2\widetilde{n}}(\mathfrak{w}^{(n)})}^{T} |Y(t;n,\mathfrak{w}^{(n)})-Y(t_{2\widetilde{n}}(\mathfrak{w}^{(n)});n,\mathfrak{w}^{(n)})|^2  \rmd t
\right]\\
&=\mathbb{E}_{\mathbb{P}_1}\left[
\frac{1}{T}\cf{t_{2\widetilde{n}}(\mathfrak{w}^{(n)})\leq T}
\int_{t_{2\widetilde{n}}(\mathfrak{w}^{(n)})}^{T} L^{-2}v^2_0|t-t_{2\widetilde{n}}(\mathfrak{w}^{(n)})|^2  \rmd t
\right]\\
&=\mathbb{E}_{\mathbb{P}_1}\left[
\frac{1}{T}\cf{t_{2\widetilde{n}}(\mathfrak{w}^{(n)})\leq T} 
\frac{1}{3}L^{-2}v^2_0(T-t_{2\widetilde{n}}(\mathfrak{w}^{(n)}))^3\right] \\
&=\mathbb{E}_{\mathbb{P}_1}\left[
\frac{1}{T}\cf{t_{2\widetilde{n}}(\mathfrak{w}^{(n)})\leq T} 
\frac{1}{3}L^{-2}v^2_0|T-t_{2\widetilde{n}}(\mathfrak{w}^{(n)})|^3 
\right]\\
&\leq\frac{1}{3T}L^{-2}v^2_0\left(
\mathbb{E}_{\mathbb{P}_1}\left[
|T-t_{2\widetilde{n}}(\mathfrak{w}^{(n)})|^4 
\right]\right)^{3/4}\,,
\end{split}
\end{equation}
where in the last inequality we used H\"older's inequality.
We note that
\begin{equation}\label{eq:evencase}
\frac{1}{3T}L^{-2}v^2_0\left(
\mathbb{E}_{\mathbb{P}_1}\left[
|T-t_{2\widetilde{n}}(\mathfrak{w}^{(n)})|^4 
\right]\right)^{3/4}\leq C_1\frac{T_*}{L^2_*}\frac{T_*}{n^{3/2}},
\end{equation}
where $C_1:=\mean{(u_1-1)^4}^{3/4}=9^{3/4}$ and $u_1$ has exponential distribution with parameter one. 

Now, we assume that $n$ is an odd number. We note that $2\widetilde{n}=n-1$ and then
\begin{equation}
\begin{split}
\mathbb{E}_{\mathbb{P}_1}&\left[
\frac{1}{T}\cf{t_{2\widetilde{n}}(\mathfrak{w}^{(n)})\leq T}\int_{t_{2\widetilde{n}}(\mathfrak{w}^{(n)})}^{T} |Y(t;n,\mathfrak{w})-Y(t_{2\widetilde{n}}(\mathfrak{w}^{(n)});n,\mathfrak{w}^{(n)})|^2  \rmd t
\right]\\
&=\mathbb{E}_{\mathbb{P}_1}\left[
\frac{1}{T}\cf{t_{2\widetilde{n}}(\mathfrak{w}^{(n)})\leq T}
\cf{t_{n}(\mathfrak{w}^{(n)})\leq T}
\int_{t_{2\widetilde{n}}(\mathfrak{w}^{(n)})}^{T} |Y(t;n,\mathfrak{w}^{(n)})-Y(t_{2\widetilde{n}}(\mathfrak{w}^{(n)});n,\mathfrak{w}^{(n)})|^2   \rmd t
\right]\\
&\qquad+\mathbb{E}_{\mathbb{P}_1}\left[
\frac{1}{T}\cf{t_{2\widetilde{n}}(\mathfrak{w}^{(n)})\leq T}
\cf{t_{n}(\mathfrak{w}^{(n)})> T}
\int_{t_{2\widetilde{n}}(\mathfrak{w}^{(n)})}^{T} L^{-2}v^2_0|t-t_{2\widetilde{n}}(\mathfrak{w}^{(n)})|^2  \rmd t
\right]\\
&=\mathbb{E}_{\mathbb{P}_1}\left[
\frac{1}{T}\cf{t_{2\widetilde{n}}(\mathfrak{w}^{(n)})\leq T}
\cf{t_{n}(\mathfrak{w}^{(n)})\leq T}
\int_{t_{2\widetilde{n}}(\mathfrak{w}^{(n)})}^{t_{2\widetilde{n}+1}(\mathfrak{w}^{(n)})} |Y(t;n,\mathfrak{w}^{(n)})-Y(t_{2\widetilde{n}}(\mathfrak{w}^{(n)});n,\mathfrak{w}^{(n)})|^2   \rmd t
\right]\\
&\qquad+\mathbb{E}_{\mathbb{P}_1}\left[
\frac{1}{T}\cf{t_{2\widetilde{n}}(\mathfrak{w}^{(n)})\leq T}
\cf{t_{n}(\mathfrak{w}^{(n)})\leq T}
\int_{t_{2\widetilde{n}+1}(\mathfrak{w}^{(n)})}^{T} L^{-2}v^2_0|t-t_{2\widetilde{n}+1}(\mathfrak{w}^{(n)})|^2  \rmd t
\right]\\
&\qquad+\mathbb{E}_{\mathbb{P}_1}\left[
\frac{1}{T}\cf{t_{2\widetilde{n}}(\mathfrak{w}^{(n)})\leq T}\cf{t_{n}(\mathfrak{w}^{(n)})> T} 
\frac{1}{3}L^{-2}v^2_0(T-t_{2\widetilde{n}}(\mathfrak{w}^{(n)}))^3\right] \\
&\leq  \frac{1}{2}\frac{T_*}{L^2_*} \frac{T_*}{n^3}+
\frac{1}{3T}L^{-2}v^2_0\left(
\mathbb{E}_{\mathbb{P}_1}\left[
|T-t_{2\widetilde{n}+1}(\mathfrak{w}^{(n)})|^4 
\right]\right)^{3/4}\\
& \qquad
+\frac{4}{3T}L^{-2}v^2_0
T^3\big|1-\frac{2\widetilde{n}}{n}\big|^3 
+\frac{4}{3T}L^{-2}v^2_0\Big(
\mathbb{E}_{\mathbb{P}_1}\Big[
\big|\frac{2\widetilde{n}T}{n}-t_{2\widetilde{n}}(\mathfrak{w}^{(n)})\big|^4 
\Big]\Big)^{3/4}\,.
\end{split}
\end{equation}
We note that
\[
\frac{1}{3T}L^{-2}v^2_0\left(
\mathbb{E}_{\mathbb{P}_1}\left[
|T-t_{2\widetilde{n}+1}(\mathfrak{w}^{(n)})|^4 
\right]\right)^{3/4}\leq C_1\frac{T_*}{L^2_*}\frac{T_*}{n^{3/2}}\,,
\]
where $C_1=\mean{(u_1-1)^4}^{3/4}$, where $u_1$ has exponential distribution with parameter one. 
A straightforward computation yields
\[
\frac{4}{3T}L^{-2}v^2_0
T^3\big|1-\frac{2\widetilde{n}}{n}\big|^3=\frac{4}{3}\frac{T_*}{L^2_*}\frac{T_*}{n^{3}}\,.
\]
We also have
\[
\frac{4}{3T}L^{-2}v^2_0\Big(
\mathbb{E}_{\mathbb{P}_1}\Big[
\big|\frac{2\widetilde{n}T}{n}-t_{2\widetilde{n}}(\mathfrak{w}^{(n)})\big|^4 
\Big]\Big)^{3/4}\leq 4C_1\frac{T_*}{L^2_*}\frac{T_*}{n^{3/2}}\,,
\]
\begin{equation}\label{eq:oddcase}
\begin{split}
\mathbb{E}_{\mathbb{P}_1}&\left[
\frac{1}{T}\cf{t_{2\widetilde{n}}(\mathfrak{w}^{(n)})\leq T}\int_{t_{2\widetilde{n}}(\mathfrak{w}^{(n)})}^{T} |Y(t;n,\mathfrak{w}^{(n)})-Y(t_{2\widetilde{n}}(\mathfrak{w}^{(n)});n,\mathfrak{w}^{(n)})|^2  \rmd t
\right]\\
&\hspace{5cm}\leq (2+C_1)\frac{T_*}{L^2_*} \frac{T_*}{n^3}+5C_1\frac{T_*}{L^2_*}\frac{T_*}{n^{3/2}}\,.
\end{split}
\end{equation}
Therefore \eqref{eq:evencase0}, \eqref{eq:evencase} and \eqref{eq:oddcase} yield for any $n\in \mathbb{N}\setminus\{0,1\}$ 
\begin{equation}\label{eq:both}
\begin{split}
\mathbb{E}_{\mathbb{P}_1}&\left[
\frac{1}{T}\cf{t_{2\widetilde{n}}(\mathfrak{w}^{(n)})\leq T}\int_{t_{2\widetilde{n}}(\mathfrak{w}^{(n)})}^{T} |Y(t;n,\mathfrak{w}^{(n)})-Y(t_{2\widetilde{n}}(\mathfrak{w}^{(n)});n,\mathfrak{w}^{(n)})|^2  \rmd t
\right]\\
&\hspace{5cm}\leq (2+C_1)\frac{T_*}{L^2_*} \frac{T_*}{n^3}+5C_1\frac{T_*}{L^2_*}\frac{T_*}{n^{3/2}}\,.
\end{split}
\end{equation}
By \eqref{eq:logo}, \eqref{eq:splitover}, \eqref{eq:both} and Lemma~\ref{lem:momentpoisson} in Appendix~\ref{ap:tools} we obtain
\begin{equation}
\begin{split}
\mathcal{W}^2_2(\mathbb{Y}_{[0,T]},\mathbb{\widetilde{Z}}_{[0,T]})&\leq 
\sum\limits_{n=2}^{\infty} p_n(T_*)
\Big(\frac{T_*}{L^2_*} \frac{T_*}{n^2}+(2+C_1)\frac{T_*}{L^2_*} \frac{T_*}{n^3}+5C_1\frac{T_*}{L^2_*}\frac{T_*}{n^{3/2}}\Big)\\
&\qquad+\frac{L^{-2}v^2_0T^2}{3}p_0(T_*)+\frac{1}{T}p_1(T_*)\int_{0}^T \mathbb{E}_{\mathbb{P}_1}[|Y(s;1,\fs)|^2] \rmd s\\
&\leq 8C_2\frac{T_*}{L^2_*}{T^{-1/2}_*}
+\frac{T^2_*}{3L^{2}_*}e^{-T_*}+e^{-T_*}T_*\frac{1}{T}\int_{0}^T \mathbb{E}_{\mathbb{P}_1}[|Y(s;1,\fs)|^2] \rmd s\,.
\end{split}
\end{equation}
where $C_2:=6C_1+3$.
We point out that for $n=1$ \eqref{eq:oddcase} implies 
\begin{equation}
\frac{1}{T}\int_{0}^T \mathbb{E}_{\mathbb{P}_1}[|Y(s;1,\fs)|^2] \rmd s\leq (2+6C_1)\frac{T_*}{L^2_*} T_*\,.
\end{equation}
This concludes the proof of Lemma~\ref{lem:YZ*}.
\end{proof}

In the sequel, we stress the fact that Proposition~\ref{prop:naturalcoupling} is just a consequence of what
we have already shown up to here.
\begin{proof}[Proof of Proposition~\ref{prop:naturalcoupling}]
Combining Lemma~\ref{lem:compZZ*} and Lemma~\ref{lem:YZ*}  in \eqref{eq:cYZtildeZ} implies \eqref{eq:pepino}. This finishes the proof of
Proposition~\ref{prop:naturalcoupling}.
\end{proof}

\section{\textbf{Proof of Lemma~\ref{lem:couplingKMT}: Koml\'os--Major--Tusn\'ady coupling}}
\label{sec:KMTcoupling}
In this section we 
apply the Koml\'os--Major--Tusn\'ady (KMT) coupling to
prove Lemma~\ref{lem:couplingKMT}. That is, the existence of a constant $\mathcal{K}$ satisfying
\begin{equation}\label{eq:final}
\mathcal{W}^2_2\left(\mathbb{Z}_{[0,T]}, \mathbb{B}_{[0,T]}\right)\leq 
\mathcal{K}T_*L^{-2}_*T^2_*e^{-T_*}+
\mathcal{K}T_*L^{-2}_*
T^{-1/2}_*(1+T^{-3/2}_*)\,.
\end{equation}
Recall that
\begin{equation}\label{eq:inetriv}
\begin{split}
\mathcal{W}^2_2\left(\mathbb{Z}_{[0,T]}, \mathbb{B}_{[0,T]}\right)
\leq \frac{1}{T}\int_{0}^{T}\rmd s~ \mathbb{E}_{\pi}\left[|Z(s)-B(s)|^2\right]\,
\end{split}
\end{equation}
for any coupling $\pi$ between $\mathbb{Z}_{[0,T]}$ and 
$\mathbb{B}_{[0,T]}$. 
Let $\pi^*$ be the so-called Koml\'os--Major--Tusn\'ady coupling given in Theorem~1 of \cite{Komlos1976} (or Lemma~\ref{lem:KMT} in Appendix~\ref{ap:coupling}) gluing with an independent Poisson random variable $K$ with parameter $T_*=\lambda T$. 
Since $K$ has Poisson distribution with parameter $T_*$, inequality \eqref{eq:inetriv},
the tower property (Item~iv) of Theorem~8.14 in \cite{Klenke2014}) and Fubini's theorem imply
\begin{equation}\label{eq:replicafub}
\begin{split}
\mathcal{W}^2_2\left(\mathbb{Z}_{[0,T]}, \mathbb{B}_{[0,T]}\right) &\leq  
p_0(T_*)\frac{1}{T}\int_{0}^{T} \rmd s~
\mathbb{E}_{\pi^*}\left[|Z(s)-B(s)|^2~\big| K=0\right]\\
&\qquad+p_1(T_*)\frac{1}{T}\int_{0}^{T}\rmd s~
\mathbb{E}_{\pi^*}\left[|Z(s)-B(s)|^2~\big| K=1\right]
+J_0\,,
\end{split}
\end{equation}
where
\begin{equation}\label{def:J0}
J_0:=\frac{1}{T}\sum\limits_{n=2}^{\infty} 
p_n(T_*)\int_{0}^{T}\rmd s~
\mathbb{E}_{\pi^*}\left[|Z(s)-B(s)|^2~\big| K=n\right]\,.
\end{equation}
In the sequel we estimate $J_0$. To make the proof easier to follow, we have divided it in 8 steps.

\noindent
\textbf{Step 1: First splitting.}
Let $\widetilde{n}=\lfloor n/2\rfloor$
and split $J_0$ as follows:
\[
J_0:=\frac{1}{T}\sum\limits_{n=2}^{\infty} 
p_n(T_*)\sum\limits_{j=1}^{\widetilde{n}}\int_{\frac{j-1}{\widetilde{n}}T}^{\frac{j}{\widetilde{n}}T}\rmd s~
\mathbb{E}_{\pi^*}\left[|Z(s)-B(s)|^2~\big| K=n\right]\,.
\]
For each $j\in \{1,\ldots,\widetilde{n}\}$ we
write $\Theta_{j-1}B(s):=B(s)-B((j-1)T/\widetilde{n})$ for any $s\geq 0$ and note that
$B(s)=B((j-1)T/\widetilde{n})+\Theta_{j-1}B(s)$. 
Since $(x-y)^2\leq 2x^2+2y^2$ for any  $x,y\in \mathbb{R}$, we have
\begin{equation}\label{equa1}
J_0\leq J_1+J_2\,,
\end{equation}
where 
\begin{equation}
\begin{split}
J_1:=\frac{2}{T}
\sum\limits_{n=2}^{\infty}p_n(T_*)\sum_{j=1}^{\widetilde{n}}\int_{\frac{j-1}{\widetilde{n}}T}^{\frac{j}{\widetilde{n}}T}\rmd s~
\mathbb{E}_{\pi^*}\left[|\Theta_{j-1}B(s)|^2~\big| K=n\right]\,
\end{split}
\end{equation}
and 
\begin{equation}
\begin{split}
J_2:=\frac{2}{T}
\sum\limits_{n=2}^{\infty}p_n(T_*)\sum_{j=1}^{\widetilde{n}}\int_{\frac{j-1}{\widetilde{n}}T}^{\frac{j}{\widetilde{n}}T}\rmd s~
\mathbb{E}_{\pi^*}\left[|Z(s)-B((j-1)T/\widetilde{n})|^2~\big| K=n\right]\,.
\end{split}
\end{equation}

\noindent
\textbf{Step 2:}
We start  with the estimate of $J_1$.
Since for each $j\in \{1,\ldots,\widetilde{n}\}$ the random variable $\Theta_{j-1}B(s)$ has Gaussian distribution with zero mean and variance $\sigma^2(s-(j-1)T/\widetilde{n})$, with $\sigma^2$ being defined in \eqref{eq:diffusivity}, we deduce 
\begin{equation}
\begin{split}
J_1 &\leq \frac{2}{T}
\sum\limits_{n=2}^{\infty}p_n(T_*)\sum_{j=1}^{\widetilde{n}}\int_{\frac{j-1}{\widetilde{n}}T}^{\frac{j}{\widetilde{n}}T}\rmd s~
\sigma^2\left(s-\frac{(j-1)}{\widetilde{n}}T\right)= T\sigma^2
\mean{\widetilde{n}^{-1}\cf{n\ge 2}}
_{\mathsf{Po}(T_*)}\,.
\end{split}
\end{equation}
The preceding inequality with the help of a slight shift in Lemma~\ref{lem:momentpoisson} in Appendix~\ref{ap:tools}, \eqref{eq:diffusivity},
the fact that  $\widetilde{m}\geq m/2-1$, $m\geq 2$ and the definition of $\sigma^2$ given in \eqref{eq:diffusivity}
 yields
\begin{equation}\label{eq:J1estimate}
\begin{split}
J_1&\leq 
 T\sigma^2
\mean{\widetilde{n}^{-1}\cf{n=2}}
_{\mathsf{Po}(T_*)}+
 T\sigma^2
\mean{\widetilde{n}^{-1}\cf{n\geq  3}}
_{\mathsf{Po}(T_*)}\\
&\leq T\sigma^2 p_2(T_*)+ 2T\sigma^2
\mean{(n-2)^{-1}\cf{n\geq  3}}
_{\mathsf{Po}(T_*)}\\
&\leq 2^{-1}\frac{T_*}{L^2_*}T^2_*e^{-T_*}+2^{7/2} \frac{T_*}{L^2_*}\frac{1}{T_*}\,.
\end{split}
\end{equation}
This finishes the estimate of $J_1$.

\noindent
\textbf{Step 3: Second splitting.}
We continue with the estimate of $J_2$.
The idea is to apply the 
Koml\'os--Major--Tusn\'ady coupling between a suitable random walk and the Brownian motion with diffusivity constant $\sigma^2$.
We define the random walk $(S_m(\fs):m\in \mathbb{N}_0)$ as follows:
\[
S_m(\fs):=\sum\limits_{j=1}^{m}\eta_j(\fs)\quad \textrm{ for }\quad m\in \mathbb{N}\quad \textrm{ and }\quad S_0(\fs):=0\,,
\]
where the sequence $(\eta_j(\fs):j\in \mathbb{N})$ are defined by
\[
\eta_j(\fs)=-v_0(s_{2j}-s_{2j-1})\quad \textrm{ for any }\quad j\in \mathbb{N}\,.
\]
Hence, the sequence $(\eta_j(\fs):j\in \mathbb{N})$ are i.i.d.\ random variables  with zero mean and variance 
$2v^2_0 \textrm{Var}[\fs_1]=2v^2_0\frac{1}{\lambda^2}$.
Set 
\begin{equation}\label{eq:lawe}
\eta^*_j(\fs):=\frac{\eta_j(\fs)}{L}=\frac{-v_0(s_{2j}-s_{2j-1})}{L}\quad \textrm{ for any }\quad j\in \mathbb{N}
\end{equation}
and note that 
\[
\frac{S_j(\fs)}{L}:=\sum\limits_{k=1}^{j}\eta^*_k(\fs)\quad \textrm{ for any }\quad j\in \mathbb{N}\,.
\]
Then
$
\mathbb{E}\left[\eta^*_j(\fs)\right]=0$ and 
$\mathrm{Var}\left[\eta^*_j(\fs)\right]=\frac{v^2_0}{L^2}2\textrm{Var}[\fs_1]=\frac{2v^2_0}{L^2\lambda^2}
=\frac{2}{L^2_*}$ for all $j\in \mathbb{N}$.

Similarly as \eqref{equa1} we obtain
\begin{equation}\label{equa2}
J_2\leq J^{(1)}_2+J^{(2)}_2\,,
\end{equation}
where
\begin{equation}
J^{(1)}_2:=\frac{4}{T}
\sum\limits_{n=2}^{\infty}p_n(T_*)\sum_{j=1}^{\widetilde{n}}\int_{\frac{j-1}{\widetilde{n}}T}^{\frac{j}{\widetilde{n}}T}\rmd s~
\mathbb{E}_{\pi^*}\left[|S_{j-1}(\fs)/L-B((j-1)T/\widetilde{n})|^2~\big| K=n\right]
\end{equation}
and
\begin{equation}
J^{(2)}_2:=\frac{4}{T}
\sum\limits_{n=2}^{\infty}p_n(T_*)\sum_{j=1}^{\widetilde{n}}\int_{\frac{j-1}{\widetilde{n}}T}^{\frac{j}{\widetilde{n}}T}\rmd s~
\mathbb{E}_{\pi^*}\left[|Z(s)-S_{j-1}(\fs)/L|^2~\big| K=n\right]\,.
\end{equation}
\noindent
\textbf{Step 4: Third splitting.}
In what follows we estimate $J^{(1)}_2$.
We start noticing that
\begin{equation}\label{eq:J1112}
J^{(1)}_2\leq J^{(1,1)}_2+J^{(1,2)}_2\,,
\end{equation}
where 
\begin{equation}
J^{(1,1)}_2:=\frac{8}{T}
\sum\limits_{n=2}^{\infty}p_n(T_*)\sum_{j=1}^{\widetilde{n}}\int_{\frac{j-1}{\widetilde{n}}T}^{\frac{j}{\widetilde{n}}T}\rmd s~
\mathbb{E}_{\pi^*}\left[|S_{j-1}(\fs)/L-\sum\limits_{\ell=1}^{j-1}\textsf{N}_\ell(0,2L^{-2}_*)|^2~\big| K=n\right]
\end{equation}
and
\begin{equation}
J^{(1,2)}_2:=\frac{8}{T}
\sum\limits_{n=2}^{\infty}p_n(T_*)\sum_{j=1}^{\widetilde{n}}\int_{\frac{j-1}{\widetilde{n}}T}^{\frac{j}{\widetilde{n}}T}\rmd s~
\mathbb{E}_{\pi^*}\left[|\sum\limits_{\ell=1}^{j-1}\textsf{N}_\ell(0,2L^{-2}_*)-\sum\limits_{\ell=1}^{j-1}\textsf{N}_\ell(0,\sigma^2 T/\widetilde{n})|^2~\big| K=n\right]\,,
\end{equation}
where $(\textsf{N}_\ell(0,\eta^2):\ell\in \mathbb{N})$ are i.i.d.\ Gaussian random variables with zero mean and variance $\eta^2>0$. 
We stress that the preceding sequence
$(\textsf{N}_\ell(0,\eta^2):\ell\in \mathbb{N})$ of random variables is the sequence used to construct the Koml\'os--Major--Tusn\'ady coupling $\pi^*$.

\noindent
\textbf{Step 5:}
Now, we estimate the simplest term $J^{(1,2)}_2$. It is straightforward to see that
\begin{equation}
\begin{split}
J^{(1,2)}_2&=\frac{8}{T}
\sum\limits_{n=2}^{\infty}p_n(T_*)\sum_{j=1}^{\widetilde{n}}\int_{\frac{j-1}{\widetilde{n}}T}^{\frac{j}{\widetilde{n}}T}\rmd s~(j-1)
|\sqrt{2L^{-2}_*}-
\sqrt{\sigma^2 T/\widetilde{n}}|^2\\
&=\frac{8}{T}
\sum\limits_{n=2}^{\infty}p_n(T_*)\sum_{j=1}^{\widetilde{n}}~(j-1)\frac{T}{\widetilde{n}}
|\sqrt{2L^{-2}_*}-
\sqrt{\sigma^2 T/\widetilde{n}}|^2\\
&=4
\sum\limits_{n=2}^{\infty}p_n(T_*)(\widetilde{n}-1)
|\sqrt{2L^{-2}_*}-
\sqrt{\sigma^2 T/\widetilde{n}}|^2\,.
\end{split}
\end{equation}
Since $\sigma^2=L^{-2}_* \lambda$,
we have
\begin{equation}
\begin{split}
J^{(1,2)}_2
&\leq 4
\sum\limits_{n=2}^{\infty}p_n(T_*)\widetilde{n}
|\sqrt{2L^{-2}_*}-
\sqrt{\sigma^2 T/\widetilde{n}}|^2= 4 L^{-2}_*
\sum\limits_{n=2}^{\infty}p_n(T_*)\widetilde{n}
|\sqrt{2}-
\sqrt{T_*/\widetilde{n}}|^2\\
&= 4 T_*L^{-2}_*
\sum\limits_{n=2}^{\infty}p_n(T_*)
\big|\sqrt{\frac{2\widetilde{n}}{T_*}}-
1\big|^2\leq 4 T_*L^{-2}_*
\sum\limits_{n=2}^{\infty}p_n(T_*)
\Big|\frac{2\widetilde{n}}{T_*}-
1\Big|\\
&\leq 4 T_*L^{-2}_*\Big(
\sum\limits_{n=2}^{\infty}p_n(T_*)
\big|\frac{2\widetilde{n}}{T_*}-
1\big|^2\Big)^{1/2}\,,
\end{split}
\end{equation}
where we have used the following subadditivity inequality:
$|\sqrt{x}-\sqrt{y}|\leq \sqrt{|x-y|}$ for any non-negative numbers $x$ and $y$, and in the last inequality we applied the Cauchy-Schwarz inequality.
The preceding inequality with the help of Lemma~\ref{lem:cotapois} in Appendix~\ref{ap:tools} implies
\begin{equation}\label{eqLtg}
J^{(1,2)}_2\leq 4 T_*L^{-2}_*\left(
\sum\limits_{n=2}^{\infty}p_n(T_*)
\left|\frac{2\widetilde{n}}{T_*}-
1\right|^2\right)^{1/2}\leq 
4\sqrt{35} T_*L^{-2}_* T^{-1/2}_*.
\end{equation}

\noindent
\textbf{Step 6:} In the sequel we estimate $J^{(1,1)}_2$. We note that
\begin{equation}\label{eq:J11}
J^{(1,1)}_2=8
\sum\limits_{n=2}^{\infty}\frac{p_n(T_*)}{\widetilde{n}}\sum_{j=1}^{\widetilde{n}}
\mathbb{E}_{\pi^*}\left[|S_{j-1}(\fs)/L-\sum\limits_{\ell=1}^{j-1}\textsf{N}_\ell(0,2L^{-2}_*)|^2~\big| K=n\right]\,.
\end{equation}
Let $\eta>0$ be fixed.
For any $j\in \{1,\ldots,\widetilde{n}\}$
we consider the following split
\begin{equation}\label{eq:J11eta}
\begin{split}
\mathbb{E}_{\pi^*}&\left[|S_{j-1}(\fs)/L-\sum\limits_{\ell=1}^{j-1}\textsf{N}_\ell(0,2L^{-2}_*)|^2~\big| K=n\right]\\
&
=\mathbb{E}_{\pi^*}\left[|S_{j-1}(\fs)/L-\sum\limits_{\ell=1}^{j-1}\textsf{N}_\ell(0,2L^{-2}_*)|^2 \mathbbm{1}_{A^{\textsf{c}}_\eta}~\big| K=n\right]\\
&\quad+\mathbb{E}_{\pi^*}\left[|S_{j-1}(\fs)/L-\sum\limits_{\ell=1}^{j-1}\textsf{N}_\ell(0,2L^{-2}_*)|^2 \mathbbm{1}_{A_\eta}~\big| K=n\right]\\
&
\leq \eta^2+\mathbb{E}_{\pi^*}\left[|S_{j-1}(\fs)/L-\sum\limits_{\ell=1}^{j-1}\textsf{N}_\ell(0,2L^{-2}_*)|^2 \mathbbm{1}_{A_\eta}~\big| K=n\right]\,,
\end{split}
\end{equation}
where 
\[
A_\eta:=\left\{
\sup\limits_{1\leq j\leq \widetilde{n}}|S_{j-1}(\fs)/L-\sum\limits_{\ell=1}^{j-1}\textsf{N}_\ell(0,2L^{-2}_*)|^2>\eta^2
\right\}\,.
\]
By the Cauchy-Schwarz inequality we have
\begin{equation}\label{eq:J11Young}
\begin{split}
\mathbb{E}&_{\pi^*}\left[|S_{j-1}(\fs)/L-\sum\limits_{\ell=1}^{j-1}\textsf{N}_\ell(0,2L^{-2}_*)|^2 \mathbbm{1}_{A_\eta}~\big| K=n\right]\\
&\quad\leq 
\left(\mathbb{E}_{\pi^*}\left[|S_{j-1}(\fs)/L-\sum\limits_{\ell=1}^{j-1}\textsf{N}_\ell(0,2L^{-2}_*)|^{4}~\big| K=n\right]\right)^{1/2}\left(\pi^*\left(A_\eta~\big| K=n\right)\right)^{1/2}\,.
\end{split}
\end{equation}
By the KMT coupling (see Theorem~1 of \cite{Komlos1976})
 there exist positive constants $C^{(1)}_{\textsf{KMT}}$, $C^{(2)}_{\textsf{KMT}}$ and $\vartheta$
 such that for every $\widetilde{n}$ it follows that
\begin{equation}\label{eq:J11KMT}
\begin{split}
\pi^*\left(A_{\eta_*(n)}~\big| K=n\right)\leq C^{(2)}_{\textsf{KMT}} e^{-\vartheta\ln(n)}=\frac{C^{(2)}_{\textsf{KMT}}}{n^\vartheta}\,,
\end{split}
\end{equation}
where 
\begin{equation}\label{eq:J12con}
\eta_*(n):=\frac{\sqrt{2}}{L_*}\left({C_{\textsf{KMT}}\ln(\widetilde{n})+\ln(n)}\right)\,.
\end{equation}
We point out that the constants
$C^{(1)}_{\textsf{KMT}}$, $C^{(2)}_{\textsf{KMT}}$ and $\vartheta$  only depend on the law of the random variable \eqref{eq:lawe}. Later on, we choose $\vartheta=4$.
Combining \eqref{eq:J11eta}, \eqref{eq:J11Young}, \eqref{eq:J11KMT} and \eqref{eq:J12con} in \eqref{eq:J11} we obtain
\begin{equation}\label{eq:htyu98}
\begin{split}
J^{(1,1)}_2\leq 8
\sum\limits_{n=2}^{\infty}p_n(T_*)\eta^2_*(n)+8
\sum\limits_{n=2}^{\infty}\frac{p_n(T_*)}{\widetilde{n}}\left(
\frac{C^{(2)}_{\textsf{KMT}}}{n^\vartheta}\right)^{1/2}
\sum_{j=1}^{\widetilde{n}}
a_j\,,
\end{split}
\end{equation}
where
\[
a_j:=
\left(\mathbb{E}_{\pi^*}\left[|S_{j-1}(\fs)/L-\sum\limits_{\ell=1}^{j-1}\textsf{N}_\ell(0,2L^{-2}_*)|^{4}~\big| K=n\right]\right)^{1/2}\quad \textrm{ for any }\quad j\in \{1,\ldots,\widetilde{n} \}\,.
\]
Note that for any $j\in \{1,\ldots,\widetilde{n    } \}$ we have
\[
b_j:=
\left(\mathbb{E}_{\pi^*}\left[\sum\limits_{\ell=1}^{j-1}\textsf{N}_\ell(0,2L^{-2}_*)|^{4}\right]\right)^{1/2}
=\left(\mathbb{E}_{\pi^*}\left[\textsf{N}_\ell(0,2(j-1)L^{-2}_*)|^{4}\right]\right)^{1/2}=2(j-1)L^{-2}_*
\]
and
\[
c_j:=
\left(\mathbb{E}_{\pi^*}\left[|S_{j-1}(\fs)/L|^{4}\right]\right)^{1/2}\,.
\]
Moreover, straightforward computations yields
$c_j=\frac{1}{L^2}(\mathbb{E}[|S_{j-1}|^4])^{1/2}=L^{-2}_*\sqrt{22j+2j^2}\leq 5L^{-2}_* j$.
Since
$a_j\leq 3(b_j+c_j)$,
\eqref{eq:htyu98} with the chose of $\vartheta=4$ and Lemma~\ref{lem:momentpoisson} in Appendix~\ref{ap:tools} implies
\begin{equation}
\begin{split}
J^{(1,1)}_2&\leq 32\max\{1,C^2_{\textsf{KMT}}\}L^{-2}_*
\sum\limits_{n=2}^{\infty}p_n(T_*)\ln^2(n)+84\sqrt{C^{(2)}_{\textsf{KMT}}}L^{-2}_*
\sum\limits_{n=2}^{\infty}{p_n(T_*)}n^{-1}
\\
&\leq 
32\max\{1,C^2_{\textsf{KMT}}\}L^{-2}_*
\sum\limits_{n=2}^{\infty}p_n(T_*)\ln^2(n)+252\sqrt{C^{(2)}_{\textsf{KMT}}}L^{-2}_*T^{-1}_*\,.
\end{split}
\end{equation}
Let $\gamma\in (0,1/2)$ be fixed and note that there exists a constant $C(\gamma)>0$ such that
$\ln(n)\leq C(\gamma )n^\gamma$ for all $n\in \mathbb{N}$. Hence, H\"older's inequality yields
\begin{equation}
\begin{split}
J^{(1,1)}_2
&\leq 
32C^2(\gamma)\max\{1,C^2_{\textsf{KMT}}\}T_*L^{-2}_*
T^{2\gamma-1}_*
+252\sqrt{C^{(2)}_{\textsf{KMT}}}T_*L^{-2}_*T^{-2}_*\,.
\end{split}
\end{equation}
The choice of $\gamma=1/4$ implies the existence of a positive constant $C^{(1)}_2$ such that 
\begin{equation}\label{eqp:gy}
\begin{split}
J^{(1,1)}_2
&\leq 
C^{(1)}_2 T_*L^{-2}_*
T^{-1/2}_*(1+T^{-3/2}_*)\,.
\end{split}
\end{equation}
Combining \eqref{eqLtg} and \eqref{eqp:gy} in \eqref{eq:J1112} we obtain
\begin{equation}\label{eq:parteuno}
J^{(1)}_2\leq 
(C^{(1)}_2+4\sqrt{35})T_*L^{-2}_*
T^{-1/2}_*(1+T^{-3/2}_*)\,.
\end{equation}

\noindent
\textbf{Step 7:}
Finally, we estimate $J^{(2)}_2$.
Let $(S(t):t\geq 0)$ be the C\`adl\`ag piecewise constant extension of the random walk $(\eta^*_{\ell}(\fs):\ell\in \mathbb{N}_0)$ given in \eqref{eq:lawe}.
Since 
\[
Z(t;n,\fs)=\sum\limits_{\ell=1}^{\widetilde{n}}\eta^*_{\ell}(\fs)\cf{t>t_{2\ell}(\fs)}
\quad \textrm{ and } \quad
S(t;n,\fs)=\sum\limits_{\ell=1}^{\widetilde{n}}\eta^*_{\ell}(\fs)
\cf{t>\frac{\ell T}{\widetilde{n}}}\,,
\]
we have 
\begin{equation}
\begin{split}
\frac{1}{T}\int_{0}^{T}\rmd t \,|Z(t;n,\fs)-S(t;n,\fs)|^2&\\
&\hspace{-4.5cm}=\sum\limits_{\ell=1}^{\widetilde{n}}\sum\limits_{\ell^\prime=1}^{\widetilde{n}}\eta^*_{\ell}(\fs)\eta^*_{\ell^\prime}(\fs)
\frac{1}{T}\int_{0}^{T}\rmd t
\left(\cf{t>t_{2\ell}(\fs)}-\cf{t>\frac{\ell T}{\widetilde{n}}}\right)\left(\cf{t>t_{2\ell^\prime}(\fs)}-\cf{t>\frac{\ell^\prime T}{\widetilde{n}}}\right)\,.
\end{split}
\end{equation}
In the sequel, we recall
 the change of variable introduced in \eqref{eq:ch1} and \eqref{eq:ch2}. That is,
for any $\ell\in \{1,\ldots,\widetilde{n}\}$ we set
\begin{equation}
\hat{s}_\ell :=s_{2\ell}+s_{2\ell-1}\quad \textrm{ and }\quad
\omega_\ell :=\frac{s_{2\ell}-s_{2\ell-1}}{\hat{s}_\ell}\,
\end{equation}
and observe that
\begin{equation}
\eta^*_{\ell}(\fs)=-\frac{v_0}{L} \hat{s}_\ell\omega_\ell\quad
\textrm{ for any } \quad\ell\in \{1,\ldots,\widetilde{n}\}\,.
\end{equation}
An analogous reasoning using in \eqref{eq:cita1} with the help of Item~i) and Item~iii) of Lemma~\ref{lem:independencia} in Appendix~\ref{ap:basic} implies
\begin{equation}\label{eq:nt1}
\begin{split}
\mean{\frac{1}{T}\int_{0}^{T}\rmd t \,|Z(t;n,\fs)-S(t;n,\fs)|^2}_{\omega}&\\
&\hspace{-4.5cm}
=\left(\frac{v_0}{L}\right)^2\sum\limits_{\ell=1}^{\widetilde{n}}\sum\limits_{\ell^\prime=1}^{\widetilde{n}} \hat{s}_\ell \hat{s}_{\ell^\prime}\mean{\omega_\ell\omega_{\ell^\prime}}_{\omega}
\frac{1}{T}\int_{0}^{T}\rmd t
\left(\cf{t>t_{2\ell}(\fs)}-\cf{t>\frac{\ell T}{\widetilde{n}}}\right)\left(\cf{t>t_{2\ell^\prime}(\fs)}-\cf{t>\frac{\ell^\prime T}{\widetilde{n}}}\right)\\
&\hspace{-4.5cm}
=\frac{1}{3}\left(\frac{v_0}{L}\right)^2\sum\limits_{\ell=1}^{\widetilde{n}} \hat{s}^2_\ell
\frac{1}{T}\int_{0}^{T}\rmd t
\left(\cf{t>t_{2\ell}(\fs)}-\cf{t>\frac{\ell T}{\widetilde{n}}}\right)^2\\
&\hspace{-4.5cm}
=\frac{1}{3}\left(\frac{v_0}{L}\right)^2\sum\limits_{\ell=1}^{\widetilde{n}} \hat{s}^2_\ell
\frac{1}{T}\int_{0}^{T}\rmd t
\cf{\min\{t_{2\ell}(\fs),\frac{\ell T}{\widetilde{n}}\}<t<\max\{t_{2\ell}(\fs),\frac{\ell T}{\widetilde{n}}\}}\\
&\hspace{-4.5cm}
\leq \frac{1}{3}\left(\frac{v_0}{L}\right)^2\sum\limits_{\ell=1}^{\widetilde{n}} \hat{s}^2_\ell
\frac{1}{T}\left(
\max\left\{t_{2\ell}(\fs),\frac{\ell T}{\widetilde{n}}\right\}-\min\left\{t_{2\ell}(\fs),\frac{\ell T}{\widetilde{n}}\right\}\right)\\
&\hspace{-4.5cm}
\leq \left(\frac{v_0}{L}\right)^2\sum\limits_{\ell=1}^{\widetilde{n}} \hat{s}^2_\ell \frac{1}{T}
\left|t_{2\ell}(\fs)-\frac{\ell T}{\widetilde{n}}\right|\,,
\end{split}
\end{equation}
where $\omega$ denotes the product measure given by the law of the random vector $(\omega_1,\ldots,\omega_{\widetilde{n}})$.
We note that
\begin{equation}\label{eq:nt2}
\sum\limits_{\ell=1}^{\widetilde{n}} \hat{s}^2_\ell
\big|t_{2\ell}(\fs)-\frac{\ell T}{\widetilde{n}}\big|\leq
\sum\limits_{\ell=1}^{\widetilde{n}} \hat{s}^2_\ell
\big|t_{2\ell}(\fs)-\frac{2\ell }{\lambda}\big| +
\sum\limits_{\ell=1}^{\widetilde{n}} \hat{s}^2_\ell
\big|\frac{2\ell }{\lambda}-\frac{\ell T}{\widetilde{n}}\big|\,
\end{equation}
and let $\hat{s}$ denote the product measure given by the law of the random vector $(\hat{s}_1,\ldots,\hat{s}_{\widetilde{n}})$.
On the one hand,
Item~ii) of Lemma~\ref{lem:independencia} in Appendix~\ref{ap:basic} with the help of Fubini's theorem yields
\begin{equation}\label{eq:nt3}
\begin{split}
&\mean{\left(\frac{v_0}{L}\right)^2
\sum\limits_{\ell=1}^{\widetilde{n}} \hat{s}^2_\ell \frac{1}{T}
\left|\frac{2\ell }{\lambda}-\frac{\ell T}{\widetilde{n}}\right|}_{\hat{s}}=\left(\frac{v_0}{L}\right)^2
\sum\limits_{\ell=1}^{\widetilde{n}} \mean{\hat{s}^2_\ell}_{\hat{s}} \frac{1}{T}
\left|\frac{2\ell }{\lambda}-\frac{\ell T}{\widetilde{n}}\right|\\
&\qquad=2\left(\frac{v_0}{\lambda L}\right)^2
\sum\limits_{\ell=1}^{\widetilde{n}} 2\ell
\left|\frac{1}{T_*}-\frac{1}{2\widetilde{n}}\right|\leq 2\left(\frac{v_0}{\lambda L}\right)^2 
{\widetilde{n}}^2
\left|\frac{1}{T_*}-\frac{1}{2\widetilde{n}}\right|\leq \frac{T_*}{L^2_*}
\frac{\widetilde{n}}{T_*}
\left|\frac{2\widetilde{n}}{T_*}-1\right|\,.
\end{split}
\end{equation}
On the other hand,  the Cauchy-Schwarz inequality implies
\begin{equation}\label{eq:nt4}
\begin{split}
&\mean{\left(\frac{v_0}{L}\right)^2
\sum\limits_{\ell=1}^{\widetilde{n}} \hat{s}^2_\ell \frac{1}{T}
\left|t_{2\ell}(\fs)-\frac{2\ell }{\lambda}\right|}_{\hat{s}}=
\mean{\frac{1}{L^2_* T}
\sum\limits_{\ell=1}^{\widetilde{n}} \lambda^2\hat{s}^2_\ell 
\left|t_{2\ell}(\fs)-\frac{2\ell }{\lambda}\right|}_{\hat{s}}\\
&\qquad\leq \frac{1}{L^2_* T}
\sqrt{\sum\limits_{\ell=1}^{\widetilde{n}} \lambda^4\mean{\hat{s}^4_\ell}_{\hat{s}}}
\sqrt{\sum\limits_{\ell=1}^{\widetilde{n}}
\mean{
\left|t_{2\ell}(\fs)-\frac{2\ell }{\lambda}\right|^2}_{\hat{s}}}
\leq \frac{1}{L^2_* T}
\sqrt{120\widetilde{n}}
\sqrt{\sum\limits_{\ell=1}^{\widetilde{n}}
\frac{2\ell}{\lambda^2}}\\
&\qquad\leq 
\frac{1}{L^2_* T}
\sqrt{120\widetilde{n}}
\frac{\widetilde{n}}{\lambda}\leq 
11\frac{T_*}{L^2_*}
\frac{n^{3/2}}{T^2_*}\,.
\end{split}
\end{equation}
Combining \eqref{eq:nt2}, \eqref{eq:nt3} and \eqref{eq:nt4} in \eqref{eq:nt1} with the help of Fubini's theorem implies
\begin{equation}\label{eq:nt5}
\mean{\frac{1}{T}\int_{0}^{T}\rmd t \,|Z(t;n,\fs)-S(t;n,\fs)|^2}_{\hat{s},\omega}
\leq \frac{T_*}{L^2_*}
\frac{\widetilde{n}}{T_*}
\left|\frac{2\widetilde{n}}{T_*}-1\right|+11\frac{T_*}{L^2_*}
\frac{n^{3/2}}{T^2_*}\,.
\end{equation}
Recall that $n$ has Poisson distribution of parameter $T_*$. Then we have
\begin{equation}\label{eq:nt6}
\begin{split}
\sum\limits_{n=2}^{\infty}& p_n(T_*)
\mean{\frac{1}{T}\int_{0}^{T}\rmd t \,|Z(t;n,\fs)-S(t;n,\fs)|^2}_{\hat{s},\omega}\\
&\hspace{3cm}\leq \frac{T_*}{L^2_*}\mean{
\frac{\widetilde{n}}{T_*}
\left|\frac{2\widetilde{n}}{T_*}-1\right|\cf{n\geq 2}}_{\mathsf{Po}(T_*)}+11\frac{T_*}{L^2_*}
\frac{\mean{n^{3/2}\cf{n\geq 2}}_{\textsf{Po}(T_*)}}{T^2_*}\,.
\end{split}
\end{equation}
Now, we estimate the first term of the right-hand side of \eqref{eq:nt6}.
By the Cauchy-Schwarz inequality we obtain
\begin{equation}
\begin{split}
\frac{T_*}{L^2_*}\mean{
\frac{\widetilde{n}}{T_*}
\left|\frac{2\widetilde{n}}{T_*}-1\right|\cf{n\geq 2}}_{\mathsf{Po}(T_*)}\leq 
\frac{T_*}{L^2_*}\Big(\mean{
\frac{\widetilde{n}^2}{T^2_*}
\cf{n\geq 2}}_{\mathsf{Po}(T_*)}\Big)^{1/2}
\Big(\mean{
\left|\frac{2\widetilde{n}}{T_*}-1\right|^2
\cf{n\geq 2}}_{\mathsf{Po}(T_*)}\Big)^{1/2}\,.
\end{split}
\end{equation}
We note that 
\[
\mean{
\frac{\widetilde{n}^2}{T^2_*}
\cf{n\geq 2}}_{\mathsf{Po}(T_*)}\leq 
\frac{1}{4T^2_*}
\mean{n^2}_{\mathsf{Po}(T_*)}\leq 
\frac{1}{4T_*}+\frac{1}{4}\,.
\]
Also, by Lemma~\ref{lem:cotapois} in Appendix~\ref{ap:tools} we obtain
\[
\mean{
\left|\frac{2\widetilde{n}}{T_*}-1\right|^2
\cf{n\geq 2}}_{\mathsf{Po}(T_*)}
\leq \frac{35}{T_*}.
\]
Then
\begin{equation}\label{eq:primera}
\begin{split}
\frac{T_*}{L^2_*}\mean{
\frac{\widetilde{n}}{T_*}
\left|\frac{2\widetilde{n}}{T_*}-1\right|\cf{n\geq 2}}_{\mathsf{Po}(T_*)}\leq 
3\frac{T_*}{L^2_*}T^{-1/2}_*(T^{-1/2}_*+1)\,.
\end{split}
\end{equation}

Now, we estimate the second term of the right-hand side of \eqref{eq:nt6}.
Then H\"older's inequality yields
\begin{equation}\label{eq:segunda}
\begin{split}
\frac{T_*}{L^2_*}
\frac{\mean{n^{3/2}\cf{n\geq 2}}_{\textsf{Po}(T_*)}}{T^2_*}&\leq \frac{T_*}{L^2_*}
\frac{\mean{n^{3/2}}_{\textsf{Po}(T_*)}}{T^2_*}\leq \frac{T_*}{L^2_*}\frac{1}{T^2_*}
\left(\mean{n^{2}}_{\textsf{Po}(T_*)}\right)^{3/4}\\
&= \frac{T_*}{L^2_*}\frac{1}{T^2_*}
\left(T_*+T^2_*\right)^{3/4}\leq  \frac{T_*}{L^2_*}(T^{-1/2}_*+T^{-5/4}_*).
\end{split}
\end{equation}
Combining \eqref{eq:primera} and \eqref{eq:segunda} in \eqref{eq:nt6} we obtain
\begin{equation}\label{eq:nt7}
J^{(2)}_2\leq 3\frac{T_*}{L^2_*}T^{-1/2}_*(T^{-1/2}_*+1)+
11\frac{T_*}{L^2_*}T^{-1/2}_*(1+T^{-3/4}_*)\,.
\end{equation}
Therefore, \eqref{equa2}, \eqref{eq:parteuno} and \eqref{eq:nt7} yields the existence of a constant $K_1>0$ satisfying
\begin{equation}\label{eq:nt8}
\begin{split}
J_2&\leq (C^{(1)}_2+24)T_*L^{-2}_*
T^{-1/2}_*(1+T^{-3/2}_*)+3T_*L^{-2}_*T^{-1/2}_*(1+T^{-1/2}_*)+
11T_*L^{-2}_*T^{-1/2}_*(1+T^{-3/4}_*)\\
&\leq K_1 T_*L^{-2}_*
T^{-1/2}_*(1+T^{-1/2}_*+T^{-3/4}_*+T^{-3/2}_*)\,.
\end{split}
\end{equation}

\noindent
\textbf{Step 8: Concluding step.}
As a consequence of \eqref{equa1}, \eqref{eq:J1estimate} and \eqref{eq:nt8}
 we obtain an upper bound for $J_0$ defined in \eqref{def:J0}. The latter with the help of \eqref{eq:replicafub} yields \eqref{eq:final}.
\appendix
{
\section{\textbf{Moment estimates}}\label{ap:tools}
This section contains derivation of the moment estimates used in the main text.
\subsection{\textbf{Poisson moment estimates}}
\begin{lemma}[Bounds for the inverse moments  of Poisson r.v.]\label{lem:momentpoisson}
Let $N$ be a Poisson random variable with parameter $\lambda>0$. Then for any $p>0$ it follows that
\begin{equation}\label{eq:pmomentpoi}
\mean{N^{-p}\cf{N\ge 1}}_{\mathsf{Po}(\lambda)}\leq 
\frac{C_p}{\lambda^{p}}\,,
\quad \textrm{ where }
\quad C_p:=2^{p(p+2)/2}\,.
\end{equation}
In addition,
\begin{equation}\label{eq:logmoment}
\mean{N^{-p}\ln(N+1)\cf{N\ge 1}}_{\mathsf{Po}(\lambda)}\leq 
\frac{C^{1/2}_{2p} \ln(\lambda+3)}{\lambda^{p}}\,.
\end{equation}
\end{lemma}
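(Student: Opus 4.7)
The plan is to handle the two inequalities in sequence. Inequality \eqref{eq:pmomentpoi} will be established first by reducing to an exact Poisson identity for an inverse rising factorial, and \eqref{eq:logmoment} will then follow by Cauchy--Schwarz combined with a separate estimate on $\mathbb{E}[\ln^2(N+1)]$.

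For \eqref{eq:pmomentpoi} I first treat integer exponents $q\in\mathbb{N}$. Reindexing the Poisson sum via the identity $1/((n+1)(n+2)\cdots(n+q))=n!/(n+q)!$ yields
\[
\mathbb{E}\left[\prod_{i=1}^{q}\frac{1}{N+i}\right]=\frac{e^{-\lambda}}{\lambda^{q}}\sum_{m=q}^{\infty}\frac{\lambda^{m}}{m!}\leq \frac{1}{\lambda^{q}}.
\]
On the event $\{N\geq 1\}$ I combine $1/N\leq 2/(N+1)$ with the elementary bound $\prod_{i=1}^{q}(N+i)\leq q!\,(N+1)^{q}$ (which follows from $(N+i)/(N+1)\leq i$) to deduce $N^{-q}\mathbbm{1}_{N\geq 1}\leq 2^{q}q!/\prod_{i=1}^{q}(N+i)$, hence $\mathbb{E}[N^{-q}\mathbbm{1}_{N\geq 1}]\leq 2^{q}q!/\lambda^{q}$. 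For general $p>0$, set $q=\lceil p\rceil$ and invoke Jensen's inequality for the concave function $x\mapsto x^{p/q}$ (with $p/q\in(0,1]$):
\[
\mathbb{E}[N^{-p}\mathbbm{1}_{N\geq 1}]=\mathbb{E}\bigl[(N^{-q}\mathbbm{1}_{N\geq 1})^{p/q}\bigr]\leq \bigl(\mathbb{E}[N^{-q}\mathbbm{1}_{N\geq 1}]\bigr)^{p/q}\leq \frac{2^{p}(q!)^{p/q}}{\lambda^{p}}.
\]
The declared constant $C_{p}=2^{p(p+2)/2}$ then reduces to verifying the numerical inequality $\log_{2}(q!)\leq q(q-1)/2$, which is established by induction on $q$: the step requires only $\log_{2}(q+1)\leq q$, i.e., $2^{q}\geq q+1$.

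For \eqref{eq:logmoment} I apply the Cauchy--Schwarz inequality,
\[
\mathbb{E}[N^{-p}\ln(N+1)\mathbbm{1}_{N\geq 1}]\leq \sqrt{\mathbb{E}[N^{-2p}\mathbbm{1}_{N\geq 1}]}\,\sqrt{\mathbb{E}[\ln^{2}(N+1)\mathbbm{1}_{N\geq 1}]},
\]
and bound the first factor by $C_{2p}^{1/2}/\lambda^{p}$ using \eqref{eq:pmomentpoi} with exponent $2p$. Since $\ln(1)=0$ the second factor equals $\sqrt{\mathbb{E}[\ln^{2}(N+1)]}$, so the result will follow once I establish $\mathbb{E}[\ln^{2}(N+1)]\leq \ln^{2}(\lambda+3)$.

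This final bound is the main obstacle, because $x\mapsto \ln^{2}(x+1)$ is convex on $[0,e-1]$ and only concave thereafter, so Jensen's inequality cannot be invoked directly on $[0,\infty)$. My plan is to split at a threshold $N\leq c\lambda$ with $c>1$ slightly above $1$: on this event the monotonicity of $\ln$ gives $\ln^{2}(N+1)\leq \ln^{2}(c\lambda+1)$, while on the complement a Chernoff-type exponential tail $\mathbb{P}(N>c\lambda)\leq e^{-c'\lambda}$ combined with the crude polynomial bound $\ln^{2}(N+1)\leq N^{2}$ shows that the tail contribution decays exponentially in $\lambda$. The shift from $\ln(\lambda+1)$ to $\ln(\lambda+3)$ supplies the uniform slack required to absorb this exponentially small tail contribution for large $\lambda$, and simultaneously keeps the right-hand side bounded below by $\ln^{2}3>0$ in the regime $\lambda\to 0$, where the naive Jensen estimate $\ln(\lambda+1)$ vanishes.
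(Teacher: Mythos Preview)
Your treatment of \eqref{eq:pmomentpoi} is correct and essentially parallel to the paper's: both arguments hinge on the elementary inequality $1/j\le 2/(j+1)$ to pass from $N^{-q}$ to an inverse rising factorial, evaluate the resulting Poisson sum exactly, and then lift from integer exponents to general $p>0$ by H\"older/Jensen. Your bookkeeping via $(N+1)^q\ge \prod_{i=1}^q(N+i)/q!$ and the induction $\log_2(q!)\le q(q-1)/2$ is a slightly different packaging of the same estimate, yielding the same constant.

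For \eqref{eq:logmoment} the Cauchy--Schwarz split is also the paper's, but your plan for bounding $\mathbb{E}[\ln^2(N+1)]$ has a genuine gap. With any fixed $c>1$ the main term already overshoots: for large $\lambda$,
\[
\ln^2(c\lambda+1)-\ln^2(\lambda+3)\sim 2(\ln c)\,\ln\lambda \to +\infty,
\]
so an exponentially decaying tail contribution cannot bring the sum below $\ln^2(\lambda+3)$. Making $c=c(\lambda)\downarrow 1$ simultaneously kills the Chernoff rate, and the balancing does not obviously close; in any case you have not carried it out.

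The paper's device is much simpler and avoids all of this: observe that $\ln(N+1)\le \ln(N+3)$, and that $x\mapsto (\ln x)^2$ has second derivative $2(1-\ln x)/x^2\le 0$ for $x\ge e$, hence is concave on $[e,\infty)$. Since $N+3\ge 3>e$ almost surely, Jensen's inequality applies directly and gives
\[
\mathbb{E}\bigl[\ln^2(N+1)\bigr]\le \mathbb{E}\bigl[\ln^2(N+3)\bigr]\le \bigl(\ln(\mathbb{E}[N]+3)\bigr)^2=\ln^2(\lambda+3).
\]
The shift by $+3$ is not ``slack to absorb a tail'' but precisely what pushes the argument of $\ln^2$ into the concavity region, after which a single application of Jensen finishes the proof.
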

\begin{proof}
We start with the case $p\in \mathbb{N}$.
By definition we have 
\begin{equation}\label{eq:pmoment}
\begin{split}
\mean{N^{-p}\cf{N\ge 1}}_{\mathsf{Po}(\lambda)}=\sum\limits_{j\geq 1}\frac{1}{j^p}e^{-\lambda }\frac{\lambda^j}{j!}
=
\frac{e^{-\lambda}}{\lambda^p}
\sum\limits_{j\geq 1}\frac{\lambda^{j+p}}{j^p(j!)}\,.
\end{split}
\end{equation}
Note that
\[
\frac{1}{j}\leq \frac{2}{j+1} \quad \textrm{ for any } j\in \mathbb{N}\,.
\]
Hence,
\begin{equation}\label{eq:productbound}
\frac{1}{j^p}\leq 
\prod_{k=1}^{p}
\frac{2^k}{j+k}\quad \textrm{ for any } j\in \mathbb{N}\,.
\end{equation}
By \eqref{eq:pmoment} and \eqref{eq:productbound} we have 
\begin{equation}\label{eq:momentpinteger}
\begin{split}
\mean{N^{-p}\cf{N\ge 1}}_{\mathsf{Po}(\lambda)}\leq K_p
\frac{e^{-\lambda}}{\lambda^p}
\sum\limits_{j\geq 1}\frac{\lambda^{j+p}}{(j+p)!}=
K_p
\frac{e^{-\lambda}}{\lambda^p}\left(e^{-\lambda}-\sum\limits_{j=0}^{p}\frac{\lambda^{j}}{j!}\right)
\leq 
\frac{K_p}{\lambda^p}\,,
\end{split}
\end{equation}
where $K_p:=\prod_{k=1}^{p}2^k=2^{p(p+1)/2}\leq C_p$.

We continue with the case $p>1$ and $p\not \in \mathbb{N}$.
Let $\lceil p\rceil$ be the least integer greater than or equal to $p$ and set $p_1=\lceil p\rceil/p$.
We observe that $p_1>1$ and let $p_2$ be the H\"older conjugate of $p_1$, that is, $1/p_1+1/p_2=1$. 
By H\"older's inequality we have 
\begin{equation}\label{eq:eqnueva1}
\begin{split}
\mean{N^{-p}\cf{N\geq 1}}^{1/p}_{\mathsf{Po}(\lambda)}&=
\mean{N^{-p}\cf{N\geq 1}\cdot 1}^{1/p}_{\mathsf{Po}(\lambda)}
\leq \left(\mean{\left(N^{-p}\cf{N\geq 1}\right)^{p_1}}^{1/p_1}_{\mathsf{Po}(\lambda)}
\mean{1^{p_2}}^{1/p_2}_{\mathsf{Po}(\lambda)}\right)^{1/p}
\\
&\leq \mean{\left(N^{-p}\cf{N\geq 1}\right)^{p_1}}^{\frac{1}{pp_1}}_{\mathsf{Po}(\lambda)}
=
\mean{N^{-\lceil p\rceil}\cf{N\geq 1}}_{\mathsf{Po}(\lambda)}^{1/\lceil p\rceil}\,.
\end{split}
\end{equation}
Since 
$\lceil p\rceil \in \mathbb{N}\setminus \{1\}$, \eqref{eq:momentpinteger} yields
\begin{equation}\label{eq:eqnueva2}
\begin{split}
\mean{N^{-\lceil p\rceil}\cf{N\geq 1}}_{\mathsf{Po}(\lambda)}^{1/\lceil p\rceil}
\leq \left(\frac{K_{\lceil p\rceil}}{\lambda^{\lceil p\rceil}}\right)^{1/\lceil p\rceil}=\frac{K_{\lceil p\rceil}^{1/\lceil p\rceil}}{\lambda}\,,
\end{split}
\end{equation}
where 
$K_{\lceil p\rceil}=2^{\lceil p\rceil(\lceil p\rceil+1)/2}$.
By \eqref{eq:eqnueva1} and \eqref{eq:eqnueva2} we obtain
\[
\mean{N^{-p}\cf{N\geq 1}}_{\mathsf{Po}(\lambda)}
\leq\frac{K_{\lceil p\rceil}^{p/\lceil p\rceil}}{\lambda^p}\leq \frac{2^{p(p+2)/2}}{\lambda^p}=\frac{C_p}{\lambda^p}\,.
\]

Finally, for $p\in (0,1)$, let $q:=1/p>1$.
By H\"older's inequality and
\eqref{eq:momentpinteger} (for $p=1$)
 we obtain
\[
\mean{N^{-p}\cf{N\geq 1}}_{\mathsf{Po}(\lambda)}\leq 
\mean{(N^{-p})^q\cf{N\geq 1}}^{1/q}_{\mathsf{Po}(\lambda)}=\mean{N^{-1}\cf{N\geq 1}}^{1/q}_{\mathsf{Po}(\lambda)}\leq \left(\frac{K_{1}}{\lambda}\right)^{p}\leq \frac{C_p}{\lambda^p}\,.
\]

Now, we prove \eqref{eq:logmoment}.
Note that 
the Jensen inequality implies
\begin{equation}\label{e:jensen}
\begin{split}
\mean{(\ln(N+3))^2}_{\mathsf{Po}(\lambda)}\leq (\ln(\lambda +3))^2.
\end{split}
\end{equation}
By the Cauchy-Schwarz inequality, inequality \eqref{eq:pmomentpoi} and the Jensen inequality we have
\begin{equation*}
\begin{split}
\mean{N^{-p}\ln(N+1)\cf{N\ge 1}}_{\mathsf{Po}(\lambda)}&\leq
\left(\mean{N^{-2p}\cf{N\ge 1}}_{\mathsf{Po}(\lambda)}\right)^{1/2}\left(\mean{(\ln(N+1))^2\cf{N\ge 1}}_{\mathsf{Po}(\lambda)}\right)^{1/2}\\
&\leq \frac{C^{1/2}_{2p}}{\lambda^p}\left(\mean{(\ln(N+1))^2}_{\mathsf{Po}(\lambda)}\right)^{1/2}\leq \frac{C^{1/2}_{2p}\ln(\lambda+3)}{\lambda^p}\,.
\end{split}
\end{equation*}
\end{proof}

\begin{lemma}\label{lem:cotapois}
Let $N$ be a Poisson random variable with parameter $\lambda>0$. Then 
\begin{equation}\label{eq:f0}
\mean{\left|\frac{2\lfloor N/2  \rfloor}{\lambda}-
1\right|^2 \cf{N\geq 2}}_{\mathsf{Po}(\lambda)}\leq 
\frac{e^{-\lambda} (2-\lambda)^2}{2}+\frac{9}{\lambda}\,.
\end{equation}
In particular,
\[
\mean{\left|\frac{2\lfloor N/2  \rfloor}{\lambda}-
1\right|^2 \cf{N\geq 2}}_{\mathsf{Po}(\lambda)}\leq 
\frac{35}{\lambda}\,.
\]
\end{lemma}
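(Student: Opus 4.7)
The plan is to isolate the contribution of the atom $\{N=2\}$, which reproduces the first term of \eqref{eq:f0} exactly, and then bound the tail $\{N\ge 3\}$ by elementary moment estimates. Since $\lfloor 2/2\rfloor=1$ and $p_2(\lambda)=\tfrac{\lambda^2 e^{-\lambda}}{2}$, a direct evaluation gives
\[
\mean{\Big|\tfrac{2\lfloor N/2\rfloor}{\lambda}-1\Big|^2\cf{N=2}}_{\mathsf{Po}(\lambda)}=\frac{(2-\lambda)^2}{\lambda^2}\,p_2(\lambda)=\frac{e^{-\lambda}(2-\lambda)^2}{2},
\]
which is precisely the first term on the right-hand side of \eqref{eq:f0}; it therefore suffices to bound the contribution from $\{N\ge 3\}$ by $9/\lambda$.

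To handle the tail I would use the identity $2\lfloor N/2\rfloor=N-\cf{N\text{ odd}}$ together with the elementary inequality $(a+b)^2\le 2a^2+2b^2$ to obtain the pointwise estimate
\[
(2\lfloor N/2\rfloor-\lambda)^2\le 2(N-\lambda)^2+2\,\cf{N\text{ odd}}.
\]
Then taking expectations, invoking the variance identity $\mean{(N-\lambda)^2}_{\mathsf{Po}(\lambda)}=\lambda$, the closed form $\mathbb{P}(N\text{ odd})=(1-e^{-2\lambda})/2$, and the standard inequality $1-e^{-x}\le x$ for $x\ge 0$, I arrive at
\[
\mean{(2\lfloor N/2\rfloor-\lambda)^2\cf{N\ge 3}}_{\mathsf{Po}(\lambda)}\le 2\lambda+(1-e^{-2\lambda})\le 4\lambda,
\]
so the tail contributes at most $4/\lambda\le 9/\lambda$ after division by $\lambda^2$. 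Combining this with the $\{N=2\}$ contribution finishes the proof of \eqref{eq:f0} (in fact yielding the sharper constant $4$).

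For the companion bound $35/\lambda$, it suffices to absorb the exponential term into an $O(1/\lambda)$ estimate. A case split on $[0,2]$ (where the crude bound $\lambda(2-\lambda)^2\le 8$ applies) and $[2,\infty)$ (where $\lambda(\lambda-2)^2 e^{-\lambda}\le \lambda^3 e^{-\lambda}\le 27 e^{-3}<8$) produces the uniform estimate $\lambda(2-\lambda)^2 e^{-\lambda}\le 8$, hence $\tfrac{e^{-\lambda}(2-\lambda)^2}{2}\le 4/\lambda$. Adding this to the $4/\lambda$ tail bound gives a total of $13/\lambda$, which is comfortably below $35/\lambda$. The only conceptual subtlety — rather than an obstacle — is the need to separate the $\{N=2\}$ atom before applying the quadratic bound, since the factor $(2-\lambda)^2$ is not of order $1/\lambda$ for small $\lambda$ and must therefore be retained as an explicit summand in the main inequality.
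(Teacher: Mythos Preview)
Your proof is correct and follows the same structural idea as the paper: separate the atom $\{N=2\}$, which produces the term $\tfrac{e^{-\lambda}(2-\lambda)^2}{2}$ exactly, and then bound the tail $\{N\ge 3\}$ by elementary Poisson moment estimates. The difference lies only in how the tail is handled. The paper expands the square $\bigl(\tfrac{2\lfloor N/2\rfloor}{\lambda}-1\bigr)^2$ directly, uses the crude inequalities $2\lfloor N/2\rfloor\le N$ and $4\lfloor N/2\rfloor\ge 2N-4$, and then exploits the cancellation between $\mean{N^2}_{\mathsf{Po}(\lambda)}/\lambda^2$ and $-\tfrac{2}{\lambda}\mean{N\cf{N\ge 3}}_{\mathsf{Po}(\lambda)}$ to arrive at $9/\lambda$. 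Your route via the identity $2\lfloor N/2\rfloor=N-\cf{N\text{ odd}}$ and the inequality $(a+b)^2\le 2a^2+2b^2$ goes straight to the Poisson variance and is cleaner, yielding the sharper constant $4$ in place of $9$. For the ``in particular'' statement both arguments simply absorb the exponential term into an $O(1/\lambda)$ bound; your arithmetic there is slightly muddled (you invoke ``the $4/\lambda$ tail bound'' but then write $13/\lambda$, which corresponds to $4+9$ rather than $4+4$), but either reading is safely below $35/\lambda$.
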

\begin{proof}
We observe that 
\begin{equation}\label{eq:f1}
\begin{split}
\mean{\left|\frac{2\lfloor N/2  \rfloor}{\lambda}-
1\right|^2 \cf{N\geq 2}}_{\mathsf{Po}(\lambda)}=
\left|\frac{2}{\lambda}-1\right|^2 p_2(\lambda)+
\mean{\left|\frac{2\lfloor N/2  \rfloor}{\lambda}-
1\right|^2 \cf{N\geq 3}}_{\mathsf{Po}(\lambda)}\,.
\end{split}
\end{equation}
An straightforward computation yields
\begin{equation}\label{eq:f2}
\begin{split}
\mean{\left|\frac{2\lfloor N/2  \rfloor}{\lambda}-
1\right|^2 \cf{N\geq 3}}_{\mathsf{Po}(\lambda)}&=
\mean{\frac{4\lfloor N/2  \rfloor^2}{\lambda^2}\cf{N\geq 3}}_{\mathsf{Po}(\lambda)}
-\mean{\frac{4\lfloor N/2  \rfloor}{\lambda}\cf{N\geq 3}}_{\mathsf{Po}(\lambda)}
+\mean{\cf{N\geq 3}}_{\mathsf{Po}(\lambda)}\\
&\leq 
\mean{\frac{N^2}{\lambda^2}}_{\mathsf{Po}(\lambda)}
+\mean{\left(\frac{4-2N}{\lambda}\right)\cf{N\geq 3}}_{\mathsf{Po}(\lambda)}
+1\\
&\leq 
\frac{5}{\lambda}+2
-\frac{2}{\lambda}\mean{N\cf{N\geq 3}}_{\mathsf{Po}(\lambda)}\\
&\leq 
\frac{5}{\lambda}+2
-\frac{2}{\lambda}\left(\lambda-p_1(\lambda)-2p_2(\lambda)\right)\\
&\leq 
\frac{5}{\lambda}
+\frac{2}{\lambda}\left(p_1(\lambda)+2 p_2(\lambda)\right)\leq 
\frac{9}{\lambda}\,.
\end{split}
\end{equation}
Finally, \eqref{eq:f1} and \eqref{eq:f2} imply  \eqref{eq:f0}.
\end{proof}

\subsection{\textbf{$\delta$-constraint moment estimates}}
\begin{lemma}[Polynomial moments and correlations for the $\delta$-function]\label{lem:moments}
Let $n\in \mathbb{N}$.
For $p\in \N$ and any $i\in \{1,\ldots,n\}$ it follows that 
\[
\mean{u^p_i}=\frac{p!}{\prod_{k=0}^{p-1} (n+k)}\,,
\]
where the integration is respect to the probability measure $\rmd^{n}\fu \cdot (n-1)! \delta(t_n(\fu)-1)$.
In particular,
\[
\mean{u_i}=\frac{1}{n}\,,\quad \mean{u^2_i}  = \frac{2}{n(n+1)}\quad \textrm{ and }\quad  \mean{u_i^3}  = \frac{6}{n(n+1)(n+2)}\,.\]
Moreover, for $n\in\mathbb{N}\setminus\{1\}$ and any $i,j\in \{1,\ldots,n\}$ $i\neq j$ it follows that 
\[\mean{u_i u_j}= \frac{1}{n(n+1)}\,.\]
\end{lemma}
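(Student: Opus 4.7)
My plan is to exploit the permutation symmetry of the probability measure $\nu(\rmd^n \fu) := (n-1)!\, \delta(t_n(\fu)-1)\, \rmd^n \fu$ in the indices $1,\ldots,n$. Since the constraint $\sum_{k=1}^n u_k = 1$ is invariant under any permutation and the Lebesgue measure is as well, one has $\mean{u_i^p} = \mean{u_1^p}$ for every $i$, and $\mean{u_i u_j} = \mean{u_1 u_2}$ for every $i \ne j$. It therefore suffices to evaluate these two quantities.

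The main tool I would invoke is the simplex volume identity
\begin{equation}
\int_{\R_*^{m}} \rmd^m \fv\, \delta\!\left(\sum_{k=1}^m v_k - r\right) = \frac{r^{m-1}}{(m-1)!} \cf{r > 0}, \qquad m \ge 1,
\end{equation}
which follows by induction on $m$ from the $\delta$-function mollification framework in Appendix~\ref{ap:delta} (the base case $m=1$ being Lemma~\ref{lem:deltaprop}, and the induction step integrating out one coordinate at a time). Using Fubini's theorem to isolate the $u_1$ variable and applying this identity with $m = n-1$ and $r = 1 - u_1$ yields
\begin{equation}
\mean{u_1^p} = \frac{(n-1)!}{(n-2)!} \int_0^1 u_1^p (1-u_1)^{n-2} \rmd u_1 = (n-1) B(p+1, n-1),
\end{equation}
and the standard Beta integral evaluation $B(p+1,n-1) = \frac{p!\,(n-2)!}{(p+n-1)!}$ rearranges to the claimed formula $\mean{u_1^p} = p!/\prod_{k=0}^{p-1}(n+k)$. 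The particular values for $p=1,2,3$ follow by direct substitution.

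For the correlation, assuming first $n \ge 3$, I would isolate the variables $u_1, u_2$ and integrate out $u_3, \ldots, u_n$ using the same simplex identity (now with $m = n-2$ and $r = 1 - u_1 - u_2$). This gives
\begin{equation}
\mean{u_1 u_2} = \frac{(n-1)!}{(n-3)!} \int_0^1 u_1 \left(\int_0^{1-u_1} u_2 (1-u_1-u_2)^{n-3} \rmd u_2 \right) \rmd u_1.
\end{equation}
The substitution $u_2 = (1-u_1) v$ in the inner integral reduces it to $(1-u_1)^{n-1} B(2,n-2) = (1-u_1)^{n-1}\,(n-3)!/(n-1)!$, and the remaining outer integral is $B(2,n) = 1/(n(n+1))$. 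The edge case $n=2$ is handled directly since $u_2 = 1 - u_1$ on the support of $\nu$, giving $\mean{u_1 u_2} = \int_0^1 u_1(1-u_1)\rmd u_1 = 1/6 = 1/(2\cdot 3)$, which agrees with the general formula.

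The computation is essentially a reduction of the Dirichlet $\mathrm{Dir}(1,\ldots,1)$ moments to ordinary Beta integrals, so I do not expect any substantial obstacle; the only care needed lies in the rigorous justification of the $\delta$-function iterated integrals, but this is precisely what Appendix~\ref{ap:delta} is designed to provide.
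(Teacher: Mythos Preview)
Your proof is correct and follows essentially the same route as the paper: exploit permutation invariance to reduce to $\mean{u_1^p}$ and $\mean{u_1 u_2}$, integrate out the remaining coordinates to obtain the one- and two-variable marginal densities on the simplex, and evaluate the resulting Beta integrals (with the $n=2$ correlation handled separately via the constraint $u_2=1-u_1$). The only cosmetic difference is that the paper packages the ``integrate out the remaining variables'' step into Lemma~\ref{lem:deltaprop} (items i) and ii)) and cites it directly, whereas you re-derive those marginal formulas from the simplex volume identity \eqref{eq:simplexvol}; the substance is identical.
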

\begin{proof}
For $n=1$ the result is obvious. 
We assume $n\in \N \setminus \{1\}$.

The estimates rely on the properties of the measure $\rmd^{n}\fu \cdot (n-1)! \delta(t_n(\fu)-1)$ proven in Appendix \ref{ap:delta}.  In particular, we know that it
is permutation invariant.  Therefore,
we have that $u_{i}$, $i\in \{1,\ldots,n\}$ are identically distributed and then
\begin{equation}\label{eq:identically}
\begin{split}
\mean{u^p_i}&=\mean{u^p_1}\quad\quad \textrm{ for all }  i\in \{1,\ldots,n\} \textrm{ and }  p\in \N\,,\\
\mean{u_iu_j}&=\mean{u_1u_2}\quad  \textrm{ for all } i,j\in \{1,\ldots,n\} \textrm{ satisfying } i\neq j\,.
\end{split}
\end{equation}

Let $p\in \N$.  We next compute
$\mean{u^p_1}$. By Lemma \ref{lem:deltaprop}, then
\begin{equation}
\begin{split}
\mean{u^p_1}=(n-1)\int_{0}^{1}u^p_1(1-u_1)^{n-2}  \rmd u_1\,.
\end{split}
\end{equation}
Recall that the density $\mathsf{Beta}(\alpha,\beta)(\rmd x)$ of the Beta distribution $\mathsf{Beta}(\alpha,\beta)$ with parameters $\alpha>0$ and $\beta>0$ is given by
\begin{equation}\label{eq:betadist}
\mathsf{Beta}(\alpha,\beta)(\rmd x)=
\frac{\Gamma(\alpha+\beta)}{\Gamma(\alpha)\Gamma(\beta)}x^{\alpha-1}(1-x)^{\beta-1}\cf{x\in [0,1]}\rmd x\,.
\end{equation}
As a consequence, 
\begin{equation} 
\begin{split}
(n-1)\int_{0}^{1}u^p_1(1-u_1)^{n-2}  \rmd u_1&=(n-1)\frac{\Gamma(p+1)\Gamma(n-1)}{\Gamma(n+p)}\\
&=\frac{p!(n-1)!}{(n-1)!\prod_{k=0}^{p-1} (n+k)}=\frac{p!}{\prod_{k=0}^{p-1} (n+k)}\,.
\end{split}
\end{equation}
Hence,
\[
\mean{u^p_1}=\frac{p!}{\prod_{k=0}^{p-1} (n+k)}\,.
\]

Next, we continue with the computation of $\mean{u_1u_2}$.
Assume that $n\geq 3$.
By Lemma \ref{lem:deltaprop}, then
\begin{equation}
\begin{split}
\mean{u_1u_2}&
=(n-1)(n-2)
\int_{0}^{1}u_1
\int_{0}^{1-u_1} u_2(1-u_1-u_2)^{n-3}\rmd u_2\, \rmd u_1\,.
\end{split}
\end{equation}
Let $x=\frac{u_2}{1-u_1}$. Then using the fact that $\mathsf{Beta}(\alpha,\beta)(\rmd x)$ given in \eqref{eq:betadist} is probability measure for any $\alpha>0$ and $\beta>0$, we obtain
\begin{equation*}
\begin{split}
\mean{u_1u_2}&=(n-1)(n-2)
\int_{0}^{1}u_1(1-u_1)^{n-1}\rmd u_1
\int_{0}^{1}x(1-x)^{n-3}\rmd x\\
&=(n-1)(n-2)\frac{\Gamma(2)\Gamma(n)}{\Gamma(n+2)}\frac{\Gamma(2)\Gamma(n-2)}{\Gamma(n)}=\frac{1}{n(n+1)}\,.
\end{split}
\end{equation*}
Finally, for $n=2$ recall the constraint $u_1+u_2=1$ and observe that
\[
\mean{u_1u_2}=\mean{u_1(1-u_1)}=\mean{u_1}-\mean{u^2_1}=\frac{1}{n}-\frac{2}{n(n+1)}=\frac{n-1}{n(n+1)}=\frac{1}{6}\,.
\]
\end{proof}

\begin{lemma}[Exponential moments formula for the $\delta$-function]\label{lem:expmoments}
Let $n\in \N\setminus \{1\}$.
For all $i\in \{1,\ldots,n\}$ and $\theta \in \mathbb{R}$ it follows that 
\[
\mean{e^{\theta u_i}}=(n-1) e^{\theta}\int_{0}^{1} x^{n-2}e^{-\theta x}\rmd x\,,
\]
where the integration is respect to the probability measure $\rmd^{n}\fu \cdot (n-1)! \delta(t_n(\fu)-1)$.
In addition, 
\begin{equation}\label{eq:3n}
\mean{e^{(n-1) u_i}}\leq 3\sqrt{n-1}\,.
\end{equation}
\end{lemma}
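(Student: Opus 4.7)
My plan is to first reduce to the marginal distribution of $u_1$ under the $\delta$-constrained probability measure via permutation invariance, which is a consequence of Lemma~\ref{lem:deltaprop} in Appendix~\ref{ap:delta}. This gives $\mean{e^{\theta u_i}} = \mean{e^{\theta u_1}}$ for any $i$, and the marginal density of $u_1$ is obtained by integrating out $u_2,\ldots,u_n$ against $(n-1)!\,\delta(\sum_{k=1}^n u_k - 1)$ on $\R_*^{n-1}$. By the same Dirac-$\delta$ integration identities used repeatedly in Section~\ref{sec:proof} (and for instance in the proof of Lemma~\ref{lem:moments}), one finds
\[
\int_{\R_*^{n-1}} \rmd u_2\cdots \rmd u_n\,(n-1)!\,\delta\!\left(\sum_{k=1}^n u_k - 1\right) = (n-1)(1-u_1)^{n-2}\cf{0<u_1<1},
\]
i.e.\ $u_1$ is $\mathsf{Beta}(1,n-1)$-distributed.

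Once this is in hand, the first identity is immediate by the substitution $x = 1-u_1$:
\[
\mean{e^{\theta u_1}} = (n-1)\int_0^1 e^{\theta u_1}(1-u_1)^{n-2}\,\rmd u_1 = (n-1)e^{\theta}\int_0^1 x^{n-2}e^{-\theta x}\,\rmd x.
\]

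For the bound \eqref{eq:3n}, I would specialise to $\theta = n-1$ and perform the scaling $y=(n-1)x$ to obtain
\[
\mean{e^{(n-1)u_1}} = \frac{e^{n-1}}{(n-1)^{n-2}}\int_0^{n-1} y^{n-2}e^{-y}\,\rmd y \le \frac{e^{n-1}(n-2)!}{(n-1)^{n-2}},
\]
bounding the truncated Gamma integral by $\Gamma(n-1)=(n-2)!$. Applying a standard Stirling bound of the form $(n-2)! \le e\sqrt{n-2}\,((n-2)/e)^{n-2}$ and using $(1-\tfrac{1}{n-1})^{n-2}\le e^{-(n-2)/(n-1)}\le 1$ then yields the asymptotic
\[
\mean{e^{(n-1)u_1}} \le e^2\sqrt{n-2}\left(\frac{n-2}{n-1}\right)^{n-2}\cdot e^{1-\tfrac{n-2}{n-1}}\cdot \text{(harmless factor)} \lesssim e\sqrt{n-1},
\]
and since $e<3$ the constant $3$ in \eqref{eq:3n} suffices for all large $n$.

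The main obstacle is pinning down the constant $3$ uniformly in $n\ge 2$: the Stirling estimate gives the right $\sqrt{n-1}$ scaling and a constant close to $e$ asymptotically, but the small cases $n=2$ and $n=3$ must be checked by direct computation (for $n=2$, $\mean{e^{u_1}}=e-1<3$; for $n=3$, $\mean{e^{2u_1}}=(e^2-3)/2<3\sqrt{2}$), and one needs to verify that the Stirling-based chain of inequalities does not degrade the constant beyond $3$ in the intermediate range. This is a finite elementary verification and presents no conceptual difficulty.
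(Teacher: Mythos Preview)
Your approach matches the paper's: reduce to the $\mathsf{Beta}(1,n-1)$ marginal via permutation invariance (Lemma~\ref{lem:deltaprop}), substitute $x=1-u_1$, bound the incomplete Gamma integral by $\Gamma(n-1)$, and finish with a Stirling estimate. The one simplification the paper exploits is that, after rewriting the resulting bound as $(n-1)!\,e^{n-1}/(n-1)^{n-1}$, Robbins' form of Stirling applied directly to $(n-1)!$ (rather than to $(n-2)!$) yields $\sqrt{2\pi}\,e^{1/12}\sqrt{n-1}<3\sqrt{n-1}$ in a single line for every $n\ge 2$, so the ratio $((n-2)/(n-1))^{n-2}$ and the separate small-$n$ checks in your argument become unnecessary.
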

\begin{proof}
By Lemma \ref{lem:deltaprop} and using \eqref{eq:identically}, we find
\[
\mean{e^{\theta u_1}}=\mean{e^{\theta u_i}}=(n-1)\int_{0}^{1} e^{\theta u_1}(1-u_1)^{n-2}\rmd u_1
\]
for all $i\in \{1,\ldots,n\}$.
Using the change of variable to $x=1-u_1$," we obtain
\[
\mean{e^{\theta u_1}}=(n-1)\int_{0}^{1} e^{\theta (1-x)}x^{n-2}\rmd x=
(n-1)e^{\theta}\int_{0}^{1} e^{-\theta x}x^{n-2}\rmd x\,.
\]
Let $y=\theta x$. Then
\begin{equation}
\begin{split}
\mean{e^{\theta u_1}}&=
\frac{(n-1)e^{\theta}}{\theta^{n-1}}\int_{0}^{\theta} e^{-y}y^{n-2}\rmd y\leq 
\frac{(n-1)e^{\theta}}{\theta^{n-1}}\int_{0}^{\infty} e^{-y}y^{(n-1)-1}\rmd y\\
&=\frac{(n-1)e^{\theta}}{\theta^{n-1}}\Gamma(n-1)=\frac{(n-1)!e^{\theta}}{\theta^{n-1}}\,.
\end{split}
\end{equation}
By \cite{Robbins1955} we have 
\[
j!\leq \sqrt{2\pi}e^{1/12}\sqrt{j}\frac{j^{j}}{e^{j}}\quad \textrm{ for all } j\in \mathbb{N}\,.
\]
Then the choice $\theta=n-1$ yields
\[
\mean{e^{\theta u_1}}\leq 
\frac{(n-1)!e^{n-1}}{(n-1)^{n-1}}\leq \sqrt{2\pi}e^{1/12} \sqrt{n-1}<3\sqrt{n-1}\,.
\]
\end{proof}

\begin{lemma}[Moment estimates for the maximum]\label{lem:maxmoments}
For any measurable and bound observable $f:\mathbb{R}^n_*\to \mathbb{R}$, we denote by $\mean{f}$  the integral of $f$ with respect to the probability measure $\rmd^{n}\fu \cdot (n-1)! \delta(t_n(\fu)-1)$.
Then
there exists a positive constant $C$ such that for all $n\in \mathbb{N}$ it follows that
\begin{equation}\label{eq:ln/n}
\mean{\max_{i\in \{1,\ldots,n\}}u_i}\leq C\frac{\ln(n+1)}{n}\,.
\end{equation}
\end{lemma}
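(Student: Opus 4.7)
The plan is to combine the exponential moment estimate \eqref{eq:3n} of Lemma~\ref{lem:expmoments} with a union bound and the layer cake representation. First I would dispose of the trivial case $n=1$, where the measure is concentrated at $u_1=1$ and the bound holds for any $C\geq 1/\ln 2$. For $n=2$ the simplex is one-dimensional and $\mean{\max\{u_1,u_2\}}$ can be computed explicitly; a finite constant suffices. Hence from here on I assume $n\geq 3$.

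For the main case, I would write
\[
\mean{\max_{1\leq i\leq n} u_i}=\int_0^\infty \mean{\cf{\max_{1\leq i\leq n}u_i>t}}\,\rmd t\leq t_0 + \int_{t_0}^1 \mean{\cf{\max_{1\leq i\leq n}u_i>t}}\,\rmd t
\]
for a threshold $t_0\in (0,1)$ to be chosen, using that each $u_i\leq 1$ almost surely. By the permutation invariance noted in Lemma~\ref{lem:moments}, a union bound yields
\[
\mean{\cf{\max_{1\leq i\leq n}u_i>t}}\leq n\,\mean{\cf{u_1>t}}.
\]
Applying the exponential Markov inequality with parameter $\theta=n-1$ together with \eqref{eq:3n} gives
\[
\mean{\cf{u_1>t}}\leq \rme^{-(n-1)t}\mean{\rme^{(n-1)u_1}}\leq 3\sqrt{n-1}\,\rme^{-(n-1)t}\,.
\]

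Inserting the two estimates and integrating the exponential tail yields
\[
\mean{\max_{1\leq i\leq n} u_i}\leq t_0+ \frac{3n\sqrt{n-1}}{n-1}\,\rme^{-(n-1)t_0}= t_0+\frac{3n}{\sqrt{n-1}}\,\rme^{-(n-1)t_0}\,.
\]
Choosing $t_0:=\frac{3}{2}\,\frac{\ln n}{n-1}$ makes the second term equal to $\frac{3}{\sqrt{n(n-1)}}$, which is $\mathcal{O}(1/n)$, while $t_0=\mathcal{O}(\ln(n+1)/n)$. Summing the two contributions and absorbing the low-$n$ cases gives \eqref{eq:ln/n} with an explicit constant $C>0$.

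The argument is essentially routine once Lemma~\ref{lem:expmoments} is available; the only mild subtlety is choosing the cutoff $t_0$ so that the bound $n\mean{\cf{u_1>t}}\leq 1$ (which is what forces truncation) is just barely active, and keeping track of the $\sqrt{n-1}$ prefactor produced by \eqref{eq:3n} so that it does not spoil the logarithmic numerator. No concentration beyond the exponential Markov inequality is required.
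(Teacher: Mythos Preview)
Your proof is correct and uses essentially the same ingredients as the paper: the exponential moment bound \eqref{eq:3n} from Lemma~\ref{lem:expmoments} with $\theta=n-1$, combined with a union bound over the $n$ coordinates. The only cosmetic difference is that the paper packages this via Jensen's inequality (Chernoff's trick), obtaining $\mean{\max_i u_i}\le \frac{\ln n}{\theta}+\frac{\ln\mean{e^{\theta u_1}}}{\theta}$ directly, whereas you reach the same conclusion through the layer cake formula and the exponential Markov inequality with an optimized cutoff $t_0$.
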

\begin{proof}
The statement for $n=1$ is immediate.
In the sequel, we assume $n\in \mathbb{N}\setminus\{1\}$.
We start with the proof of \eqref{eq:ln/n}.
We use the so-called Chernoff's trick explained in Section~2.5 of \cite{Boucheron2013}.
Recall that the random variables $u_1,\ldots, u_n$ are identically distributed.
By the Jensen inequality for the convex function $f(x)=\exp\left(\theta x\right)$, $x\in \mathbb{R}$ and $\theta>0$ yields
\begin{equation}
\begin{split}
\exp\left(\theta \mean{\max\limits_{i\in \{1,\ldots,n\}}u_i}\right)&=f\left(\mean{\max_{i\in \{1,\ldots,n\}}u_i}\right)\leq \mean{f\left(\max_{i\in \{1,\ldots,n\}}u_i\right)}\\
&=
\mean{\exp\left(\theta\max\limits_{i\in \{1,\ldots,n\}}u_i}\right)=\mean{\max\limits_{i\in \{1,\ldots,n\}}  \exp\left(\theta u_i\right)}\\
&\leq \mean{\sum\limits_{i=1}^n  \exp\left(\theta u_i\right)}=n\mean{\exp\left(\theta u_1\right)}\,.
\end{split}
\end{equation}
Hence, 
\begin{equation}\label{eq:maxfirst}
\mean{\max\limits_{i\in \{1,\ldots,n\}}u_i}\leq \frac{\ln(n)}{\theta}+\frac{\ln\left(\mean{\exp\left(\theta u_1\right)}\right)}{\theta}\quad \textrm{ for all } \theta>0\,.
\end{equation}
Inequality \eqref{eq:maxfirst} for $\theta=n-1$ with the help of
Lemma~\ref{lem:expmoments} in Appendix~\ref{ap:tools} implies
\begin{equation}
\mean{\max\limits_{i\in \{1,\ldots,n\}}u_i}\leq \frac{\ln(n)}{n-1}+\frac{\ln(3\sqrt{n})}{n-1}\leq 
3\frac{\ln(n+1)}{n}+2\frac{\ln(3)}{n}\leq C_1\frac{\ln(n+1)}{n}\,,
\end{equation}
where $C_1:=\max\{6,4\ln(3)/\ln(2)\}$.
\end{proof}

\section{\textbf{Free velocity flip moment estimates}}\label{ap:coupling}
In this section, we estimate the moments of the free velocity flip model via Theorem~1 in \cite{Janssen1990}.
\begin{lemma}[Moment estimates for the free velocity flip model]\label{lem:momentskac}
The following moment estimates are valid.
\begin{enumerate}
\item  The first-moment of $X(t;\fs)$ is given by
\[
\mathbb{E}\left[(X(t;\fs))\right]=
\frac{v_0}{2\lambda}\left(1-e^{-2\lambda t}\right)\quad \textrm{ for all }\quad t\geq 0\,.
\]
\item The second-moment and the variance of $X(t;\fs)$ are given by
\[
\mathbb{E}\left[(X(t;\fs))^2\right]=\frac{v^2_0}{2\lambda^2}\left({2\lambda t}-(1-e^{-2\lambda t}) \right)
\]
and
\[
\mathrm{Var}\left[X(t;\fs)\right]=\frac{v^2_0}{\lambda^2}\left(\lambda t+e^{-2\lambda t}-\frac{e^{-4\lambda t}}{4}-\frac{3}{4}\right)\,.
\]
for all $t\geq 0$, respectively.
\item 
For any $r>0$ there exists a constant $C(r)$ (does not depend on $\lambda$) such that
\begin{equation}\label{eq:momentskacr}
\mathbb{E}\left[|X(t;\fs)|^r\right]\leq |v_0|^{r}\left( \widetilde{C}(r) {\lambda}^{-r/2}t^{r/2}+C(r){\lambda}^{-r/2-1} t^{r/2-1}\right) \quad \textrm{ for all }\quad t\geq 0\,,
\end{equation}
where
\begin{equation}\label{eq:absmomengauss}
\begin{split}
\widetilde{C}(r)&:=\int_{\mathbb{R}} \rmd z\, \varphi(z)|z|^r=
2^{r/2}\frac{\Gamma(\frac{r+1
}{2})}{\sqrt{\pi}}
\quad \textrm{ with }\quad
\varphi(z):=\frac{e^{-z^2/2}}{\sqrt{2\pi}}\mathbbm{1}_{\mathbb{R}}(z)
\end{split}
\end{equation}
and $\Gamma$ denotes the Gamma function. 
The constant $C(r)$ can be estimated from the proof of Theorem~1 in \cite{Janssen1990}.
The following crude bound is also true
\begin{equation*}
\mathbb{E}\left[|X(t;\fs)|^r\right]\leq 
|v_0|^{r} t^r\quad \textrm{ for all }\quad t\geq 0\,.
\end{equation*}
The last bound is good for times $t\ll 1$.
\end{enumerate}
\end{lemma}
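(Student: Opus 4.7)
The plan is to address the three items in sequence, handling (1) and (2) by direct integral computations from the path representation
\[
X(t;\fs)=\int_{0}^{t}V(s;\fs)\,\rmd s,\qquad V(s;\fs)=v_{0}(-1)^{N(s;\fs)},
\]
and handling (3) by invoking the Gaussian expansion of Janssen.

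For item (1), I would apply Fubini's theorem to exchange expectation and integration and then evaluate $\E[(-1)^{N(s;\fs)}]$ using the probability generating function of the Poisson distribution: for $z\in\C$ with $|z|\le 1$ and any Poisson random variable $N$ with parameter $\mu$, $\E[z^{N}]=\rme^{-\mu(1-z)}$, which with $z=-1$ gives $\E[(-1)^{N(s;\fs)}]=\rme^{-2\lambda s}$. Integrating over $[0,t]$ yields the stated formula. For the second moment, I would expand $X(t;\fs)^{2}$ as a double integral, split into $\{s_1<s_2\}$ and its mirror by symmetry, and use independence of increments of the Poisson process to write $\E[(-1)^{N(s_1;\fs)+N(s_2;\fs)}]=\E[(-1)^{2N(s_1;\fs)}]\,\E[(-1)^{N(s_2;\fs)-N(s_1;\fs)}]=\rme^{-2\lambda(s_2-s_1)}$ for $s_1<s_2$. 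Two nested elementary integrals produce $\E[X(t;\fs)^{2}]$, and subtracting the square of the mean from item (1) yields the claimed variance after elementary algebraic simplification with the substitution $a:=1-\rme^{-2\lambda t}$.

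For item (3), the idea is to apply Theorem~1 in \cite{Janssen1990}, which provides an expansion of the density $f(x,t)$ of $X(t;\fs)$ in terms of the Gaussian density with variance $v_{0}^{2}t/\lambda$ plus correction terms carrying an extra prefactor of order $\lambda^{-1}t^{-1}$. Integrating $|x|^r$ against the leading Gaussian term produces the explicit constant $\widetilde C(r)$ written as an absolute moment of a standard normal, which evaluates to $2^{r/2}\Gamma(\frac{r+1}{2})/\sqrt{\pi}$. The remaining correction term is controlled by extracting the $r$-th absolute moment against the explicit structure of the Janssen remainder, yielding a bound of the form $C(r)|v_0|^{r}\lambda^{-r/2-1}t^{r/2-1}$. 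The crude bound $\E[|X(t;\fs)|^r]\le |v_0|^{r}t^{r}$ follows immediately from $|V(s;\fs)|=|v_0|$ and the triangle inequality in the integral defining $X$.

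The routine part is (1), (2), and the trivial crude bound in (3). The main obstacle lies in controlling the remainder in Janssen's expansion so that its contribution to $\E[|X(t;\fs)|^r]$ is genuinely of smaller order in $t$, namely $t^{r/2-1}$, uniformly in $\lambda$. Since Janssen's proof gives the expansion pointwise in $x$, a careful splitting into a bulk Gaussian region and tails is required so that $|x|^r$ does not blow up the remainder. The short route here is to quote directly Theorem~1 of \cite{Janssen1990}, noting that the constants $\widetilde C(r)$ and $C(r)$ depend only on $r$ (not on $v_0$ or $\lambda$), as both dependences are explicit through the scalings $X(t;\fs)/(|v_0|\lambda^{-1/2})$ appearing in Janssen's formulation.
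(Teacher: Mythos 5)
Your overall plan is correct and lands on the same key input as the paper (Janssen's Theorem~1) for item~(3), but you take a genuinely different route on item~(2) and you slightly misdiagnose the nature of the obstacle in item~(3). For item~(2) the paper simply cites Lemma~1 of Orsingher \cite{Orsingher1990} for the second moment, whereas you propose the direct double-integral computation using independence of Poisson increments, $\E[(-1)^{N(s_1)+N(s_2)}]=\rme^{-2\lambda|s_2-s_1|}$, followed by two nested integrals; both work, and your version has the advantage of being self-contained, at the cost of a short extra computation. For item~(3) you say Janssen's expansion is ``pointwise in $x$'' and that a bulk/tail splitting would therefore be needed so $|x|^r$ does not blow up the remainder — this is not the case. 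Theorem~1 of \cite{Janssen1990} already delivers a weighted $L^1$ estimate with the exponential weight $\rme^{|x|}$, namely $\int_{\R}\rmd x\,\rme^{|x|}|t^{1/2}h(t,t^{1/2}x,1)-\varphi(x)-t^{-1/2}x\varphi(x)/2|\le C/t$; the paper then just uses the elementary bound $|x|^r\le K_r\,\rme^{|x|}$ to absorb the polynomial weight, together with the scaling identity $\mathbb{E}[|X(t;\fs)|^r]=\lambda^{-r}\,\mathbb{E}[|\,v_0\int_0^{\lambda t}(-1)^{N(s;\fu)}\rmd s\,|^r]$ to make the $\lambda$-dependence explicit. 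So your perceived obstacle is spurious, and since you end by proposing to quote Theorem~1 of \cite{Janssen1990} directly — which is exactly what dissolves it — your argument, once filled in, coincides with the paper's. Item~(1) and the crude bound $\E[|X(t;\fs)|^r]\le|v_0|^r t^r$ are handled by you exactly as in the paper.
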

\begin{proof}
We start with the proof of Item~(1).
By \eqref{def:path01} and Fubini's theorem we have
\begin{equation}
\begin{split}
\mathbb{E}\left[(X(t;\fs))\right]& =
v_0\int_{0}^{t} \rmd s \,\mathbb{E}\left[(-1)^{N(s;\fs)}\right]
=v_0\int_{0}^{t} \rmd s \,\sum\limits_{k=0}^{\infty}(-1)^{k}e^{-\lambda s}\frac{(\lambda s)^k}{k!}\\
&=v_0\int_{0}^{t} \rmd s \,e^{-2\lambda s}=
\frac{v_0}{2\lambda}\left(1-e^{-2\lambda t}\right)\quad \textrm{ for all }\quad t\geq 0\,,
\end{split}
\end{equation}
which yields Item~(1).
The proof of Item~(2) follows directly from Lemma~1 in \cite{Orsingher1990}.
In the sequel, we prove Item~(3).
To prove Item~(3) we apply the estimate given in Theorem~1 in \cite{Janssen1990}.
For $p=1$, $\delta=1$, $M=1$ in Theorem~1 in \cite{Janssen1990}
we have the existence of a positive constant $C$ such that
\begin{equation}\label{eq:expX}
\begin{split}
\int_\mathbb{R} \rmd x\,e^{|x|}\,
\left |
t^{1/2}h(t,t^{1/2}x,1)-\varphi(x)-t^{-1/2}\frac{x\varphi(x)}{2}
\right |\leq \frac{C}{t}\quad\textrm{ for all }\quad t>0\,,
\end{split}
\end{equation}
where for $t>0$, $\lambda>0$, $x\in \mathbb{R}$ the function $h$ is given by
\begin{equation}
h(t,x,\lambda):=\frac{\lambda}{2}e^{-\lambda t}
\left\{
\mathcal{I}_0\left(\lambda(t^2-x^2)^{1/2}\right)
+\left((t+x)/(t-x)\right)^{1/2}
\mathcal{I}_1\left(\lambda(t^2-x^2)^{1/2}\right)
\right\}\mathbbm{1}_{(-t,t)}(x)\,.
\end{equation}
Here, $\mathcal{I}_{0}$ and $\mathcal{I}_{1}$ denote the modified Bessel functions of order $0$ and $1$, respectively.
In other words, 
\begin{equation}
\mathcal{I}_{0}(z)=\sum\limits_{k=0}^\infty \frac{(z^2/4)^k}{(k!)^2}\quad \textrm{ and } \quad
\mathcal{I}_{1}(z)=\frac{z}{2}\sum\limits_{k=0}^\infty \frac{(z^2/4)^k}{k!(k+1)!}\,\quad
\textrm{ for any }\quad z\in \mathbb{R}\,.
\end{equation}
Let $r>0$ be fixed. 
Note that there exists a positive constant $K_r$ such that
\begin{equation}\label{eq:polyexpo}
|x|^r\leq K_r e^{|x|}\quad \textrm{ for all }\quad x\in \mathbb{R}\,.
\end{equation}
Recall that 
\[
X(t;\fs)=v_0\int_{0}^{t}\rmd s\,(-1)^{N(s;\fs)},
\]
where the entries of $\fs=(s_1,s_2,\ldots,)$ are i.i.d.\ with exponential distribution of parameter $\lambda$.
Denote by $\stackrel{\mathcal{D}}{=}$ equality in distribution.
Note that 
\[
X(t;\fs)\stackrel{\mathcal{D}}{=}v_0\int_{0}^{t}\rmd s\,(-1)^{N(\lambda s;\fu)},
\]
where 
the entries of $\fu=(u_1,u_2,\ldots,)$ are i.i.d.\ with exponential distribution of parameter $1$.
Hence for any $r>0$ we have
\[
\mathbb{E}[|X(t;\fs)|^r]=\frac{1}{\lambda^r}
\mathbb{E}\left[\left|v_0
\int_{0}^{\lambda t}\rmd s\,(-1)^{N(s;\fu)}\right|^r
\right]\,.
\]
Consequently,
\begin{equation}\label{eq:claimscaling}
\int_{\mathbb{R}}\rmd z\,h(t,z,\lambda)|z|^r=\frac{1}{\lambda^r}\int_{\mathbb{R}}\rmd z\,h(\lambda t,z,1)|z|^r\,.
\end{equation}

We continue with the estimate of the $r$-th moment.
The change of variable $z=\widetilde{t}^{1/2}x$
with $\widetilde{t}:=\lambda t$
 yields
\begin{equation}\label{eq:splitd}
\begin{split}
\int_{\mathbb{R}}\rmd z\,h(\lambda t,z,1)|z|^r&=
\int_{\mathbb{R}}\rmd x\,\widetilde{t}^{1/2}h(\widetilde{t},\widetilde{t}^{1/2}x,1)|\widetilde{t}^{1/2}x|^r\\
&=
\int_{\mathbb{R}}\rmd x\,
\left(
\widetilde{t}^{1/2}h(\widetilde{t},\widetilde{t}^{1/2}x,1)-\varphi(x)-\widetilde{t}^{-1/2}\frac{x\varphi(x)}{2}
\right)
|\widetilde{t}^{1/2}x|^r\\
&\qquad +
\int_{\mathbb{R}}\rmd x\,
\left(
\varphi(x)+\widetilde{t}^{-1/2}\frac{x\varphi(x)}{2}
\right)
|\widetilde{t}^{1/2}x|^r\,.
\end{split}
\end{equation}
By \eqref{eq:expX} and \eqref{eq:polyexpo} we have
\begin{equation}\label{eq:cota1}
\begin{split}
\left|
\int_{\mathbb{R}}\rmd x\,
\left(
\widetilde{t}^{1/2}h(\widetilde{t},\widetilde{t}^{1/2}x,1)-\varphi(x)-\widetilde{t}^{-1/2}\frac{x\varphi(x)}{2}
\right)
|\widetilde{t}^{1/2}x|^r\right|\leq \widetilde{t}^{r/2-1}K_rC=
{t}^{r/2-1}{\lambda}^{r/2-1}K_rC\,.
\end{split}
\end{equation}
We also note that
\begin{equation}\label{eq:cota2}
\begin{split}
\int_{\mathbb{R}}\rmd x\,
\left(
\varphi(x)+\widetilde{t}^{-1/2}\frac{x\varphi(x)}{2}
\right)
|\widetilde{t}^{1/2}x|^r=
\int_{\mathbb{R}}\rmd x\,
\varphi(x)
|\widetilde{t}^{1/2}x|^r=\widetilde{C}(r)\widetilde{t}^{r/2}=\widetilde{C}(r){\lambda}^{r/2}{t}^{r/2}\,.
\end{split}
\end{equation}
Relations \eqref{eq:claimscaling} and  \eqref{eq:splitd} with the help of  \eqref{eq:cota1}  and \eqref{eq:cota2} imply Item~(3).
\end{proof}

The next lemma shows that the $p$-th moment and the $p$-th absolute moment are observables that satisfy \eqref{def:lip}. 

\begin{lemma}[Moments and absolute moments]\label{lem:momentabs}
For any $p\in \mathbb{N}$ it follows that 
\begin{equation}\label{eq:momentosw}
| \|x\|^p-\|y\|^p |\leq K\frac{w(x)+w(y)}{2}\|x-y\|\quad \textrm{ for all } x,y\in \mathbb{R}^d\,,
\end{equation}
where $K=2p$ and $w:\mathbb{R}^d\to [0,\infty)$ is given by $w(z)=\|z\|^{p-1}$ for all $z\in \mathbb{R}^d$.
For $d=1$ let $K=2p$ and $w(z)=|z|^{p-1}$ for all $z\in \mathbb{R}$.
The following inequalities are also valid.
\begin{itemize}
\item[i)] For any $p>0$ it follows that
\begin{equation}\label{eq:momentosw1d}
||x|^p-|y|^p |\leq K\frac{w(x)+w(y)}{2}|x-y|\quad \textrm{ for all } x,y\in \mathbb{R}\,,
\end{equation}
\item[ii)] 
For any $p\in \mathbb{N}$ it follows that
\begin{equation}\label{eq:momentosw1d090}
|x^p-y^p |\leq K\frac{w(x)+w(y)}{2}|x-y|\quad \textrm{ for all } x,y\in \mathbb{R}\,,
\end{equation}
\end{itemize}
\end{lemma}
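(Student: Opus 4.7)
The cleanest way to proceed is to establish item (i) first, since it implies the $\mathbb{R}^d$ version by a reverse triangle inequality, and a slight variant of its proof gives item (ii). For (i) with $p>0$, I would assume without loss of generality that $|x|\ge|y|\ge 0$ and use the integral representation
\begin{equation}
||x|^p-|y|^p|=p\int_{|y|}^{|x|}t^{p-1}\,\rmd t\,.
\end{equation}
The key observation is that the map $t\mapsto t^{p-1}$ is monotone on $(0,\infty)$ in every regime (increasing for $p\ge 1$, decreasing for $0<p<1$), so its supremum on $[|y|,|x|]$ is attained at an endpoint and in particular is dominated by $|x|^{p-1}+|y|^{p-1}=w(x)+w(y)$. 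Combining this with the elementary bound $|x|-|y|\le|x-y|$ yields
\begin{equation}
||x|^p-|y|^p|\le p\,|x-y|\,(w(x)+w(y))=2p\cdot\frac{w(x)+w(y)}{2}\,|x-y|\,,
\end{equation}
which is the desired inequality with $K=2p$.

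For item (ii), with $p\in\mathbb{N}$, $p\ge 2$, I would start from the algebraic factorization $x^p-y^p=(x-y)\sum_{k=0}^{p-1}x^{p-1-k}y^k$ and apply the triangle inequality to get $|x^p-y^p|\le|x-y|\sum_{k=0}^{p-1}|x|^{p-1-k}|y|^k$. The sum is then controlled by the weighted arithmetic--geometric mean inequality
\begin{equation}
|x|^{p-1-k}|y|^k\le\frac{p-1-k}{p-1}|x|^{p-1}+\frac{k}{p-1}|y|^{p-1}\,,\qquad 0\le k\le p-1\,,
\end{equation}
and summing on $k$ gives $\sum_{k=0}^{p-1}|x|^{p-1-k}|y|^k\le\frac{p}{2}(|x|^{p-1}+|y|^{p-1})$, since both coefficient sums equal $p/2$. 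This delivers $|x^p-y^p|\le p\cdot\frac{w(x)+w(y)}{2}\,|x-y|$, which is even sharper than the claimed $K=2p$. The case $p=1$ is the identity $|x-y|=|x-y|$.

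Finally, for the $\mathbb{R}^d$ statement I would reduce to the one-dimensional inequality (ii) by applying it to the nonnegative scalars $a=\|x\|$ and $b=\|y\|$, obtaining $|\|x\|^p-\|y\|^p|\le 2p\cdot\frac{\|x\|^{p-1}+\|y\|^{p-1}}{2}\cdot|\|x\|-\|y\||$, and then invoking the reverse triangle inequality $|\|x\|-\|y\||\le\|x-y\|$ to arrive at the stated form with $w(z)=\|z\|^{p-1}$.

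There is no real technical obstacle; the only care point is item (i) when $0<p<1$ and $|y|=0$, where $w(y)=|y|^{p-1}$ must be read as $+\infty$ and the inequality holds trivially, and similarly when $p>1$ and $|y|=0$, where $w(y)=0$ poses no issue because the integrand $t^{p-1}$ vanishes at $0$ and the bound by $|x|^{p-1}$ at the right endpoint is still valid. All the remaining manipulations are uniform across the parameter ranges.
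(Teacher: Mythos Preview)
Your proof is correct and largely parallels the paper. For item~(i) you use exactly the same argument as the paper: the integral representation via the Fundamental Theorem of Calculus together with the monotonicity of $t\mapsto t^{p-1}$ on $(0,\infty)$.

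The differences are organizational and in item~(ii). The paper proves the $\mathbb{R}^d$ inequality first by writing $\|y\|^p-\|x\|^p=\|y\|^p(1-r^p)$ with $r=\|x\|/\|y\|$ and using $1-r^p=(1-r)\sum_{k=0}^{p-1}r^k\le p(1-r)$, whereas you derive the $\mathbb{R}^d$ case as a corollary of the one-dimensional result via the reverse triangle inequality; your reduction is arguably cleaner and avoids repeating the geometric-sum bound. For item~(ii), the paper simply says the argument is analogous to~(i), i.e.\ another application of the Fundamental Theorem of Calculus; your approach via the factorization $x^p-y^p=(x-y)\sum_{k}x^{p-1-k}y^k$ combined with the weighted AM--GM inequality is different and in fact yields the sharper constant $K=p$ rather than $K=2p$. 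Both routes are elementary, but yours extracts a bit more from the algebraic structure.
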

\begin{proof}
We start with the proof of \eqref{eq:momentosw}.
We first note that if $\|x\|=0$ or $\|y\|=0$, then \eqref{eq:momentosw} holds. Hence, we assume that $\|x\|\neq 0$ and $\|y\|\neq 0$.
By the trichotomy property we have the following cases: 
\begin{itemize}
\item[a)] $0\neq \|x\|=\|y\|$.
\item[b)] $0<\|x\|<\|y\|$.
\item[c)] $0<\|y\|<\|x\|$.
\end{itemize}
The case a) is immediate.
We start with the case b).
Let $r:=\|x\|/\|y\|\in (0,1)$ and note that 
\begin{equation*}
\begin{split}
| \|x\|^p-\|y\|^p |&= \|y\|^p-\|x\|^p =\|y\|^p (1-r^p)=\|y\|^p(1-r)\sum\limits_{k=0}^{p-1}r^k\\
&\leq p\|y\|^p (1-r) =p\|y\|^{p-1}(\|y\|-\|x\|)\\
&=
p w(y)|\|x\|-\|y\||\leq K\frac{w(x)+w(y)}{2} \|x-y\|\,.
\end{split}
\end{equation*}
The case c) is completely analogous to the case b).
The proof of \eqref{eq:momentosw} is complete.

We continue with the proof of \eqref{eq:momentosw1d}. 
For $p=1$ it is easy to see  that \eqref{eq:momentosw1d} holds.
In the sequel, we assume that $p\neq 1$.
Let $x,y\in \mathbb{R}$ be fixed.
For $|x|=|y|$ it is easy to see that \eqref{eq:momentosw1d} holds.
Without loss of generality, we assume that $|x|\neq |y|$.
By the Fundamental Theorem of Calculus we have
\begin{equation*}
\begin{split}
||y|^p-|x|^p|&=\left |p\int_{|x|}^{|y|}\rmd z \,z^{p-1}\right|\leq 
\left |p\int_{|x|}^{|y|}\rmd z \,|z|^{p-1}\right|\leq 
p\max\limits_{|x|\wedge |y|\leq z\leq |x|\vee |y|}w(z)||x|-|y||\\
&
\leq p(w(x)+w(y))||x|-|y||\leq 
K\frac{w(x)+w(y)}{2}|x-y|
\,,
\end{split}
\end{equation*}
where the third inequality follows from the fact that the function $(0,\infty)\ni z \mapsto w(z)$ is increasing for $p>1$ and decreasing for $p\in (0,1)$.
The proof of \eqref{eq:momentosw1d} is complete.

The proof of \eqref{eq:momentosw1d090} follows from an analogous reasoning as used in the proof of \eqref{eq:momentosw1d}.
\end{proof}
Lemma~\ref{lem:momentabs} in Appendix~\ref{ap:coupling} yields that 
for the observables ($p$-th moment and the $p$-th absolute moment),
the weighted function $w$ that appears in \eqref{def:lip} can be chosen as $w(z)=\|z\|^{p-1}$, $z\in \mathbb{R}^d$.

In the sequel, we estimate the constants $C_1$ and $C_2$ (for $w(z)=|z|^{p-1}$, $z\in \mathbb{R}$) that appears in \eqref{eq:Cdef}, i.e.,
\begin{equation}\label{eq:Cdef78}
\begin{split}
&C_1^2:=\int \mu_1(\rmd X) \frac{1}{T}\int_{0}^{T}\rmd s\, (w(X(s)))^2<\infty\,,\\
&
C_2^2:=\int \mu_2(\rmd Y) \frac{1}{T} \int_{0}^{T}\rmd s\, (w(Y(s)))^2 <\infty\,,
\end{split}
\end{equation}
for
 the processes $X$ and $Y$ being replaced by the rescaled telegraph process $L^{-1}X$ and a Brownian motion $B$ with a suitable diffusivity constant $\sigma^2$, respectively.
Hence the constant $\max\{C_1,C_2\}$, which is needed in the estimate given in the right-hand side of \eqref{eq:momentCs}, is also estimated.
\begin{lemma}[Time average estimates for the  $w$-weight]\label{lem:averagew}
For $p=1$ it follows  that $C^2_1=C^2_2=1$.
For any $p>0$ with $p\neq 1$,
the following estimates are valid
\[
C^2_1\leq 
\frac{\widetilde{C}T^{p-1}_*}{p L^{2(p-1)}_*}
+
\frac{{C}T^{p-2}_*}{(p-1) L^{2(p-1)}_*}\quad \textrm{ and } \quad
C^2_2=\frac{2^{p-1}\Gamma(\frac{2p-1
}{2})}{p\sqrt{\pi}}\frac{T^{p-1}_*}{L^{2(p-1)}_*}\,,
\]
where the constants 
$T_*$ and $L_*$ are the scaling parameters that appears in  \eqref{e:scalings},
$\widetilde{C}$ and $C$ are the constants that appears for $r=2(p-1)$ in Item~(3), \eqref{eq:momentskacr},
of Lemma~\ref{lem:momentskac} in Appendix~\ref{ap:coupling} and $\Gamma$ denotes the Gamma function.
\end{lemma}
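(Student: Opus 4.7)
The plan is to reduce everything to two direct computations: for $C_1^2$ I will insert the moment bound for the telegraph process provided by Item~(3) of Lemma~\ref{lem:momentskac} into \eqref{eq:Cdef78} and then convert the resulting powers of $(v_0,\lambda,L,T)$ into the intrinsic scaling parameters $T_*=\lambda T$ and $L_*=|v_0|^{-1}\lambda L$; for $C_2^2$ I will use the exact Gaussian absolute moments together with the prescribed diffusivity $\sigma^2=L^{-2}v_0^2/\lambda$. The case $p=1$ reduces to $w\equiv 1$, whence $C_1^2=C_2^2=\tfrac{1}{T}\int_0^T 1\,\rmd s=1$, so I will dispense with it at the outset and concentrate on $p>0$ with $p\neq 1$.

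For $C_1^2$, since $w(z)=|z|^{p-1}$, the definition \eqref{eq:Cdef78} gives
\[
 C_1^2=\frac{L^{-2(p-1)}}{T}\int_0^T \rmd s\,\E\big[|X(s;\fs)|^{2(p-1)}\big]\,.
\]
The key step is to apply \eqref{eq:momentskacr} of Lemma~\ref{lem:momentskac} with $r=2(p-1)$, which yields
\[
 \E\big[|X(s;\fs)|^{2(p-1)}\big]\le |v_0|^{2(p-1)}\bigl(\widetilde{C}\,\lambda^{-(p-1)} s^{p-1}+C\,\lambda^{-p}s^{p-2}\bigr)\,.
\]
After carrying out the elementary time integrals $\tfrac{1}{T}\int_0^T s^{p-1}\rmd s=T^{p-1}/p$ and $\tfrac{1}{T}\int_0^T s^{p-2}\rmd s=T^{p-2}/(p-1)$, I will regroup the prefactors using the identity $L^{-2(p-1)}|v_0|^{2(p-1)}=L_*^{-2(p-1)}\lambda^{2(p-1)}$ (which follows immediately from $L_*=|v_0|^{-1}\lambda L$). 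A short bookkeeping step then shows that $L^{-2(p-1)}|v_0|^{2(p-1)}\lambda^{-(p-1)}T^{p-1}=L_*^{-2(p-1)}T_*^{p-1}$ and $L^{-2(p-1)}|v_0|^{2(p-1)}\lambda^{-p}T^{p-2}=L_*^{-2(p-1)}T_*^{p-2}$, producing the claimed bound on $C_1^2$.

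For $C_2^2$, I will use that $B(s)\sim\mathcal{N}(0,\sigma^2 s)$, so $|B(s)|^{2(p-1)}\stackrel{\mathcal{D}}{=}(\sigma^2 s)^{p-1}|Z|^{2(p-1)}$ with $Z$ standard normal. Plugging the well-known Gaussian absolute moment $\E[|Z|^{2(p-1)}]=2^{p-1}\Gamma((2p-1)/2)/\sqrt{\pi}$ (which is precisely $\widetilde{C}(2(p-1))$ from \eqref{eq:absmomengauss}) and integrating $\tfrac{1}{T}\int_0^T s^{p-1}\rmd s=T^{p-1}/p$ gives an exact expression in terms of $\sigma$, $T$, and $p$. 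The final step is identical in spirit to the one for $C_1^2$: using $\sigma^2=L^{-2}v_0^2/\lambda$ I rewrite $(\sigma^2 T)^{p-1}=L_*^{-2(p-1)}T_*^{p-1}$, which yields exactly the formula stated for $C_2^2$.

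I do not anticipate a genuine obstacle; the only point that requires care is the scaling bookkeeping that converts the $(L,v_0,\lambda,T)$ prefactors into powers of $L_*$ and $T_*$, so I will perform that conversion once and reuse it in both estimates. Everything else is a direct substitution of Lemma~\ref{lem:momentskac} and the Gaussian moment formula.
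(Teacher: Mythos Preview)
Your proposal is correct and follows essentially the same approach as the paper: both handle $p=1$ trivially, bound $C_1^2$ by inserting the moment estimate \eqref{eq:momentskacr} with $r=2(p-1)$ into the time-averaged expression and integrating, and compute $C_2^2$ exactly from the Gaussian absolute moment formula~\eqref{eq:absmomengauss}, then rewrite all prefactors in terms of $T_*$ and $L_*$. Your scaling identities are exactly the bookkeeping the paper performs.
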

\begin{proof}
We observe that $C^2_1=C^2_2=1$ for $p=1$.
We start with the estimate of $C^2_1$ for $p\neq 1$.
In this case, the process $X$ is replaced by $L^{-1}X$ in \eqref{eq:Cdef78}.
A direct computation yields 
\begin{equation}\label{eqt:p}
\begin{split}
C_1^2&=\int \mu_1(\rmd X) \frac{1}{T}\int_{0}^{T}\rmd s\, (w(L^{-1}X(s;\fs)))^2=\int \mu_1(\rmd X) \frac{1}{T}\int_{0}^{T}\rmd s\, |L^{-1}X(s;\fs)|^{2(p-1)} \\
&=\frac{1}{L^{2(p-1)}T}\int_{0}^{T}\rmd s\,\mathbb{E}[|X(s;\fs)|^{2(p-1)}]\leq 
\frac{|v_0|^{2(p-1)}}{L^{2(p-1)}T}\int_{0}^{T}\rmd s\,\big(
\widetilde{C} {\lambda}^{-(p-1)}s^{p-1}+C{\lambda}^{-p} s^{p-2}
\big)\\
&=
\frac{|v_0|^{2(p-1)}\widetilde{C}{\lambda}^{-(p-1)}T^{p-1}}{p L^{2(p-1)}}
+
\frac{|v_0|^{2(p-1)}{C}{\lambda}^{-p}T^{p-2}}{(p-1) L^{2(p-1)}}
=
\frac{\widetilde{C}T^{p-1}_*}{p L^{2(p-1)}_*}
+
\frac{{C}T^{p-2}_*}{(p-1) L^{2(p-1)}_*}
\,,
\end{split}
\end{equation}
where the constants  $\widetilde{C}$ and $C$ are the constants that appears for $r=2(p-1)$ in Item~(3), \eqref{eq:momentskacr}, 
of Lemma~\ref{lem:momentskac} in Appendix~\ref{ap:coupling}.

We continue with the estimate of $C^2_2$. 
Since the process  $Y$ in \eqref{eq:Cdef78} is the Brownian motion
with diffusivity constant $\sigma^2=\lambda L^{-2}_*$ given in \eqref{eq:diffusivity},
we have 
\begin{equation}
\begin{split}
C_2^2&=\int \mu_2(\rmd Y) \frac{1}{T} \int_{0}^{T}\rmd s\, (w(B(s)))^2=\int \mu_2(\rmd Y) \frac{1}{T} \int_{0}^{T}\rmd s\, |B(s)|^{2(p-1)}\\
&=\frac{1}{T}\int_{0}^{T}\rmd s\, \mathbb{E}[|B(s)|^{2(p-1)}]
=\mathbb{E}[|\sigma B(1)|^{2(p-1)}] \frac{1}{T}\int_{0}^{T}\rmd s\, s^{p-1}\\
&=\frac{\sigma^{2(p-1)}2^{p-1}\Gamma(\frac{2p-1
}{2})}{\sqrt{\pi}}\frac{T^{p-1}}{p}=
\frac{2^{p-1}\Gamma(\frac{2p-1
}{2})}{p\sqrt{\pi}}\frac{T^{p-1}_*}{L^{2(p-1)}_*}\,,
\end{split}
\end{equation}
where in the last inequality we used relation \eqref{eq:absmomengauss} given in Item~(3) of Lemma~\ref{lem:momentskac} in Appendix~\ref{ap:coupling}.
\end{proof}

In the following we estimate the distance between $L^{-1}\mathbb{X}^{\fs}_{[0,T]}=(L^{-1}X(t;\fs):0\leq t\leq T)$ and $\mathbb{B}_{[0,T]}=(B(t):0\leq t\leq T)$ by the independent coupling between them.
\begin{lemma}[Independent coupling bound]\label{lem:indcoup}
For any $L>0$, $T>0$, $\lambda>0$ and $v_0\in \mathbb{R}\setminus\{0\}$ it follows that
\begin{equation}
\cW_2\left(L^{-1}\mathbb{X}^{\fs}_{[0,T]},\mathbb{B}_{[0,T]}\right)\leq \left(\frac{1}{4T_* L^2_*}
\left(1-e^{-2 T_*}\right)
-\frac{1}{2L_*^2}+
\frac{T_*}{L^2_*}\right)^{{1}/{2}}\,,
\end{equation}
where $T_*=\lambda T$ and ${L}_*=|v_0|^{-1}\lambda L$.
\end{lemma}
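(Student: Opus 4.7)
The plan is to bound $\mathcal{W}_2$ by evaluating the average quadratic cost under the independent product coupling $\gamma_{\mathrm{ind}} := \mu_1 \otimes \mu_2$, where $\mu_1$ and $\mu_2$ denote the laws of $L^{-1}\mathbb{X}^{\fs}_{[0,T]}$ and $\mathbb{B}_{[0,T]}$, respectively. Since the Wasserstein distance is an infimum over all couplings, the definition \eqref{def:metric} immediately gives
\begin{equation*}
\mathcal{W}_2^2\bigl(L^{-1}\mathbb{X}^{\fs}_{[0,T]}, \mathbb{B}_{[0,T]}\bigr) \leq \frac{1}{T}\int_0^T \mathbb{E}_{\gamma_{\mathrm{ind}}}\bigl[|L^{-1}X(s;\fs) - B(s)|^2\bigr]\,\rmd s\,,
\end{equation*}
and the first step is to expand the square in the integrand.

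Under $\gamma_{\mathrm{ind}}$ the random variables $L^{-1}X(s;\fs)$ and $B(s)$ are independent, and crucially $\mathbb{E}[B(s)] = 0$ since $B$ is a centred Brownian motion starting at the origin. Therefore the cross term $-2 L^{-1}\mathbb{E}[X(s;\fs)]\,\mathbb{E}[B(s)]$ vanishes identically, and the integrand reduces to $L^{-2}\mathbb{E}[(X(s;\fs))^2] + \mathbb{E}[B(s)^2]$. This observation is precisely what saves the factor $\sqrt{2}$ that a Cauchy--Schwarz estimate of the cross term would incur, as flagged after \eqref{eq:crudeboundKB}.

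It remains to substitute closed-form expressions and integrate. Item~(2) of Lemma~\ref{lem:momentskac} yields $\mathbb{E}[(X(s;\fs))^2] = \frac{v_0^2}{2\lambda^2}(2\lambda s - 1 + e^{-2\lambda s})$, while by the definition \eqref{eq:diffusivity} of the diffusivity constant one has $\mathbb{E}[B(s)^2] = \sigma^2 s = v_0^2 s/(\lambda L^2)$. Integrating both contributions over $s \in [0,T]$ is elementary, requiring only $\int_0^T 2\lambda s\,\rmd s = \lambda T^2$ and $\int_0^T e^{-2\lambda s}\,\rmd s = (1 - e^{-2\lambda T})/(2\lambda)$; dividing by $T$ and rewriting via the scalings $T_* = \lambda T$ and $L_*^2 = \lambda^2 L^2/v_0^2$ reproduces the three terms $T_*/L_*^2$, $-1/(2L_*^2)$, and $(1 - e^{-2T_*})/(4T_* L_*^2)$. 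Taking the square root then yields the asserted bound, and no step presents any serious obstacle; the entire argument is essentially the observation that centredness of $B$ decouples the two variances exactly.
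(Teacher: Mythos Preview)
Your proposal is correct and follows essentially the same approach as the paper: bound $\mathcal{W}_2^2$ by the independent coupling, observe that the cross term vanishes because $\mathbb{E}[B(s)]=0$, substitute the second-moment formula from Item~(2) of Lemma~\ref{lem:momentskac} together with $\mathbb{E}[B(s)^2]=\sigma^2 s$, integrate over $[0,T]$, and rewrite in the scaled variables $T_*$, $L_*$. The paper's proof is simply a more explicit version of the same computation.
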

\begin{proof}
We estimate the following expectation
\begin{equation}
\mathbb{E}_{\gamma}\left[\frac{1}{T}\int_{0}^{T} \rmd t \, \left({L}^{-1}{X(t;\fs)}-B(t)\right)^2\right]
\end{equation}
for a suitable coupling $\gamma$  between  $L^{-1}\mathbb{X}^{\fs}_{[0,T]}$ and 
$\mathbb{B}_{[0,T]}$.
We recall that the diffusivity constant of $B$ is given by $
\sigma^2=\lambda L^{-2}_*$ given in \eqref{eq:diffusivity}.
By Fubini's theorem and Pythagoras's theorem we obtain
\begin{equation}\label{eq:fubpiyt}
\begin{split}
\mathbb{E}_{\gamma}&\left[\frac{1}{T}\int_{0}^{T} \rmd t \, \left({L}^{-1}{X(t;\fs)}-B(t)\right)^2\right]=
\frac{1}{T}\int_{0}^{T} \rmd t \,
\mathbb{E}_{\gamma}\Big[ \left({L}^{-1}{X(t;\fs)}-B(t)\right)^2\Big]\,\\
&\hspace{3cm}=\frac{1}{T}\int_{0}^{T} \rmd t\, 
\big( 
\mathbb{E}_{\gamma}\big[\left({L}^{-1}{X(t;\fs)}\right)^2\big]+\mathbb{E}_{\gamma}\big[(B(t))^2\big]\big)-
\frac{2}{T}\int_{0}^{T} \rmd t\, 
\mathbb{E}_{\gamma}\big[{L}^{-1}{X(t;\fs)} B(t)\big].
\end{split}
\end{equation}
By Lemma~\ref{lem:momentskac} in 
Appendix~\ref{ap:coupling}  we have
\begin{equation}\label{eq:momento2gral}
\mathbb{E}_{\gamma}\left[(X(t;\fs))^2\right]=\frac{v^2_0}{2\lambda^2}\left({2\lambda t}-(1-e^{-2\lambda t}) \right)\quad\textrm{ for all }\quad t\geq 0\,
\end{equation}
and for any coupling $\gamma$.
Since $(B(t):t\geq 0)$ is a Brownian motion with diffusivity $\sigma^2$, the second moment of its marginal $B(t)$ is given by
\begin{equation}\label{eq:momento2Brow}
\mathbb{E}_{\gamma}[(B(t))^2 ]=\sigma^2 t\quad
\textrm{ for all }\quad t\geq 0\,
\end{equation}
for any coupling $\gamma$.
Next, we estimate the contribution of the term
$\mathbb{E}_\gamma [{L}^{-1}{X(t;\fs)} B(t)]$
for a suitable coupling $\gamma$.
By the use of the independent coupling $\gamma=\pi$ we obtain  
\begin{equation}\label{eq:momentcross}
\mathbb{E}_{\pi}[{L}^{-1}{X(t;\fs)} B(t)]=
\mathbb{E}_\pi[{L}^{-1}{X(t;\fs)}]
\mathbb{E}_\pi\left[B(t)\right]=0\,.
\end{equation}
Combining \eqref{eq:momento2gral}, \eqref{eq:momento2Brow} and \eqref{eq:momentcross} in \eqref{eq:fubpiyt}  implies
\begin{equation}\label{e:reformula}
\begin{split}
\inf_{\gamma }\,\,
\mathbb{E}_{\gamma}&\left[\frac{1}{T}\int_{0}^{T} \rmd t \, \left({L}^{-1}{X(t;\fs)}-B(t)\right)^2\right]\leq \mathbb{E}_{\pi}\left[\frac{1}{T}\int_{0}^{T} \rmd t \, \left({L}^{-1}{X(t;\fs)}-B(t)\right)^2\right]\\
&\hspace{3cm}=\frac{1}{T}\int_{0}^{T} \rmd t\, 
\big( 
\mathbb{E}_{\pi}\big[\big({L}^{-1}{X(t;\fs)}\big)^2\big]+\mathbb{E}_{\pi}\big[(B(t))^2\big]\big)\\
&\hspace{3cm}=\frac{1}{T}\int_{0}^{T} \rmd t\, 
\left( 
\frac{v^2_0}{2\lambda^2 L^2}\left({2\lambda t}-1+e^{-2\lambda t} \right)+\sigma^2 t\right)\\
&\hspace{3cm}=\frac{1}{T}\left(\frac{v^2_0}{2\lambda^2L^2}\lambda T^2-
\frac{v^2_0}{2\lambda^2L^2}
T
+\frac{v^2_0}{2\lambda^2L^2}\frac{1}{2\lambda}(1-e^{-2\lambda T})+
\frac{\sigma^2 T^2}{2}
\right)\\
&\hspace{3cm}=
\frac{T_*}{2L^2_*}-\frac{1}{2L^2_*}+\frac{1}{4 L^2_* T_*}(1-e^{-2T_*})+
\frac{1}{2}\sigma^2 T\,,
\end{split}
\end{equation}
where $T_*=\lambda T$ and ${L}_*=|v_0|^{-1}\lambda L$. Since $\sigma^2=\lambda L^{-2}_*$, \eqref{e:reformula} yields
\begin{equation}
\cW^2_2\left(L^{-1}\mathbb{X}^{\fs}_{[0,T]},\mathbb{B}_{[0,T]}\right)\leq \frac{1}{4T_* L^2_*}
\left(1-e^{-2 T_*}\right)
-\frac{1}{2L_*^2}+
\frac{T_*}{L^2_*}\,,
\end{equation}
which yields the statement.
\end{proof}

For completeness of the presentation, we recall 
the Koml\'os--Major--Tusn\'ady coupling (see Theorem~1 of \cite{Komlos1976}) and we state it as a lemma.

\begin{lemma}[Koml\'os--Major--Tusn\'ady Theorem]\label{lem:KMT}
Let $F:\mathbb{R}\to [0,1]$ be a distribution function satisfying 
\[
\int_{\mathbb{R}} e^{r z}F(\rmd z)<\infty\quad \textrm{ for any } \quad r\in (-r_0,r_0)\,
\]
for some positive number  $r_0>0$. 
In addition, we assume
\[
\int_{\mathbb{R}} zF(\rmd z)=0\quad \textrm{ and } \quad 
\int_{\mathbb{R}} z^2F(\rmd z)=1\,.
\]
Then there exists a probability space $(\Omega_*,\mathcal{F}_*,\pi^*)$ for which we can construct
sequences $X:=(X_n:n\in \mathbb{N})$ and $Y:=(Y_n:n\in \mathbb{N})$ of random variables 
satisfying
\begin{itemize}
\item[a)]  $X$ is i.i.d.\ with common distribution function $F$,
\item[b)]   $Y$ is i.i.d.\ with standard Gaussian distribution
\end{itemize}
in a way that for all $x>0$ and every $n\in \mathbb{N}$ the following estimate is valid
\[
\pi^*\left(\max\limits_{1\leq m\leq n}\left|\sum_{j=1}^m X_j-\sum_{j=1}^m Y_j\right| \geq C\ln(n)+x\right)\leq K e^{-\vartheta x}\,.
\]
Here, the positive constants $C,K,\vartheta $ depend only on $F$, and $\vartheta$ can be taken as large as desired by choosing $C$ large enough. Consequently, $|\sum_{j=1}^n X_j-\sum_{j=1}^n Y_j|=\mathcal{O}(\ln(n))$ almost surely.
\end{lemma}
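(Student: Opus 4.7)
The plan is to follow the classical Hungarian construction of Koml\'os, Major, and Tusn\'ady. The strategy is a top-down dyadic coupling: I would construct the sequence $(X_n)$ on the same probability space as the Gaussian sequence $(Y_n)$ by coupling the partial sums $S_m = \sum_{j\le m} X_j$ and $T_m = \sum_{j\le m} Y_j$ at increasingly fine dyadic scales, exploiting at each level the fact that the conditional law of a midpoint sum given the endpoint sum is approximately Gaussian.

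The first ingredient is a one-step conditional coupling lemma: under Cram\'er's condition on $F$, for each $n$ and given the endpoint values $S_{2n}$ and $T_{2n}$, one can construct the midpoints $S_n$ and $T_n$ such that the conditional discrepancy $|(S_n - S_{2n}/2)-(T_n - T_{2n}/2)|$ has subexponential tails whose scale depends only on $|S_{2n}-T_{2n}|/\sqrt{2n}$. This rests on sharp Edgeworth-type expansions of the conditional density of $S_n$ given $S_{2n}$: the hypothesis $\int e^{rz}\,F(\mathrm{d}z)<\infty$ for $r\in(-r_0,r_0)$ makes the characteristic function of $F$ analytic in a complex strip, so a saddle-point contour deformation yields Gaussian approximation with exponentially small remainder, uniformly over the conditioning variable in the full large-deviation regime.

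Assuming without loss of generality that $n=2^N$, I would then initiate the recursion at the top by coupling the single variables $S_n$ and $T_n$ via univariate quantile coupling. Under Cram\'er's condition, a sharpened Berry--Esseen bound delivers $|S_n - T_n| \le C\log n + \xi$ with $\mathbb{P}(\xi>x)\le K e^{-\theta x}$. Descending through the $N=\log_2 n$ dyadic levels, at each level $k$ there are $2^{N-k}$ midpoints, and applying the one-step lemma conditionally at every one of them produces coupled refined partial sums; iterating down to level $0$ recovers the individual increments $X_j$. A union bound, combined with the geometric decay in $k$ of the per-level coupling scale and the subexponential tails, compounds into a deterministic offset of order $\log n$ plus a global fluctuation retaining exponential tails, yielding $\mathbb{P}(\max_{m\le n}|S_m - T_m| \ge C\log n + x) \le K e^{-\theta x}$.

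The main obstacle is the one-step conditional coupling. Producing exponential tails, rather than the polynomial rates accessible from standard CLT refinements, requires Edgeworth expansions with exponentially small remainders valid over the entire large-deviation range of the conditioning variable; this is precisely where the full strength of the exponential-moment hypothesis is used. Making $\theta$ arbitrarily large is possible, but forces $C$ to grow accordingly, since a portion of each per-level contribution must be absorbed into the deterministic logarithmic displacement. The almost-sure $\mathcal{O}(\log n)$ conclusion then follows by applying the tail bound at $x=2\log n$ together with a Borel--Cantelli argument along the dyadic subsequence $n=2^N$, with interpolation to the remaining integers using the monotonicity of the maximum.
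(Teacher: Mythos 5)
The paper does not prove this lemma: it is stated verbatim as a citation to Theorem~1 of the Koml\'os--Major--Tusn\'ady paper (Koml\'os, Major, Tusn\'ady 1976), and the main text simply invokes it as an external black box. There is therefore no ``paper's own proof'' against which to compare your attempt.

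That said, your sketch is the correct classical route. The top-down dyadic refinement, the one-step conditional (quantile) coupling of midpoint partial sums given the endpoints with exponentially small error uniformly in the conditioning value, the initialisation by a sharpened Berry--Esseen/quantile bound, and the union bound across $\log_2 n$ dyadic levels yielding a deterministic logarithmic displacement plus subexponential fluctuation, are exactly the ingredients of the Hungarian construction used in the cited references and their later streamlined versions (e.g.\ Bretagnolle--Massart, Einmahl). The weak point, which you flag yourself, is that the entire technical content --- the one-step conditional quantile coupling with error exponentially small over a large-deviation range of the conditioning variable, under the Cram\'er condition --- is asserted as an ingredient rather than derived; this is where essentially all of the difficulty of KMT resides, and a referee would not accept the saddle-point/Edgeworth remark as a substitute for it. As presented, the proposal is a sound roadmap to the known proof, not a self-contained argument, which is an acceptable stance here precisely because the paper itself only cites the result.
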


\section{\textbf{Definition and basic properties of the $\delta$-constraints}}\label{ap:delta}

In the text,  we frequently integrate with respect to measures which are identified using the somewhat formal Dirac $\delta$-function notation.  As explained in Appendix~A in \cite{Lukkarinen2019}, this slightly formal notion can often be given a fully rigorous definition as a positive Radon measure obtained via the 
Riesz--Markov--Kakutani representation theorem applied to a limit of a sequence of approximations where the $\delta$-function is replaced by a suitably chosen positive ordinary function.  The benefit of using the approximations becomes apparent when one needs to perform operations which are standard to Lebesgue measures, such as changes of integration variables or splitting with aid of Fubini's theorem.

In this appendix, we explain the definition and derive the properties used in the text related to integration with respect to the probability measure
 $\nu(\rmd^n \fu)$ which is defined on $\mathbb{R}_*^n$, $n\in \mathbb{N}\setminus\{1\}$, by the formula
\[
\nu(\rmd^n \fu)=\rmd^{n}\fu \, (n-1)! \delta(t_n(\fu)-1)\,,
\]
where  $t_n(\fu)=\sum_{j=1}^{n}u_j$ and
$\rmd^n \fu$ denotes the restriction of the 
Lebesgue measure to $\mathbb{R}^n_*$.  These measures are obtained by scaling from the more general family of positive measures
\[
\tilde{\nu}_r(\rmd^n \fu)=\rmd^{n}\fu \, \delta(t_n(\fu)-r)\,, \qquad r>0\,,
\]
since $\nu(\rmd^n \fu) = (n-1)! \tilde{\nu}_1(\rmd^n \fu)$.

The most direct definition of the measure $\tilde{\nu}_r$ is obtained by formal integration of the $\delta$-function over the last coordinate, $u_n$. We define a positive semidefinite
linear functional $\Lambda(\cdot;r)$ on $C_c(\mathbb{R}_*^n)$ by setting for any continuous function $f:\R_*^n \to \C$ which has a compact support,
\begin{align}\label{eq:defLambdafunct}
 \Lambda(f;r) :=   \int_{X'(n,r)}\rmd^{n-1}u\,
 f\Bigl(u,r-\sum_{j=1}^{n-1}u_j\Bigr)\,,
\end{align}
where $X'(n,r):= \{ u\in \R_*^{n-1} : \sum_{j=1}^{n-1}u_j<r\}$ is a bounded Borel set.
Since $\mathbb{R}_*^n$ is a locally compact Hausdorff space, the Riesz--Markov--Kakutani representation theorem implies that there exists a unique Radon measure $\tilde{\nu}_r$
for which $ \Lambda(f;r)=\int_{\mathbb{R}_*^n}\tilde{\nu}_r(\rmd^n \fu) f(\fu )$.
Now 
$K_m := \{ u\in \R_*^n : \frac{1}{m}\le u_j \le m\text{ for all }j\}$, $m\in \N$,
form an increasing covering sequence of compact subsets of $\R_*^n$.  Thus by Urysohn's lemma, for each $m\in \N$, we can find a continuous function $\phi_m:\R_*^n \to \R$ such that $0\le \phi_m \le 1$ and $\phi_m(u)=1$, if $u\in K_m$, and $\phi_m(u)=0$, if $u\not\in K_{m+1}$.
Then each $\phi_m$ has a compact support and, by the dominated convergence theorem, we find that $\Lambda(\phi_m;r)\to \int_{X'(n,r)}\rmd^{n-1}u \,1$, as $m\to \infty$.
Here, 
by induction in $n$, one can check the well-known simplex volume result,
\begin{align}\label{eq:simplexvol}
&
 \int_{X'(n,r)}\rmd^{n-1}u \, 1 =
  \int_{\R_*^{n-1}}\rmd^{n-1} u\,\cf{\sum_{i=1}^{n-1}u_j<r} = \frac{r^{n-1}}{(n-1)!}\,, 
\end{align}
which is valid for all $r>0$ and $n\in \N$ with $n\ge 2$. Since clearly also $|\Lambda(f;r)|\le \frac{r^{n-1}}{(n-1)!}\Vert f\Vert_\infty$ for any $f\in C_c(\mathbb{R}_*^n)$, we find that $\int_{\mathbb{R}_*^n}\tilde{\nu}_r(\rmd^n \fu)\,1=\frac{r^{n-1}}{(n-1)!}$.  Therefore, each
$\tilde{\nu}_r$ is a bounded, positive semidefinite Radon measure, and $\nu(\rmd^n \fu)=(n-1)!\tilde{\nu}_1(\rmd^n \fu)$ is a Radon probability measure on $\mathbb{R}_*^n$. 

The following technical lemma shows that the formula \eqref{eq:defLambdafunct} indeed applies to a much larger class of functions.  In addition, it also gives a convenient approximation result by ordinary integrals for integration of continuous functions. This can be used to derive many further properties about integrals over $\tilde{\nu}_r(\rmd^n \fu)$,
similar to what is done in Appendix~A in \cite{Lukkarinen2019}.   We derive the main properties used in the present work from these, as summarized later in 
Lemma~\ref{lem:deltaprop}.
\begin{lemma}\label{lem:Borelprop}
 Suppose $n\ge 2$ and  $r>0$.  Denote
 $E_r:=\{\fu \in \R_*^n : t_n(\fu)\ne r\}$.
 Then all of the statements below hold.
\begin{enumerate}
 \item  $\tilde{\nu}_r(E_r)=0$.
 \item If $A\subset \R_*^{n-1}$ has Lebesgue measure zero, then $\tilde{\nu}_r(A\times \R_*)=0$.
 \item  Suppose
 $f:\R_*^{n}\to \C$ is a Borel measurable function.
 If $f$ is non-negative or $f\in L^1(\tilde{\nu}_r)$,
 we have
 \begin{align}\label{eq:mainnurtech}
  \int_{\mathbb{R}_*^n}\tilde{\nu}_r(\rmd^n \fu) f(\fu )
  =
   \int_{\R_*^{n-1}}\rmd^{n-1} u\,\cf{t_{n-1}(u)<r} f(u,r-t_{n-1}(u))
   = \frac{r^{n-1}}{(n-1)!}   \int_{\mathbb{R}_*^n}\nu(\rmd^n \fu) f(r \fu )
\,.  
 \end{align}
 \item Suppose $\varphi:\R \to \R$ is a continuous function with a compact support 
for which $\varphi \ge 0$ and $\int_{\R}\rmd x \,\varphi(x)=1$.  Define $\Phi_\varepsilon(\fu;r) :=  \varepsilon^{-1}\varphi\!\left((t_n(\fu)-r)/\varepsilon\right)$ for $\fu\in \R_*^n$ and $\varepsilon>0$.  Then for any function 
$F:[0,\infty)^{n}\to \C$ which is continuous, we have
\[
  \int_{\mathbb{R}_*^n}\tilde{\nu}_r(\rmd^n \fu) F(\fu )
  = \lim_{\varepsilon\to 0^+} \int_{\R_*^n}\!\rmd^{n}\fu \,
 \Phi_\varepsilon(\fu;r) F(\fu )\,.
\]
\item The measure $\tilde{\nu}_r$ is permutation invariant: if $P$ is a permutation of $\set{1,2,\ldots,n}$, then for any measurable function $f$ the function $\fu \mapsto f(\fu_P)$, $(\fu_P)_j := u_{P(j)}$, $j=1,2,\ldots,n$, has the same integral as $f$.
\end{enumerate}
\end{lemma}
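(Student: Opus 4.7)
The plan is to derive all five items from the master identity \eqref{eq:mainnurtech} by exploiting two complementary descriptions of $\tilde{\nu}_r$: the direct formula \eqref{eq:defLambdafunct} obtained by integrating out the last coordinate, and the mollifier approximation in (4), which makes the permutation symmetry of $\tilde{\nu}_r$ manifest. I would prove items in the order (3), (1), (2), (4), (5).

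First, for (3), observe that $\Lambda(\cdot;r)$ in \eqref{eq:defLambdafunct} is literally equal to the right-hand side of \eqref{eq:mainnurtech} on $C_c(\R_*^n)$. Since both sides of \eqref{eq:mainnurtech} are positive functionals of $f$ agreeing on $C_c(\R_*^n)$, standard measure-theoretic approximation (simple-function approximation combined with monotone convergence) extends the identity to all non-negative Borel $f$, and then to $f \in L^1(\tilde{\nu}_r)$ by decomposing into positive, negative, real and imaginary parts. The second equality in \eqref{eq:mainnurtech} is the scaling identity $u = rv$ applied to the first expression: with Jacobian $r^{n-1}$, the condition $t_{n-1}(u)<r$ rescales to $t_{n-1}(v)<1$ and the argument $r - t_{n-1}(u)$ to $r(1 - t_{n-1}(v))$, producing the claimed factor $\frac{r^{n-1}}{(n-1)!}$ together with the integrand $f(r\fu)$. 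Items (1) and (2) then follow at once: for (1), whenever the integrand in \eqref{eq:mainnurtech} is evaluated at the constrained point $(u, r - t_{n-1}(u))$ we have $t_n(\fu) = r$, so $\cf{E_r}$ vanishes identically; for (2), the right-hand side reduces to the Lebesgue integral of $\cf{A}(u)\cf{t_{n-1}(u)<r}$, which is zero because $A$ is Lebesgue-null in $\R_*^{n-1}$.

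For (4), I would fix a continuous $F$ on $[0,\infty)^n$ and substitute $s = t_n(\fu) = t_{n-1}(u') + u_n$ in the last coordinate, with $u' := (u_1,\ldots,u_{n-1})$ held fixed (Jacobian one):
\[
\int_{\R_*^n} \rmd^n \fu\, \Phi_\varepsilon(\fu;r) F(\fu) = \int_{\R_*^{n-1}}\!\rmd^{n-1}u' \int_{t_{n-1}(u')}^\infty \!\rmd s\, \tfrac{1}{\varepsilon}\varphi\!\left(\tfrac{s-r}{\varepsilon}\right) F(u', s-t_{n-1}(u'))\,.
\]
For each fixed $u'$, as $\varepsilon\to 0^+$ the inner integral converges to $\cf{t_{n-1}(u')<r}\,F(u', r-t_{n-1}(u'))$: the mollifier selects $s = r$ when that point lies strictly inside the integration range; if $t_{n-1}(u')>r$ the integration interval is eventually disjoint from the support of $\varphi$ and the integral vanishes; the boundary set $\{t_{n-1}(u')=r\}$ has zero $(n-1)$-dimensional Lebesgue measure. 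Combined with (3), this pointwise limit is exactly the integrand produced by $\int \tilde{\nu}_r(\rmd^n\fu)\, F(\fu)$.

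The main obstacle sits in this step: passing the $\varepsilon \to 0^+$ limit under the outer $\rmd^{n-1}u'$ integral requires a uniform-in-$\varepsilon$ dominant in $L^1(\R_*^{n-1})$, yet $F$ need not be globally bounded on $[0,\infty)^n$. The key observation is that, because the support of $\varphi$ is contained in $[-M,M]$ for some finite $M$, for all $\varepsilon \in(0,1]$ the integrands are supported in the bounded subset $\{t_{n-1}(u')<r+M\}\times [0,r+M]$ of $[0,\infty)^n$; on this compact set continuity of $F$ yields a finite uniform bound, which together with $\|\varphi\|_1 = 1$ and the finite Lebesgue volume of $\{t_{n-1}(u')<r+M\}$ supplies the needed dominant, and the dominated convergence theorem closes (4). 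Finally, (5) follows from (4): $\Phi_\varepsilon(\fu;r)$ depends on $\fu$ only through the symmetric function $t_n(\fu)$ and $\rmd^n\fu$ is permutation invariant, so the left-hand side of the display above is unchanged if $F(\fu)$ is replaced by $F(\fu_P)$ for any permutation $P$; passing to the limit $\varepsilon\to 0^+$ transports this symmetry to $\tilde{\nu}_r$ for continuous $F$, and density of $C_c(\R_*^n)$ in $L^1(\tilde{\nu}_r)$ extends it to all non-negative or integrable measurable integrands.
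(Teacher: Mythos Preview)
Your argument is correct, and items (4) and (5) match the paper's proof almost exactly, but you take a genuinely different route to (1)--(3). The paper proves (1) and (2) first by direct cutoff arguments (Urysohn's lemma and approximation of open sets), and then establishes (3) through an explicit mollifier regularisation that relies on Lebesgue points of the auxiliary function $h(x)=f(x,r-t_{n-1}(x))$; in particular, the paper's proof of (3) \emph{uses} item (2) to discard the non-Lebesgue-point set. You invert this: you observe that the right-hand side of \eqref{eq:mainnurtech} is itself integration against a finite Borel measure on $\R_*^n$ (the pushforward of Lebesgue measure on $X'(n,r)$ under $u\mapsto(u,r-t_{n-1}(u))$), note that on the Polish space $\R_*^n$ every finite Borel measure is Radon, and then invoke the uniqueness part of Riesz--Markov--Kakutani to conclude $\tilde\nu_r$ equals that pushforward. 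Once (3) is in hand, (1) and (2) become one-line corollaries. Your route is shorter and conceptually cleaner; the paper's route is more self-contained and constructive, avoiding any appeal to regularity of Borel measures on Polish spaces. One small expository point: your phrase ``simple-function approximation combined with monotone convergence'' skips the step from $C_c$ to indicators of Borel sets; it would be crisper to say explicitly that both sides are Radon measures agreeing on $C_c$ and hence equal, after which the extension to all non-negative measurable $f$ is automatic.
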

\begin{proof}
For the first item, consider the set $S=\R_*^n \setminus E_r$.
We choose a cutoff function $\eta:\R\to\R$ which is continuous, $0\le \eta \le 1$, and $\eta(x)=1$, if $|x|\le 1$, and $\eta(x)=0$, if $|x|\ge 2$.
Recall the earlier cutoff functions $\phi_m$, and define
$F_m(\fu) := \phi_m(\fu) \eta(m(t_n(\fu)- r))$.  If $\fu \in S$, then 
$F_m(\fu) = \phi_m(\fu) \to 1$ as $m\to \infty$.
If $\fu\not \in S$, then $\eta(m(t_n(\fu)- r))\to 0$ as $m\to \infty$, and thus also $\lim_{m\to\infty}F_m(\fu) =0$. Therefore, by the  dominated convergence theorem, 
$\tilde{\nu}_r(S)=\lim_{m\to \infty}  \int_{\R_*^{n}}\tilde{\nu}_r(\rmd^n \fu) F_m(\fu)$.
Since $F_m\in C_c(\mathbb{R}_*^n)$, we have
\[
 \int_{\R_*^{n}}\tilde{\nu}_r (\rmd^n \fu) F_m(\fu) = 
 \int_{X'(n,r)}\rmd^{n-1}u\,
 \phi_m\Bigl(u,r-\sum_{j=1}^{n-1}u_j\Bigr) \eta(0)=
  \int_{\R_*^{n}}\tilde{\nu}_r (\rmd^n \fu) \phi_m(\fu) \,,
\]
which goes to $\tilde{\nu}_r(\R_*^{n})<\infty$ as $m\to\infty$.
Therefore, $S$ has full measure, and thus its complement $E_r$ must have measure zero. This implies that for any bounded measurable function $f$ we have
\[
  \int_{\mathbb{R}_*^n}\tilde{\nu}_r(\rmd^n \fu) f(\fu )
  = \int_{S}\tilde{\nu}_r(\rmd^n \fu) f(\fu )
  =  \int_{S}\tilde{\nu}_r(\rmd^n \fu) \,f(u,r-t_{n-1}(u))|_{u=P_{n-1}\fu}
\]
where $P_{n-1}\fu := (u_1,u_2,\ldots,u_{n-1})\in \R_*^{n-1}$. 

For the next item, let us first suppose that
$A\subset \R_*^{n-1}$ is open.  Then it can be written as a limit of an increasing sequence of compact sets. 
Applying Urysohn's lemma with open sets $A\times (0,m+1)$ and an increasing sequence of compact sets obtained  by taking a product with the above sets and $[1/m,m]$, we find a sequence of functions $f_m\in C_c(\R_*^{n})$ which pointwise monotone increase to the indicator function of $A\times \R_*$.
Thus by using the monotone convergence theorem,
we conclude that $\tilde{\nu}_r(A\times \R_*)= \int_{A}\rmd^{n-1} u$.  If $A\subset \R_*^{n-1}$ has zero Lebesgue measure, it can be covered with countably many open sets so that the sum of their measures is arbitrarily small.
Hence, we conclude that then $\tilde{\nu}_r(A\times \R_*)=0$.

For the third item, we construct a suitable regularisation so that the defining functional can be used to evaluate the integral.  
In fact, we only need to do this assuming that  $f:\R_*^{n}\to \C$ is bounded and Borel measurable.
Namely, suppose that \eqref{eq:mainnurtech} holds for such $f$.  If $f$ is non-negative, we can then apply the result to every $f_N(\fu) := \cf{f(\fu)\le N} f(\fu)$, $N\in \N$.
This is a monotone non-decreasing sequence converging to $f$, and thus the monotone convergence theorem may be applied to each of the three integrals in \eqref{eq:mainnurtech}.
This shows that \eqref{eq:mainnurtech} holds for all measurable, non-negative functions.
Finally, if $g\in L^1(\tilde{\nu}_r)$, then $f=|g|$ is a non-negative function for which 
the first integral in \eqref{eq:mainnurtech} is finite.
Thus by the previous result also the other two integrals are then finite.  Therefore, $f$ is a dominant for the 
function sequence $g_N$, defined by
$g_N(\fu) := \cf{|g(\fu)|\le N} g(\fu)$, $N\in \N$.
Since each $g_N$ is bounded and 
$g_N\to g$ pointwise, dominated convergence theorem, applied to each of the three integrals separately, implies that \eqref{eq:mainnurtech} holds also for $g$.

Let us thus assume that
$f:\R_*^{n}\to \C$ is bounded and Borel measurable.
We then choose continuous cutoff functions $\eta_\delta:\R \to \R$, $0<\delta\le 1$, which satisfy $0\le \eta_\delta\le 1$, $\eta_\delta(x)=0$ if $|x|\ge 1+\delta$, and $\eta_\delta(x)=1$ if $|x|\le 1$.  For $\varepsilon>0$, we  define 
\[
 \chi_{\delta,\varepsilon}(x) := \eta_\delta(|x|/\varepsilon)\,,\qquad x\in \R^{n-1}\,,
\]
and set, for all $x\in \R^{n-1}$,
\[
 h(x) := \begin{cases}
 f(x,r-t_{n-1}(x))\,, & \text{if } x\in \R_*^{n-1}\text{ and }t_{n-1}(x)<r\,,\\
 0\,, & \text{otherwise}\,.
         \end{cases}
\]
Then $h$ is bounded, $h\in L^1(\R^{n-1})$ and 
\[
 \int_{\R^{n-1}}\rmd^{n-1} x\, h(x) =
  \int_{\R_*^{n-1}}\rmd^{n-1} u\,\cf{t_{n-1}(u)<r} f(u,r-t_{n-1}(u))\,.
\]
The regularisation we need will be provided by the functions
\[
 h_{\delta,\varepsilon}(x) := 
\frac{1}{Z_{\delta,\varepsilon}} \int_{\R^{n-1}}\rmd^{n-1} y\,
 \chi_{\delta,\varepsilon}(x-y) h(y)\,,
\]
where
\[
 Z_{\delta,\varepsilon} := \int_{\R^{n-1}}\rmd^{n-1} y\,
 \chi_{\delta,\varepsilon}(y)<\infty\,.
\]
By applying the Lebesgue dominated convergence theorem,
we then find that each $h_{\delta,\varepsilon}$ is  continuous function on $\R^{n-1}$.  Also, $\Vert h_{\delta,\varepsilon}\Vert_\infty\le \Vert f\Vert_\infty$ 
and the support of each $h_{\delta,\varepsilon}$ is contained in the compact set $\set{x\in [0,\infty)^{n-1} : |x|\le 2 + r}$.

Let $X_0\subset \R^{n-1}$ collect all Lebesgue points of $h$. Since $h\in L^1(\R^{n-1})$, we know that its complement has Lebesgue measure zero.  By definition, for every $x\in X_0$ we then have
\[
 \lim_{\vep \to 0^+} \frac{1}{|B_\vep|}
 \int_{B_\vep}\rmd y \left|h(x-y)-h(x)\right|=0\,,
\]
where $B_\vep$ denotes the closed ball with a radius $\vep$, centred at the origin.  Now, if $\vep>0$,  
\[
 \lim_{\delta\to 0^+}\chi_{\delta,\varepsilon}(y)
 = \cf{|y|\le \vep}\,, 
\]
and thus by applying the dominated convergence theorem twice, we find that for all $x$
\[
\lim_{\delta \to 0^+} h_{\delta,\varepsilon}(x)=
\frac{1}{|B_\vep|}
 \int_{B_\vep}\rmd y\, h(x-y)\,.
\]
Therefore, 
\[
 \lim_{\vep \to 0^+}\left(\lim_{\delta \to 0^+} h_{\delta,\varepsilon}(x)\right)=h(x)\,, \qquad x\in X_0\,.
\]
For notational convenience, let us denote from now on the above double limit by ``$\lim_{\vep,\delta}$''.
Finally, we recall the increasing sequence of cutoff functions $\phi_m$ used earlier.  For $m\in \N$ and $\fu\in \R_*^n$ we define
\[
 f_{m,\delta,\varepsilon}(\fu)
 := h_{\delta,\varepsilon}(P_{n-1}\fu) \phi_m(\fu)\,,
\]
where the projection $P_{n-1}$ was defined above. By construction, $f_{m,\delta,\varepsilon} \in C_c(\mathbb{R}_*^n)$, and thus
\begin{align}\label{eq:hmdeeqs}
& \int_{\mathbb{R}_*^n}\tilde{\nu}_r(\rmd^n \fu) 
 f_{m,\delta,\varepsilon}(\fu)
 = \int_{\R_*^{n-1}}\rmd^{n-1} u\,\cf{t_{n-1}(u)<r} 
  f_{m,\delta,\varepsilon}(u,r-t_{n-1}(u))
\\ & \quad\nonumber 
  = \int_{\R_*^{n-1}}\rmd^{n-1} u\,\cf{t_{n-1}(u)<r} 
  h_{\delta,\varepsilon}(u) \phi_m(u,r-t_{n-1}(u))\,. 
\end{align}
The dominated convergence theorem allows us to conclude that
the limit of the right hand side is given by
\[
 \int_{\R_*^{n-1}}\rmd^{n-1} u\,\cf{t_{n-1}(u)<r}
 h(u) =  \int_{\R_*^{n-1}}\rmd^{n-1} u\,\cf{t_{n-1}(u)<r} f(u,r-t_{n-1}(u))\,.
\]
Again, by the dominated convergence theorem, the limit of the left-hand side of \eqref{eq:hmdeeqs}, as $m\to \infty$, is given by
$\int_{\mathbb{R}_*^n}\tilde{\nu}_r(\rmd^n \fu) h_{\delta,\varepsilon}(P_{n-1}\fu)$.  Furthermore,
\[
 \lim_{\vep,\delta}
 \int_{X_0\times \mathbb{R}_*}\tilde{\nu}_r(\rmd^n \fu) h_{\delta,\varepsilon}(P_{n-1}\fu)
 =  \int_{X_0\times \mathbb{R}_*}\tilde{\nu}_r(\rmd^n \fu) h(P_{n-1}\fu) =
 \int_{X_0\times \mathbb{R}_*}\tilde{\nu}_r(\rmd^n \fu) f(\fu)\,,
\]
where in the last equality we used the observation that if $t_n(\fu)=r$, then also $t_{n-1}(P_{n-1}\fu)<r$ and thus
$f(\fu)=f(P_{n-1}\fu,r-\sum_{j=1}^{n-1}u_j)=h(P_{n-1}\fu)$.
However, by the second item, now $\tilde{\nu}_r(X_0^c\times \mathbb{R}_*)=0$, and thus
\[
 \lim_{\vep,\delta}
 \int_{X_0\times \mathbb{R}_*}\tilde{\nu}_r(\rmd^n \fu) h_{\delta,\varepsilon}(P_{n-1}\fu) =
  \int_{\mathbb{R}_*^n}\tilde{\nu}_r(\rmd^n \fu) f(\fu)\,. 
\]
We can conclude that the first equality in \eqref{eq:mainnurtech} holds for bounded Borel measurable $f$.  But then for any such function $f$, we also have
\begin{align*}
 &  \frac{1}{(n-1)!}\int_{\mathbb{R}_*^n}\nu(\rmd^n \fu) f(r\fu )
 =   \int_{\mathbb{R}_*^n}\tilde{\nu}_1(\rmd^n \fu) f(r\fu)
= 
  \int_{\R_*^{n-1}}\rmd^{n-1} u\,\cf{t_{n-1}(u)<1} f(r u,r-r t_{n-1}(u))\,.
\end{align*}
A change of variables to $v = r u$ in the last integral yields
\[
r^{-(n-1)}
\int_{\R_*^{n-1}}\rmd^{n-1} v\,\cf{t_{n-1}(v)<r} f(v,r-t_{n-1}(v))
= r^{-(n-1)} \int_{\mathbb{R}_*^n}\tilde{\nu}_r(\rmd^n \fv) f(\fv)\,.
\]
Therefore, 
\eqref{eq:mainnurtech} holds for all bounded Borel measurable $f$.  This completes the proof of the third item.

We next prove the fourth statement.
Given $0<\varepsilon\le 1$, let us denote
\[
  I_\varepsilon =  \int_{\R_*^n}\!\rmd^{n}\fu \,
 \Phi_\varepsilon(\fu) F(\fu )\,.
\]
Since $\varphi$ has a compact support, there is $R>0$ such that $\varphi(x)=0$ for all $|x|\ge R$.  Thus $\Phi_\varepsilon(\fu)=0$ if $t_n(\fu)\ge r+R$, using the assumption $\varepsilon\le 1 $.  Denote $Y:=\{x\in [0,\infty)^n : t_n(x)\le r+R\}$ and $Y':= Y\cap \R_*^n$.  
Since $F$ is continuous and $Y$ is compact, $M:=\sup_{x\in Y}|F(x)|<\infty$.  We thus find that $ \Phi_\varepsilon(\fu) |F(\fu )|\le M\Phi_\varepsilon(\fu)$.

We can thus  use Fubini's theorem to reorder the integrals, resulting in
\[
  I_\varepsilon = \int_{\R_*^{n-1}}\!\rmd^{n-1} u\left(
 \int_0^\infty \rmd u_n \,
 \varepsilon^{-1}\varphi\!\left((u_n + t_{n-1}(u)-r)/\varepsilon\right) F(u,u_n) \right)\,.
\]
A change of variables from $u_n$ to $x := (u_n + t_{n-1}(u)-r)/\varepsilon$ yields
\[
 I_\varepsilon = \int_{\R_*^{n-1}}\!\rmd^{n-1} u\left(
 \int_{-\infty}^\infty \rmd x \, \cf{x>\left(\sum_{j=1}^{n-1}u_j-r\right)/\varepsilon}
 \varphi(x) F\Bigl(u,r-\sum_{j=1}^{n-1}u_j+\varepsilon x\Bigr) \right)\,.
\]
If $\sum_{j=1}^{n-1}u_j\ge r+R \varepsilon$, then either the indicator function or $\varphi(x)$ is zero, implying that the integrand is then zero.
On the other hand, 
the measure of the set $\{u\in \R_*^{n-1} : r\le \sum_{j=1}^{n-1}u_j< r+R \varepsilon\}$ goes to zero as $\varepsilon\to 0$.  Therefore, by the dominated convergence theorem and the continuity of $F$ we have
\[
 \lim_{\varepsilon\to 0} I_\varepsilon =  \int_{\R_*^{n-1}}\!\rmd^{n-1} u \,\cf{\sum_{j=1}^{n-1}u_j< r}
  \left(
 \int_{-\infty}^\infty \rmd x \, 
 \varphi(x) F\Bigl(u,r-\sum_{j=1}^{n-1}u_j\Bigr) \right) = \int_{\mathbb{R}_*^n}\tilde{\nu}_r(\rmd^n \fu) F(\fu )\,,
\]
where the last equality follows from the normalisation of $\varphi$ and the first part of the Lemma.  

The final item is a corollary of the previous one: if $f\in C_c(\R_*^n)$, we can apply the previous item to approximate its expectation.  However, 
$\rmd^{n}\fu \, \Phi_\varepsilon(\fu;r)$ is obviously permutation invariant, 
and we obtain the statement for such functions $f$. 
By the uniqueness statement in the 
Riesz--Markov--Kakutani theorem, this is sufficient to conclude that the Radon measure $\tilde{\nu}_r$ itself is permutation invariant.
\end{proof}

Essentially as a corollary, we obtain the following technical estimates used in the paper.
\begin{lemma}[$\delta$-integration]\label{lem:deltaprop}
The following formulas are valid.
\begin{itemize}
\item[i)] {\bf{Marginal moments}.}  Suppose $n\ge 2$ and $f:[0,\infty)\to \C$ is a continuous function.  Then 
\begin{equation}\label{eq:margmom}
\mean{f(u_1)}_{\nu(\rmd^n \fu)}=(n-1)\int_{0}^{1} \rmd u_1\,f(u_1)(1-u_1)^{n-2}  \,.
\end{equation}
\item[ii)]  {\bf{Correlations}.} 
Suppose $n\geq 3$ and $g:[0,\infty)^2\to \C$ is  a continuous function.  Then
\begin{equation}\label{eq:gcorrel}
\mean{g(u_1,u_2)}_{\nu(\rmd^n \fu)}=
(n-1)(n-2)
\int_{0}^{1} \rmd u_1\,
\int_{0}^{1-u_1} \rmd u_2\,g(u_1,u_2)(1-u_1-u_2)^{n-3}\,.
\end{equation}
\item[iii)] {\bf{Concentration}.} We have $t_n(\fu)=1$ almost surely
under the probability measure $\nu ( \rmd^{n} \fu )$.
\item[iv)] {\bf{``Fubini's theorem''}.} 
Suppose  $n\ge 2$ and $f:\R_*^{n}\to \C$ is a Borel measurable function.
If $f$ is also either non-negative, or Lebesgue integrable, then 
\begin{align}\label{eq:itemthreegoal}
\int_{\R_*^{n}} \rmd^{n} \fv \, f(\fv)
=  \int_{\R_*} \rmd r\,\left( \int_{\R_*^{n}} \tilde{\nu}_r(\rmd^n \fv) f(\fv) \right) 
=  \int_{\R_*} \rmd r\, \frac{r^{n-1}}{(n-1)!}\left(\int_{\R_*^{n}} \nu( \rmd^{n} \fu ) \, f(r \fu) \right)\,,
\end{align}
where the integrals are either all absolutely convergent or all infinite.
\end{itemize}
\end{lemma}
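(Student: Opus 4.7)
The plan is to derive all four items as consequences of the integration identity \eqref{eq:mainnurtech} and related statements already established in Lemma~\ref{lem:Borelprop}, together with the simplex volume formula \eqref{eq:simplexvol}. Since the heavy lifting (construction of $\tilde{\nu}_r$ as a Radon measure and the approximation by mollifiers) has already been done, what remains are essentially explicit iterated-integral evaluations and standard Fubini manipulations.

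For item i), I would apply \eqref{eq:mainnurtech} with $r=1$, recalling that $\nu=(n-1)!\tilde{\nu}_1$, to rewrite $\mean{f(u_1)}_{\nu}$ as
\[
(n-1)!\int_{\R_*^{n-1}}\rmd^{n-1}u\,\cf{t_{n-1}(u)<1}\,f(u_1).
\]
Since the integrand depends only on $u_1$, I would integrate out $u_2,\ldots,u_{n-1}$ under the constraint $\sum_{j=2}^{n-1}u_j<1-u_1$ using \eqref{eq:simplexvol} with $r$ replaced by $1-u_1$, producing the factor $(1-u_1)^{n-2}/(n-2)!$ and the desired formula. Item ii) is a direct elaboration of the same idea: after invoking \eqref{eq:mainnurtech}, I integrate out $u_3,\ldots,u_{n-1}$ with the constraint $\sum_{j=3}^{n-1}u_j<1-u_1-u_2$, so that \eqref{eq:simplexvol} yields the factor $(1-u_1-u_2)^{n-3}/(n-3)!$, and combining with $(n-1)!$ gives $(n-1)(n-2)$.

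Item iii) is immediate. Item (1) of Lemma~\ref{lem:Borelprop} states that $\tilde{\nu}_r(E_r)=0$ for $E_r=\{\fu\in\R_*^n:t_n(\fu)\ne r\}$. Specialising to $r=1$ and scaling by $(n-1)!$ gives $\nu(E_1)=0$, which is precisely the claim that $t_n(\fu)=1$ almost surely under $\nu$.

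For item iv), define $F(r):=\int_{\R_*^n}\tilde{\nu}_r(\rmd^n\fv)f(\fv)$ and use the first equality of \eqref{eq:mainnurtech} to write
\[
F(r)=\int_{\R_*^{n-1}}\rmd^{n-1}u\,\cf{t_{n-1}(u)<r}\,f(u,r-t_{n-1}(u)).
\]
Assuming $f$ is non-negative (or absolutely integrable), I would apply Tonelli's (resp.\ Fubini's) theorem to swap the $r$-integral with the $u$-integral and then perform the change of variables $v_n=r-t_{n-1}(u)$ in the inner integral, which ranges over $r>t_{n-1}(u)$, hence $v_n>0$. This produces exactly $\int_{\R_*^n}\rmd^n\fv\, f(\fv)$ and establishes the first equality in \eqref{eq:itemthreegoal}. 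The second equality follows from the scaling identity in \eqref{eq:mainnurtech}, i.e.\ $\tilde{\nu}_r(\rmd^n\fv)f(\fv)=\frac{r^{n-1}}{(n-1)!}\nu(\rmd^n\fu)f(r\fu)$ after the substitution $\fv=r\fu$. The potential obstacle is purely bookkeeping: one must verify the integrability hypotheses propagate consistently through the three integrals, which is handled by the dichotomy ``non-negative or $L^1$'' already built into \eqref{eq:mainnurtech}.
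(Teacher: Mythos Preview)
Your proposal is correct and follows essentially the same route as the paper: items i) and ii) via \eqref{eq:mainnurtech} followed by integrating out the extra coordinates with the simplex volume formula, item iii) directly from Lemma~\ref{lem:Borelprop}(1), and item iv) via \eqref{eq:mainnurtech}, Fubini/Tonelli, and the change of variables $v_n=r-t_{n-1}(u)$. The only minor points the paper makes explicit that you glossed over are the edge cases $n=2$ in i) and $n=3$ in ii) (where no extra coordinates remain to integrate out), and the observation that item iii) combined with continuity guarantees boundedness of $f(u_1)$ and $g(u_1,u_2)$ on the support of $\nu$, which is what justifies invoking \eqref{eq:mainnurtech} there.
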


\begin{proof}
The third item is a direct corollary of item (1) of Lemma~\ref{lem:Borelprop}.  Therefore, the assumed continuity properties of $f$ and $g$ imply that
the observables $\fu \mapsto f(u_1)$ and $\fu \mapsto g(u_1,u_2)$ are bounded on the support of the measure $\nu$.  Hence, they are integrable, and we can apply item (3) of Lemma \ref{lem:Borelprop}.

Assuming $n\ge 3$, we first obtain
\[
 \mean{g(u_1,u_2)}_{\nu(\rmd^n \fu)}
 =  (n-1)!\int_{\R_*^{n-1}}\rmd^{n-1} u\,\cf{t_{n-1}(u)<1} g(u_1,u_2)\,.
\]
If $n=3$, this is equal to $\int_{0}^{1} \rmd u_1\,
\int_{0}^{1-u_1} \rmd u_2\,g(u_1,u_2)$.  If $n>3$, the indicator function is still zero if $u_1+u_2\ge 1$, and thus
we have 
\[
 \int_{\R_*^{n-1}}\rmd^{n-1} u\,\cf{t_{n-1}(u)<1} g(u_1,u_2) = 
\int_{0}^{1} \rmd u_1\,
\int_{0}^{1-u_1} \rmd u_2\,g(u_1,u_2)
 \int_{\R_*^{n-3}}\rmd^{n-3}v\,\cf{\sum_{i=1}^{n-3}v_i<1-u_1-u_2}\,.
\]
Hence, by using the earlier simplex volume result, we can conclude that for all $n\ge 3$,
\[
 \mean{g(u_1,u_2)}_{\nu(\rmd^n \fu)} = 
(n-1)(n-2)
\int_{0}^{1} \rmd u_1\,
\int_{0}^{1-u_1} \rmd u_2\,g(u_1,u_2)(1-u_1-u_2)^{n-3}\,.
\]
Similarly, if $n\ge 2$, we have 
\[
 \mean{f(u_1)}_{\nu(\rmd^n \fu)}
 = (n-1)! \int_{\R_*^{n-1}}\rmd^{n-1} u\,\cf{t_{n-1}(u)<1} f(u_1)\,.
\]
If $n=2$, this evaluates to 
$ \int_{0}^1\rmd u_1\, f(u_1) = (n-1)\int_{0}^{1} \rmd u_1\,f(u_1)(1-u_1)^{n-2}$.  If $n>2$, as above, we obtain
\[
  \mean{f(u_1)}_{\nu(\rmd^n \fu)}
  = (n-1)! \int_{0}^{1} \rmd u_1\, f(u_1)
  \int_{\R_*^{n-2}}\rmd^{n-2}v\,\cf{\sum_{i=1}^{n-2}v_i<1-u_1} \,.
\]
and evaluating the volume of the simplex  results in  \eqref{eq:margmom}.

Therefore, only the fourth item remains to be proven.
Let $n\ge 2$ and suppose $f:\R_*^{n}\to \C$ is Borel measurable.
Let us first consider the case where $f$ is also non-negative.
Applying item (3) of 
Lemma~\ref{lem:Borelprop}
we find that for any $r>0$,
\[
 \int_{\mathbb{R}_*^n}\tilde{\nu}_r(\rmd^n \fu) f(\fu )
  =
   \int_{\R_*^{n-1}}\rmd^{n-1} u\,\cf{t_{n-1}(u)<r} f(u,r-t_{n-1}(u))\,. 
\]
The integrand on the right hand side is non-negative and measurable under $\rmd r \times (\rmd^{n-1} u)$.
Thus by Fubini's theorem, we find that 
\[
 \int_{\R_*}\!\rmd r \left(\int_{\mathbb{R}_*^n}\tilde{\nu}_r(\rmd^n \fu) f(\fu )\right)
 =  \int_{\R_*^{n-1}}\rmd^{n-1} u\left(
  \int_{\R_*}\!\rmd r \cf{t_{n-1}(u)<r} f(u,r-t_{n-1}(u))\right)\,. 
\]
We change the integration variable $r$ to $u_n =r-t_{n-1}(u)$, yielding
\[
  \int_{\R_*^{n-1}}\rmd^{n-1} u\left(
  \int_{0}^\infty\!\rmd u_n f(u,u_n)\right)\,.
\]
By Fubini's theorem, this is equal to $\int_{\R_*^{n}}\rmd^{n} \fu f(\fu)$.  Hence we have proven
the first equality in \eqref{eq:itemthreegoal} for 
non-negative $f$ and the second equality follows then immediately from item (3) of Lemma~\ref{lem:Borelprop}.

Let us then assume that $f\in L^1(\R^n)$.
Then \eqref{eq:itemthreegoal} holds for $|f|$, as well as for positive and negative parts of $\mathrm{Re} f$ and $\mathrm{Im} f$.  Each of these functions is bounded by $|f|$, and thus all of the resulting integrals are finite.  Thus by taking the appropriate complex linear combination, we conclude that 
\eqref{eq:itemthreegoal} holds also for $f$, with absolutely convergent integrals.
This concludes the proof of the lemma.
\end{proof}
\section{\textbf{Tools}}\label{ap:basic}
The following section collects a few standard results, given here to facilitate following the argument in the main text.

\begin{lemma}\label{lem:independencia}
Let $U$ and $V$ be independent random variables with exponential distribution of parameter $\lambda>0$.
Set $X=(U-V)/(U+V)$ and $Y=U+V$. Then 
\begin{itemize}
\item[i)] the random variable $X$ has continuous uniform distribution in $[-1,1]$.
\item[ii)] the random variable $Y$ has Gamma distribution with parameter $2$ and $\lambda$.
\item[iii)] the random variables $X$ and $Y$ are  independent.
\end{itemize}
\end{lemma}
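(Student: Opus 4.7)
The plan is to obtain all three items simultaneously by computing the joint density of $(X,Y)$ via a change of variables, and observing that this density factorises into a function of $x$ alone times a function of $y$ alone. By independence of $U$ and $V$ and the assumption on the marginals, the joint density of $(U,V)$ with respect to the Lebesgue measure on $\R_*^2$ is given by $f_{U,V}(u,v)=\lambda^2 \rme^{-\lambda(u+v)}$ on $\R_*^2$, which provides the starting point.

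First I would invert the transformation $(U,V)\mapsto (X,Y)$ explicitly by solving for $U=Y(1+X)/2$ and $V=Y(1-X)/2$. This map is a diffeomorphism from $\R_*^2$ onto $(-1,1)\times \R_*$, and a direct computation of the Jacobian determinant yields $|J|=Y/2$. Substituting into the transformation formula for densities, the joint density of $(X,Y)$ with respect to the Lebesgue measure on $(-1,1)\times \R_*$ becomes
\begin{equation}
f_{X,Y}(x,y)=\frac{\lambda^2}{2}y\,\rme^{-\lambda y}\cf{x\in(-1,1)}\cf{y>0}
=\left(\frac{1}{2}\cf{x\in(-1,1)}\right)\left(\lambda^2 y\,\rme^{-\lambda y}\cf{y>0}\right).
\end{equation}

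The right-hand side is a product of two one-variable densities: the first factor is the density of the uniform distribution on $[-1,1]$, which establishes item i); the second factor is the density of the Gamma distribution with parameters $2$ and $\lambda$ (cf. Lemma~\ref{lem:propgamma} in Appendix~\ref{ap:basic}), establishing item ii). Since the joint density factorises as the product of the two marginal densities, $X$ and $Y$ are independent, which yields item iii).

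There is no real obstacle here: the only point that requires mild care is verifying that $(U,V)\mapsto (X,Y)$ is a valid change of variables (bijectivity onto $(-1,1)\times \R_*$ and non-vanishing Jacobian almost everywhere), together with the explicit Jacobian computation. Once the joint density is written down in the displayed form above, the three claims follow immediately by inspection.
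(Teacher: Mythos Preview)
Your proof is correct and essentially the same as the paper's: both arguments compute the joint law of $(X,Y)$ via a change of variables from $(U,V)$ and observe that the resulting density factorises into the uniform density on $[-1,1]$ times the $\mathrm{Gamma}(2,\lambda)$ density. The only cosmetic difference is that the paper does the change of variables in two steps (first $y=u+v$, $z=u-v$, then $x=z/y$) and phrases it through expectations of test functions, whereas you go directly to $(x,y)$ and write down the joint density; the content is identical.
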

\begin{proof}
The proof of Item~i) and Item~ii) are straightforward. We leave the details to the interested reader.
In the sequel, we prove Item~iii).
For any continuous and bounded function $F:\mathbb{R}^2\to \mathbb{R}$ we have
\begin{equation}\label{eq:integralf}
\begin{split}
\mathbb{E}[F(U,V)]&=
\lambda^2\int_{0}^{\infty}\int_{0}^\infty \rmd u\rmd v\, e^{-\lambda u-\lambda v}F(u,v)\\
&=
\frac{\lambda^2}{2}\int_{\mathbb{R}^2} \rmd y\rmd z e^{-\lambda y} \cf{y+z>0}\cf{y-z>0}F\left(\frac{y+z}{2},\frac{y-z}{2}\right)\\
&=
\frac{\lambda^2}{2}\int_{0}^\infty \rmd y e^{-\lambda y}\int_{-y}^{y} \rmd z F\left(\frac{y+z}{2},\frac{y-z}{2}\right)\,,
\end{split}
\end{equation}
where in the second equality we apply the Change of Variable theorem for
$y=u+v$ and $z=u-v$. 
Equality \eqref{eq:integralf} with the help of
the change of variable $x=z/y$
yields
\begin{equation*}
\begin{split}
\mathbb{E}[F(U,V)]
&=
\int_{0}^\infty 
\rmd y\, \underbrace{\lambda^2 ye^{-\lambda y}}_{\textrm{density of  Y}} 
\int_{-1}^{1} \underbrace{\frac{1}{2}}_{\textrm{density of } X}\rmd x\,  F\left(y\frac{1+x}{2},y\frac{1-x}{2}\right)\,.
\end{split}
\end{equation*}
In particular, for continuous and bounded functions $f$ and $g$ the choice $F(u,v):=f((u-v)/(u+v))g(u+v)$, $u,v\in \mathbb{R}$ yields
\begin{equation*}
\begin{split}
\mathbb{E}[F(U,V)]=
\mathbb{E}[f(X)g(Y)]=
\int_{0}^\infty 
\rmd y\, {\lambda^2 ye^{-\lambda y}}g(y)
\int_{-1}^{1}\rmd x {\frac{1}{2}} f(x)\,.
\end{split}
\end{equation*}
Hence, the proof of Item iii) is finished.
\end{proof}

The following lemma provides distributional properties for the sum and difference of i.i.d.\ exponential random variables.
Since the proof is straightforward, we omit the details here.
\begin{lemma}\label{lem:propgamma}
Let $X$ and $Y$ be independent random variables  having exponential distribution of parameter $\theta>0$.
Then the following is valid.
\begin{enumerate}
\item $X-Y$ has Laplace distribution with location parameter zero and scale parameter $\frac{1}{\theta}$.
\item $X+Y$ has Gamma distribution with parameters $2$ and $\theta$.
\item $\mathbb{E}[X-Y]=0$ and $\mathbb{E}[X+Y]=\frac{2}{\theta}$.
\item $\textsf{Var}[X-Y]=\textsf{Var}[X+Y]=\frac{2}{\theta^2}$.
\item In general, if $Z$ is distributed according to a Gamma distribution of parameters $m\in \mathbb{N}$ and $\theta>0$, then for any $k\in \mathbb{N}$ we have
$\mathbb{E}[Z^k]=\theta^{-k}\frac{(m+k-1)!}{(m-1)!}$.
In addition, for the case $m=\theta$ we have 
\[
\mathbb{E}[|1-Z|^6]=\frac{15m^2 + 130m + 120}{m^5}\,.
\]
\end{enumerate}
\end{lemma}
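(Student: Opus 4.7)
The plan is to treat the five items in the order of increasing computational weight, exploiting independence throughout. For Item~(1), I would compute the density of $X-Y$ by the standard convolution formula $f_{X-Y}(z)=\int_{0}^{\infty}\rmd y\,f_X(z+y)f_Y(y)$, with $f_X(x)=f_Y(x)=\theta e^{-\theta x}\cf{x\geq 0}$. Splitting the integral into the cases $z\geq 0$ and $z<0$ (so that the constraint $z+y\ge 0$ becomes $y\geq 0$ or $y\geq -z$ respectively) yields in both cases $f_{X-Y}(z)=\frac{\theta}{2}\rme^{-\theta|z|}$, which is exactly the Laplace density with location $0$ and scale $1/\theta$. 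For Item~(2), the cleanest argument is via moment generating functions: $X$ has MGF $\theta/(\theta-t)$ on $t<\theta$, so by independence $X+Y$ has MGF $(\theta/(\theta-t))^2$, which is the MGF of the Gamma distribution with shape $2$ and rate $\theta$; alternatively, one can convolve the densities directly to recover $f_{X+Y}(z)=\theta^2 z \rme^{-\theta z}\cf{z\geq 0}$.

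For Items~(3) and (4), I would simply use linearity of expectation and the independence-based additivity of variance, together with the elementary facts $\mean{X}=1/\theta$ and $\mathrm{Var}[X]=1/\theta^2$ for $X\sim \mathrm{Exp}(\theta)$: this gives $\mean{X\pm Y}=\frac{1}{\theta}\pm\frac{1}{\theta}$ and $\mathrm{Var}[X\pm Y]=\mathrm{Var}[X]+\mathrm{Var}[Y]=2/\theta^2$ in either sign case. For the first part of Item~(5), I would compute the $k$-th moment of $Z\sim \mathrm{Gamma}(m,\theta)$ directly from the density:
\[
\mean{Z^k}=\frac{\theta^m}{(m-1)!}\int_0^\infty z^{m+k-1}\rme^{-\theta z}\,\rmd z
=\frac{(m+k-1)!}{\theta^k(m-1)!},
\]
which is the claimed formula.

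Finally, for the explicit evaluation of $\mean{|1-Z|^6}$ when $m=\theta$, I note that $|1-Z|^6=(1-Z)^6$ because the exponent is even, and expand using the binomial theorem:
\[
\mean{(1-Z)^6}=\sum_{k=0}^{6}\binom{6}{k}(-1)^k \mean{Z^k}
=\sum_{k=0}^{6}\binom{6}{k}(-1)^k \frac{(m+k-1)!}{m^k(m-1)!}.
\]
Substituting $\theta=m$ and simplifying $(m+k-1)!/((m-1)!\,m^k)=\prod_{j=1}^{k-1}(1+j/m)$ leads, after collecting the polynomial-in-$1/m$ contributions, to a rational function of $m$; grinding through the binomial sum and cancelling the $1$, $1/m$, $1/m^2$ terms (which must vanish by design, since $\mean{(1-Z)^6}=\mathrm{O}(m^{-3})$ as $m\to\infty$ by the CLT for Gamma variables) yields the stated expression $(15 m^2+130 m+120)/m^5$.

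The only nonroutine step here is the algebraic simplification in the last display: one must be careful to carry all seven terms (for $k=0,\ldots,6$), put them over the common denominator $m^5$, and check that the leading three coefficients in the numerator indeed cancel. All other steps are direct consequences of standard facts about exponentials and Gamma distributions together with linearity of expectation and independence.
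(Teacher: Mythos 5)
Your proof is correct. The paper itself omits the proof of this lemma, stating only that it is straightforward; your computations (density of the difference via convolution, MGF for the sum, linearity and independence for moments and variances, the standard integral for the Gamma moments, and the binomial expansion for $\mathbb{E}[(1-Z)^6]$) fill in exactly the routine steps that were left out, and I have verified that the binomial sum does cancel the $m^0$, $m^{-1}$, and $m^{-2}$ contributions and produces $(15m^2+130m+120)/m^5$ as claimed.
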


In the next lemma, we recall the exact value for the mean and the second moment of a Poisson random variable.
\begin{lemma}[First and second moment of Poisson distribution]\label{lem:mopoisfull}
Let $X$ be a random variable with Poisson distribution of parameter $\lambda>0$. Then it follows that
\[
\mean{X}_{\textsf{Po}(\lambda)}=\lambda
\quad
\textrm{ and }\quad
\mean{X^2}_{\textsf{Po}(\lambda)}=\lambda^2+\lambda\,.
\]
\end{lemma}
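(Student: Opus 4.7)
The proof is a direct computation from the definition of the Poisson probability mass function $p_n(\lambda) = e^{-\lambda}\lambda^n/n!$ given in \eqref{eq:poissonprob}. The plan is to evaluate the two series
\[
\mean{X}_{\mathsf{Po}(\lambda)} = \sum_{n=0}^{\infty} n\, p_n(\lambda), \qquad
\mean{X^2}_{\mathsf{Po}(\lambda)} = \sum_{n=0}^{\infty} n^2\, p_n(\lambda),
\]
by the standard reindexing trick, relying only on the Taylor expansion $e^{\lambda} = \sum_{k=0}^{\infty} \lambda^k/k!$.

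For the first moment, I would drop the $n=0$ term, cancel the factor $n$ against $n!$, and shift the summation index $k=n-1$:
\[
\sum_{n=0}^{\infty} n\,e^{-\lambda}\frac{\lambda^n}{n!}
= e^{-\lambda}\sum_{n=1}^{\infty} \frac{\lambda^n}{(n-1)!}
= \lambda e^{-\lambda}\sum_{k=0}^{\infty} \frac{\lambda^{k}}{k!}
= \lambda.
\]
All rearrangements are justified by absolute convergence (all terms are non-negative), so Fubini/Tonelli applies trivially.

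For the second moment, rather than handling $n^2$ directly, the efficient path is to compute the factorial moment $\mean{X(X-1)}_{\mathsf{Po}(\lambda)}$. Writing $n^2 = n(n-1) + n$, and applying the same reindexing with $k=n-2$ to the first piece, I would obtain
\[
\sum_{n=0}^{\infty} n(n-1)\,e^{-\lambda}\frac{\lambda^n}{n!}
= e^{-\lambda}\sum_{n=2}^{\infty} \frac{\lambda^n}{(n-2)!}
= \lambda^2 e^{-\lambda}\sum_{k=0}^{\infty}\frac{\lambda^k}{k!}
= \lambda^2.
\]
Combining the two identities then yields $\mean{X^2}_{\mathsf{Po}(\lambda)} = \lambda^2 + \lambda$, completing the proof.

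There is no real obstacle here: the argument is entirely mechanical and relies only on termwise manipulations of the exponential series. The only mild subtlety is the bookkeeping of the lower index after the shift, which must start at $k=0$ in order to reproduce $e^{\lambda}$ exactly; this is what makes dropping the $n=0$ (resp.\ $n=0,1$) terms essential before reindexing.
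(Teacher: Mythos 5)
Your computation is correct and complete: the standard reindexing of the exponential series for the first moment, and the factorial-moment decomposition $n^2=n(n-1)+n$ for the second, both carried through with the right index shifts. Note that the paper itself states Lemma~\ref{lem:mopoisfull} without proof (it is recalled as a standard fact in the tools appendix), so there is no argument in the source to compare against; your proposal would serve perfectly well as the omitted proof.
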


\section*{\textbf{Declarations}}

\noindent
\textbf{Acknowledgements.}
The authors are grateful to the reviewer for the thorough
examination of the manuscript, which has lead to a significant improvement.
\bigskip

\noindent
\textbf{Funding.} The research has been supported by the Academy of Finland, via 
the Matter and Materials Profi4 university profiling action, 
an Academy project (project No. 339228)
and the Finnish centre of excellence in Randomness and STructures
(project No. 346306).
\bigskip

\noindent
\textbf{Availability of data and material.}
Data sharing not applicable to this article as no datasets were generated or analysed during the current study.
\bigskip

\noindent
\textbf{Conflict of interests.} 
The authors declare that they have no conflict of interest.
\bigskip

\noindent
\textbf{Authors' contributions.}
Both authors have contributed equally to the paper.

\bibliographystyle{amsplain}

\end{document}